\let\mathbb\mathds
\DeclareMathAlphabet\mathbfcal{OMS}{cmsy}{b}{n}
\pgfplotsset{compat=1.13}
\newenvironment{subproof}[1][\proofname]{%
	\begin{proof}[#1]%
	}{%
	\end{proof}%
}
\tikzset{Rightarrow/.style={double equal sign distance,>={Implies},->},
  triple/.style={-,preaction={draw,Rightarrow}},
  quadruple/.style={preaction={draw,Rightarrow,shorten >=0pt},shorten >=1pt,-,double,double
    distance=0.2pt}}
\def\on{\operatorname}
\def\sf{\mathsf}
\def\CC{\mathbb{C}}
\def\C{\EuScript{C}}
\def\D{\EuScript{D}}
\def\BB{\mathbb{B}}
\def\DD{\mathbb{D}}
\def\bS{\textbf{S}}
\def\OO{\mathbb{O}}
\def\Fun{\on{Fun}}
\def\Cat{\on{Cat}}
\def\iCat{\EuScript{C}\!\on{at}}
\def\Hom{\on{Hom}}
\def\D{\EuScript{D}}
\def\Nerv{\on{N}}
\def\Nsc{\on{N}^{\on{sc}}}
\def\Nms{\on{N}^{\on{ms}}}
\def\id{\on{id}}
\def\Set{\on{Set}}
\def\Map{\on{Map}}
\def\scsSet{{\on{Set}_{\Delta}^{\mathbf{sc}}}}
\def\mbsSet{{\Set_\Delta^{\mathbf{mb}}}}
\def\St{\on{St}}
\def\ST{\mathbb{S}\!\on{t}}
\def\Un{\on{Un}}
\def\UN{\mathbb{U}\!\on{n}}
\def\sc{{\on{sc}}}
\DeclarePairedDelimiterX\set[1]{\lbrace}{\rbrace}{\def\given{\:\delimsize\vert\allowbreak\:}#1}
\newlist{implications}{description}{1} % Mejorar todo este sistema
\setlist[implications]{itemsep=0pt,leftmargin=\parindent}
\NewDocumentCommand\implication{o}
  {\IfValueTF{#1}
    {\auximplication#1\relax}
    {\item[\normalfont($\,\Rightarrow\,$)]}}
\NewDocumentCommand\auximplication{u-u\relax}
  {\item[\normalfont(#1)$\,\Rightarrow\,$(#2)]}
\newcounter{diagram}[section]
\def\thediagram{\thesection.\arabic{diagram}}
\def\ftype@diagram{4}
\def\ext@diagram{diag}
\def\fnum@diagram{Diagram~\thediagram}
\def\fs@diagram{htbp!}
\NewDocumentEnvironment{diagram}{O{htbp!}m}
  {\@float{diagram}[#1]\centering}
  {
   
   \caption{}
   \label{#2}
   \end@float
  }
\newcounter{subdiagram}[diagram]
\def\thesubdiagram{\thediagram.\arabic{subdiagram}}
\NewDocumentCommand\domultidiagram{omu\enddomultidiagram}
 {
  \IfValueTF{#1}{\diagram[#1]}{\diagram}{}
   \refstepcounter{diagram}
   \centering
    \seq_clear:N \l_tmpb_seq
    \seq_set_split:Nnn \l_tmpa_seq { \next } { #3 }
    \seq_map_inline:Nn \l_tmpa_seq
     {
      \seq_put_right:Nn \l_tmpb_seq
       {
        \begin{tabular}[b]{@{}c@{}}
         ##1 \\[3ex]
         \refstepcounter{subdiagram}
         \label{#2\othercolon\the\value{subdiagram}}
         Diagram~\thesubdiagram 
        \end{tabular}
       }
     }
    \seq_use:Nn \l_tmpb_seq { \qquad }
   \let\label\@gobble
   \let\caption\@gobble
  \enddiagram
 }
\def\othercolon{:}
\declaretheoremstyle[bodyfont=\itshape,notefont=\bfseries]{abellanA}
\declaretheoremstyle[notefont=\bfseries]{abellanB}
\declaretheorem[style=abellanA,numberwithin=section,name={Theorem}]{theorem}
\declaretheorem[style=abellanA,numberlike=theorem,name={Lemma}]{lemma}
\declaretheorem[style=abellanB,numberlike=theorem,name={Definition}]{definition}
\declaretheorem[style=abellanB,numberlike=theorem,name={Remark}]{remark}
\declaretheorem[style=abellanB,numberlike=theorem,name={Construction}]{construction}
\declaretheorem[style=abellanA,numberlike=theorem,name={Proposition}]{proposition}
\declaretheorem[style=abellanB,numberlike=theorem,name={Example}]{example}
\declaretheorem[style=abellanB,numbered=no,name={Notation}]{notation}
\declaretheorem[style=abellanA,numberlike=theorem,name={Corollary}]{corollary}
\newtheorem*{thm*}{Theorem}
\newtheorem*{prop*}{Proposition}
\newtheorem*{cor*}{Corollary}
\let\leq\leqslant
\let\geq\geqslant
\let\epsilon\varepsilon
\let\isom\simeq
\newcommand*\numberset{\mathbb}
\newcommand*\tensor{\otimes}
\newcommand*\N{\numberset{N}}
\newcommand*\mathblank{\mathord{-}}
\DeclareMathOperator*\colim{colim}
\def\msSet{{\on{Set}_{\Delta}^+}}
\def\mssSet{{\on{Set}^{\mathbf{ms}}_{\Delta}}}
\def\Cat{\on{Cat}}
\def\cchi{\mathbb{\bbchi}}
\def\PPhi{\mathbb{\bbphi}}
\let\emptyset\varnothing
\newcommand{\fixed@sra}{$\vrule height 2\fontdimen22\textfont2 width 0pt\rightarrow$}
\newcommand{\shortarrowup}[1]{%
  \mathrel{\text{\rotatebox[origin=c]{65}{\fixed@sra}}}
}
\newcommand{\shortarrowdown}[1]{%
  \mathrel{\text{\rotatebox[origin=c]{250}{\fixed@sra}}}
}
\newcommand{\upslash}{\!\shortarrowup{1}}
\def\lra{\longrightarrow}
\def\lla{\longleftarrow}
\def\llra{\def\arraystretch{.1}\begin{array}{c} \lra \\ \lla \end{array}}
\newcommand*\adj[4]
\def\op{{\on{op}}}
\DeclareMathOperator\SSt{\mathbb{S}t}
\newcommand*\dirlim{\mathop{\mathpalette\varlim@{\rightarrowfill@\scriptscriptstyle}}\nmlimits@}
\newcommand*\prolim{\mathop{\mathpalette\varlim@{\leftarrowfill@\scriptscriptstyle}}\nmlimits@}
\def\llra{\def\arraystretch{.1}\begin{array}{c} \lra \\ \lla \end{array}}
\newcommand{\nat}{\Rightarrow}
\tikzset{
  abellanarrows/.style={line cap=round,line join=round,line width=.4pt},
  abellanarrowlength/.store in=\abellanarrowlength,
}
\NewDocumentCommand \func { s O{} m }
 {
  \group_begin:
   \IfBooleanTF{#1}
    { \keys_set:nn { abellan / func } { aligned = true , #2 } }
    { \keys_set:nn { abellan / func } {#2} }
   \abellan_func:n {#3}
  \group_end:
 }
\NewDocumentCommand \arr { s o m }
 {
  \IfBooleanF{#1}
   { \bool_if:NT \l_abellan_aligned_bool { & } }
  \abellan_arr:n {#3}
 }
\NewDocumentCommand \addarr { o m m }
 {
  \keys_set:nn { abellan / func / addarrow } { name = {#2} , #3 }
  \tl_clear:N \l_abellan_arrname_tl
 }
\NewDocumentCommand \setupfunc { m } { \keys_set:nn { abellan / func } {#1} }
\tikzset{abellanarrowlength={#1}} ,
\tikzset{abellanarrows/.append ~ style={#1}} ,
\NewDocumentCommand \abellan_addarrow:nnww { m m O{} u\q_abellan }
 {
  \exp_args:Nc \NewDocumentCommand { abellan_arr_#1_#2:w } { #3 }
   {
    \use:c { abellan_arr_ \l_abellan_arrmode_tl :n } { #4 }
   }
 }
\newcommand*\resetdynamicto
\gdef\dynamicto{\arr*{to}\gdef\dynamicto{\arr*{mapsto}}}}
\NewDocumentCommand \printheader { m o m }
 {
  \par\noindent
  \begin{minipage}[t]{\textwidth}\noindent
  
  \begin{tabular}[t]{ll}
     & \keyval_parse:NNn \abellan_printname:n \abellan_printnamemail:nn { #3 } %\\[-2ex]
    %Asignatura & #1
  \end{tabular}
  \vspace{.4cm}
  \end{minipage}
  \begin{center}\Large\bfseries
   #1 \IfValueT{#2}{\\[1ex] \large #2}
  \end{center}
  \vspace{.6cm}
 }
\quad\texttt{#2} \\ &
        \string\usetikzlibrary{decorations.markings} to use arrows with markings}{}}{}%
\def\mbsSet{\on{Set}_{\Delta}^{\mathbf{mb}}}
\def\bS{\textbf{MB}}
\def\sS{\textbf{S}}
\def\Sst{\mathbb{S}\!\on{t}}
\def\scr{\EuScript}
\newcommand{\myitem}[1]{%
  \item[#1]\protected@edef\@currentlabel{#1}%
}
\DeclareSymbolFont{lettersA}{U}{txmia}{m}{it}
\DeclareRobustCommand*{\varmathbb}[1]{\gdef\F@ntPrefix{m@thbbch@r}%
	\@EachCharacter #1\@EndEachCharacter}
\long\def\DoLongFutureLet #1#2#3#4{% 
	\def\@FutureLetDecide{#1#2\@FutureLetToken
		\def\@FutureLetNext{#3}\else
		\def\@FutureLetNext{#4}\fi\@FutureLetNext}
	\futurelet\@FutureLetToken\@FutureLetDecide}
\def\DoFutureLet #1#2#3#4{\DoLongFutureLet{#1}{#2}{#3}{#4}}
\def\@EachCharacter{\DoFutureLet{\ifx}{\@EndEachCharacter}%
	{\@EachCharacterDone}{\@PickUpTheCharacter}}
\def\m@keCharacter#1{\csname\F@ntPrefix#1\endcsname}
\def\@PickUpTheCharacter#1{\m@keCharacter{#1}\@EachCharacter}
\def\@EachCharacterDone \@EndEachCharacter{}
\DeclareMathSymbol{\m@thbbch@rA}{\mathord}{lettersA}{129}
\DeclareMathSymbol{\m@thbbch@rB}{\mathord}{lettersA}{130}
\DeclareMathSymbol{\m@thbbch@rC}{\mathord}{lettersA}{131}
\DeclareMathSymbol{\m@thbbch@rD}{\mathord}{lettersA}{132}
\DeclareMathSymbol{\m@thbbch@rE}{\mathord}{lettersA}{133}
\DeclareMathSymbol{\m@thbbch@rF}{\mathord}{lettersA}{134}
\DeclareMathSymbol{\m@thbbch@rG}{\mathord}{lettersA}{135}
\DeclareMathSymbol{\m@thbbch@rH}{\mathord}{lettersA}{136}
\DeclareMathSymbol{\m@thbbch@rI}{\mathord}{lettersA}{137}
\DeclareMathSymbol{\m@thbbch@rJ}{\mathord}{lettersA}{138}
\DeclareMathSymbol{\m@thbbch@rK}{\mathord}{lettersA}{139}
\DeclareMathSymbol{\m@thbbch@rL}{\mathord}{lettersA}{140}
\DeclareMathSymbol{\m@thbbch@rM}{\mathord}{lettersA}{141}
\DeclareMathSymbol{\m@thbbch@rN}{\mathord}{lettersA}{142}
\DeclareMathSymbol{\m@thbbch@rO}{\mathord}{lettersA}{143}
\DeclareMathSymbol{\m@thbbch@rP}{\mathord}{lettersA}{144}
\DeclareMathSymbol{\m@thbbch@rQ}{\mathord}{lettersA}{145}
\DeclareMathSymbol{\m@thbbch@rR}{\mathord}{lettersA}{146}
\DeclareMathSymbol{\m@thbbch@rS}{\mathord}{lettersA}{147}
\DeclareMathSymbol{\m@thbbch@rT}{\mathord}{lettersA}{148}
\DeclareMathSymbol{\m@thbbch@rU}{\mathord}{lettersA}{149}
\DeclareMathSymbol{\m@thbbch@rV}{\mathord}{lettersA}{150}
\DeclareMathSymbol{\m@thbbch@rW}{\mathord}{lettersA}{151}
\DeclareMathSymbol{\m@thbbch@rX}{\mathord}{lettersA}{152}
\DeclareMathSymbol{\m@thbbch@rY}{\mathord}{lettersA}{153}
\DeclareMathSymbol{\m@thbbch@rZ}{\mathord}{lettersA}{154}
\def\bcat{\varmathbb}
\def\Fr{\on{Fr}}
\newglossaryentry{DuskNerve}{
	name={$\ensuremath{\N_2}$},
	description={The Duskin nerve $\N_2(\CC)\in \Set_\Delta$ of a 2-category $\CC$}
}
\newglossaryentry{ScNerve}{
	name={$\Nsc$},
	description={The scaled nerve $\Nsc(\CC)\in \scsSet$ of a 2-category $\CC$}
}
\newglossaryentry{GenCatNotation}{
	name={$C$, $\scr{C}$, $\mathbb{C}$, $\bcat{C}$},
	description={Generic notations for categories. A 1-category is $C$, an $(\infty,1)$-category is $\scr{C}$, a 2-category is $\CC$, and an $(\infty,2)$-category is $\bcat{C}$}
}
\newglossaryentry{MBann}{
	name={\textbf{MB}-anodyne},
	description={The marked-biscaled anodyne morphisms, used to construct the model structure on $\mbsSet$}
}
\newglossaryentry{MSann}{
	name={\textbf{MS}-anodyne},
	description={The marked-scaled anodyne morphisms, used to construct the model structure on $\mssSet$}
}
\newglossaryentry{MSsSet}{
	name={$\mssSet$},
	description={The category of marked-scaled simplicial sets}
}
\newglossaryentry{scsSet}{
	name={$\scsSet$},
	description={The category of scaled simplicial sets}
}
\newglossaryentry{MBsSet}{
	name={$\mbsSet$},
	description={The category of marked-biscaled simplicial sets}
}
\newglossaryentry{ST}{
	name={$\ST_\phi$},
	description={The scaled straightening functor}
}
\newglossaryentry{UN}{
	name={$\UN_\phi$},
	description={The scaled unstraightening functor}
}
\newglossaryentry{FX}{
	name={$\mathbb{F}(\bcat{X})$, $\mathbb{F}(p)$},
	description={The free 2-Cartesian fibration associated to an $(\infty,2)$-functor $p:\bcat{X}\to \bcat{D}$}
}
\newglossaryentry{BoxProd}{
	name={$\boxtimes$},
	description={The box product of $\msSet$-enriched functors}
}
\newglossaryentry{GrayProd}{
	name={$\otimes$},
	description={The Gray Product of scaled simplicial sets}
}
\newglossaryentry{MX}{
	name={$\mathcal{M}_X$},
	description={The `mapping simplex' associated to a 2-Cartesian fibration $X\to \Delta^n_\flat$}
}
\newglossaryentry{LX}{
	name={$\mathcal{L}_X$},
	description={Auxiliary form of the mapping simplex $\mathcal{M}_X$}
}
\newglossaryentry{BicatInfty}{
	name={$\varmathbb{B}\negthinspace\operatorname{icat}_\infty$},
	description={The $\infty$-bicategory of $\infty$-bicategories}
}
\newglossaryentry{CartINfty}{
	name={$2\varmathbb{C}\negthinspace\operatorname{art}(S)$},
	description={The $\infty$-bicategory of 2-Cartesian fibrations over a scaled simplicial set $S$}
}
\newglossaryentry{OI}{
	name={$\OO^I$},
	description={The 2-Categories which represent the $n$-simplices of the Duskin Nerve}
}
\newglossaryentry{RhoC}{
	name={$\cchi_{\CC}$},
	description={The relative 2-nerve}
}
\newglossaryentry{FrC}{name={$\Fr(\CC)$},description={The strict free 2-Cartesian fibration}}
\renewcommand{\glossarysection}[2][]{}
\title{2-Cartesian fibrations II: A Grothendieck construction for $\infty$-bicategories}
\author{Fernando Abellán \& Walker H. Stern}
\date{} 
\begin{document}
  \maketitle
  \begin{abstract}
  	In this work, we conclude our study of fibred $\infty$-bicategories by providing a Grothendieck construction in this setting. Given a scaled simplicial set $S$ (which need not be fibrant) we construct a 2-categorical version of Lurie's straightening-unstraightening adjunction, thereby furnishing an equivalence between the $\infty$-bicategory of 2-Cartesian fibrations over $S$ and the $\infty$-bicategory of contravariant functors $S^\op \to \bcat{B}\mathbf{\!}\on{icat}_\infty$ with values in the $\infty$-bicategory of $\infty$-bicategories. We provide a relative nerve construction in the case where the base is a 2-category, and use this to prove a comparison to existing bicategorical Grothendieck constructions.
  	\par\vskip\baselineskip\noindent
  	\textbf{Keywords}: Grothendieck construction, $(\infty,2)$-category, Cartesian fibration, 2-Cartesian fibration, relative nerve.
  	\par\vskip\baselineskip\noindent
  	\textbf{MSC}: 18N65, 18N40, 18N99.
  \end{abstract}
  
  \tableofcontents
  
    \section{Introduction}
  
  This is the second of a two-paper sequence devoted to $(\infty,2)$-categorical fibrations and the concomitant Grothendieck constructions, with an eye towards understanding $(\infty,2)$-categorical cofinality. In this paper, we provide a explicit and computationally tractable Grothendieck construction for $(\infty,2)$-categories fibred in $(\infty,2)$-categories. In a companion paper \cite{AGScofinality}, we leverage this technology to prove a complete characterization of \emph{marked cofinal} functors of $(\infty,2)$-categories, which we conjectured in \cite{AGSQuillen}.
  
  The Grothendieck construction first emerged as a tool to study descent in \cite{SGA1}, but it has since become an invaluable tool to study universal properties more generally. In its original form, it takes the form of an equivalence
  \[
  \on{Fib}(\scr{C})\simeq \Fun^{\on{ps}}(\C^\op,\Cat),
  \]
  for any small category $\C$, between the category of \emph{fibred categories over $\C$}, and the category of \emph{pseudo-functors} $\C^{\op} \to \Cat$. The underlying idea is that certain conditions on a a functor $p:\scr{D}\to \scr{C}$ mean that the fibres of $p$ vary \emph{(pseudo-)functorially} in $\scr{C}$. Indeed, the original definition of a fibred category, in \cite{GrothendieckDescent}, was what we today would call a pseudo-functor $F:\scr{C}^\op\to \Cat$. More precisely, an assignment of a category $F(x)\in \Cat$ for every $x\in \C$, a functor $F(f):F(y)\to F(x)$ for every morphisms $f:x\to y$ in $\C$, and natural isomorphisms $F(g)\circ F(f)\cong F(f\circ g)$ for every composable pair of morphisms, satisfying additional coherence conditions. 
  
  The Grothendieck construction reformulates the data of a pseudo-functor into a \emph{Cartesian fibration}. Given a  functor $P:\scr{F}\to \C$, an morphism $f:x\to y$ in $\scr{F}$ is called \emph{Cartesian} if, for every $g:z\to y$ in $\scr{F}$, and every commutative diagram 
  \[
  \begin{tikzcd}
  	& P(x)\arrow[dr,"P(f)"] & \\
  	P(z) \arrow[rr,"P(g)"']\arrow[ur,"h"] & & P(y)
  \end{tikzcd}
  \]
  in $\C$, there is a unique morphism $\tilde{h}:z\to x$ with $P(\tilde{h})=h$, such that $f\circ \tilde{h}=g$. The functor $P$ is said to be an \emph{Cartesian fibration} if, for every $f:c\to P(y)$ in $\scr{C}$, there is a Cartesian morphism $\tilde{f}:x\to y$ in $\scr{F}$ such that $P(\tilde{f})=f$. 
  
  The equivalence between pseudo-functors $F:\scr{C}^\op\to \Cat$ and Cartesian fibrations over $\C$ is then achieved by constructing a Cartesian fibration $P:\on{El}(F)\to \scr{C}$ as follows:
  \begin{itemize}
  	\item The objects of $\on{El}(F)$ consist of pairs $(c,x)$, where $c\in \C$, and $x\in F(c)$. 
  	\item A morphism $(f,\tilde{f}):(c,x)\to (d,y)$ consists of a morphism $f:c\to d$ in $\C$, together with a morphism $\tilde{f}:x\to F(f)(y)$ in $F(x)$.
  \end{itemize}
  The Cartesian morphisms of $\on{El}(F)$ are precisely those $(f,\tilde{f})$ such that $\tilde{f}$ is an isomorphism.

  \subsection{Higher-categorical Grothendieck constructions}
  
  More recent incarnations of the Grothendieck construction have focused on $\infty$-categorical variants. By their very nature, functors of $(\infty,1)$-categories generalize pseudo-functors of $(2,1)$-categories\footnote{There is a quibble to be made here about models, since in quasi-categorical terms, every functor of $(\infty,1)$-categories is assumed to \emph{strictly} preserve identities (degenerate 1-simplices).}, so that higher Grothendieck constructions now take the form of equivalences 
  \[
  \scr{C}\!\on{art}(\C)\simeq \Fun(\C^\op,\iCat_\infty)
  \]
  of $\infty$-categories. This equivalence was proven by Lurie in \cite{HTT}, using model-categorical techniques which we adapt in the present work.
  
  The basic form of these arguments is not hard to follow. Given an $\infty$-category $\scr{C}$, presented as a quasi-category, Lurie defines \emph{marked simplicial sets} over $\scr{C}$ to be pair $(X,M_X)$ consisting of a simplicial set $X\in \Set_\Delta$ and a subset $M_X\subset X_1$ of \emph{marked edges} containing all degenerate edges, equipped with a morphism $p:X\to \scr{C}$ of simplicial sets. Requiring maps to preserve these marked edges yields a category $(\Set_\Delta^+)_{/\scr{C}}$. Lurie then constructs a model structure on this category, the fibrant objects of which satisfy lifting properties akin to those defining 1-categorical Cartesian fibrations. In particular, the corresponding model structure on $\msSet\cong (\msSet)_{/\Delta^0}$ models $(\infty,1)$-categories. 
  
  With these model structures in place, one can consider the category $(\msSet)^{\mathfrak{C}[\scr{C}]^\op}$ of simplicially-enriched functors $\mathfrak{C}[\C]^\op \to \msSet$, and equip it with the projective model structure. The $(\infty,1)$-categorical Grothendieck construction then takes the form of a Quillen equivalence 
  \[
  \St_{\scr{C}}: (\msSet)_{/\C}  \llra (\msSet)^{\mathfrak{C}[\C]^\op}: \Un_{\C} 
  \]
  between these two model categories. 
  
  In the $\infty$-categorical context, Grothendieck constructions have become an indispensable  tool, as the added computational complexity of $\infty$-categorical constructions renders many ad-hoc constructions of functors  nearly impossible to work with. It is often far easier to work with the fibration associated to a functor of $\infty$-categories than with the functor itself. Examples of such applications include the study of monoidal $(\infty,1)$-categories in \cite{DAGX} and \cite{HA} and the approach to lax colimits presented in \cite{GHN}. The study of higher forms of cofinality, which is our intended application, is another case in which it is virtually essential to use the Grothendieck construction.
  
  \subsection{Scaled simplicial sets and non-fibrant base}
  
  Throughout this work, we model $(\infty,2)$-categories using \emph{scaled simplicial sets}. Introduced by Lurie in \cite{LurieGoodwillie}, this model is similar in perspective to the marked simplicial sets described above. More formally, a scaled simplicial set is a pair $(X,T_X)$ consisting of a simplicial set $X\in \Set_{\Delta}$, together with a subset $T_X\subset X_2$ containing all degenerate 2-simplices. We think of the simplices in $T_X$ --- typically referred to as \emph{thin triangles} or \emph{thin two simplices} --- as representing invertible 2-morphisms, and we think of all other two simplices as representing non-invertible 2-morphism. 
  
  Our use of this model for $(\infty,2)$-categories presents a number of advantages. Most immediately apparently, much intuition from quasi-categorical models of $(\infty,1)$-categories carries over to scaled simplicial sets, and such intuitions suffuse all of our proofs. More importantly though, our model-categorical approach to the Grothendieck construction means that our proof holds even when the base of our fibrations is a \emph{non-fibrant scaled simplicial set}. 
  
  The benefit of this generality may not be immediately apparent, but arises from the consideration of lax functors. A morphism of scaled simplicial sets $(X,T_X)\to (Y,T_Y)$ sends scaled 2-simplices to scaled 2-simplices, and thus sends invertible 2-morphisms to invertible 2-morphisms. In this sense, morphisms of scaled simplicial sets between fibrant objects generalize pseudofunctors of 2-categories. However, if one considers a fibrant scaled simplicial set $(X,T_X)$ (what we will later call an $\infty$-bicategory), and replaces the scaling $T_X$ with the scaling $M_X$ which consists in thin triangles where the edge $\Delta^{\set{0,1}}$ or the edge $\Delta^{\set{1,2}}$ is an equivalence in $X$ then a morphism
  \[
  \func{F:  (X,M_X)\to  (Y,T_Y)}
  \]
  now can be seen as a generalization of a \emph{normal lax functor}. The identities are still preserved and the thin triangles of $M_X$ represent 2-morphisms in $X$, which tells us that $F$ is “strictly“ functorial on mapping categories. However, in this situation composability only holds up to chosen coherent 2-morphisms. As a consequence, the Grothendieck construction in the case of non-fibrant base provides a potent tool to understand normal lax $(\infty,2)$-functors in terms of their associated fibrations.
  
  \subsection{Variances}

  The zoo of $(\infty,1)$-categorical Grothendieck constructions is complicated by the fact that such Grothendieck constructions come in \emph{two} variances. One can either consider the aforementioned \emph{Cartesian} fibrations of $(\infty,1)$-categories over $\scr{C}$, or consider \emph{coCartesian} fibrations over $\scr{C}$. The former correspond to $(\infty,1)$-functors 
  \[
  \func{
  	F:\C^\op\to \iCat_\infty
  }
  \]
  whereas the latter correspond to $(\infty,1)$-functors 
  \[
  \func{F:\C \to \iCat_\infty}
  \] 
  Additionally, if one treats the case of functors 
  \[
  \func{F:\C^\op\to \scr{S}\subset \iCat_\infty }
  \]
  valued in $\infty$-groupoids (spaces), one obtains more restrictive variants of Cartesian/coCartesian fibrations, called \emph{right fibrations} and \emph{left fibrations}, respectively, in \cite[Ch. 2]{HTT}. 
  
  Each variance can be obtained from the other by appropriate dualization proceedures, and so, in practice it is only necessary to prove one correspondence to obtain the other. In the world of $(\infty,2)$-categories, where there are \emph{four} possible variances, a similar principle applies, although the dualization procedures can become more complicated. As a result, we have focused on a single variance in our exploration of the $(\infty,2)$-categorical Grothendieck construction. Later in the introduction  we will give a more complete account of known Grothendieck constructions, as well as a table of relations between them. 
  
  \subsection{The $(\infty,2)$-categorical Grothendieck construction}
  
  The present paper provides a complete $(\infty,2)$-categorical Grothendieck construction. Loosely speaking, for every scaled simplicial set $S$, we provide an equivalence of $(\infty,2)$-bicategories (or simply $\infty$-bicategories as in \cite{LurieGoodwillie})
  \[
  2\bcat{C}\!\on{art}(S) \simeq \Fun(S^\op,\bcat{B}\!\on{icat}_{\infty}) 
  \]
  between 2-Cartesian fibrations\footnote{What we call 2-Cartesian fibrations are called \emph{outer 2-Cartesian fibrations} in \cite{GHL_Cart}. Because we focus on a single variance, we trim the terminology for ease of reading.} over $S$, and $(\infty,2)$-functors $S^\op\to \bcat{B}\!\on{icat}_{\infty}$ with values in $\infty$-bicategories. 
  
  To understand this construction on an intuitive level, it is helpful to first consider the strict 2-categorical variant of the construction, developed by Buckley in \cite{Buckley}. In this setting, we consider strict 2-functors $p:\CC\to \DD$. A 1-morphism $f:c\to \overline{c}$ in $\CC$ is called \emph{Cartesian} if, for every $a\in \CC$ there is a pullback square of categories 
  \[
  \begin{tikzcd}
  	\CC(a,c) \arrow[r,"f_\ast"]\arrow[d,"P"'] & \CC(a,\overline{c})\arrow[d,"P"] \\
  	\DD(P(a),P(c)) \arrow[r,"P(f)_\ast"'] & \DD(P(a),P(\overline{c}))
  \end{tikzcd}
  \]
  A 2-morphism $\alpha:f\Rightarrow g$ in $\CC(x,y)$ is called \emph{coCartesian} if it is a coCartesian 1-morphism for the map 
  \[
  \func{P:\CC(x,y)\to \DD(P(x),P(y)).}
  \]
  The functor $P$ is then called a 2-Cartesian fibration if it admits Cartesian lifts of all 1-morphisms, and coCartesian lifts of all 2-morphisms. 
  
  In our previous paper, \cite{AGS_CartI}, we provided a model structure which we claimed modeled the appropriate $(\infty,2)$-categorical variant of the above definitions. To keep track of the data of (1) invertible 2-morphisms, (2) Cartesian 1-morphisms, and (3) coCartesian 2-morphisms in the simplicial setting, we considered a 3-part decoration on simplicial sets. Given a simplicial set $X\in \Set_\Delta$, we define a \emph{marking and biscaling} on $X$ to consist of 
  \begin{itemize}
  	\item As in \cite{LurieGoodwillie}, invertible 2-morphisms are encoded as a collection $T_X\subset X_2$ of 2-simplices, which is required to contain degenerate simplices. The 2-simplices in $T_X$ are called \emph{thin} 2-simplices. 
  	\item As in \cite{HTT}, Cartesian 1-morphisms are encoded as a collection $M_X\subset X_1$ of 1-simplices, which is required to contain the degenerate 1-simplices. The 1-simplices in $M_X$ are called \emph{marked} 1-simplices.
  	\item The coCartesian 2-morphisms are encoded as a collection $C_X\subset X_2$. Since every invertible 2-morphism should be coCartesian, we require that $T_X\subset C_X$. We refer to the 2-simplices in $C_X$ as \emph{lean} 2-simplices.  
  \end{itemize}
  A tuple $(X,M_X,T_X\subset C_X)$ is referred to as a \emph{marked-biscaled simplicial set} (or $\bS$ \emph{simplicial set} for short). We denote the category of $\bS$ simplicial sets by $\mbsSet$. Summarizing the main results from our paper \cite{AGS_CartI}, we have 
  
  \begin{thm*}
  	Let $(S,T_S)$ be a scaled simplicial set. 
  	\begin{enumerate}
  		\item There is a left proper, combinatorial, simplicial model structure on $(\mbsSet)_{/(S,\sharp,T_S\subset \sharp)}$, called the \emph{2-Cartesian model structure}. 
  		\item If $S=\Delta^0$ is the terminal scaled simplicial set, the resulting model structure models $\infty$-bicategories. 
  		\item If $(S,T_S)$ is the scaled nerve of a strict 2-category $\DD$, every 2-Cartesion fibration of strict 2-categories $P:\CC\to \DD$ gives rise to a fibrant object of $(\mbsSet)_{/(S,\sharp,T_S\subset \sharp)}$.
  	\end{enumerate} 
  \end{thm*}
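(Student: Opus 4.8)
The plan is to follow the template Lurie uses to build the Cartesian model structure in \cite[\S3.1]{HTT}, adapted to the three-fold decoration $(M_X, T_X \subset C_X)$. First I would isolate a set $\mathcal{A}$ of generating $\bS$-anodyne morphisms: suitably decorated horn inclusions $\Lambda^n_i \hookrightarrow \Delta^n$ whose marking, thin, and lean data are chosen to encode exactly the intended lifting behaviour --- inner horns with their canonical scalings, the special outer horns that witness existence of Cartesian $1$-morphisms and of coCartesian $2$-morphisms, and the auxiliary maps forcing equivalences to be marked and invertible $2$-simplices to be thin. Declaring the $\bS$-anodyne maps to be the weakly saturated class generated by $\mathcal{A}$, a map of $(\mbsSet)_{/(S,\sharp,T_S\subset\sharp)}$ is a fibration when it has the right lifting property against $\mathcal{A}$, so the fibrant objects are precisely the 2-Cartesian fibrations over $S$. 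Taking cofibrations to be the monomorphisms and defining weak equivalences through the simplicial mapping objects into fibrant targets, I would then invoke the standard recognition theorem for combinatorial model structures (Jeff Smith's theorem, as in \cite{HTT}) to obtain the structure claimed in (1).

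The technical heart of (1), and the step I expect to be the main obstacle, is showing that the $\bS$-anodyne maps are compatible with the simplicial --- indeed the Gray-monoidal --- structure, i.e.\ a pushout--product axiom: for a generating cofibration $i$ and a generating anodyne $j$, the pushout--product $i\,\square\,j$ is again $\bS$-anodyne. This forces a delicate decorated bookkeeping on products $\Delta^m\times\Delta^n$ and their shuffle filtrations, tracking how the marking, thin, and lean data propagate across the cells that are adjoined. It is this combinatorial engine that simultaneously delivers the simplicial enrichment and, together with the observation that every object is cofibrant, left properness. I would organise it as a chain of lemmas establishing closure of the $\bS$-anodyne class under pushout--product with the boundary and horn inclusions.

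For (2), when $S=\Delta^0$ the constraint $T_S\subset\sharp$ is vacuous and the slice collapses, so the assertion is that the resulting model structure on $\mbsSet$ presents $\infty$-bicategories. I would compare it with the scaled model structure on $\scsSet$ from \cite{LurieGoodwillie} via the adjunction that forgets, respectively freely adjoins, the marking and lean data, and verify that this is a Quillen equivalence: a fibrant $\bS$-object over $\Delta^0$ has all $1$-simplices marked and its lean data pinned down by its thin data, so it recovers exactly a scaled-fibrant object, i.e.\ an $\infty$-bicategory, and the two notions of weak equivalence then agree on fibrant objects.

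Finally, for (3), given a strict 2-Cartesian fibration $P:\CC\to\DD$ I would equip the scaled nerve $\Nsc(\CC)$ with the decoration whose marked $1$-simplices are the $P$-Cartesian morphisms, whose lean $2$-simplices are those whose distinguished edge is $P$-coCartesian, and whose thin $2$-simplices are the invertible ones, together with the induced map to the base $(\Nsc(\DD),\sharp,T\subset\sharp)$. It then remains to check the right lifting property against each generator in $\mathcal{A}$; using that $\Nsc$ is fully faithful on strict $2$-categories, this reduces horn-by-horn to the defining strict properties of $P$ --- existence and uniqueness of Cartesian lifts of $1$-morphisms and of coCartesian lifts of $2$-morphisms --- which can be checked directly.
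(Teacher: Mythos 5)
Your outline follows essentially the same route as the actual construction: the paper does not prove this theorem itself but imports it from \cite{AGS_CartI}, and the machinery it recalls in Section 2 --- the generating $\bS$-anodyne maps of Definition 2.13, fibrations/fibrant objects defined by lifting against them, cofibrations the monomorphisms, weak equivalences tested by $\on{Map}_S(-,X)$ into 2-Cartesian fibrations, and the pushout--product axiom (cited there as Prop.\ 3.10 of \cite{AGS_CartI}) --- is exactly the Lurie-style recipe you describe, with parts (2) and (3) likewise handled by a Quillen comparison with $\scsSet$ and by decorating $\Nsc(\CC)$ by Cartesian 1-morphisms, coCartesian 2-morphisms and invertible 2-morphisms. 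Two local corrections, though: the enrichment relevant to the pushout--product is the Cartesian ($\msSet$-)tensoring, not a Gray-monoidal one (the Gray product only enters later, for free fibrations); and in a fibrant object over $\Delta^0$ the marked edges are precisely the \emph{equivalences}, not all edges --- it is the lean and thin scalings that collapse onto each other --- which is exactly what makes the minimal-decoration adjunction with $\scsSet$ a Quillen equivalence (marking all edges would not be a weak equivalence in general).
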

  
  In the second of these results, our decoration becomes highly redundant. In a fibrant object, the marked 1-morphisms correspond to equivalences, the thin 2-simplices correspond to invertible 2-morphisms, but the lean 2-simplices are identical to the thin 2-simplices. To simplify our later computations, we rectify this redundancy by also considering \emph{marked-scaled} simplicial sets, i.e., triples $(X,M_X,T_X)$ consisting of a simplicial set $X$, a collection of marked 1-simplices $M_X$, and a collection of thin 2-simplices $T_X$. The category of marked-scaled simplicial sets is denoted by $\mssSet$. The first result of this paper formalizes the fact that marked-scaled simplicial sets should also model $\infty$-bicategories. 
  
  \begin{thm*}
  	There is a left proper, combinatorial, $\Set_\Delta^+$-enriched model structure on $\mssSet$. Moreover, it is Quillen equivalent to the 2-Cartesian model structure on $\mbsSet$, and thus models $\infty$-bicategories.
  \end{thm*}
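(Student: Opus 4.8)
The plan is to build the model structure on $\mssSet$ by hand and then to compare it with $\mbsSet$ through the forgetful adjunction, exploiting the redundancy already observed: on fibrant objects the lean and thin $2$-simplices coincide. I begin by recording the relevant adjunctions. Forgetting the lean scaling gives $U\colon \mbsSet\to\mssSet$, $(X,M_X,T_X\subset C_X)\mapsto (X,M_X,T_X)$. It has a left adjoint $L$, equipping a marked-scaled set with the minimal compatible scaling $(X,M_X,T_X\subset T_X)$, and a right adjoint $R$, which sets $C_X=\sharp$. Since $UL=UR=\id_{\mssSet}$, both $L$ and $R$ are fully faithful, and the essential image of $L$ is exactly the subcategory of objects with $C_X=T_X$ --- precisely the shape of the fibrant objects of the $2$-Cartesian model structure over $\Delta^0$.

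For existence, I declare the cofibrations to be the monomorphisms, generated as a weakly saturated class by a set of decorated boundary inclusions (freely adjoining a nondegenerate simplex, marking an edge, scaling a triangle), and I introduce a set of \emph{marked-scaled anodyne} morphisms mirroring the marked-biscaled anodyne maps of \cite{AGS_CartI}, with the lean-specific generators collapsed onto their scaled counterparts. After endowing $\mssSet$ with its $\msSet$-enrichment --- where $\Map(X,Y)\in\msSet$ has for $n$-simplices the maps $X\tensor(\Delta^n)^\flat\to Y$, together with the evident tensoring and cotensoring --- I obtain the model structure from the general recognition theorem used for $\mbsSet$, once its hypotheses are verified. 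The load-bearing hypothesis, and the principal combinatorial task of this step, is the enriched pushout--product axiom: the pushout--product of a cofibration with a marked-scaled anodyne map, and more generally with a cofibration of marked simplicial sets, must again be anodyne. Granting this, the category is combinatorial and $\msSet$-enriched, left properness follows because every cofibration is a monomorphism, and the weak equivalences are those maps inducing equivalences of $\msSet$-mapping spaces into every fibrant object.

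For the Quillen equivalence I use the pair $L\dashv U$. That $L$ is left Quillen reduces to the facts that $L$ preserves monomorphisms and carries the marked-scaled anodyne generators into marked-biscaled anodyne maps, both true by the design of the generating sets. Because every object of $\mssSet$ is cofibrant, Ken Brown's lemma shows $L$ preserves all weak equivalences. The counit $LUZ\to Z$ is the identity whenever $Z$ is fibrant: fibrancy forces $C_Z=T_Z$, so $LUZ=Z$ on the nose and the derived counit is an isomorphism. By Hovey's criterion it therefore remains only to prove that $L$ \emph{reflects} weak equivalences, i.e.\ that for a map $\hat g\colon A\to W$ with $W$ fibrant, $L\hat g$ being a marked-biscaled equivalence forces $\hat g$ to be a marked-scaled equivalence.

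This reflection statement is where I expect the genuine difficulty to lie. I would attack it by factoring $\hat g$ through a fibrant replacement $A\to A^{f}$ and reducing, via two-out-of-three together with the preservation already established, to a map of fibrant objects $A^{f}\to W$; here the comparison becomes tractable because one checks that $L$ sends fibrant marked-scaled sets to fibrant marked-biscaled sets (again since lean equals thin), so weak equivalences can be transported back along the fully faithful $L$. Concretely the adjunction yields $\Map_{\mssSet}(A,V)\cong\Map_{\mbsSet}(LA,RV)$, and the remaining point is to replace $RV$ by a fibrant model without disturbing these mapping spaces. The crux throughout is to control how fibrant replacement in $\mbsSet$ interacts with the lean scaling: such replacement only ever enlarges $C_X$ until it agrees with $T_X$, and never alters the homotopy type seen after forgetting the lean structure. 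Once reflection is in hand, $L\dashv U$ is a Quillen equivalence, and since the $2$-Cartesian model structure over $\Delta^0$ models $\infty$-bicategories by the preceding theorem, so does the marked-scaled model structure.
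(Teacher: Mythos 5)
Your proposal is correct, and on the existence/enrichment half it coincides with the paper's own route: both introduce \textbf{MS}-anodyne generators mirroring the \textbf{MB}-anodyne ones (with the lean-specific generators collapsed onto their scaled counterparts), prove the pushout--product statement, and invoke the same recognition machinery as in \cite{AGS_CartI}, the $\Set_\Delta^+$-enrichment being supplied by the Cartesian-closure/Cisinski--Olschok argument that the paper carries out in \autoref{prop:markedscaledenriched}. The genuine divergence is in the Quillen equivalence. The paper (\autoref{thm:QE_mb_ms_sc}) never analyzes $D\dashv R$ (your $L\dashv U$) directly: it interposes $\mssSet$ in the already-established Quillen equivalence between $\scsSet$ and $\mbsSet$ of \cite[Thm 3.39]{AGS_CartI}, proves by hand that $(-)^\flat\dashv G\colon \scsSet \rightleftarrows \mssSet$ is a Quillen equivalence (derived unit via \autoref{lem:GpresWEs}, derived counit via \autoref{lem:MS_max_Kan} and a pushout of type \ref{MS:kan}), and concludes by two-out-of-three for Quillen equivalences. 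You instead run Hovey's criterion on $L\dashv U$ itself: every object of $\mssSet$ is cofibrant, the counit $LUZ\to Z$ is an isomorphism for fibrant $Z$ (lifting against \ref{mb:coCartoverThin} over the point forces $C_Z=T_Z$), and reflection of weak equivalences reduces to your claim that $L$ preserves fibrant objects --- a finite check against the \textbf{MB}-generators in which \ref{mb:coCartoverThin} lifts trivially because lean equals thin, and the \ref{mb:innersaturation}-type generator is handled by \autoref{lem:Ui_MS-anodyne}. Both routes are sound: yours is self-contained and avoids any detour through $\scsSet$, at the price of the fibrancy-preservation check; the paper's reuses prior work and obtains the Quillen equivalence with $\scsSet$ as a byproduct, which it wants anyway both to justify ``models $\infty$-bicategories'' and for later use of $G$. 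One local correction to your last paragraph: the passage through $RV=(V,E_V,T_V\subset\sharp)$ and its fibrant replacement should be deleted, since $RV$ is essentially never fibrant (lifting against \ref{mb:coCartoverThin} would force every triangle of $V$ to be thin) and mapping spaces into a non-fibrant object are not preserved by fibrant replacement; the identification you actually need is between $\Map_{\mathbf{ms}}(A,V)$ and the mapping object of $LA$ into $LV$ computed in $(\mbsSet)_{/\Delta^0}$, with $LV$ \emph{already} fibrant --- which is exactly what your fibrancy-preservation claim provides, so no replacement is needed at all.
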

  This can be found in the body of the paper as \autoref{thm:markedscaledmodel} and \autoref{prop:markedscaledenriched}.

  The main construction of this paper yields a functor for each scaled simplicial set $S$ 
  \[
  \func{\ST_S: (\mbsSet)_{/S}\to \Fun(\mathfrak{C}^{\sc}[S]^\op,\mssSet)}
  \] 
  called the \emph{bicategorical straightening over $S$}. The functor itself is simply a more highly decorated version of previous straightening functors (e.g., that of \cite{HTT}), and is discussed in detail at the beginning of section \ref{sec:Grothendieck}. We then show that $\ST_S$ admits a right adjoint $\UN_S$ which we call the (bicategorical) \emph{unstraightening over $S$}. As already discussed, the category $(\mbsSet)_{/S}$ carries a model structure which models 2-Cartesian fibrations. If we equip the category of $\on{Set}_\Delta^+$-enriched functors $\mathfrak{C}[S]^\op \to \mssSet$  with the projective model structure, we obtain an enriched model category which models the $(\infty,1)$-category of $(\infty,2)$-functors $S^\op \to \bcat{B}\!\on{icat}_\infty$. The main technical result of this paper is that this adjunction is in fact a Quillen equivalence.

  \begin{thm*}[\autoref{thm:QE_general}]
  	Let $S$ be an scaled simplicial set. Then the bicategorical straightening-unstraightening adjunction defines a Quillen equivalence 
  	\[
  	\ST_S: (\mbsSet)_{/S} \llra \left(\mssSet\right)^{\mathfrak{C}[S]^\op}:\UN_S
  	\]
  	between the model structure on (outer) 2-Cartesian fibrations over $S$ and the projective model structure on $\msSet$-enriched functors $\mathfrak{C}[S]^\op \to \mssSet$ with values in marked-scaled simplicial sets.
  \end{thm*}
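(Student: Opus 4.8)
The plan is to adapt Lurie's proof of the straightening--unstraightening equivalence \cite[\S 3.2]{HTT} to the biscaled setting, organizing the argument into four movements: establishing the Quillen adjunction, reducing to the case of simplices, resolving $S = \Delta^n$ by induction on $n$, and grounding the induction in the equivalence over a point. First I would check that $(\ST_S, \UN_S)$ is a Quillen adjunction. As a left adjoint $\ST_S$ preserves colimits, and by its explicit cellular description it carries the cofibrations of $(\mbsSet)_{/S}$ to projective cofibrations; it therefore suffices to show that $\ST_S$ sends a generating set of trivial cofibrations---the \textbf{MB}-anodyne maps over $S$---to projective trivial cofibrations. By the pushout-product calculus and two-out-of-three this is a finite list of cellular verifications, each of which unwinds to a statement about the scaled straightening $\ST_\phi$ on standard pieces.

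Next comes the reduction to simplices. Here I would build $S$ from the empty scaled simplicial set by iteratively attaching its nondegenerate simplices along boundary inclusions $\partial\Delta^n \hookrightarrow \Delta^n$. Since $\ST_S$ preserves the colimits presenting $S$ and the construction is compatible with change of base along maps $\Delta^n \to S$ (up to a coherent comparison transformation), left-properness together with the fact that weak equivalences of projectively fibrant functors are detected objectwise lets me reduce the claim that the derived unit and counit of $(\ST_S,\UN_S)$ are weak equivalences to the corresponding claim for each $\ST_{\Delta^n}$.

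For $S = \Delta^n$ I would induct on $n$, analyzing a fibrant 2-Cartesian fibration $X \to \Delta^n_\flat$ through its mapping simplex $\mathcal{M}_X$ and the auxiliary form $\mathcal{L}_X$. The aim is to present $X$ as glued from its restriction over $\Delta^{\{0,\dots,n-1\}}$, to which the inductive hypothesis applies, and its fibre over the final vertex, with the attaching data encoded by the Gray product $\otimes$ and the free fibration $\mathbb{F}$; comparing this presentation with the unstraightening of the corresponding projectively fibrant functor then shows the derived counit $\ST_{\Delta^n}\UN_{\Delta^n} \to \id$ (and dually the unit) to be a weak equivalence. The base case $S = \Delta^0$ is precisely the comparison between $(\mbsSet)_{/\Delta^0}$ and $\mssSet$, i.e., the Quillen equivalence of \autoref{thm:markedscaledmodel}, which grounds the induction.

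The step I expect to be the main obstacle is this inductive analysis over $\Delta^n$: controlling the homotopy type of the mapping simplex and matching the gluing of fibres against the straightening requires a delicate understanding of how the Gray product interacts with the three-part marking--biscaling decoration, and in particular of how the lean (coCartesian) 2-simplices propagate through $\ST$. Unlike the $(\infty,1)$-categorical case, where the fibres glue along an honest mapping cylinder, the lax character of the Gray product means the comparison map is only a weak equivalence rather than an isomorphism, and establishing this---while keeping the decorations under control---will be the technical heart of the argument.
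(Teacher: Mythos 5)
Your overall architecture --- Quillen adjunction, reduction to simplices by a cell-decomposition of $S$, induction over $\Delta^n$ via the mapping simplex $\mathcal{M}_X$/$\mathcal{L}_X$, grounded in the case of a point --- is the same as the paper's. But your first movement rests on a step that fails as stated: you claim that the \textbf{MB}-anodyne maps over $S$ form ``a generating set of trivial cofibrations,'' so that checking $\ST_S$ on them suffices for left Quillen-ness. The 2-Cartesian model structure on $(\mbsSet)_{/S}$ is built Cisinski--Olschok style: the \textbf{MB}-anodyne maps serve only to \emph{define} the fibrant objects, and the trivial cofibrations are not known to lie in (and, in all analogous settings such as \cite[\S 3.2]{HTT}, do not lie in) the weak saturation of the anodyne maps. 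So preservation of cofibrations and \textbf{MB}-anodyne maps does not yet make $\ST_S$ left Quillen; this is precisely the same hole Lurie has to fill in the $(\infty,1)$-case. The paper closes it with an ingredient your proposal never mentions: \autoref{thm:product}, which shows that straightening is compatible with products --- hence with the $\msSet$-tensoring --- up to pointwise weak equivalence, together with the natural levelwise equivalence $\alpha:\ST_*\Rightarrow L$ of \autoref{prop:STptToL}. Granting these, $\ST_S$ carries marked homotopies to homotopies; since any weak equivalence may be replaced, via \textbf{MB}-anodyne maps, by a weak equivalence between fibrant objects, which then admits a homotopy inverse, one concludes that $\ST_S$ preserves \emph{all} weak equivalences (\autoref{thm:STLQ}). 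Without this tensoring argument your adjunction is not known to be Quillen, and everything downstream is blocked.

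A second, related gap is your base case. You say that $S=\Delta^0$ ``is precisely'' the known comparison between $\mbsSet$ and $\mssSet$ (\autoref{thm:markedscaledmodel}). It is not: $\ST_{\Delta^0}$ is computed by mapping spaces in the rigidification $\mathfrak{C}^{\sc}[X^{\triangleright}]$, so for instance $\ST_{\Delta^0}(\Delta^n_\flat)$ is the rather large marked-scaled simplicial set $Q^n$ of \autoref{def:Qn}, not $\Delta^n$. Identifying $\ST_{\Delta^0}$ with the canonical left Quillen equivalence $L$ requires constructing the comparison $\alpha$ and verifying it is a levelwise equivalence on generators (\autoref{lem:StQ}, \autoref{lem:alphan}, \autoref{prop:STptToL}, \autoref{prop:SToverPt}) --- the very same ingredient missing above, so it cannot be taken for granted. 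Finally, a smaller inaccuracy: the gluing over $\Delta^n$ is performed along the decorated simplex $(\Delta^n)^\diamond$ in the homotopy pushout of \autoref{cor:hmtpy_pushout_mapsimp}; the Gray product and the free fibration $\mathbb{F}$ play no role there, entering only in the later cofinality arguments.
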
 
  
  Observe that both model categories are in fact $\on{Set}^+_\Delta$-enriched categories. After performing elementary explicit verifications we prove that the functor $\UN_S$ is compatible with the (co)tensoring yielding an upgrade of the previous theorem to an intrinsinc bicategorical result.
  
  \begin{thm*}[\autoref{cor:intrinsincbicat}]
  	The bicategorical straightening is a left Quillen equivalence for any scaled simplicial set $S$. Moreover, the functor $\UN_S$ provides an equivalence of $(\infty,2)$-categories 
  	\[
  	2\bcat{C}\!\on{art}(S)\simeq \Fun(S^\op,\bcat{B}\!\on{icat}_\infty).
  	\]
  \end{thm*}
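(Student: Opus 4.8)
The first assertion is just a restatement of \autoref{thm:QE_general}: since the straightening--unstraightening adjunction is a Quillen equivalence, its left adjoint $\ST_S$ is in particular a left Quillen equivalence. The genuine content of the corollary is the ``Moreover'' clause, which promotes the resulting equivalence of underlying $(\infty,1)$-categories to an equivalence of $\infty$-bicategories. The plan is to refine the Quillen equivalence of \autoref{thm:QE_general} into a $\msSet$-enriched Quillen equivalence, and then to appeal to the general fact that a Quillen equivalence enriched over a monoidal model category presenting $\iCat_\infty$ induces an equivalence of the associated enriched $\infty$-categories.

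First I would recall that both $(\mbsSet)_{/S}$, with the 2-Cartesian model structure, and $(\mssSet)^{\mathfrak{C}[S]^\op}$, with the projective model structure, are $\msSet$-enriched model categories, and that $\msSet$ with its Cartesian model structure presents $\iCat_\infty$. Consequently, restricting each to its fibrant--cofibrant objects yields categories enriched in $\infty$-categories (the mapping objects $\Map(X,Y)$ being fibrant marked simplicial sets), whose $\infty$-bicategorical nerves are, by construction, $2\bcat{C}\!\on{art}(S)$ and $\Fun(S^\op,\bcat{B}\!\on{icat}_\infty)$ respectively. Thus everything reduces to showing that the adjunction is suitably enriched.

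The key technical step is to upgrade $\ST_S \dashv \UN_S$ to a $\msSet$-enriched adjunction; equivalently, to verify that $\UN_S$ preserves cotensors by marked simplicial sets (or, dually, that $\ST_S$ preserves tensors). Using the explicit hom/coend formula that defines $\UN_S$ as the right adjoint to the highly decorated straightening functor, together with the known (co)tensor structures on the two sides, this reduces to checking that the canonical comparison map $\UN_S(X^{K}) \to \UN_S(X)^{K}$ is an isomorphism for every $K\in \msSet$ --- an identity that unwinds directly from the definitions.

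Granting enrichment, I would invoke the standard machinery for enriched Quillen equivalences: a $\msSet$-enriched Quillen equivalence restricts on fibrant--cofibrant objects to a $\msSet$-enriched Dwyer--Kan equivalence. Essential surjectivity on homotopy categories is precisely \autoref{thm:QE_general}, while the enriched adjunction isomorphism $\Map(\ST_S X, Y)\cong \Map(X,\UN_S Y)$ combined with the derived (co)unit being a weak equivalence shows that the induced maps on mapping objects are weak equivalences of $\infty$-categories; applying the $\infty$-bicategorical nerve then produces the equivalence $2\bcat{C}\!\on{art}(S)\simeq \Fun(S^\op,\bcat{B}\!\on{icat}_\infty)$. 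I expect the main obstacle to be the enrichment verification of the previous paragraph: although each individual identity is elementary, one must carefully match the cotensoring on $(\mbsSet)_{/S}$ against that on the functor category and confirm that $\UN_S$, defined only implicitly as a right adjoint, genuinely commutes with it. Once this compatibility is in hand, the passage to the equivalence of $\infty$-bicategories is formal.
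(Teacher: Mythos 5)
Your high-level strategy --- enrich the situation over $\msSet$ and then invoke formal machinery for enriched Quillen equivalences --- is exactly the paper's, but two of your steps fail as stated. The central problem is the step you yourself flag as the main obstacle: the adjunction $\ST_S\dashv\UN_S$ admits \emph{no} strict $\msSet$-enriched structure, because $\ST_S$ does not preserve tensors up to isomorphism (equivalently, $\UN_S$ does not preserve cotensors up to isomorphism). The compatibility with tensors holds only up to weak equivalence: \autoref{thm:product} produces a pointwise \emph{weak equivalence} $\ST_\phi(X)\to \ST_A(X_A)\boxtimes\ST_B(X_B)$, and the comparison $\alpha\colon \ST_*\Rightarrow L$ of \autoref{prop:STptToL} is likewise only a levelwise weak equivalence; concretely, already over the point one has $\ST_*(\Delta^n,\flat,\flat)\cong (Q^n,\flat,T_{Q^n})$, which is not isomorphic to $(\Delta^n,\flat,\flat)$ for $n\geq 2$. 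So the comparison map $\UN_S(X^K)\to \UN_S(X)^K$ cannot ``unwind to an isomorphism,'' and the enriched adjunction isomorphism $\Map(\ST_S X,Y)\cong\Map(X,\UN_S Y)$ invoked in your last paragraph does not exist. The paper's fix is to demand less: it equips $\UN_\phi$ \emph{alone} with the structure of a $\msSet$-enriched functor, by sending a map $K\to\Map^+(\scr{F},\scr{G})$, i.e.\ a map $\scr{F}\otimes K\to \scr{G}$, to the composite $\UN_\phi(\scr{F})\times K_{\sharp\subset\sharp}\to \UN_\phi(\scr{F})\times \UN_*(K_\sharp)\to \UN_\phi(\scr{F}\otimes K)\to \UN_\phi(\scr{G})$, built from $\alpha$ and the product comparison map; this requires only that these natural maps \emph{exist}, not that they be invertible. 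With $\UN_\phi$ enriched and \autoref{thm:QE_general} in hand, the conclusion on fibrant--cofibrant objects follows from \cite[A.3.1.10]{HTT}, with no enriched adjunction needed.

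The second gap is your claim that the scaled nerve of the fibrant--cofibrant objects of $(\mssSet)^{\mathfrak{C}^{\sc}[S]^\op}$ is ``by construction'' $\Fun(S^\op,\bcat{B}\!\on{icat}_\infty)$. It is not: $\Fun(S^\op,\bcat{B}\!\on{icat}_\infty)$ denotes the functor $\infty$-bicategory of maps of scaled simplicial sets $S^\op\to\Nsc((\mssSet)^\circ)$, whereas the object you produce is the nerve of a category of \emph{strict} enriched functors. Identifying the two is a rectification theorem, which the paper proves as a separate proposition by transporting \cite[Prop. 4.2.4.4]{HTT} to the marked-scaled setting (using that $\mssSet$, like $\Set_\Delta$, is an excellent model category). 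Without this step your argument yields only $2\bcat{C}\!\on{art}(S)\simeq \Nsc\bigl(((\mssSet)^{\mathfrak{C}^{\sc}[S]^\op})^\circ\bigr)$, which is not yet the statement of the corollary.
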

  
  The majority of this paper is thus devoted to this proof. We will recapitulate the major ideas of the proof, as well as the structure of the paper, in the penultimate section of the introduction.
  
  \subsection{A relative 2-nerve}
  
  Although it is desirable to have a bicategorical Grothendieck construction that works in the most the general context possible, 
  many practical applications make use of those $\infty$-bicategories which arise as scaled nerves of strict 2-categories. We provide a version of the Grothendieck construction better suited to this particular situation in the final section of this paper. In this context, we define an explicit version $\bbchi_\CC$ of the unstraightening functor over $\Nsc(\CC)$, which we call the \emph{relative 2-nerve}. 
  
  \begin{thm*}[\autoref{cor:relative2}]
  	Let $\mathbb{C}$ be an strict 2-category. Then there is a Quillen equivalence
  	\[
  	\PPhi_{\mathbb{C}}: (\mbsSet)_{/\Nsc(\mathbb{C})}  \llra \left(\mssSet\right)^{\mathbb{C}^{(\op,-)}}:\cchi_{\mathbb{C}}
  	\]
  	and an equivalence of left-derived functors $\on{L}\!\ST_{\mathbb{C}} \xRightarrow{\simeq}  \on{L}\!\PPhi_{\mathbb{C}}$.
  \end{thm*}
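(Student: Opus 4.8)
The plan is to deduce the statement from the general Grothendieck equivalence of \autoref{thm:QE_general} by transporting it along the canonical comparison between the scaled rigidification $\mathfrak{C}^{\sc}[\Nsc(\mathbb{C})]$ and the strict $2$-category $\mathbb{C}$ itself. Concretely, the counit of the scaled rigidification and nerve adjunction furnishes a map $\mathfrak{C}^{\sc}[\Nsc(\mathbb{C})]\to \mathbb{C}$ of $\msSet$-enriched categories, which I claim is a Dwyer--Kan equivalence; this is the $2$-categorical analogue of the equivalence $\mathfrak{C}[\Nerv(C)]\xrightarrow{\sim} C$ for ordinary categories. Restriction along the opposite of this map --- with the dualization bookkeeping encoded by the variance $\mathbb{C}^{(\op,-)}$ --- then yields a Quillen equivalence
\[
\left(\mssSet\right)^{\mathbb{C}^{(\op,-)}}\lra \left(\mssSet\right)^{\mathfrak{C}^{\sc}[\Nsc(\mathbb{C})]^\op},
\]
since restriction along a Dwyer--Kan equivalence is a Quillen equivalence between projective model structures on enriched functor categories with combinatorial target.

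Next I would pin down the explicit functors. The relative $2$-nerve $\cchi_{\mathbb{C}}$ is defined simplex-wise following Lurie's relative nerve \cite{HTT}: an $n$-simplex of $\cchi_{\mathbb{C}}(F)$ lying over a simplex $\sigma$ of $\Nsc(\mathbb{C})$ records a compatible family of simplices in the marked-scaled values of $F$, now decorated with the marking, thin, and lean data dictated by the $2$-Cartesian structure built from the cells $\OO^I$. Its left adjoint $\PPhi_{\mathbb{C}}$ exists by the adjoint functor theorem, both categories being presentable, and is computed by an explicit left Kan extension formula. The first substantive check is that $\PPhi_{\mathbb{C}}\dashv\cchi_{\mathbb{C}}$ is a Quillen adjunction: cofibrations are preserved essentially formally, and preservation of trivial cofibrations reduces to verifying that the generating $\bS$-anodyne maps are sent to trivial cofibrations, which I would settle using the description of the $2$-Cartesian model structure from \cite{AGS_CartI}.

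The technical heart of the argument is the comparison of derived functors $\on{L}\ST_{\mathbb{C}}\Rightarrow \on{L}\PPhi_{\mathbb{C}}$, where $\ST_{\mathbb{C}}$ is the concrete straightening over $\mathbb{C}$ modeled by the composite of $\ST_{\Nsc(\mathbb{C})}$ with the left Kan extension of the first paragraph. I would construct a natural transformation relating $\cchi_{\mathbb{C}}$ to the composite of $\UN_{\Nsc(\mathbb{C})}$ with the restriction of the first paragraph, and show that it is a pointwise weak equivalence on fibrant objects. By compatibility with colimits it suffices to check this on the free diagrams corepresented at a single object of $\mathbb{C}$, where both sides compute an explicit $2$-Cartesian fibration governed by the mapping $2$-categories $\OO^I$. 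The hard part will be precisely this identification: reconciling the combinatorics of the scaled nerve, the building blocks $\OO^I$, and the three layers of marking and biscaling decoration so that the explicit relative $2$-nerve produces the same fibration, up to $\bS$-equivalence, as the abstract unstraightening.

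Finally, with the adjunction known to be Quillen, the restriction established as a Quillen equivalence, and the two derived left adjoints identified, the Quillen equivalence $\PPhi_{\mathbb{C}}\dashv\cchi_{\mathbb{C}}$ follows from the two-out-of-three property for Quillen equivalences: a left Quillen functor whose total left derived functor is identified with an equivalence of homotopy categories is itself a Quillen equivalence. This yields both assertions of the corollary simultaneously.
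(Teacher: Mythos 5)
Your overall architecture matches the paper's: invoke \autoref{thm:QE_general} via the counit equivalence $\mathfrak{C}^{\sc}[\Nsc(\CC)]\to\CC$, build the explicit relative 2-nerve and its left adjoint, and reduce the comparison with the abstract straightening/unstraightening to generators governed by the $\OO^I$-combinatorics. However, two of your middle steps fail as written. First, your claim that the Quillen adjunction follows because "preservation of trivial cofibrations reduces to verifying that the generating $\bS$-anodyne maps are sent to trivial cofibrations" is insufficient: in the 2-Cartesian model structure of \autoref{thm:MBModelStructure}, the trivial cofibrations are \emph{not} the weakly saturated hull of the $\bS$-anodyne maps (weak equivalences are defined by mapping into fibrant objects, \autoref{def:weakequiv1}). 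This is precisely why \autoref{thm:STLQ} needs the tensoring/homotopy argument on top of the anodyne-preservation results of Section \ref{subsec:STMBann}, and why the paper never attempts a direct verification for $\PPhi_{\CC}$: it instead shows that $\cchi_{\CC}$ preserves trivial fibrations (whence $\PPhi_{\CC}$ preserves cofibrations) and then deduces preservation of trivial cofibrations by 2-out-of-3 from a natural \emph{levelwise weak equivalence of underived functors} $\xi^{\CC}:\ST_{\CC}\Rightarrow\PPhi_{\CC}$.

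Second, your comparison is set up on the right-adjoint side ($\cchi_{\CC}$ versus $\UN_{\Nsc(\CC)}$ composed with restriction) but justified by "compatibility with colimits $\dots$ it suffices to check on the free diagrams corepresented at a single object." Right adjoints do not preserve colimits, so that reduction is unavailable; moreover, speaking of the total derived functors of $\PPhi_{\CC}\dashv\cchi_{\CC}$ presupposes the Quillen adjunction that your first step failed to establish, so the proposal as ordered is circular. The repair, which is the paper's route (\autoref{thm:nat_equiv_St_rho}, following \cite{AGSRelNerve}), is to run the comparison on the \emph{left} adjoints: both $\ST_{\CC}$ and $\PPhi_{\CC}$ preserve colimits and cofibrations, so a natural transformation between them is determined by, and can be checked to be a weak equivalence on, the generators $p_n$, $p_1^{\sharp}$, $(p_2)_{\flat\subset\sharp}$, $(p_2)_{\sharp}$; there the identifications $\PPhi_{\OO^n}(p_n)\cong\mathfrak{O}^n$ and the equivalences $\mathcal{L}^n_i\to\Nms(\OO^n_{i\upslash})$ of \autoref{prop:CompareEquivonFlat} carry the content. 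With that underived comparison in hand, left Quillen-ness of $\PPhi_{\CC}$, the identification $\on{L}\!\ST_{\CC}\simeq\on{L}\!\PPhi_{\CC}$, and the Quillen equivalence all follow exactly as in your final paragraph.
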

  
  As in the $(\infty,1)$-categorical setting (see Section 3.2.5 in \cite{HTT}) the benefits of a relative nerve construction are twofold: on the one hand, the relative 2-nerve is particularly computationally tractable and well-suited to explicit examples; on the other, the relative 2-nerve allows us to compare our $\infty$-bicategorical Grothendieck construction to preexisting strict Grothendieck constructions. We apply our relative nerve construction to obtain a comparison with the Grothendieck construction appearing in \cite{Buckley}. The strict 2-categorical Grothendieck construction of \cite{Buckley} takes the form of an equivalence 
  \[
  \func{
  	\mathbb{E}\!\on{l}: \Fun^{\on{ps}}(\CC^\op,\Cat_2)\to 2\on{Cart}
  }
  \]
  for a 2-category $\CC$. The final result of the paper shows that relative 2-nerve coincides with $\mathbb{E}\!\on{l}$ for every strict 2-functor with values in $2$-categories.
  
  \begin{thm*}[\autoref{thm:BuckleyComp}]
  	Let 
  	\[
  	\func{F:\CC^{(\op,-)}\to 2\!\Cat}
  	\]
  	be a 2-functor, and let $\tilde{F}$ denote the composite 
  	\[
  	\func{\CC^{(\op,-)}\to 2\!\Cat\to \Set_\Delta^{\mathbf{ms}}}
  	\]
  	Then there is an equivalence 
  	\[
  	\begin{tikzcd}
  		\cchi_\CC(\tilde{F}) \arrow[dr]\arrow[rr, rightarrow, "\simeq"] & &\arrow[dl] (\Nerv_2(\mathbb{E}\!\on{l}(F)),M,T\subset C)\\
  		& \Nsc(\CC)& 
  	\end{tikzcd}
  	\]
  	of 2-Cartesian fibrations over $\Nsc(\CC)$.  
  \end{thm*}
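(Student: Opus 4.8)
The plan is to exhibit a natural isomorphism of marked-biscaled simplicial sets over $\Nsc(\CC)$ between $\cchi_\CC(\tilde F)$ and $(\Nerv_2(\mathbb{E}\!\on{l}(F)),M,T\subset C)$, and to observe that any such isomorphism is in particular an equivalence of 2-Cartesian fibrations. Both objects are defined by explicit simplicial formulas, so the core of the argument is a combinatorial matching of $n$-simplices compatible with the simplicial operators, the projection to $\Nsc(\CC)$, and the three decorations $(M,T\subset C)$.

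First I would unwind the two descriptions. By the definition of the relative 2-nerve (set up earlier in the appendix), an $n$-simplex of $\cchi_\CC(\tilde F)$ consists of an $n$-simplex $\sigma$ of $\Nsc(\CC)$---that is, a normal lax functor $[n]\to\CC$---together with a coherent family of simplices in the marked-scaled nerves $\Nms(F(\sigma(j)))$, indexed by the subsimplices of $\Delta^n$ and glued along the transition 1-functors $F(\sigma(i\to j))$. On the other side, an $n$-simplex of the Duskin nerve $\Nerv_2(\mathbb{E}\!\on{l}(F))$ is a normal lax 2-functor $[n]\to\mathbb{E}\!\on{l}(F)$; unwinding Buckley's construction, its projection to $\CC$ is again such a $\sigma$, while the remaining data is precisely a coherent choice of objects $x_j\in F(\sigma(j))$, fiber 1-morphisms $x_i\to F(\sigma(i\to j))(x_j)$ (the twist $(\op,-)$ fixing the direction), and the coherence 2-cells filling each triangle.

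The key step is to recognize that these two packets of data coincide on the nose: a simplex of $\Nms(F(c))$ is by definition a normal lax 2-functor into $F(c)$, so the fiber data of the relative 2-nerve over a vertex matches the fiber data extracted from Buckley's lax functor, and the gluing 1-functors $F(\sigma(i\to j))$ appearing in $\cchi_\CC$ are exactly the structure 1-morphisms of $\mathbb{E}\!\on{l}(F)$. This yields a bijection on $n$-simplices, natural in $[n]\in\Delta$ and commuting with both projections to $\Nsc(\CC)$. I would then verify that the decorations agree: a 1-simplex is marked exactly when its fiber component is an equivalence, which is Buckley's criterion for a Cartesian 1-morphism; a 2-simplex is thin exactly when its fiber 2-cell is invertible; and a 2-simplex is lean exactly when its fiber 2-cell is coCartesian for the projection $\mathbb{E}\!\on{l}(F)(x,y)\to\CC(P(x),P(y))$, which is Buckley's definition of a coCartesian 2-morphism.

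The main obstacle I expect lies in the coherence layer: keeping the variance twist $(\op,-)$ consistent across the two constructions, and confirming that the cocycle conditions governing a normal lax 2-functor into $\mathbb{E}\!\on{l}(F)$ translate exactly into the gluing conditions of the relative 2-nerve. The most delicate decoration to match is the lean condition, since $T\subset C$ records coCartesian 2-morphisms rather than merely invertible ones: I must check that the fiberwise coCartesian property over the hom-categories of $\CC$ corresponds precisely to the lean marking assigned by $\cchi_\CC$. Once the bijection is shown to respect all three decorations and to commute with the structure maps, it is an isomorphism in $(\mbsSet)_{/\Nsc(\CC)}$, and the asserted equivalence of 2-Cartesian fibrations follows at once.
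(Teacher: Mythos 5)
Your proposal hinges on a claim that the paper itself carefully avoids making, and that claim is where the gap lies. You assert that the $n$-simplices of $\cchi_\CC(\tilde F)$ and of $\Nerv_2(\mathbb{E}\!\on{l}(F))$ ``coincide on the nose'' in every dimension, so that the comparison is an isomorphism of marked-biscaled simplicial sets. But an $n$-simplex of $\cchi_\CC(\tilde F)$ is not a family of simplices of the fibres: it is a compatible family of maps of marked-scaled simplicial sets $\theta_I\colon \Nms(\OO^I_{i\upslash})^\flat\to \Nms(F(\sigma(i)))$, and a simplicial map between Duskin nerves is a normal \emph{lax} functor whose compositors the scaling only forces to be invertible, not a strict 2-functor. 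Such a $\theta_I$ records values on the composite $1$-morphisms of $\OO^I_{i\upslash}$ together with invertible compositor $2$-cells, and the gluing constraints do not reach this data: already for $n=2$, the two morphisms $\{0,2\},\{0,1,2\}\colon\{0\}\to\{0,1,2\}$ of $\OO^2_{0\upslash}$, the $2$-cell between them, and the composition triangles over them lie in the image of no $\omega_{[2],J}$ with $J\subsetneq[2]$ (any simplex in such an image with source $\{0\}$ forces $0\in J$ and then has target inside $\{0\}\cup J$). By contrast, an $n$-simplex of $\Nerv_2(\mathbb{E}\!\on{l}(F))$ for $n\geq 3$ is a mere compatibility condition on its $2$-faces. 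So the natural comparison map identifies the Buckley data inside a strictly larger packet of (coherently contractible) data; it is an equivalence, but injectivity on simplices---and hence your isomorphism---fails, or at the very least requires a strictification/coskeletality argument that your proposal never supplies. The delicate points you flag (the variance twist, the lean condition) are in fact the easy parts; the real difficulty is exactly this higher-dimensional comparison.

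This is why the paper takes a different route. It constructs only a \emph{morphism} $\tau\colon\cchi_\CC(\tilde F)\to \Nerv_2(\mathbb{E}\!\on{l}(F))$, exploiting the fact that the Duskin nerve is 3-coskeletal so that only dimensions $\leq 3$ need checking (identical data in dimensions $\leq 2$, a cocycle verification in dimension $3$). It then proves that $\tau$ is an equivalence of 2-Cartesian fibrations by the fibrewise criterion \cite[Proposition 3.35]{AGS_CartI}: the fibre of $\Nerv_2(\mathbb{E}\!\on{l}(F))$ over $x$ is $\Nsc(F(x))$, the fibre of $\cchi_\CC(\tilde F)$ over $x$ is $\cchi_\ast(F(x))$, and \autoref{cor:fibre_of_rho}, resting on the ``peripatetically constant'' functors of \autoref{lem:corep_rho_over_pt}, identifies the latter with $\Nsc(F(x))$ compatibly with $\tau$. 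To repair your argument you would either have to prove that every simplex of $\cchi_\CC(\tilde F)$ is uniquely determined by the Buckley-type data (a statement your proposal asserts but does not argue, and which is false under the literal reading of $\theta_I$ as simplicial maps), or fall back on a fibrewise reduction of the paper's kind---in which case the external input that fibrewise equivalences of 2-Cartesian fibrations are equivalences is indispensable and is entirely absent from your write-up.
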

  
  \subsection{The zoo of Grothendieck constructions} 
  
  To aid the reader in connecting our work here to other results in the literature, we here provide a brief overview of existing Grothendieck constructions, as well as known connections among them. Where practicable, we will choose the version of the construction with the correct variance to agree with our construction.
  
  \begin{itemize}
  	\item The classical Grothendieck construction of \cite{SGA1} takes the form of a equivalence 
  	\[
  	\func{\on{El}: \Fun^{\on{ps}}(C^\op,\Cat) \to \on{Cart}(C)}
  	\]
  	for a 1-category $C$
  	\item The classical Grothendieck construction is often restricted to categories fibred in groupoids, in which case it takes the form of an equivalence 
  	\[
  	\func{\on{el}: \Fun^{\on{ps}}(C^\op,\on{Grpd}) \to \on{Rfib}(C)}
  	\]
  	for a 1-category $C$.
  	\item The strict 2-categorical Grothendieck construction of \cite{Buckley} takes the form of an equivalence 
  	\[
  	\func{
  		\mathbb{E}\!\on{l}: \Fun^{\on{ps}}(\CC^\op,\Cat_2)\to 2\on{Cart}
  	}
  	\]
  	for a 2-category $\CC$. 
  	\item The three Grothendieck-Lurie constructions: 
  	\begin{itemize}
  		\item For $(\infty,1)$-categories fibred in $\infty$-groupoids, the construction of \cite[Ch 2]{HTT} takes the form of a left Quillen equivalence
  		\[
  		\func{
  			\St^{}_S:(\Set_\Delta)_{/S}\to (\Set_\Delta)^{\mathfrak{C}[S]^\op} 
  		}
  		\]
  		for $S$ a simplicial set. Here $(\Set_\Delta)_{/S}$ is equipped with the model structure for right fibrations, and $(\Set_\Delta)^{\mathfrak{C}[S]^\op}$ is equipped with the projective model structure obtained from the Kan-Quillen model structure. 
  		\item For $(\infty,1)$-categories fibred in $(\infty,1)$-categories, the construction of \cite[Ch. 3]{HTT} takes the form of a left Quillen equivalence
  		\[
  		\func{
  			\St^+_S: (\msSet)_{/S} \to (\mssSet)^{\mathfrak{C}[S]^\op}  
  		}
  		\]
  		where $S$ is a simplicial set. Here $(\msSet)_{/S}$ carries the Cartesian model structure and $(\msSet)^{\mathfrak{C}[S]^\op} $ carries the projective model structure on $\Set_\Delta$-enriched functors. 
  		\item For $\infty$-bicategories fibred in $(\infty,1)$-categories, the construction of \cite{LurieGoodwillie} takes the form of a left Quillen equivalence 
  		\[
  		\func{
  			\St_S^{(2,1)}: (\msSet)_{/S}\to (\Set_\Delta^+)^{\mathfrak{C}^{\sc}[S]^{(\op,\op)}} 
  		}
  		\] 
  		where $S$ is a scaled simplicial set. Here $(\msSet)_{/S}$ carries the \emph{$\mathfrak{P}$-anodyne model structure} of \cite[Section 3.2]{LurieGoodwillie} and $(\Set_\Delta^+)^{\mathfrak{C}^{\sc}[S]^{(\op,\op)}}$ carries the projective model structure on $\msSet$-enriched functors.
  	\end{itemize}
  	\item A general $(\infty,n)$-categorical Grothendieck construction is given by Rasekh in \cite[Section 5]{RasekhDYoneda}, using $\Theta_n$-spaces. This Grothendieck construction takes the form of a zig-zag of Quillen equivalences, which, given an $(\infty,n+1)$-category $\C$ presented as a complete Segal object in $\Theta_n$-spaces, induces an equivalence
  	\[
  	\func{\on{E}_\Theta: \on{Cart}_{(\infty,n)}(\C) \to[\sim] \Fun_{(\infty,n+1)}(\C,\Cat_{(\infty,n)})}
  	\]
  	between $(\infty,n)$-Cartesian fibrations over $\scr{C}$, and $(\infty,n+1)$-functors $\scr{C}\to \Cat_{(\infty,n)}$. 
  	\item The \emph{comprehension construction}, defined by Riehl and Verity in \cite{RiehlVerityComp} works in an $\infty$-cosmos $\scr{K}$. In the $\infty$-cosmos of quasi-categories, it provides a functor 
  	\[
  	\func{\on{Comp}:\Fun(B,\on{qCat})\to \on{coCart}(\on{qCat})_{/B}}
  	\]
  	from functors into small quasi-categories to coCartesians fibrations over $B$. However, the authors defer the proof that this is an equivalence to a latter work. 
  \end{itemize}
  
  We summarize the known relations relations between these constructions in the following diagram. An arrow in the diagram represents a special case, e.g. $\on{El}\to \St_S^+$ means that $\on{El}$ is known to be equivalent to a special case of $\St_S^+$. A dashed arrow will represent a relation which we conjecture to hold. 
  
  In particular: 
  \begin{enumerate}
  	\item These relations are proven in \cite{HTT}. In particular, the comparison of the $(\infty,1)$-categorical and strict cases passes through the \emph{relative nerve} of section 3.2.5. 
  	\item In \cite[Remark 4.5.10]{LurieGoodwillie} the author claims that due to the formal differences in the construction of both straightening functors, no direct comparison seems possible. Instead, the author proves a comparison on fibrant object without showing naturality in \cite[Prop. 4.5.10]{LurieGoodwillie} using model-independent arguments.
  	
  	However, we attribute the difficulty of a comparison to the fact that both straightening functors have different variances. It follows from our construction (see point 4 on this list) of $\ST_S$ that $\St^+_S$  morally has an outer Cartesian variance. Since $\St^+_S$ models the Grothendieck construction for $\infty$-categories this construction is blind to the variance in 2-morphisms and it is seen as having simply a Cartesian variance. The construction of \cite[Prop. 4.5.9]{LurieGoodwillie} passes through two 1-morphism dualizations to obtain a Cartesian variant of $\St^{(2,1)}_S$. We thus believe a more complete comparison result with the $\St_S^{(2,1)}$ after taking the pertinent 2-morphism dual, as well.
  	\item We show this relation in \autoref{thm:BuckleyComp}, making use of a relative 2-nerve which we construct for that purpose in the final section. 
  	\item This relation is nearly immediate from the definitions. We give a proof in \autoref{prop:infty1comparison}. 
  	\item We believe that this relation will hold, however, the constructions $\on{St}_S^{(2,1)}$ and $\ST_S$ are related by a 2-morphism dualization. Given the difficulties inherent in realizing 2-morphism duals in scaled simplicial sets, we defer any attempt to prove this statement for the time being. 
  \end{enumerate}
  
  \begin{center}
  	\begin{tikzpicture}
  		\path (0,0) node (EL) {$\mathbb{E}\!\on{l}$}; 
  		\path (6,0) node (ST) {$\ST_S$};
  		\path (4,-2) node (ST21) {$\St_S^{(2,1)}$};
  		\path (0,-6) node (El) {$\on{El}$}; 
  		\path (6,-6) node (Stp) {$\St^+_S$};
  		\path (4,-8) node (Stgrpd) {$\St_S$}; 
  		\path (-2,-8) node (el) {$\on{el}$}; 
  		\path (10,-2) node (comp) {$\on{Comp}$};
  		\path (10,-4) node (ET) {$\on{E}_\Theta$};
  		\draw[->] (Stgrpd) to node[label=below right:(1)] {} (Stp);
  		\draw[->,dashed] (Stp) to node[label=below left:(2)] {} (ST21);
  		\draw[->] (el) to (El);
  		\draw[->] (El) to (EL);
  		\draw[->] (El) to node[label=below:(1)] {} (Stp);
  		\draw[->] (el) to node[label=below:(1)] {} (Stgrpd);
  		\draw[->] (EL) to node[label=above:(3)] {} (ST); 
  		\draw[->] (Stp) to node[label=right:(4)] {} (ST);
  		\draw[->,dashed] (ST21) to node[label=below:(5)] {} (ST);
  	\end{tikzpicture}
  \end{center}

  \subsection{Structure of the paper}
  
  In Section 2, we provide some background on the model structures and constructions the paper will use. In particular, we describe model structures on scaled, marked-scaled, and marked-biscaled simplicial sets and show that they define Quillen equivalent models for $\infty$-bicategories. We also recall the model structure for 2-Cartesian fibrations from our previous paper \cite{AGS_CartI}. 
  
  Section 3 contains the main results and technical arguments of the paper, in particular the $(\infty,2)$-categorical Grothendieck construction. We first construct the adjunction which will define the Grothendieck construction, and show that it is a Quillen adjunction. To show that our straightening functor $\ST_S$ is a Quillen equivalence, we perform the kind of dimensional induction used in \cite[Section 3.2]{HTT}: 
  \begin{itemize}
  	\item We prove that $\ST_{\Delta^0}$ is a left Quillen equivalence in Section \ref{subsec:STequivPt}. The proof of this fact is quite direct, and proceeds by constructing a natural equivalence from $\ST_{\Delta^0}$ to a more canonical left Quillen equivalence $\func{L:\mbsSet\to \mssSet}$ (which is defined in \autoref{subsec:MSsSet}).
  	\item We then prove that $\ST_{\Delta^n_\flat}$ is a left Quillen equivalence in \autoref{subsec:STequivSimp}. This is the most technically demanding step in the proof. We first show that, for any 2-Cartesian fibration $p:X\to \Delta^n_\flat$, there is a homotopy pushout diagram 
  	\[
  	\begin{tikzcd}
  		X\times (\Delta^{n-1})^\diamond \arrow[r]\arrow[d] & X_n \times (\Delta^n)^\diamond\arrow[d] \\
  		X\times_{\Delta^{n}} (\Delta^{n-1})^\diamond\arrow[r] & X
  	\end{tikzcd}
  	\]
  	in $(\mbsSet)_{/\Delta^n_\flat}$. This statement appears as \autoref{cor:hmtpy_pushout_mapsimp}, the proof of which occupies most of Section \ref{subsec:STequivSimp}. 
  	
  	Once the homotopy pushout is established, we can show that, for a 2-Cartesian fibration $X\to \Delta_\flat$ with $i$-fibre $X_i$, the canonical map $\ST_{\Delta^0}\to \ST_{\Delta^n_\flat}(X)(i)$ is an equivalence. We then use the fibrewise nature of equivalences in $(\mbsSet)_{/\Delta^n_\flat}$ together with the pointwise nature of equivalences in $(\mssSet)^{\mathfrak{C}^{\sc}[\Delta^n_\flat]^\op}$ to complete the proof.  	  
  	\item The general case follows from the special cases over simplices by an inductive argument nearly identical to that of \cite[Prop. 3.8.4]{LurieGoodwillie}. 
  \end{itemize} 
  
  The final section of the paper, Section 4, is devoted to a relative nerve construction, and a comparison with the strict 2-categorical constructions of \cite{Buckley}.

  \subsection*{Acknowledgements}
  
  F.A.G. would like to acknowledge the support of the VolkswagenStiftung through the Lichtenberg Professorship Programme while he conducted this research. W.H.S. wishes to acknowledge the support of the NSF Research Training Group at the
  University of Virginia (grant number DMS-1839968) during the preparation of this work. We are very grateful to Tobias Dyckerhoff for the careful revision of this draft and the many improvements suggested. F.A.G would also like to thank Tobias Dyckerhoff for suggesting studying 2-categorical notions of cofinality as one of the main topics of his PhD thesis and for all of the guidance offered during the process.

\newpage 

\section{Preliminaries}\label{sec:Preliminaries}

We here collect background information and preliminary results which will be necessary for our arguments in the rest of the paper. While we will review some 2-category theory, it is impracticable to recapitulate all of the needed background on $(\infty,2)$-categories, so we will limit ourselves to a brief discussion of scaled simplicial sets, and direct the reader to \cite{LurieGoodwillie} for more comprehensive background. 

\subsection{2-categories}

We will often use strict 2-categories as a tool to explore $\infty$-bicategories. In this section, we briefly collect some notations and constructions we will use in the sequel.

\begin{notation} 
	By a \emph{2-category}, we will always mean a strict 2-category. By a \emph{2-functor}, we will mean a strict 2-functor unless specified otherwise. We will denote strict 2-categories by blackboard bold letters, e.g. $\DD$. 
	
	A 2-category $\DD$ has three duals, determined by reversing the direction of 1-morphisms, 2-morphisms, or both, respectively. In this work, we will primarily make use of the 1-morphism dual. To better accord with the notation used in (scaled) simplicial sets, we will denote the 1-morphism dual simply by $\DD^\op$. Where needed, we denote the 2-morphism dual by $\DD^{(-,\op)}$, and the dual which reverses both 1- and 2-morphisms by $\DD^{(\op,\op)}$. We will denote the 1-category of 2-categories and strict 2-functors by $2\!\Cat$. 
\end{notation}

%\begin{definition}
%	Given a symmetric monoidal category $\mathcal{V}$ we denote by $\mathcal{V}\!\Cat$ the category of $\mathcal{V}$-enriched categories.
%\end{definition}

\begin{definition}\label{def:OI}
	Let $I$ be a linearly ordered finite set. We define a $2$-category $\OO^{I}$ \glsadd{OI} as
	follows
	\begin{itemize}
		\item the objects of $\OO^I$ are the elements of $I$,
		\item the category $\mathbb{O}^{I}(i,j)$ of morphisms between objects $i,j \in I$ is defined
		as the poset of finite sets $S \subseteq I$ such that $\min(S)=i$ and $\max(S)=j$
		ordered by inclusion,
		\item the composition functors are given, for $i,j,l\in I$, by
		\[
		\mathbb{O}^{I}(i,j) \times \mathbb{O}^{I}(j,l) \to \mathbb{O}^{I}(i,l), \quad (S,T) \mapsto S \cup T.
		\]
	\end{itemize}
	When $I=[n]$, we denote $\OO^I$ by $\OO^n$. Note that the $\OO^n$ form a cosimplicial object in $2\!\Cat$, which we denote by $\OO^\bullet$. 
\end{definition}

\begin{definition}\label{defn:upslice}
	Let $f:\CC \to \DD$ be a functor of 2-categories. Given an object $d \in \DD$ we define the lax slice $\CC_{d\upslash}$ 2-category as follows:
	\begin{itemize}[noitemsep]
		\item Objects are morphisms $u:d \to f(c)$ in $\DD$ with source $d$ such that $c \in \CC$.
		\item A 1-morphism from $u: d \to f(c)$ to $v:d \to f(c')$ is given by a 1-morphism $\alpha: c \to c'$ in $\CC$ and a 2-morphism $f(\alpha) \circ u \xRightarrow{} v$.
		\item A 2-morphism in $\CC_{d\upslash}$ is given by a 2-morphism $\epsilon: \alpha  \nat \beta$ such that the diagram below commutes
		\[
		\begin{tikzcd}
			f(\alpha) \circ u \arrow[dd, Rightarrow,"f(\epsilon)*u",swap] \arrow[rrd, Rightarrow] &  &   \\
			&  & v \\
			f(\beta) \circ u \arrow[rru, Rightarrow]                        &  &  
		\end{tikzcd}
		\]
		If the functor $f$ is the identity on the 2-category $\DD$ we will use the notation $\DD_{d\upslash}$.
	\end{itemize}
\end{definition}

\begin{example}
	Let $I$ be a linearly ordered finite set and denote its minimum by $i$. Unraveling \autoref{defn:upslice} we see that $\OO^{I}_{i \upslash}$ is the 2-category given by:
	\begin{itemize}[noitemsep]
		\item Objects are subsets $S \subseteq I$ such that $\min(S)=i$.
		\item A morphism $S \to T$ is given by a subset $U \subseteq I$ such that $\max(S)=\min(U)$ and $\max(U)=\max(T)$ and such that $S \cup U \subseteq T$.
		\item We have a 2-morphism $U \xRightarrow{} V$ precisely if $U \subseteq V$.
	\end{itemize}
\end{example}

\begin{remark}\label{rem:initialmapping}
	We observe that the non-empty mapping categories in $\OO^I_{i\upslash}$ are all contractible since each has an initial object.
\end{remark}

To represent 2-categories as simplicial sets, we need a nerve operation. We denote the category of simplicial sets by $\Set_\Delta$. 

\begin{definition}\label{def:Duskin}
	Given $\DD\in 2\!\Cat$, we define a simplicial set $\Nerv_2(\DD)\in \Set_\Delta$, the \emph{Duskin nerve} of $\DD$, by 
	\[
	\Nerv_2(\DD)_n:= 2\!\Cat(\OO^n,\DD).
	\]
\end{definition}

\subsection{Higher categories and decorated simplicial sets}

Throughout this paper, we will make extensive use of models for higher categories in terms of decorated simplicial sets. For models for $(\infty,1)$-categories, we will direct the reader to \cite[\S 2.2.5, \S 3.1]{HTT}, though we briefly discuss notation here. 

\begin{definition}
	We denote the category of simplicial sets by $\Set_\Delta$. A \emph{marked simplicial set} is defined to be a pair $(X,M_X)$ consisting of a simplicial set $X\in \Set_\Delta$, and a collection $M_X\subseteq X_1$ of 1-simplices in $X$ which contains the degenerate 1-simplices. We will denote the category of marked simplicial sets by $\msSet$. 
	
	We will typically view $\Set_\Delta$ as equipped with either the \emph{Kan-Quillen model structure} (see, e.g., \cite[Ch. 1]{GoerssJardine}) or the \emph{Joyal model structure} (see, e.g. \cite[\S 2.2.5]{HTT}). We will view $\msSet$ as equipped with the \emph{Cartesian model structure} of \cite[\S 3.1]{HTT}.
\end{definition}

The first model structure we will use to study $\infty$-bicategories is a model structure on enriched categories.

\begin{notation}
	We denote by $\Cat_\Delta^+$ the category of $\msSet$-enriched categories. 
\end{notation}

\begin{proposition}
	There is a left-proper, combinatorial model structure on $\Cat_\Delta^+$ such that 
	\begin{itemize}
		\item[W] The weak equivalences are those enriched functors which are essentially surjective on homotopy categories and induce equivalences on all mapping spaces.
		\item[C] The cofibrations the smallest weakly saturated class containing $\varnothing \to [0]_{\msSet}$, and each inclusion $[1]_A\to [1]_{B}$ where $A\to B$ is a generating cofibration for $\msSet$. 
	\end{itemize}
\end{proposition}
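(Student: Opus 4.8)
The plan is to deduce this from the general machinery for model structures on enriched categories, specifically \cite[Prop.~A.3.2.4]{HTT}. That result states that whenever $\mathbf{S}$ is an \emph{excellent} model category in the sense of \cite[Def.~A.3.2.16]{HTT}, the category $\Cat_{\mathbf{S}}$ of $\mathbf{S}$-enriched categories carries a left-proper combinatorial model structure in which the weak equivalences are the Dwyer--Kan equivalences---enriched functors that are essentially surjective on homotopy categories and induce weak equivalences on all mapping objects---and the cofibrations are generated by $\varnothing\to[0]_{\mathbf{S}}$ together with the maps $[1]_A\to[1]_B$ for $A\to B$ a generating cofibration of $\mathbf{S}$. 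Instantiating $\mathbf{S}=\msSet$ with its Cartesian model structure and Cartesian product, the resulting classes $W$ and $C$ coincide verbatim with those in the statement. Thus the whole proposition reduces to the assertion that $\msSet$ is excellent.

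Most of the defining conditions of excellence are immediate from \cite[\S 3.1]{HTT}. The Cartesian model structure is combinatorial, and the Cartesian product makes it a symmetric monoidal model category, the relevant pushout--product statement being the compatibility of monomorphisms with marked anodyne maps. Since the cofibrations of $\msSet$ are exactly the monomorphisms, every object---in particular the monoidal unit $\Delta^0$---is cofibrant. Finally, Cartesian weak equivalences are stable under filtered colimits, as both the underlying categorical equivalences and the marking data are governed by maps out of finitely presentable objects. The one substantive condition that remains is the invertibility hypothesis.

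The hard part is therefore verifying the invertibility hypothesis \cite[Def.~A.3.2.12]{HTT}: that in a locally fibrant $\msSet$-enriched category a morphism becoming invertible in the homotopy category admits a coherent inverse, compatibly with the $\msSet$-tensoring. I would attack this by descending to the level of mapping objects, where a fibrant object has mapping marked simplicial sets that are $\infty$-categories whose marked edges are precisely the equivalences. In these terms the hypothesis becomes the statement that the ``walking equivalence'' $\msSet$-category---obtained by fibrantly replacing the localization of $[1]_{\Delta^0}$ at its generating $1$-morphism---maps into any fibrant $\msSet$-category so as to pick out exactly the equivalences between objects. I expect this to be the principal obstacle, and would discharge it either by checking the condition directly against the generating marked anodyne morphisms, using the explicit description of Cartesian equivalences and of fibrant marked simplicial sets, or by transferring the known simplicial (Bergner) case along the Quillen adjunction between $\Set_\Delta$ and $\msSet$. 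Once the invertibility hypothesis is in hand, $\msSet$ is excellent and \cite[Prop.~A.3.2.4]{HTT} yields the model structure with the stated weak equivalences and cofibrations.
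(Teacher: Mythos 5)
Your proposal is correct and follows essentially the same route as the paper, whose entire proof is the citation of \cite[A.3.2.4]{HTT} applied to $\mathbf{S}=\msSet$. The only difference is that you treat the excellence of $\msSet$ (in particular the invertibility hypothesis) as work still to be done, whereas Lurie already verifies that the Cartesian model structure on marked simplicial sets is excellent (see \cite[Example A.3.2.22]{HTT}), so no further argument is needed.
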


\begin{proof}
	This is a special case of \cite[A.3.2.4]{HTT}.
\end{proof}

\begin{remark}\label{rmk:2Cat_as_msSetCat}
	Notice that, given a strict 2-category $\DD$, we can take the nerves of the mapping categories to obtain a $\Set_\Delta$-enriched category. If we define a marking on each mapping category by declaring precisely the isomorphisms to be marked, we obtain a canonical element of $\Cat_\Delta^+$ associated to $\DD$. We will uniformly abuse notation by denoting this $\msSet$-enriched category by $\DD$ as well. 
\end{remark}

The basic idea for the main models for $\infty$-bicategories used in this paper is that simplicial sets may be used to model $\infty$-bicategories, provided we keep track of which 2-simplices are considered to represent invertible 2-morphisms. 

\begin{definition}
	A \emph{scaled simplicial set} \glsadd{scsSet} consists of a pair $(X,T_X)$, where $X\in \Set_\Delta$ is a simplicial set, and $T_X\subseteq X_2$ is a collection of 2-simplices --- called the \emph{thin} 2-simplices --- which contains all degenerate 2-simplices. We denote by $\scsSet$ the category of scaled simplicial sets.  
\end{definition}

\begin{notation}
	We will sometimes make use of subscripts to denote scalings. In particular, $(X,X_2)$ will be will be denoted by $X_\sharp$, and $(X,\on{deg}(X_2))$ will be denoted by $X_\flat$. Similarly, we will sometimes use superscripts to denote markings on a simplicial set: $(X,X_1)=X^\sharp$, and $(X,\on{deg}(X_1))=X^\flat$. 
\end{notation}

\begin{definition}\label{def:scanodyne}
	The set of \emph{generating scaled anodyne maps} \(\sS\) is the set of maps of scaled simplicial sets consisting of:
	\begin{enumerate}
		\item[(i)]\label{item:anodyne-inner} the inner horns inclusions
		\[
		\bigl(\Lambda^n_i,\{\Delta^{\{i-1,i,i+1\}}\}\bigr)\rightarrow \bigl(\Delta^n,\{\Delta^{\{i-1,i,i+1\}}\}\bigr)
		\quad , \quad n \geq 2 \quad , \quad 0 < i < n ;
		\]
		\item[(ii)]\label{i:saturation} the map 
		\[
		(\Delta^4,T)\rightarrow (\Delta^4,T\cup \{\Delta^{\{0,3,4\}}, \ \Delta^{\{0,1,4\}}\}),
		\]
		where we define
		\[
		T\overset{\text{def}}{=}\{\Delta^{\{0,2,4\}}, \ \Delta^{\{ 1,2,3\}}, \ \Delta^{\{0,1,3\}}, \ \Delta^{\{1,3,4\}}, \ \Delta^{\{0,1,2\}}\};
		\]
		\item[(iii)]\label{item:anodyne_outer} the set of maps
		\[
		\Bigl(\Lambda^n_0\coprod_{\Delta^{\{0,1\}}}\Delta^0,\{\Delta^{\{0,1,n\}}\}\Bigr)\rightarrow \Bigl(\Delta^n\coprod_{\Delta^{\{0,1\}}}\Delta^0,\{\Delta^{\{0,1,n\}}\}\Bigr)
		\quad , \quad n\geq 3.
		\]
	\end{enumerate}
	A general map of scaled simplicial set is said to be \emph{scaled anodyne} if it belongs to the weakly saturated closure of \(\sS\).
\end{definition}

\begin{definition}
	We say that a map of scaled simplicial sets $p:X \to S$ is a weak \sS-fibration if it has the right lifting property with respect to the class of scaled anodyne maps. We call $X\in \scsSet$ a \emph{$\infty$-bicategory} if the unique map $X\to \Delta^0_\sharp$ is a weak \sS-fibration. 
\end{definition}

\begin{definition}
	The composite
	\[
	\begin{tikzcd}
		\Delta \arrow[r,"\mathfrak{C}"] & {\Cat_\Delta} \arrow[r,"\flat"]& {\Cat_\Delta^+}
	\end{tikzcd}
	\]
	gives us a cosimplicial object in $\Cat_\Delta^+$. We can moreover send the thin 2-simplex $\Delta^2_\sharp$ to $\mathfrak{C}[\Delta^2]$ equipped with sharp-marked mapping spaces. The usual machinery of nerve and realization then gives us adjoint functors  
	\[
	\begin{tikzcd}
		\mathfrak{C}^{\sc}: &[-3em] \scsSet \arrow[r,shift left] & \Cat_\Delta^+ \arrow[l,shift left] &[-3em] :\Nsc 
	\end{tikzcd}
	\]
	which we will call the \emph{scaled nerve} and \emph{scaled rigidification}. \glsadd{ScNerve}
\end{definition}

\begin{remark}
	Given a 2-category $\DD$, we can define a scaling on $N_2(\DD)$ by declaring a triangle to be thin if and only if the corresponding 2-morphism is invertible. Viewing $\DD$ as an $\msSet$-enriched category as in \autoref{rmk:2Cat_as_msSetCat}, we see that this scaled simplicial set coincides with $\Nsc(\DD)$. We thus are justified in speaking of the \emph{scaled nerve} of a 2-category.
\end{remark}

\begin{theorem}
	There is a left proper, combinatorial model structure on $\scsSet$ with
	\begin{itemize}
		\item[W] The weak equivalences are the morphisms $f:A\to B$ such that $\mathfrak{C}^{\sc}[f]:\mathfrak{C}^{\sc}[A]\to \mathfrak{C}^{\sc}[B]$ is an equivalence in $\Cat_\Delta^+$. 
		\item[C] The cofibrations are the monomorphisms. 
	\end{itemize}
	Moreover, the fibrant objects in this model structure are the $\infty$-bicategories, and the adjunction 
	\[
	\begin{tikzcd}
		\mathfrak{C}^{\sc}: &[-3em] \scsSet \arrow[r,shift left] & \Cat_\Delta^+ \arrow[l,shift left] &[-3em] :\Nsc 
	\end{tikzcd}
	\]
	is a Quillen equivalence.
\end{theorem}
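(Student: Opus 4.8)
The plan is to construct the model structure by hand using the recognition theorem for combinatorial model categories (the form due to Jeff Smith, e.g.\ \cite[A.2.6.13]{HTT}), taking as generating cofibrations the set $I$ consisting of the boundary inclusions $\partial\Delta^n_\flat\to\Delta^n_\flat$ together with the scaling map $\Delta^2_\flat\to\Delta^2_\sharp$ (these generate all monomorphisms under weak saturation), and as weak equivalences the class $W$ of maps $f$ with $\mathfrak{C}^{\sc}[f]$ an equivalence in $\Cat_\Delta^+$. The category $\scsSet$ is a presheaf category with extra structure, hence locally presentable; $W$ has the two-out-of-three property since equivalences in $\Cat_\Delta^+$ do; and $W$ is accessible and accessibly embedded in the arrow category because $\mathfrak{C}^{\sc}$ is a left adjoint (hence accessible and colimit-preserving) while the weak equivalences of the combinatorial model category $\Cat_\Delta^+$ already form an accessible class. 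It then remains to verify the two substantive hypotheses: that maps with the right lifting property against $I$ lie in $W$, and that $W\cap\on{cof}(I)$ is closed under pushout and transfinite composition.

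The second hypothesis is where the key structural lemma enters, namely that $\mathfrak{C}^{\sc}$ carries monomorphisms to cofibrations of $\Cat_\Delta^+$. Since $\mathfrak{C}^{\sc}$ preserves colimits and the cofibrations of $\Cat_\Delta^+$ are weakly saturated, this may be checked on the generating set $I$, a finite computation of $\mathfrak{C}^{\sc}$ on boundary inclusions and on $\Delta^2_\flat\to\Delta^2_\sharp$. Granting it, a trivial cofibration $f$ (a monomorphism in $W$) is sent by $\mathfrak{C}^{\sc}$ to a cofibration that is an equivalence, i.e.\ a trivial cofibration of $\Cat_\Delta^+$; because $\mathfrak{C}^{\sc}$ preserves pushouts and transfinite composites, because pushouts of monomorphisms remain monomorphisms in the presheaf category $\scsSet$, and because trivial cofibrations are stable under these operations in $\Cat_\Delta^+$, the class $W\cap\on{cof}(I)$ inherits the required closure. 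The first hypothesis is routine: a map with the right lifting property against all monomorphisms is a trivial fibration of scaled simplicial sets, and one checks directly that $\mathfrak{C}^{\sc}$ sends it to an equivalence. This produces the model structure; left properness is automatic, since the cofibrations are exactly the monomorphisms and so every object is cofibrant.

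To identify the fibrant objects I would first verify that each generating scaled anodyne map is a trivial cofibration, i.e.\ that $\mathfrak{C}^{\sc}$ sends the three families of maps of \autoref{def:scanodyne} to equivalences; the definition of $\sS$ is tailored precisely so that this holds, and this case-by-case check is the combinatorial heart of the matter. It then follows formally that a fibrant object is a weak $\sS$-fibration over $\Delta^0_\sharp$, that is, an $\infty$-bicategory. The converse inclusion — that an $\infty$-bicategory has the right lifting property against \emph{all} trivial cofibrations, not merely the generating scaled anodyne ones — is the more delicate direction, requiring one to show that the scaled anodyne maps generate $W\cap\on{cof}(I)$ as a weakly saturated class up to retracts, so that lifting against them suffices.

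Finally, for the Quillen equivalence, $\mathfrak{C}^{\sc}$ is left Quillen: it preserves cofibrations by the structural lemma above, and preserves trivial cofibrations because $\mathfrak{C}^{\sc}$ of a weak equivalence is an equivalence by the very definition of $W$. Since every object of $\scsSet$ is cofibrant and $\mathfrak{C}^{\sc}$ creates the weak equivalences, the standard criterion collapses the Quillen equivalence to a single assertion: that the counit $\mathfrak{C}^{\sc}[\Nsc(\mathbb{D})]\to\mathbb{D}$ is a weak equivalence for every fibrant $\msSet$-enriched category $\mathbb{D}$. This rigidification statement — the $(\infty,2)$-categorical analogue of the comparison $\mathfrak{C}[\Nerv(\scr{C})]\to\scr{C}$ of \cite{HTT} — is the main obstacle, since it resists purely formal model-categorical manipulation and instead requires an explicit analysis of the rigidification of a scaled nerve. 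I would expect to establish it by an inductive argument over the cells of $\Nsc(\mathbb{D})$, controlling the mapping $\msSet$-categories of $\mathfrak{C}^{\sc}[\Nsc(\mathbb{D})]$ and comparing them directly with those of $\mathbb{D}$.
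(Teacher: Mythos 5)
Your route to the hardest part of the theorem --- the identification of the fibrant objects with the $\infty$-bicategories --- rests on a false lemma. You propose to show that the scaled anodyne maps generate $W\cap\on{cof}(I)$ as a weakly saturated class up to retracts, ``so that lifting against them suffices.'' That statement cannot hold. Every generator of $\sS$ is bijective on vertices: the maps of type (ii) are isomorphisms of underlying simplicial sets, the inner horns $\Lambda^n_i\subset\Delta^n$ with $n\geq 2$ contain every vertex, and the type (iii) maps are vertex-bijective after the indicated pushout along $\Delta^{\{0,1\}}\to\Delta^0$. Vertex-bijectivity is preserved by pushouts, transfinite composition, and retracts, so every retract of a scaled anodyne map is bijective on vertices. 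But there are trivial cofibrations that are not: the inclusion $\Delta^0\to \Nerv(J)_\sharp$ of a vertex into the maximally scaled nerve of the contractible groupoid $J$ on two objects is a monomorphism lying in $W$ --- since all triangles are thin, every edge of every mapping space of $\mathfrak{C}[\Nerv(J)]$ is a whiskering of the generating edge of some $\mathfrak{C}[\Delta^2](0,2)$ and hence gets marked, so $\mathfrak{C}^{\sc}[\Nerv(J)_\sharp]$ is the maximally marked $\mathfrak{C}[\Nerv(J)]$, which is Dwyer--Kan equivalent to $[0]$ by \cite[\S 2.2]{HTT} --- yet it is not bijective on vertices. This is exactly the phenomenon familiar from the Joyal model structure, where inner anodyne maps detect the fibrant objects but do not generate the trivial cofibrations, and where fibrancy of quasi-categories cannot be deduced by such a saturation argument.

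For context: the paper proves this theorem purely by citation, invoking \cite[Thm A.3.2.4]{LurieGoodwillie} for the existence of the model structure and the Quillen equivalence, and \cite[Thm 5.1]{GHL_Equivalence} for the characterization of fibrant objects; so you are in effect attempting to reprove two substantial theorems of the literature. Your formal scaffolding is sound and does match how the model structure is actually built: Smith's recognition theorem, accessibility of $W$ pulled back along $\mathfrak{C}^{\sc}$, closure of $W\cap\on{cof}(I)$ via colimit-preservation of $\mathfrak{C}^{\sc}$ and the fact that it carries monomorphisms to cofibrations, left properness from cofibrancy of all objects, and the reduction of the Quillen equivalence to the counit $\mathfrak{C}^{\sc}[\Nsc(\mathbb{D})]\to\mathbb{D}$ being an equivalence on fibrant $\mathbb{D}$ (valid because $\mathfrak{C}^{\sc}$ creates $W$). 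But the two substantive inputs are then either wrong or missing: the fibrant-object characterization is the main theorem of \cite{GHL_Equivalence}, whose proof requires genuinely new arguments (equivalence-extension results for $\infty$-bicategories and a chain of model comparisons) rather than any saturation statement; and the counit equivalence, the scaled analogue of \cite[Thm 2.2.0.1]{HTT}, is the hard content of \cite{LurieGoodwillie}, for which your closing sketch of ``an inductive argument over the cells'' is a deferral rather than a proof.
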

\begin{proof}
	This is \cite[Thm A.3.2.4]{LurieGoodwillie}. The characterization of fibrant objects is \cite[Thm 5.1]{GHL_Equivalence}.
\end{proof}

\begin{remark}[Key notational convention]
	When \glsadd{GenCatNotation} no confusion is likely to arise, we denote decorated simplicial sets by simple roman majescules: $X$, $Y$, $Z$, etc. For fibrant objects, representing (higher) categories of various kinds, we fix the following conventions:
	\begin{itemize}
		\item Strict 1-categories will be denoted by undecorated roman majescules, $B$, $C$, $D$, etc.
		\item $(\infty,1)$-categories as presented by Joyal fibrant simplicial sets of fibrant marked simplicial sets will be denoted by calligraphic majescules: $\scr{B}$, $\scr{C}$, $\scr{D}$, etc. 
		\item Strict 2-categories will be denoted by blackboard-bold majescules, $\BB$, $\CC$, $\DD$, etc. 
		\item $(\infty,2)$-categories, presented as fibrant scaled simplicial sets, whill be denoted by thickened blackboard-bold majescules: $\bcat{B}$, $\bcat{C}$, $\bcat{D}$, etc.
	\end{itemize}
\end{remark}

\subsection{Marked biscaled simplicial sets and 2-Cartesian fibrations.}\label{subsec:MSsSet}
In this section we collect useful definitions and results introduced in \cite{AGS_CartI} that will play a relevant role in this paper.
\begin{definition}
	A \emph{marked biscaled} simplicial set (mb simplicial set) is given by the following data
	\begin{itemize}
		\item A simplicial set $X$.
		\item A collection of edges  $E_X \in X_1$ containing all degenerate edges.
		\item A collection of triangles $T_X \in X_2$ containing all degenerate triangles. We will refer to the elements of this collection as \emph{thin triangles}.
		\item A collection of triangles $C_X \in X_2$ such that $T_X \subseteq C_X$. We will refer to the elements of this collection as \emph{lean triangles}.
	\end{itemize}
	We will denote such objects as triples $(X,E_X, T_X \subseteq C_X)$. A map $(X,E_X, T_X \subseteq C_X) \to (Y,E_Y,T_Y \subseteq C_Y)$ is given by a map of simplicial sets $f:X \to Y$ compatible with the collections of edges and triangles above. We denote by $\mbsSet$ the category of mb simplicial sets. \glsadd{MBsSet}
\end{definition}

\begin{notation}
	Let $(X,E_X, T_X \subseteq C_X)$ be a mb simplicial set. Suppose that the collection $E_X$ consist only of degenerate edges. Then we fix the notation $(X,E_X, T_X \subseteq C_X)=(X,\flat,T_X \subseteq E_X)$ and do similarly for the collection $T_X$. If $C_X$ consists only of degenerate triangles we fix the notation $(X,E_X, T_X \subseteq C_X)=(X,E_X, \flat)$. In an analogous fashion we wil use the symbol “$\sharp$“ to denote a collection containing all edges (resp. all triangles). Finally suppose that $T_X=C_X$ then we will employ the notation $(X,E_X,T_X)$.
\end{notation}

\begin{remark}
	We will often abuse notation when defining the collections $E_X$ (resp. $T_X$, resp. $C_X$) and just specified its non-degenerate edges (resp. triangles).
\end{remark}

\begin{definition}\label{def:mbsanodyne}
	The set of \emph{generating mb anodyne maps} \(\bS\) \glsadd{MBann} is the set of maps of mb simplicial sets consisting of:
	\begin{enumerate}
		\myitem{(A1)}\label{mb:innerhorn} The inner horn inclusions 
		\[
		\bigl(\Lambda^n_i,\flat,\{\Delta^{\{i-1,i,i+1\}}\}\bigr)\rightarrow \bigl(\Delta^n,\flat,\{\Delta^{\{i-1,i,i+1\}}\}\bigr)
		\quad , \quad n \geq 2 \quad , \quad 0 < i < n ;
		\]
		\myitem{(A2)}\label{mb:wonky4} The map 
		\[
		(\Delta^4,\flat,T)\rightarrow (\Delta^4,\flat,T\cup \{\Delta^{\{0,3,4\}}, \ \Delta^{\{0,1,4\}}\}),
		\]
		where we define
		\[
		T\overset{\text{def}}{=}\{\Delta^{\{0,2,4\}}, \ \Delta^{\{ 1,2,3\}}, \ \Delta^{\{0,1,3\}}, \ \Delta^{\{1,3,4\}}, \ \Delta^{\{0,1,2\}}\};
		\]
		\myitem{(A3)}\label{mb:leftdeglefthorn} The set of maps
		\[
		\Bigl(\Lambda^n_0\coprod_{\Delta^{\{0,1\}}}\Delta^0,\flat,\flat \subset\{\Delta^{\{0,1,n\}}\}\Bigr)\rightarrow \Bigl(\Delta^n\coprod_{\Delta^{\{0,1\}}}\Delta^0,\flat,\flat \subset\{\Delta^{\{0,1,n\}}\}\Bigr)
		\quad , \quad n\geq 2.
		\]
		These maps force left-degenerate lean-scaled triangles to represent coCartesian edges of the mapping category.
		\myitem{(A4)}\label{mb:2Cartesianmorphs} The set of maps
		\[
		\Bigl(\Lambda^n_n,\{\Delta^{\{n-1,n\}}\},\flat \subset \{ \Delta^{\{0,n-1,n\}} \}\Bigr) \to \Bigl(\Delta^n,\{\Delta^{\{n-1,n\}}\},\flat \subset \{ \Delta^{\{0,n-1,n\}} \}\Bigr) \quad , \quad n \geq 2.
		\]
		This forces the marked morphisms to be $p$-Cartesian with respect to the given thin and lean triangles. 
		\myitem{(A5)}\label{mb:2CartliftsExist} The inclusion of the terminal vertex
		\[
		\Bigl(\Delta^{0},\sharp,\sharp \Bigr) \rightarrow \Bigl(\Delta^1,\sharp,\sharp \Bigr).
		\]
		This requires $p$-Cartesian lifts of morphisms in the base to exist.
		\myitem{(S1)}\label{mb:composeacrossthin} The map
		\[
		\Bigl(\Delta^2,\{\Delta^{\{0,1\}}, \Delta^{\{1,2\}}\},\sharp \Bigr) \rightarrow \Bigl(\Delta^2,\sharp,\sharp \Bigr),
		\]
		requiring that $p$-Cartesian morphisms compose across thin triangles.
		\myitem{(S2)}\label{mb:coCartoverThin} The map
		\[
		\Bigl(\Delta^2,\flat,\flat \subset \sharp \Bigr) \rightarrow \Bigl( \Delta^2,\flat,\sharp\Bigr),
		\]
		which requires that lean triangles over thin triangles are, themselves, thin.
		\myitem{(S3)}\label{mb:innersaturation} The map
		\[
		\Bigl(\Delta^3,\flat,\{\Delta^{\{i-1,i,i+1\}}\}\subset U_i\Bigr) \rightarrow \Bigl(\Delta^3,\flat, \{\Delta^{\{i-1,i,i+1\}}\}\subset \sharp \Bigr) \quad, \quad 0<i<3
		\]
		where $U_i$ is the collection of all triangles except $i$-th face. This and the next two generators serve to establish composability and limited 2-out-of-3 properties for lean triangles.
		\myitem{(S4)}\label{mb:dualcocart2of3} The map
		\[
		\Bigl(\Delta^3 \coprod_{\Delta^{\{0,1\}}}\Delta^0,\flat,\flat \subset U_0\Bigr) \rightarrow \Bigl(\Delta^3 \coprod_{\Delta^{\{0,1\}}}\Delta^0,\flat, \flat \subset \sharp \Bigr) 
		\]
		where $U_0$ is the collection of all triangles except the $0$-th face.
		\myitem{(S5)}\label{mb:coCart2of3} The map
		\[
		\Bigl(\Delta^3,\{\Delta^{\{2,3\}}\},\flat \subset U_3\Bigr) \rightarrow \Bigl(\Delta^3,\{\Delta^{\{2,3\}}\}, \flat \subset \sharp \Bigr) 
		\]
		where $U_3$ is the collections of all triangles except the $3$-rd face.
		\myitem{(E)}\label{mb:equivalences} For every Kan complex $K$, the map
		\[
		\Bigl( K,\flat,\sharp  \Bigr) \rightarrow \Bigl(K,\sharp, \sharp\Bigr).
		\]
		Which requires that every equivalence is a marked morphism.
	\end{enumerate}
	A map of mb simplicial sets is said to be \bS-anodyne if it belongs to the weakly saturated closure of \bS.
\end{definition}

\begin{definition}
	Let $f:(X,E_X,T_X \subseteq C_X) \to (Y,E_Y,T_Y \subseteq C_Y)$ be a map of mb simplicial sets. We say that $f$ is a \bS-fibration if it has the right lifting property against the class of \bS-anodyne morphisms.
\end{definition}

\begin{definition}\label{def:mappingbicats}
	Given two mb simplicial sets $(K,E_K,T_K \subseteq C_K), (X,E_X,T_X \subseteq C_X)$ we define  another mb simplicial set denoted by $\on{Fun}^{\mathbf{mb}}(K,X)$ and characterized by the following universal property
	\[
	\on{Hom}_{\mbsSet}\Bigr(A,\on{Fun}^{\mathbf{mb}}(K,X) \Bigl)\isom \Hom_{\mbsSet}\Bigr(A \times K,X  \Bigl).
	\]
\end{definition}

\begin{proposition}\label{prop:bsfibfun}
	Let $f:(X,E_X,T_X \subseteq C_X) \to (Y,E_Y,T_Y \subseteq C_Y)$ be a \bS-fibration. Then for every $K \in \mbsSet$ the induced morphism $\on{Fun}^{\mathbf{mb}}(K,X) \to \on{Fun}^{\mathbf{mb}}(K,Y)$ is a \bS-fibration.
\end{proposition}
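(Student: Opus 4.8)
The plan is to reduce the statement to a product-stability property of $\bS$-anodyne maps by means of the defining adjunction of $\on{Fun}^{\mathbf{mb}}$. Recall that a map is a $\bS$-fibration precisely when it has the right lifting property against every $\bS$-anodyne map. So I would fix a $\bS$-anodyne map $i\colon A\to B$ and show that $\on{Fun}^{\mathbf{mb}}(K,f)$ lifts against it.

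The universal property of \autoref{def:mappingbicats} exhibits $\on{Fun}^{\mathbf{mb}}(K,-)$ as the right adjoint of the functor $(-)\times K$. Transposing a lifting problem of $\on{Fun}^{\mathbf{mb}}(K,f)$ against $i$ along this adjunction yields a lifting problem of $f$ against $i\times\id_K\colon A\times K\to B\times K$,
\[
\begin{tikzcd}[ampersand replacement=\&]
A\times K \arrow[r]\arrow[d,"i\times \id_K"'] \& X \arrow[d,"f"]\\
B\times K \arrow[r] \& Y,
\end{tikzcd}
\]
and the two lifting problems have the same set of solutions. Since $f$ is a $\bS$-fibration, the transposed problem admits a solution as soon as $i\times\id_K$ is $\bS$-anodyne. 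Thus the proposition reduces entirely to the claim that $i\times\id_K$ is $\bS$-anodyne whenever $i$ is.

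For this product-stability claim, observe that $i\times\id_K$ is exactly the pushout-product of $i$ with the monomorphism $\varnothing\to K$ (the pushout defining the pushout-product collapses because one corner is empty). Hence the claim is a special case of the pushout-product compatibility between the $\bS$-anodyne maps and the cofibrations (monomorphisms) of $\mbsSet$, which is one of the structural inputs of the 2-Cartesian model structure recalled in \autoref{subsec:MSsSet} and proven in \cite{AGS_CartI}. Were one to reprove it here, I would note that the class $\{\,i : i\times\id_K \text{ is } \bS\text{-anodyne}\,\}$ is weakly saturated---because $(-)\times K$ is a left adjoint and so preserves pushouts, transfinite composites, and retracts---reducing the verification to the generators $i\in\bS$.

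The genuine obstacle lies precisely in this last verification: checking, for each generating $\bS$-anodyne map (A1)--(E) and for an arbitrary $K\in\mbsSet$, that its product with $K$ is again $\bS$-anodyne. This is the combinatorial heart of the matter and is exactly the content of the pushout-product lemma of \cite{AGS_CartI}; everything preceding it is formal manipulation of the adjunction.
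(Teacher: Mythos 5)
Your proposal is correct and is essentially the intended argument: the paper states this proposition without proof as a recollection from \cite{AGS_CartI}, where it follows by exactly your adjunction--transposition reduction to the pushout-product lemma for $\bS$-anodyne maps against cofibrations (\cite[Prop.~3.10]{AGS_CartI}, the $\mathbf{MB}$ analogue of \autoref{prop:PPMS}). Your observations that $i\times\id_K$ is the pushout-product of $i$ with $\varnothing\to K$ and that weak saturation reduces everything to the generators are precisely the formal steps that make that citation suffice.
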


\begin{definition}
	Let $f: Y \to S$ be a morphism of mb simplicial another map $g:X \to Y$. We define a mb simplicial set $\on{Map}_Y(K,X)$ by means of the pullback square
	\[
	\begin{tikzcd}[ampersand replacement=\&]
		\on{Map}_Y(K,X) \arrow[r] \arrow[d] \& \on{Fun}^{\mathbf{mb}}(K,X) \arrow[d] \\
		\Delta^0 \arrow[r,"g"] \& \on{Fun}^{\mathbf{mb}}(K,Y)
	\end{tikzcd}
	\]
	If $f:X \to Y$ is a \bS-fibration then it follows from the previous proposition that $\on{Map}_Y(K,X)$ is an $\infty$-bicategory.
\end{definition}

Let $S \in \on{Set}^{\mathbf{sc}}_{\Delta}$ for the rest of the section we will denote $(\mbsSet)_{/S}$ the category of mb simplicial set over $(S,\sharp,T_S \subset \sharp)$.

\begin{definition}\label{def:fibrantobjects}
	We say that an object $\pi:X \to S$ in $(\mbsSet)_{/S}$ is an \emph{outer} 2-\emph{Cartesian} fibration if it is a $\bS$-fibration.
\end{definition}

\begin{remark}
	We will frequently abuse notation and refer to outer 2-Cartesian as \emph{2-Cartesian fibrations}.
\end{remark}

\begin{definition}\label{def:underlyingmapping}
	Let $\pi:X \to S$ be a morphism of mb simplicial sets. Given an object $K\to S$, we define $\on{Map}^{\on{th}}_{S}(K,X)$ to be the  mb sub-simplicial set consisting only of the thin triangles. Note that if $\pi$ is a 2-Cartesian fibration this is precisely the underlying $\infty$-category of $\on{Map}_S(K,X)$. 
	
	We similarly denote by $\on{Map}^{\isom}_S(K,X)$ the mb sub-simplicial set consisting of thin triangles and marked edges. As before, we note that if $\pi$ is a 2-Cartesian fibration, the simplicial set $\on{Map}^{\isom}_S(K,X)$ can be identified with the maximal Kan complex in $\on{Map}_S(K,X)$.
\end{definition}

\begin{definition}
	We define a functor $\func{I: \on{Set}^+_{\Delta} \to \mbsSet}$ mapping a marked simplicial set $(K,E_K)$ to the mb simplicial set $(K,E_K,\sharp)$. If $K$ is maximally marked we adopt the notation $I^{+}(K^{\sharp})=K^{\sharp}_{\sharp}$
\end{definition}

\begin{remark}
	Note that we can endow the $(\mbsSet)_{/S}$ with the structure of a $\on{Set}_{\Delta}^{+}$-enriched category by means of $\on{Map}^{\on{th}}_{S}(\mathblank,\mathblank)$. In addition given $K \in \on{Set}_{\Delta}^+$ and $\pi:X \to S$ we define  $K \tensor X:= I(K) \times X$ equipped with a map to $S$ given by first projecting to $X$ and then composing with $\pi$. This construction shows that $(\mbsSet)_{/S}$ is tensored over $\on{Set}^+_{\Delta}$. One can easily show that $(\mbsSet)_{/S}$ is also cotensored over $\on{Set}^+_{\Delta}$.
	
	In a similar way one can use $\on{Map}^{\isom}_{S}(\mathblank,\mathblank)$ to endow $(\mbsSet)_{/S}$ with the structure of a $\on{Set}_{\Delta}$-enriched category. In this case the cotensor is given by $K \tensor X= I(K^{\sharp}) \times X$.
\end{remark}

\begin{definition}\label{def:weakequiv1}
	Let $\func{L \to[h] K \to[p] S}$ be a morphism in $(\mbsSet)_{/S}$. We say that $h$ is a cofibration when it is a monomorphism of simplicial sets. We will call $h$ a weak equivalence if for every 2-Cartesian fibration $\pi:X \to S$ the induced morphism
	\[
	\func{h^{*}:\on{Map}_S(K,X) \to \on{Map}_S(L,X)}
	\]
	is a bicategorical equivalence.
\end{definition}

For the convenience of the reader, we here recall the main result of \cite{AGS_CartI}:

\begin{theorem}[\cite{AGS_CartI} Theorem 3.38]\label{thm:MBModelStructure}
	Let $S$ be a scaled simplicial set. Then there exists a left proper combinatorial simplicial model structure on $(\mbsSet )_{/S}$, which is characterized uniquely by the following properties:
	\begin{itemize}
		\item[C)] A morphism $f:X \to Y$ in $(\mbsSet )_{/S}$ is a cofibration if and only if $f$ induces a monomorphism betwee the underlying simplicial sets.
		\item[F)] An object $X \in (\mbsSet )_{/S}$ is fibrant if and only if $X$ is a 2-Cartesian fibration. 
	\end{itemize}
\end{theorem}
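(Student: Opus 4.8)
The category $(\mbsSet)_{/S}$ is locally presentable (it is a category of decorated presheaves on $\Delta$, sliced over a fixed object), and the monomorphisms form the weakly saturated closure of a small set $I$ of generators --- the decorated boundary inclusions together with the maps freely adjoining one marked edge or one thin or lean triangle. With cofibrations fixed as these monomorphisms and weak equivalences $W$ as in \autoref{def:weakequiv1}, the plan is to invoke a recognition theorem for combinatorial model structures, concretely \cite[Prop.\ A.2.6.13]{HTT}, and then to read off left properness, the enrichment, and uniqueness. Note immediately that $\varnothing \to X$ is a monomorphism for every $X$, so all objects are cofibrant; since a model category in which every object is cofibrant is automatically left proper, this property will come for free once the structure exists.

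The first substantive step is to show that every $\bS$-anodyne map is a trivial cofibration. Let $h:L \to K$ be $\bS$-anodyne and let $\pi:X \to S$ be a 2-Cartesian fibration, i.e.\ a $\bS$-fibration (\autoref{def:fibrantobjects}). By \autoref{prop:bsfibfun} the restriction $h^\ast:\on{Map}_S(K,X) \to \on{Map}_S(L,X)$ is again a $\bS$-fibration; because $\bS$-anodyne maps are stable under pushout-product with cofibrations, the adjunction identity shows that $h^\ast$ has the right lifting property against all monomorphisms, hence is a trivial fibration of $\infty$-bicategories and in particular a bicategorical equivalence. Thus the saturation of $\bS$ lies in $W \cap \mathrm{Cof}$, and these maps --- after being closed up under pushout-product with the generating cofibrations $I$ --- will serve as the candidate set $J$ of generating trivial cofibrations.

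It remains to verify the hypotheses of the recognition theorem. Two-out-of-three and closure under retracts for $W$ are immediate; perfectness (accessibility plus stability under filtered colimits) holds because membership in $W$ can be tested against a set of fibrant objects and the mapping-object construction is accessible, the fibrant objects themselves forming an accessible subcategory as the injectives against the set $\bS$. The weakly saturated class generated by $J$ is stable under the required operations by construction. The crux --- and the step I expect to be the main obstacle --- is the acyclicity condition: one must show that a map which has the right lifting property against $J$ and lies in $W$ has the right lifting property against every cofibration, equivalently that $J$ generates all trivial cofibrations. This reduces to proving that a $\bS$-fibration between fibrant objects which is a weak equivalence is a trivial fibration, which in turn demands a fiberwise analysis of equivalences of 2-Cartesian fibrations, controlling composition and the limited $2$-out-of-$3$ behaviour of lean triangles through the saturation generators \ref{mb:composeacrossthin}--\ref{mb:coCart2of3}.

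Granting the acyclicity condition, \cite[Prop.\ A.2.6.13]{HTT} produces a left proper combinatorial model structure whose cofibrations are the monomorphisms and whose fibrant objects are precisely the $J$-injective objects; by the second paragraph these coincide with the objects injective against $\bS$, i.e.\ the 2-Cartesian fibrations, yielding (C) and (F). The $\msSet$-enriched (simplicial) structure is then obtained from the mapping objects $\on{Map}^{\isom}_S(\mathblank,\mathblank)$ and the tensoring $K \otimes X = I(K^\sharp)\times X$, the pushout-product axiom following by reduction on generators to the stability of $\bS$-anodyne maps under pushout-product with marked cofibrations. Finally, uniqueness is the standard argument: a cofibrantly generated model structure is determined by its cofibrations together with its fibrant objects, since these fix the trivial fibrations and hence, via the lifting characterizations, the entire class $W$.
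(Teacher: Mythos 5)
Your overall frame matches the proof that this paper imports from \cite{AGS_CartI} (Theorem 3.38), which in turn follows the template of \cite[Prop.\ 3.1.3.7]{HTT}: cofibrations are the monomorphisms, $W$ is the class of \autoref{def:weakequiv1}, one feeds this into a recognition theorem such as \cite[Prop.\ A.2.6.13]{HTT}, and your second paragraph (that $\bS$-anodyne maps lie in $W$, via the pushout-product lemma and adjunction) is correct and is exactly the argument used there. The proposal breaks down, however, at the step you yourself flag as the crux, and it breaks for a structural reason rather than a technical one. Your candidate set $J$ is built from the $\bS$-anodyne generators closed under pushout-product with cofibrations; but by the $\bS$-analogue of \autoref{prop:PPMS} (i.e.\ \cite[Prop.\ 3.10]{AGS_CartI}) such pushout-products are again $\bS$-anodyne, so the saturation of your $J$ is just the class of $\bS$-anodyne maps. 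That class is \emph{strictly smaller} than the class of trivial cofibrations, so the acyclicity condition as you formulate it (``$J$ generates all trivial cofibrations'', equivalently that every $J$-injective map in $W$ is a trivial fibration) is false, not merely difficult. Concretely, the initial-vertex inclusion $\{0\}\to(\Delta^1)^\sharp_\sharp$ over $S=\Delta^0$ is a trivial cofibration (it is an endpoint inclusion into the interval of the simplicial structure; note that generator \ref{mb:2CartliftsExist} is the inclusion of the \emph{terminal} vertex only), yet it is not $\bS$-anodyne: $\bS$-anodyne maps induce trivial fibrations on $\Map_S(\mathblank,X)$ for every 2-Cartesian fibration over \emph{every} base, whereas over $S=\Delta^1_\flat$ this inclusion induces ``evaluate a Cartesian section at $0$'', i.e.\ (up to equivalence) the pullback functor $X_1\to X_0$, which fails to be an equivalence already for the 2-Cartesian fibration $\Delta^1\sqcup\Delta^0\to\Delta^1_\flat$ with the extra vertex sitting over $0$. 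This is the same phenomenon as inner anodynes versus Joyal trivial cofibrations, or marked anodynes versus Cartesian equivalences: in none of these settings do the anodynes generate the trivial cofibrations, which is precisely why the actual proofs never exhibit a set of generating trivial cofibrations at all.

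The repair is to run \cite[Prop.\ A.2.6.13]{HTT} as stated, whose hypotheses are: (i) $W$ is perfect; (ii) $W$ is stable under pushout along cofibrations; (iii) any map with the right lifting property against all cofibrations lies in $W$. Here a second gap in your write-up surfaces: you skip (ii) entirely, asserting that left properness ``comes for free once the structure exists'' because all objects are cofibrant. That implication is true, but circular in this setting: stability of $W$ under pushouts along cofibrations is an \emph{input} to the recognition theorem, not an output (and the Smith-type variant you gesture at instead requires closure of $\mathrm{cof}(I)\cap W$ under pushout, which is the same issue; your remark that ``the saturated class generated by $J$ is stable by construction'' addresses $\mathrm{cof}(J)$, not $\mathrm{cof}(I)\cap W$). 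The standard verification applies $\Map_S(\mathblank,X)$ for fibrant $X$: it carries the pushout to a pullback of a bicategorical equivalence along a fibration between fibrant objects, which is an equivalence in any model category. With (i)--(iii) in place the theorem produces the left proper combinatorial structure, fibrancy of 2-Cartesian fibrations then follows from your paragraph-two argument plus surjectivity on vertices of trivial fibrations between the mapping objects, and your treatment of the simplicial enrichment and of uniqueness is fine.
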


\subsubsection{\bS-anodyne morphisms and dull subsets}
Before proceeding, we here record two variants of the pivot point trick \cite[Lem. 1.10]{AGS_Twisted} which will be of use later. 

\begin{definition}
	Let $\mathbb{P}(n)$ be the power set of $[n]$. Given $\mathcal{A} \subset \mathbb{P}(n)$ and $X \in \mathbb{P}(n)$ we say that $X\subset \mathcal{A}$ is \emph{$\mathcal{A}$-basal} if it contains precisely one element from each $S\in \mathcal{A}$. We denote the set of $\mathcal{A}$-basal sets by $\on{Bas}(\mathcal{A})$.
\end{definition}

\begin{definition}
	Given subset $\mathcal{A}\subset \mathbb{P}(n)$ such that $\emptyset \notin \mathcal{A}$, and a marked-biscaled simplex $(\Delta^n)^\dagger$, we define a marked-biscaled simplicial subset 
	\[
	(\mathcal{S}^{\mathcal{A}})^\dagger =\bigcup_{S\in\mathcal{A}} \Delta^{[n]\setminus S}.
	\] 
\end{definition}

\begin{definition}\label{def:innerdull}
	We call a subset $\mathcal{A}\subset \mathbb{P}(n)$ \emph{inner-dull} if the following conditions are satisfied
	\begin{enumerate}
		\item $\mathcal{A}$ does not contain $\varnothing$.
		\item There exists $0<i<n$ such that $i \notin S$ for every $S \in \mathcal{A}$.
		\item For any $S,T \in \mathcal{A}$, $S \cap T=\varnothing$.
		\item For every $\mathcal{A}$-basal set $X \in \mathbb{P}(n)$ there exists $u,v \in X$ such that $u<i < v$.
	\end{enumerate}
	We call the element $i$ in the second condition the pivot point.
\end{definition}

\begin{definition}
	Given an inner-dull subset $\mathcal{A} \subset \mathbb{P}(n)$, we define $\scr{M}_{\mathcal{A}}$ to be the set of subsets $X \in \mathbb{P}(n)$ satisfying:
	\begin{itemize}
		\item[A1)] $X$ contains the pivot point $i \in X$.
		\item[A2)] The simplex $\sigma_X:\Delta^X \to (\Delta^n)^{\dagger}$ does not factor through $ (\mathcal{S}^{\mathcal{A}})^\dagger$.
	\end{itemize}
	We define $\scr{M}_{\mathcal{A}}^j=\set{X \in \scr{M}_{\mathcal{A}}\given |X|=j}$. Note that those elements $X \in \scr{M}_{\mathcal{A}}$ of minimal cardinality are of the form $X_0 \cup \set{i}$ for $X_0 \in \on{Bas}(\mathcal{A})$.
\end{definition}

\begin{definition}
	Let $\mathcal{A} \subset \mathbb{P}(n)$ be an inner-dull subset with pivot point $i$. Given an $\mathcal{A}$-basal subset $X$ we denote by $l^X < u^X$ the pair of consecutive elements such that $l^X <i < u^X$.
\end{definition}

\begin{lemma}[The pivot trick]\label{lem:innerpivot}
	Let $\mathcal{A} \subset \mathbb{P}(n)$ be an inner-dull subset and let $(\Delta^n)^{\dagger}$ be a marked biscaled simplex. Suppose that the following conditions hold:
	\begin{enumerate}
		\item Every marked edge (resp. thin triangle) which does not contain the pivot point $i$ factors through $(\mathcal{S}^{\mathcal{A}})^\dagger$.
		\item For every $X \in \on{Bas}(\mathcal{A})$ and every $l^X\leq r <i < s \leq u^X$ the triangle $\set{r,i,s}$ is thin. 
		\item Let $\sigma=\set{a<b<c}$ be a lean simplex not containing the pivot point $i$. Then either $\sigma$ factors through $(\mathcal{S}^{\mathcal{A}})^\dagger$ or we have $a < i <c$ and the simplex $\sigma \cup \set{i}$ is fully lean scaled.
	\end{enumerate}
	Then the inclusion
	\[
	\func{(\mathcal{S}^{\mathcal{A}})^\dagger \to (\Delta^n)^\dagger}
	\]
	is in the weakly saturated hull of morphisms of type \ref{mb:innerhorn} and \ref{mb:innersaturation}.
\end{lemma}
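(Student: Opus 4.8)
The plan is to exhibit the inclusion as a two-phase transfinite composite of pushouts: a first phase built from the inner-horn generators \ref{mb:innerhorn}, which produces the underlying simplicial set together with its marking and thin scaling, and a second phase built from the inner-saturation generators \ref{mb:innersaturation}, which fills in the remaining lean triangles. Throughout I call a set $X\subseteq[n]$ \emph{missing} if it meets every $S\in\mathcal{A}$, equivalently if $\Delta^X$ does not factor through $(\mathcal{S}^{\mathcal{A}})^\dagger$.

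For the first phase I would run the pivot filtration indexed by $\scr{M}_{\mathcal{A}}$. Since $i\notin S$ for every $S$, the assignment $X\mapsto X\cup\set{i}$ is a bijection between missing sets avoiding $i$ and the elements of $\scr{M}_{\mathcal{A}}$, and I would attach, for each $Y\in\scr{M}_{\mathcal{A}}$ in increasing order of $|Y|$, the two new cells $\Delta^{Y\setminus\set{i}}$ and $\Delta^Y$ by pushing out along $\Lambda^Y_i\hookrightarrow\Delta^Y$. Two combinatorial points need checking. First, $i$ is an inner vertex of $Y$: every missing set contains an $\mathcal{A}$-basal subset, and the fourth inner-dullness condition forces such a subset to straddle $i$ — note this already forces $|\mathcal{A}|\geq 2$, since a singleton basal set cannot straddle $i$. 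Second, the horn $\Lambda^Y_i$ is present before the attachment: each face $\Delta^{Y\setminus\set{y}}$ with $y\neq i$ contains $i$, and hence either factors through $(\mathcal{S}^{\mathcal{A}})^\dagger$ or lies in $\scr{M}_{\mathcal{A}}$ with strictly smaller cardinality, while $\Delta^{Y\setminus\set{i}}$ is the unique face excluded from the horn.

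It then remains to match decorations so that each attachment is genuinely a pushout of a generator \ref{mb:innerhorn}, whose decoration is flat-marked with a single thin triangle. Because $|\mathcal{A}|\geq 2$ and the members of $\mathcal{A}$ are disjoint, no missing edge can contain $i$; combined with the lemma's hypothesis (1) this shows no new edge is marked, matching the flat marking of the generators. For the scaling the key observation is that every \emph{new} triangle containing $i$ is in fact thin: such a triangle is $\set{a,i,c}$ with $\set{a,c}$ meeting every $S$, whence $\set{a,c}$ is basal, and hypothesis (2) makes $\set{a,i,c}$ thin. Thus filling $\Lambda^Y_i$ via \ref{mb:innerhorn} scales exactly the neighbour triangle $\set{v_{k-1},i,v_{k+1}}$ of $i$ in $Y$ — which is thin in the target by hypothesis (2) — while by hypothesis (1) every thin triangle avoiding $i$ already lies in $(\mathcal{S}^{\mathcal{A}})^\dagger$. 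This realizes $(\mathcal{S}^{\mathcal{A}})^\dagger\to(\Delta^n,M,T\subseteq C_1)$ as a composite of pushouts of \ref{mb:innerhorn}, where $C_1$ consists of the thin triangles together with the lean triangles already present in $(\mathcal{S}^{\mathcal{A}})^\dagger$. The second phase then adds $C\setminus C_1$ using \ref{mb:innersaturation}: by the same observation every such triangle is a lean $\sigma=\set{a<b<c}$ with $i\notin\sigma$, so hypothesis (3) supplies $a<i<c$ and the fact that $\sigma\cup\set{i}$ is fully lean. Its three faces containing $i$ are thin or lie in $(\mathcal{S}^{\mathcal{A}})^\dagger$, hence lean in $C_1$; the neighbour triangle of $i$ is thin; and $\sigma$ is the face opposite $i$. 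This is precisely the source configuration of \ref{mb:innersaturation}, and since the saturated faces never contain $i$ the steps do not interfere, so a single coproduct of such pushouts produces all of $C\setminus C_1$ and finishes the proof.

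The main obstacle is the decoration bookkeeping rather than the combinatorial skeleton: one must verify at every step that the rigid scaling pattern carried by \ref{mb:innerhorn} and \ref{mb:innersaturation} matches the induced decoration on $(\Delta^n)^\dagger$. The two load-bearing facts are the forced inequality $|\mathcal{A}|\geq 2$, which rules out marked or thin edges through $i$, and the observation that every new triangle containing $i$ is automatically thin, which cleanly separates the thin scaling (phase one) from the lean scaling (phase two) and guarantees that hypotheses (1)–(3) furnish exactly the compatibilities required.
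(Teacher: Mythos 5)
Your proof is correct and is, at bottom, the same argument as the paper's: the identical pivot filtration of $\scr{M}_{\mathcal{A}}$ by cardinality, with one pushout of type \ref{mb:innerhorn} per element of $\scr{M}_{\mathcal{A}}$ and one pushout of type \ref{mb:innersaturation} per missing lean, non-thin triangle avoiding $i$. The only organizational difference is that the paper interleaves the \ref{mb:innersaturation} pushouts into the corresponding horn attachments (so that its intermediate objects carry the full induced decorations), whereas you defer them all to a second phase; since the faces being saturated never contain $i$, these pushouts commute with everything else and both orderings work equally well.

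One step needs tightening. Both of your phases require that the neighbour triangle $\{v_{k-1},i,v_{k+1}\}$ of the pivot in the simplex being attached is thin, and your stated key observation does not deliver this: that observation only covers \emph{missing} triangles through $i$, whose outer vertices form a basal pair, but the neighbour triangle of an attachment $Y$ (or of $\sigma\cup\{i\}$ in phase two) will typically factor through $(\mathcal{S}^{\mathcal{A}})^\dagger$. For instance, with $\mathcal{A}=\{\{0\},\{4\}\}\subset\mathbb{P}(4)$, $i=2$ and $Y=\{0,1,2,4\}$, the neighbour triangle is $\{1,2,4\}$, which misses the member $\{0\}$ and hence lies in $(\mathcal{S}^{\mathcal{A}})^\dagger$. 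The claim is still true, but it needs the full range form of hypothesis (2): choose a basal $X_0\subseteq Y$ (one exists because $Y$ is missing); then $l^{X_0}\leq v_{k-1}<i<v_{k+1}\leq u^{X_0}$, since $l^{X_0},u^{X_0}\in Y$ while $v_{k\pm1}$ are the elements of $Y$ closest to $i$, so hypothesis (2) applies with $r=v_{k-1}$ and $s=v_{k+1}$. With that half-line inserted (it is needed in both phases, and it is the same verification the paper elides with ``in virtue of our assumptions''), your argument goes through.
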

\begin{proof}
	Observe that since $\mathcal{A}$ is inner-dull it follows that every $\mathcal{A}$-basal set has the same cardinality which we denote $\epsilon$. For every $\epsilon \leq j \leq n$ we define
	\[
	Y_j= Y_{j-1}\cup \bigcup\limits_{X \in \scr{M}_{\mathcal{A}}^j}\sigma_X
	\]
	where $Y_{\epsilon-1}=(\mathcal{S}^{\mathcal{A}})^\dagger$ and we view $\sigma_X$ as having the inherited decorations. This yields a filtration
	\[
	(\mathcal{S}^{\mathcal{A}})^\dagger \to Y_\epsilon \to \cdots \to Y_{n-1} \to (\Lambda^n_i)^{\dagger} \to (\Delta^n)^\dagger
	\]
	We will show that each step of this filtration can be obtained as an iterated pushout along morphisms of type \ref{mb:innerhorn}. Let $X \in \mathcal{M}_{\mathcal{A}}^j$ for $\epsilon\leq j \leq n-1$ and consider the pullback diagram
	\[
	\begin{tikzcd}[ampersand replacement=\&]
		\Lambda^X_i \arrow[r] \arrow[d] \& \Delta^X \arrow[d,"\sigma_X"] \\
		Y_{j-1} \arrow[r] \& Y_j
	\end{tikzcd}
	\]
	We claim that the top horizontal morphism is in the weakly saturated hull of morphisms of type \ref{mb:innerhorn} and \ref{mb:innersaturation}. First we notice that the triangle $\set{i-1,i,i+1}$ is thin in $\Delta^X$ in virtue of our assumptions. Observe that if the dimension of $\Delta^X$ is bigger than $3$ then all the possible decorations factor through $ \Lambda^X_i$. We will therefore assume that the dimension is at most 3 otherwise the claim follows directly. Suppose that $\epsilon=2$ then we can have some $\Delta^X$ of dimension $2$. In this case our assumptions guarantee that the edge that does not have the vertex $i$ cannot be marked. If $\epsilon=2$ and the dimension of $\Delta^X$ is $3$ then it follows that the face that misses the vertex $i$ cannot be thin-scaled. If that face is not lean-scaled then the claim follows immediately. Otherwise our assumptions imply that $\Delta^X$ is fully lean scaled the the map $ \Lambda^X_i  \to \Delta^X$ is a composite of a morphism of type \ref{mb:innerhorn} and a morphism of type \ref{mb:innersaturation}.  The final case $\epsilon=3$ is similar and left as an exercise.
	
	We finish the proof by noting that $X,Y \in \scr{M}_{\mathcal{A}}^j$ it follows that $\sigma_X \cap \sigma_Y \in Y_{j-1}$ which implies that the order in which the add the simplices is irrelevant. We conclude that each step in the filtration belongs to the weakly saturated hull of morphisms of type \ref{mb:innerhorn} and \ref{mb:innersaturation}.
\end{proof}
We finish the discussion on dull subsets by giving a right-horn variant of the previous construction.
\begin{definition}
	We call a subset $\mathcal{A}\subset \mathbb{P}(n)$ \emph{right-dull} if the following conditions are satisfied
	\begin{enumerate}
		\item $\mathcal{A}$ does not contain $\varnothing$.
		\item For every $S\in \mathcal{A}$, $n\notin S$. 
		\item For any $S,T\in \mathcal{A}$, $S\cap T=\varnothing$. 
		\item For every $\mathcal{A}$-basal subset $X$ we have $u,v \in X$ such that $u<v<n$.
	\end{enumerate}
	In this case we call $n$ the pivot point.
\end{definition}

\begin{lemma}\label{lem:previousRightpivot}
	Let $\mathcal{A}\subset \mathbb{P}(n)$ be a right-dull subset. Let $(\Delta^n)^{\dagger}$ be a marked-biscaled simplex whose thin triangles are degenerate. Suppose that the following conditions holds
	\begin{itemize}
		\item For every $\mathcal{A}$-basal subset $X$ and for every $s,r\in [n]$ such that $s\leq \min(X) < \max(X)\leq r<n$, the triangle $\{s<r<n\}$ is lean, and the edge $r\to n$ is marked. 
		\item Let $e$ be a marked edge in $(\Delta^n)^{\dagger}$ not containing the vertex $n$. Then $e$ factors through $(\mathcal{S}^{\mathcal{A}})^\dagger $.
		\item Let $\sigma=\set{a<b<c}$ be a lean triangle in $(\Delta^n)^{\dagger}$ not containing the vertex $n$. Then either $\sigma$ factors through $(\mathcal{S}^{\mathcal{A}})^\dagger $ or $\sigma \cup \set{n}$ is fully lean-scaled and $c \to n$ is marked.
	\end{itemize}
	Then $(\mathcal{S}^{\mathcal{A}})^\dagger \to (\Delta^n)^\dagger$ is in the saturated hull of morphisms of type \ref{mb:2Cartesianmorphs}
\end{lemma}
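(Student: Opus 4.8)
The plan is to follow the proof of \autoref{lem:innerpivot} almost verbatim, moving the pivot to the terminal vertex $n$ and replacing the inner horns by the right horns of type \ref{mb:2Cartesianmorphs}. Since the members of $\mathcal{A}$ are pairwise disjoint, every $\mathcal{A}$-basal subset has the same cardinality $\epsilon:=|\mathcal{A}|$. I reuse the set $\scr{M}_{\mathcal{A}}$ of subsets $X$ that contain the pivot $n$ and whose simplex does not factor through $(\mathcal{S}^{\mathcal{A}})^\dagger$; since $n\notin S$ for every $S\in\mathcal{A}$, this means $X\cap S\neq\varnothing$ for all such $S$, and the minimal such $X$ are the sets $X_0\cup\set{n}$ with $X_0\in\on{Bas}(\mathcal{A})$, of cardinality $\epsilon+1$. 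Setting $Y_{\epsilon}=(\mathcal{S}^{\mathcal{A}})^\dagger$ and $Y_j=Y_{j-1}\cup\bigcup_{X\in\scr{M}_{\mathcal{A}}^j}\sigma_X$ for $\epsilon<j\leq n+1$ produces a filtration terminating at $(\Delta^n)^\dagger$, and I attach the simplices of a fixed cardinality one at a time.

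The combinatorial heart is the identity $\Delta^X\cap Y_{j-1}=\Lambda^X_n$ for $X\in\scr{M}_{\mathcal{A}}^j$, where $\Lambda^X_n$ is the horn omitting the facet $X\setminus\set{n}$ opposite the pivot. That facet still meets every $S\in\mathcal{A}$, so it does not factor through $(\mathcal{S}^{\mathcal{A}})^\dagger$, and it cannot be a face of any previously attached simplex, since those have smaller cardinality yet must contain $n$; thus it is genuinely new. Every other proper face $\tau\subsetneq X$ already lies in $Y_{j-1}$: if $n\in\tau$ then $\tau$ either factors through $(\mathcal{S}^{\mathcal{A}})^\dagger$ or belongs to $\scr{M}_{\mathcal{A}}^{<j}$; if $n\notin\tau$ then, according to whether $\tau$ meets every $S$, it is either the opposite facet of the earlier simplex $\tau\cup\set{n}\in\scr{M}_{\mathcal{A}}^{<j}$ or else factors through $(\mathcal{S}^{\mathcal{A}})^\dagger$. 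Hence each attachment is a pushout along $\Lambda^X_n\hookrightarrow\Delta^X$, and simplices of equal cardinality meet inside $Y_{j-1}$, so the order of attachment is immaterial.

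It remains to identify each $\Lambda^X_n\hookrightarrow\Delta^X$, with decorations inherited from $(\Delta^n)^\dagger$, as a pushout of \ref{mb:2Cartesianmorphs}. Writing $X=\set{x_0<\cdots<x_k=n}$ and choosing any basal $X_0\subseteq X$, the first hypothesis applied with $s=x_0$, $r=x_{k-1}$ makes the edge $x_{k-1}\to n$ marked and the triangle $\set{x_0,x_{k-1},n}$ lean --- exactly the decorations carried by \ref{mb:2Cartesianmorphs} in dimension $k$. The attachment creates only the top cell $X$ and the opposite facet $X\setminus\set{n}$, all other faces lying on $\Lambda^X_n$ with decorations inherited from $Y_{j-1}$. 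For $k\geq 4$ the two new cells carry no edge or triangle decoration, so the attachment is directly a pushout of \ref{mb:2Cartesianmorphs}. For $k=2$ the new opposite cell is the edge $X\setminus\set{n}$, which the second hypothesis forces to be unmarked (it does not factor through $(\mathcal{S}^{\mathcal{A}})^\dagger$, hence cannot be a marked off-pivot edge), while the generator simultaneously renders the full triangle lean. For $k=3$ with the opposite facet not lean, the third hypothesis leaves that facet flat, and again the attachment is a single \ref{mb:2Cartesianmorphs}.

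The one configuration that resists a single right horn --- and which I expect to be the main obstacle --- is $k=3$ with $X$ fully lean-scaled; by the third hypothesis this is forced precisely when the opposite facet $\set{x_0,x_1,x_2}$, which cannot factor through $(\mathcal{S}^{\mathcal{A}})^\dagger$, is itself lean. Here \ref{mb:2Cartesianmorphs} fills the horn but leaves that facet flat, whereas the target requires it lean. Exactly as \autoref{lem:innerpivot} follows its inner horn with the saturation \ref{mb:innersaturation}, I will complete this attachment by composing with the coCartesian saturation \ref{mb:coCart2of3}: its marked edge $\set{x_2,n}$ and its three lean faces through $n$ (the faces $\set{x_0,x_1,n}$, $\set{x_0,x_2,n}$, $\set{x_1,x_2,n}$, already lean on the horn) are precisely the present data, and it promotes $\set{x_0,x_1,x_2}$ to lean. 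Verifying that the fully lean-scaled datum meets the hypotheses of this saturation generator is the delicate point; everything else is bookkeeping parallel to the inner pivot trick. Composing the resulting pushouts along the filtration then exhibits $(\mathcal{S}^{\mathcal{A}})^\dagger\to(\Delta^n)^\dagger$ in the weakly saturated hull of morphisms of type \ref{mb:2Cartesianmorphs} (together with \ref{mb:coCart2of3} for the fully lean-scaled tetrahedra, in exact analogy with \autoref{lem:innerpivot}).
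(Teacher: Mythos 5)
Your proposal is correct and is essentially the paper's own argument: the paper proves this lemma by a one-line appeal to \autoref{lem:innerpivot}, and your cardinality filtration, the identification $\Delta^X\cap Y_{j-1}=\Lambda^X_n$, and the decoration checks via the three hypotheses are exactly that argument transported to the pivot $n$ with right horns. Your concern about the fully lean-scaled tetrahedra is also well-founded, and in fact identifies an imprecision in the stated conclusion rather than a gap in your proof: a pushout along \ref{mb:2Cartesianmorphs} adds the missing facet with \emph{no} decoration, so whenever the third hypothesis' second alternative is realized (a lean triangle $\set{a<b<c}$ not factoring through $(\mathcal{S}^{\mathcal{A}})^\dagger$), the inclusion genuinely cannot lie in the saturated hull of \ref{mb:2Cartesianmorphs} alone --- one can check, for instance, that the map which forgets the leanness of that one triangle has the right lifting property against \ref{mb:2Cartesianmorphs} but not against the inclusion. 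The extra pushout along \ref{mb:coCart2of3} you propose is therefore necessary, and it does apply: the third hypothesis supplies precisely the marked edge $c\to n$ and the three lean faces through $n$ that this generator requires, so the ``delicate point'' you flag is immediate. This mirrors \autoref{lem:innerpivot}, whose conclusion correctly lists \ref{mb:innersaturation} alongside \ref{mb:innerhorn}; and since every downstream use of the present lemma (in \autoref{lem:Right_pivot} and \autoref{lem:keylemX}) only needs the map to be \textbf{MB}-anodyne, enlarging the conclusion to the saturated hull of \ref{mb:2Cartesianmorphs} and \ref{mb:coCart2of3} is harmless.
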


\begin{proof}
	The argument is nearly identical to the proof of \autoref{lem:innerpivot}.
\end{proof}

\begin{lemma}\label{lem:Right_pivot}
	Let $\mathcal{A}\subset \mathbb{P}(n)$ be a right-dull subset. Let $(\Delta^n)^{\dagger}=(\Delta^n,E_n,T_n \subset C_n)$ be a marked-biscaled simplex such that $(\Delta^n)^\diamond:=(\Delta^n,E_n,\flat \subset C_n)$ satisfies the hypothesis of \autoref{lem:previousRightpivot}. Suppose that we are given a morphism 
	\[
	\func{(\Delta^n,E_n,T_n \subset C_r) \to (X,\sharp,T_X \subset \sharp)}
	\]
	Then the morphism $(\mathcal{S}^{\mathcal{A}})^\dagger \to (\Delta^n)^\dagger$ is an $\bS$-anodyne morphism over $(X,T_X)$.
\end{lemma}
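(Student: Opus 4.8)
The plan is to exploit the fact that $(\Delta^n)^\dagger$ and $(\Delta^n)^\diamond$ differ \emph{only} in their thin scaling: both carry the marked edges $E_n$ and the lean triangles $C_n$, while $(\Delta^n)^\diamond$ has the degenerate thin scaling $\flat$ and $(\Delta^n)^\dagger$ has $T_n$. I will therefore factor the inclusion $(\mathcal{S}^{\mathcal{A}})^\dagger \to (\Delta^n)^\dagger$ into a step that imports the $\diamond$-level statement provided by \autoref{lem:previousRightpivot}, followed by a step that reinstates the missing thin triangles using morphisms of type \ref{mb:coCartoverThin}. Throughout, let $f$ denote the given morphism to $(X,\sharp,T_X \subset \sharp)$.

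For the first step, by hypothesis $(\Delta^n)^\diamond$ satisfies the assumptions of \autoref{lem:previousRightpivot}, so the inclusion $(\mathcal{S}^{\mathcal{A}})^\diamond \to (\Delta^n)^\diamond$ lies in the saturated hull of maps of type \ref{mb:2Cartesianmorphs}, and is in particular $\bS$-anodyne. I then form the pushout
\[
P := (\mathcal{S}^{\mathcal{A}})^\dagger \coprod_{(\mathcal{S}^{\mathcal{A}})^\diamond} (\Delta^n)^\diamond,
\]
where $(\mathcal{S}^{\mathcal{A}})^\diamond \to (\mathcal{S}^{\mathcal{A}})^\dagger$ is the identity on underlying data and merely adjoins the inherited thin triangles. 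Computing the pushout decoration-by-decoration shows that $P$ has underlying simplex $\Delta^n$, marked edges $E_n$, lean triangles $C_n$, and thin triangles exactly those members of $T_n$ that factor through $(\mathcal{S}^{\mathcal{A}})^\dagger$. The induced map $(\mathcal{S}^{\mathcal{A}})^\dagger \to P$ is a cobase change of an $\bS$-anodyne map; since all four objects carry compatible structure maps to $(X,\sharp,T_X\subset\sharp)$ inherited from $f$, this is a pushout in the slice and hence $\bS$-anodyne over $(X,T_X)$.

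The residual map $P \to (\Delta^n)^\dagger$ is again the identity on underlying simplicial set, marked edges, and lean triangles; it only adjoins the thin triangles of $T_n$ that do \emph{not} factor through $(\mathcal{S}^{\mathcal{A}})^\dagger$. Each such triangle $\sigma$ already lies in $C_n$, so it is lean in the current stage, and I adjoin its thin scaling by pushing out along the generator $(\Delta^2,\flat,\flat\subset\sharp)\to(\Delta^2,\flat,\sharp)$ of type \ref{mb:coCartoverThin} attached along $\sigma$. Running over all such triangles as a transfinite composite of pushouts realizes $P \to (\Delta^n)^\dagger$. The one genuinely delicate point — which I expect to be the main obstacle — is checking that each of these pushouts is legitimate \emph{over} $(X,T_X)$: the relative generator must send the newly-thin triangle to a thin triangle of $T_X$. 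This holds precisely because $f$ is defined on $(\Delta^n,E_n,T_n\subset C_n)$ and therefore carries $\sigma\in T_n$ into $T_X$. Moreover, since the source and target of \ref{mb:coCartoverThin} both carry $\flat$ markings and identical lean data, the pushout alters neither the marking nor the lean scaling, so only the thin scaling of $\sigma$ is affected and no unwanted decorations appear.

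Composing the two steps, $(\mathcal{S}^{\mathcal{A}})^\dagger \to P \to (\Delta^n)^\dagger$ exhibits the desired inclusion as a composite of $\bS$-anodyne maps over $(X,T_X)$, which completes the argument. The transfer of \autoref{lem:previousRightpivot} in the first step is essentially formal once the pushout decorations are computed; the only substantive bookkeeping lies in the second step, tracking which triangles remain to be scaled and confirming the relative compatibility of each \ref{mb:coCartoverThin}-pushout with the base.
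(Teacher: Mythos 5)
Your proposal is correct and follows essentially the same route as the paper's proof: push out the $\bS$-anodyne inclusion $(\mathcal{S}^{\mathcal{A}})^\diamond \to (\Delta^n)^\diamond$ supplied by \autoref{lem:previousRightpivot} along $(\mathcal{S}^{\mathcal{A}})^\diamond \to (\mathcal{S}^{\mathcal{A}})^\dagger$ to obtain $P$, which differs from $(\Delta^n)^\dagger$ only in its thin scaling, and then scale the remaining triangles via pushouts of type \ref{mb:coCartoverThin}, using that the structure map carries $T_n$ into $T_X$ so these pushouts are legitimate over the base. This last compatibility check is exactly the point the paper's proof also singles out.
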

\begin{proof}
	By \autoref{lem:previousRightpivot} we obtain a pushout diagram
	\[
	\begin{tikzcd}[ampersand replacement=\&]
		(\mathcal{S}^{\mathcal{A}})^\diamond \arrow[r] \arrow[d] \& (\Delta^n)^{\diamond} \arrow[d] \\
		(\mathcal{S}^{\mathcal{A}})^\dagger  \arrow[r] \& P 
	\end{tikzcd}
	\]
	where the top horizontal morphism is $\bS$-anodyne. Note $P$ only differs from $(\Delta^n)^\dagger$ in its thin-scaling. Moreover every lean triangle in $P$ whose image in $(\Delta^n)^{\dagger}$ is thin gets mapped to a thin triangle in $(X,T_X)$ so it can be scaled using a morphism of type \ref{mb:coCartoverThin}.
\end{proof}

\subsection{Marked scaled simplicial sets}

A special case of the model structure of \autoref{thm:MBModelStructure} of particular interest occurs when $S=\Delta^0$ is the terminal scaled simplicial set. Then, by \cite[Thm 3.39]{AGS_CartI}, the resulting model structure on $\mbsSet$ is Quillen equivalent to the model structure for $\infty$-bicategories on $\Set_\Delta^{\mathbf{sc}}$. In this case, the data of the two scalings becomes highly redundant --- for any fibrant object the two scalings coincide, and heuristically they no longer encode different information. 

We can avoid this redundancy by defining a further model structure which includes both markings and scalings, but avoids the redundancies created by a biscaling. The aim of this section is to define this model structure, and relate it to the \textbf{MB} model structure. 

\begin{definition}
	A \emph{marked-scaled simplicial set} consists of 
	\begin{itemize}
		\item A simplicial set $X$.
		\item A collection of edges $E_X\subseteq X_1$ containing all degenerate edges. We call the elements of $E_X$ \emph{marked edges}.
		\item A collection of triangles $T_X\subseteq X_2$ containing all degenerate triangles. We call the elements of $T_X$ \emph{thin triangles}.
	\end{itemize}
	
	We denote by $\mssSet$ the category of marked-scaled simplicial sets. \glsadd{MSsSet} We view this as a $\Set_\Delta^+$-enriched category by defining 
	\[
	\Hom_{\Set_\Delta^+}(X,\mssSet(Y,Z)):= \Hom_{\mssSet}(X_\sharp\times Y,Z)
	\]
	where $X_\sharp=(X,E_X,\sharp)$. 
\end{definition}

Before continuing with the construction of the model structure, we briefly digress to explore the relations between $\mbsSet$ and $\mssSet$. The primary component of our comparison will be the adjunction:
\[
\adj{\mssSet}{\mbsSet}{D}{R}
\]
where $D$ is given on objects by 
\[
\func{
	D: (X,E_X,T_X) \mapsto (X,E_X,T_X\subseteq T_X)
}
\]
and $R$ is given on objects by 
\[
\func{
	R :(Y,E_Y,T_Y\subseteq C_Y) \mapsto (Y,E_Y,T_Y)
}
\]
We will show that this adjunction becomes a Quillen equivalence once we have equipped $\mssSet$ with the appropriate model structure. 

This model structure itself is constructed exactly analogously to the model structure on $\mbsSet$. We begin with a set of generating anodyne morphisms:

\begin{definition}
	The set of \emph{generating $\mathbf{MS}$-anodyne maps} $\mathbf{MS}$ \glsadd{MSann} is the set of maps of marked-scaled simplicial sets consisting of:
	\begin{itemize}
		\myitem{(MS1)}\label{MS:inner} The inner horn inclusions 
		\[
		\bigl(\Lambda^n_i,\flat,\{\Delta^{\{i-1,i,i+1\}}\}\bigr)\rightarrow \bigl(\Delta^n,\flat,\{\Delta^{\{i-1,i,i+1\}}\}\bigr)
		\quad , \quad n \geq 2 \quad , \quad 0 < i < n ;
		\]
		\myitem{(MS2)}\label{MS:wonky4} The map 
		\[
		\func{
			(\Delta^4,\flat,T) \to (\Delta^4,\flat, T\cup \{\Delta^{\{0,3,4\}},\Delta^{\{0,1,4\}}\})
		}
		\]
		where $T$ is defined as in \autoref{def:mbsanodyne}, \ref{mb:wonky4}.
		\myitem{(MS3)}\label{MS:0horn} The set of maps 
		\[
		\Bigl(\Lambda^n_0\coprod_{\Delta^{\{0,1\}}}\Delta^0,\flat,\{\Delta^{\{0,1,n\}}\}\Bigr)\rightarrow \Bigl(\Delta^n\coprod_{\Delta^{\{0,1\}}}\Delta^0,\flat,\{\Delta^{\{0,1,n\}}\}\Bigr)
		\quad , \quad n\geq 2.
		\]
		\myitem{(MS4)}\label{MS:nhorn} The set of maps 
		\[
		\Bigl(\Lambda^n_n,\{\Delta^{\{n-1,n\}}\}, \{ \Delta^{\{0,n-1,n\}} \}\Bigr) \to \Bigl(\Delta^n,\{\Delta^{\{n-1,n\}}\}, \{ \Delta^{\{0,n-1,n\}} \}\Bigr) \quad , \quad n \geq 2.
		\]
		\myitem{(MS5)}\label{MS:Cartlifts} The inclusion of the terminal vertex 
		\[
		\func{
			\left(\Delta^0,\sharp,\sharp\right)\to \left(\Delta^1,\sharp,\sharp\right)
		}
		\] 
		\myitem{(MS6)}\label{MS:Compose} The map 
		\[
		\Bigl(\Delta^2,\{\Delta^{\{0,1\}}, \Delta^{\{1,2\}}\},\sharp \Bigr) \rightarrow \Bigl(\Delta^2,\sharp,\sharp \Bigr),
		\]
		\myitem{(MS7)}\label{MS:composedeg4} The map
		\[
		\Bigl(\Delta^3 \coprod_{\Delta^{\{0,1\}}}\Delta^0,\flat, U_0\Bigr) \rightarrow \Bigl(\Delta^3 \coprod_{\Delta^{\{0,1\}}}\Delta^0,\flat, \sharp \Bigr) 
		\]
		where $U_0$ is the collection of all triangles except the $0$-th face.
		\myitem{(MS8)}\label{MS:composemarked5} The map
		\[
		\Bigl(\Delta^3,\{\Delta^{\{2,3\}}\}, U_3\Bigr) \rightarrow \Bigl(\Delta^3,\{\Delta^{\{2,3\}}\}, \sharp \Bigr) 
		\]
		where $U_3$ is the collections of all triangles except the $3$-rd face.
		\myitem{(MSE)}\label{MS:kan} For every Kan complex $K$, the map
		\[
		\Bigl( K,\flat,\sharp  \Bigr) \rightarrow \Bigl(K,\sharp, \sharp\Bigr).
		\]
	\end{itemize}
	We will call a morphism in $\mssSet$ \emph{$\mathbf{MS}$-anodyne} if it lies in the saturated hull of $\mathbf{MS}$. 
\end{definition} 

We can immediately obtain two useful lemmata. 

\begin{lemma}\label{lem:Ui_MS-anodyne}
	The morphism
	\[
	\Bigl(\Delta^3,\flat,\{\Delta^{\{i-1,i,i+1\}}\}\subset U_i\Bigr) \rightarrow \Bigl(\Delta^3,\flat, \{\Delta^{\{i-1,i,i+1\}}\}\subset \sharp \Bigr) \quad, \quad 0<i<3,
	\]
	where $U_i$ is the collection of all triangles except $i$-th face, is $\mathbf{MS}$-anodyne.
\end{lemma}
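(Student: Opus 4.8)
The plan is to exhibit the morphism of the lemma as a single cobase change of the wonky-$4$ generator \ref{MS:wonky4}, taken along a degeneracy $\Delta^4 \to \Delta^3$. Recall that \ref{MS:wonky4} scales the two faces $\Delta^{\{0,3,4\}}$ and $\Delta^{\{0,1,4\}}$ of $\Delta^4$ as soon as the five faces in $T = \{\Delta^{\{0,2,4\}}, \Delta^{\{1,2,3\}}, \Delta^{\{0,1,3\}}, \Delta^{\{1,3,4\}}, \Delta^{\{0,1,2\}}\}$ are thin. Thus it suffices to produce a degeneracy which carries all of $T$ into triangles that are already thin (or degenerate), while carrying one of the two output faces onto the single face $\Delta^{[3]\setminus\{i\}}$ that we still need to scale. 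A pushout along such a degeneracy will then add exactly that missing face to the scaling.

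Concretely, I would take $s_i \colon \Delta^4 \to \Delta^3$ to be the degeneracy that repeats the vertex $i$, i.e.\ $s_1 = (0,1,1,2,3)$ for $i=1$ and $s_2 = (0,1,2,2,3)$ for $i=2$ (listing the images of $0,\dots,4$). For $i=1$ a direct check gives $\Delta^{\{0,2,4\}}\mapsto \Delta^{\{0,1,3\}}$, $\Delta^{\{0,1,3\}}\mapsto \Delta^{\{0,1,2\}}$ and $\Delta^{\{1,3,4\}}\mapsto \Delta^{\{1,2,3\}}$, all of which lie in $U_1$, while $\Delta^{\{1,2,3\}}$ and $\Delta^{\{0,1,2\}}$ become degenerate; in particular no face of $T$ maps to the missing face $\Delta^{\{0,2,3\}}$, so $s_1$ defines a map $(\Delta^4,\flat,T) \to (\Delta^3,\flat,U_1)$ of marked-scaled simplicial sets. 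The two output faces satisfy $\Delta^{\{0,3,4\}}\mapsto \Delta^{\{0,2,3\}}$ and $\Delta^{\{0,1,4\}}\mapsto \Delta^{\{0,1,3\}}\in U_1$, so the first output face is precisely the one we want. The case $i=2$ is completely analogous with $s_2$, under which $\Delta^{\{0,1,4\}}\mapsto \Delta^{\{0,1,3\}}$ supplies the missing face and all of $T$ again lands in $U_2$ or in degenerate triangles.

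With this in hand, I would form the pushout of \ref{MS:wonky4} along $s_i$. Since \ref{MS:wonky4} is the identity on underlying simplicial sets, the pushout again has underlying simplicial set $\Delta^3$, its marking stays $\flat$, and its thin triangles are $U_i$ together with the $s_i$-images of $\Delta^{\{0,3,4\}}$ and $\Delta^{\{0,1,4\}}$. By the computation above these images are the missing face $\Delta^{[3]\setminus\{i\}}$ and a face already in $U_i$, so the pushout is exactly $(\Delta^3,\flat,\sharp)$. The induced map $(\Delta^3,\flat,U_i) \to (\Delta^3,\flat,\sharp)$ is therefore a cobase change of a generating $\mathbf{MS}$-anodyne morphism, hence $\mathbf{MS}$-anodyne. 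The only delicate point in the whole argument is the bookkeeping of the previous paragraph: one has to select the degeneracy so that every hypothesis face of the wonky-$4$ generator lands on an already-thin (or degenerate) triangle while an output face lands on the one triangle to be scaled. This is the step I expect to require the most care, even though it is purely mechanical once the correct $s_i$ is written down.
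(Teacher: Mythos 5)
Your proof is correct: both degeneracies carry the wonky-$4$ scaling $T$ into $U_i$ (or into degenerate triangles) while sending exactly one of the two output faces onto the missing face $d_i(\Delta^3)$, so the map of the lemma is a cobase change of the generator \ref{MS:wonky4} and hence $\mathbf{MS}$-anodyne. This is essentially the same argument the paper relies on: its proof is just a citation of \cite[Rmk 3.1.4]{LurieGoodwillie}, which records this statement in the scaled setting and rests on precisely this pushout of the wonky $4$-simplex generator along a degeneracy $\Delta^4 \to \Delta^3$ (indeed \ref{MS:wonky4} is the only generator that adds scaling without adding simplices, so any proof must go through it).
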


\begin{proof}
	See \cite[Rmk 3.1.4]{LurieGoodwillie}.
\end{proof}

\begin{lemma}
	The morphism 
	\[
	\func{\theta: (\Delta^2,\{\Delta^{\{1,2\}},\Delta^{\{0,2\}}\},\sharp) \to (\Delta^2,\sharp,\sharp)}
	\]
	is $\mathbf{MS}$-anodyne.
\end{lemma}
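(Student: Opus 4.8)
The statement is the left-cancellation property for marked edges: in the thin triangle $\Delta^{\{0,1,2\}}$ with $\Delta^{\{1,2\}}$ and $\Delta^{\{0,2\}}$ marked, we must see that marking the remaining edge $\Delta^{\{0,1\}}$ is $\mathbf{MS}$-anodyne. The plan is to realize $\theta$ as a retract of an $\mathbf{MS}$-anodyne map. The first thing to record is why a naive argument cannot work: the composition generator \ref{MS:Compose} only ever forces the \emph{long} edge $\Delta^{\{0,2\}}$ of a thin triangle with marked legs $\Delta^{\{0,1\}},\Delta^{\{1,2\}}$ to be marked. If one tries to subdivide $\Delta^{\{0,1\}}$ and mark it by \ref{MS:Compose} inside some auxiliary simplex equipped with a decoration-preserving collapse back onto $\Delta^2$, one checks directly that a monotone collapse must send one of the two halves of the subdivided edge onto $\Delta^{\{0,1\}}$ itself; since $\Delta^{\{0,1\}}$ is unmarked in the source, that half cannot be marked, and \ref{MS:Compose} does not apply. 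Hence the marking of $\Delta^{\{0,1\}}$ genuinely requires invertibility, which is supplied only by the equivalence generator \ref{MS:kan}.

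The construction I would carry out is therefore to adjoin to $(\Delta^2,\{\Delta^{\{1,2\}},\Delta^{\{0,2\}}\},\sharp)$ a formal inverse of the marked edge $\Delta^{\{1,2\}}$, together with the thin triangles witnessing it as an inverse and exhibiting $\Delta^{\{0,1\}}$ as the composite of $\Delta^{\{0,2\}}$ with that inverse. The existence of the relevant cells and the filling of the resulting inner horns are handled by \ref{MS:inner} and \autoref{lem:Ui_MS-anodyne}, while the existence and invertibility of the inverse edge come from \ref{MS:Cartlifts} and, crucially, from \ref{MS:kan}: on the contractible Kan subcomplex spanned by $\Delta^{\{1,2\}}$ and its inverse, the generator \ref{MS:kan} marks every edge. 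Once the inverse is marked and the mediating triangle relating $\Delta^{\{0,2\}}$, the inverse, and $\Delta^{\{0,1\}}$ has been made thin, the composition generator \ref{MS:Compose} marks $\Delta^{\{0,1\}}$, which now appears as the long edge of a thin triangle whose two legs are marked. (The right-horn generator \ref{MS:nhorn} is the variant one invokes if one prefers to phrase the cancellation through the Cartesian/lifting form of the marked edges rather than through an explicit inverse.)

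Finally, I would assemble these steps into a single $\mathbf{MS}$-anodyne map $g$ of the enlarged marked-scaled set and exhibit $\theta$ as a retract of $g$, concluding by closure of the $\mathbf{MS}$-anodyne class under retracts. The main obstacle, and the step deserving genuine care, is precisely this bookkeeping: because $\theta$ is an isomorphism on underlying simplicial sets and merely adds a marking, it cannot be obtained from the generators except through a retract of a map that enlarges the underlying complex, and one must choose the enlargement and its collapse so that all three decorations (the two given marked edges, the thin triangle, and the edge to be marked) are respected simultaneously in both directions of the retract. This is exactly the constraint that defeats any composition-only approach and isolates \ref{MS:kan} as the indispensable ingredient; verifying it is the heart of the proof, the remaining fillings being routine applications of \ref{MS:inner} and \autoref{lem:Ui_MS-anodyne}.
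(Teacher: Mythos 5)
Your negative observations are sound: no sequence of \ref{MS:Compose}-pushouts inside $\Delta^2$ can mark $\Delta^{\{0,1\}}$, so some enlargement plus a retract (or a lifting argument) is unavoidable. But the mechanism you then propose cannot be repaired, for a structural reason. Any retract presentation of $\theta$ requires, on the target side, a decoration-preserving map $\beta\colon B \to (\Delta^2,\sharp,\sharp)$ together with a section $\alpha\colon (\Delta^2,\sharp,\sharp)\to B$ satisfying $\beta\alpha=\mathrm{id}$; in particular $\beta$ must restrict to the identity on the distinguished copy of $\Delta^2$ sitting inside $B$. Your $B$, however, contains a formal inverse of $\Delta^{\{1,2\}}$, i.e.\ an edge from (the image of) the vertex $2$ to (the image of) the vertex $1$. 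Since $\Delta^2$ is the nerve of a poset, it has no edge from $2$ to $1$, so no such $\beta$ can exist on underlying simplicial sets, independently of all decorations. Adjoining an inverse of $1\to 2$ therefore destroys, once and for all, the possibility of exhibiting $\theta$ as a retract of the resulting map; this is not delicate bookkeeping but an absolute obstruction, and it forces every auxiliary object in a correct argument to remain ``stratified over $[2]$,'' with all edges pointing weakly forward.

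There is a second, independent gap. The object spanned by $\Delta^{\{1,2\}}$, a formal inverse, and two witnessing triangles is not a Kan complex (for instance the inner horn of the alternating $3$-simplex of the walking isomorphism $J$ has no filler in it, degenerate or otherwise), and \ref{MS:kan} applies only to honest Kan complexes mapping in with thin triangles; the smallest relevant one, $J$, has nondegenerate simplices in every dimension. Worse, producing such a map by $\mathbf{MS}$-anodyne attachments is circular: the horn-filling generators \ref{MS:inner} and \ref{MS:nhorn} never mark the new edge they create, so marking the freshly adjoined inverse of the marked edge $\Delta^{\{1,2\}}$ is precisely another instance of the cancellation property the lemma asserts. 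For comparison, the paper itself proves nothing combinatorial here: it invokes the biscaled analogue \cite[Lem.~3.7]{AGS_CartI}. A workable route, consistent with the stratification constraint above, is to use that the weakly saturated hull of the set $\mathbf{MS}$ consists exactly of maps with the left lifting property against every morphism having the right lifting property against $\mathbf{MS}$, and then to solve that lifting problem by fibration-theoretic means: marked edges acquire Cartesian-type lifting properties from \ref{MS:nhorn} and \ref{MS:Cartlifts}, Cartesian edges satisfy left cancellation, fibrewise comparison edges are equivalences in the (fibrant) fibres and hence marked by \ref{MS:kan} applied to maps out of $J$ landing in a single fibre, and \ref{MS:Compose} then transfers the marking to $\Delta^{\{0,1\}}$ — note that in this argument $J$ maps into a fibre, never across the base, which is exactly what your construction violates.
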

\begin{proof}
	The proof follows exactly as in \cite[Lem. 3.7]{AGS_CartI}.
\end{proof}

Finally, in total analogy to the marked biscaled case, we can establish a pushout-product axiom, and thereby a model structure.

\begin{proposition}\label{prop:PPMS}
	Let $f:X\to Y$ be an $\mathbf{MS}$-anodyne morphism in $\mssSet$, and let $g:A\to B$ be a cofibration in $\mssSet$. The morphism
	\[
	\func{
		f\wedge g: X\times B\coprod_{X\times A} Y\times A \to Y\times B 
	}
	\]
	is $\mathbf{MS}$-anodyne.
\end{proposition}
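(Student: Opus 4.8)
The plan is to run the familiar two-variable reduction to generators and then dispatch the finitely many resulting cases by explicit filtration arguments, leaning on the pivot-point lemmas of the previous subsection and on the parallel computation already carried out for $\mbsSet$ in \cite{AGS_CartI}. First I would fix a cofibration $g$ and consider the class $\mathcal{S}_g$ of those morphisms $f$ for which $f\wedge g$ is $\mathbf{MS}$-anodyne. Since $-\wedge g$ preserves pushouts, transfinite compositions and retracts, and since the $\mathbf{MS}$-anodyne maps are by definition weakly saturated, the class $\mathcal{S}_g$ is itself weakly saturated; hence it suffices to treat the case in which $f$ is one of the generators \ref{MS:inner}--\ref{MS:kan}. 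Dually, with a generating $\mathbf{MS}$-anodyne $f$ fixed, the class of cofibrations $g$ with $f\wedge g$ $\mathbf{MS}$-anodyne is again weakly saturated, so it is enough to let $g$ run over a generating set of cofibrations. For $\mssSet$ these are the boundary inclusions $(\partial\Delta^n,\flat,\flat)\to(\Delta^n,\flat,\flat)$ together with the edge-marking map $(\Delta^1,\flat,\flat)\to(\Delta^1,\sharp,\flat)$ and the triangle-scaling map $(\Delta^2,\flat,\flat)\to(\Delta^2,\flat,\sharp)$. We are thereby reduced to verifying that $f\wedge g$ is $\mathbf{MS}$-anodyne for each pair consisting of a generator of $\mathbf{MS}$ and one of these three cofibrations.

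For the verification against a boundary inclusion I would decompose the relevant product $\Delta^n\times\Delta^m$ (or the appropriate pushout thereof, in the cases \ref{MS:0horn} and \ref{MS:nhorn}) into its nondegenerate simplices via the standard shuffle filtration, ordering them so that each is attached along a horn. The pairings with the inner-horn generator \ref{MS:inner} are then filled by a sequence of inner-horn pushouts, the pivot trick \autoref{lem:innerpivot} keeping track of which intermediate triangles are forced to be thin; the pairing with the Cartesian horn \ref{MS:nhorn} is handled identically using the right-handed variant \autoref{lem:Right_pivot}, and the outer generator \ref{MS:0horn} by its left-handed analogue. The decoration-adding cofibrations are comparatively mild: since such a $g$ is an isomorphism on underlying simplicial sets, $f\wedge g$ merely adjoins marked edges or thin triangles to finitely many simplices of the product, and each such adjunction can be realized as a pushout of one of the decoration-only generators \ref{MS:wonky4}, \ref{MS:Compose}, \ref{MS:composedeg4}, \ref{MS:composemarked5}, \ref{MS:kan}.

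I expect the main obstacle to lie in the pairs involving the Cartesian horn \ref{MS:nhorn}, the equivalence map \ref{MS:kan}, and the wonky four-simplex \ref{MS:wonky4}, where one must check that every simplex introduced in the filtration carries \emph{exactly} the marking and scaling required for the elementary filling to be of an allowed type --- in particular that the marked edge of \ref{MS:nhorn} and the thin triangles of \ref{MS:wonky4} propagate correctly across the product, and that the maximal Kan complex witnessing \ref{MS:kan} is preserved by the relevant projections. Since the marked-scaled setting is the degeneration of the marked-biscaled setting in which the lean and thin scalings coincide, essentially all of this bookkeeping has already been performed in \cite{AGS_CartI} for the generators \ref{mb:innerhorn}--\ref{mb:equivalences}. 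I would therefore import those filtrations wholesale, discarding the now-redundant lean scaling, and only re-examine by hand the finitely many cases where the identification $T_X=C_X$ genuinely alters which elementary move is available, thereby completing the proof without redoing the full combinatorial analysis.
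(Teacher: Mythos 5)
Your proposal is correct and lands exactly where the paper does: the paper's entire proof of \autoref{prop:PPMS} is the observation that every case is, \emph{mutatis mutandis}, the same as the corresponding case of \cite[Prop.~3.10]{AGS_CartI}, which is precisely your closing strategy of importing the marked-biscaled filtrations wholesale after collapsing the lean scaling onto the thin one. The preliminary reduction to generators and the shuffle/pivot machinery you sketch is the content of that cited proof, so your reconstruction is consistent with, and somewhat more explicit than, what the paper records.
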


\begin{proof}
	Every case is, mutatis mutandis, the same as the corresponding case in the proof of \cite[Prop. 3.10]{AGS_CartI}.
\end{proof}

As in the marked-biscaled case, we can immediately define several mapping spaces.

\begin{definition}
	Let $\overline{X}:=(X,E_X,T_X)$ be a fibrant marked-scaled simplicial set and $\overline{Y}:=(Y,E_Y,T_Y)$ any marked-scaled simplicial set. We can define a marked-scaled simplicial set $\Fun^{\mathbf{ms}}(\overline{Y},\overline{X})$ via the universal property
	\[
	\Hom_{\mssSet}(\overline{A},\Fun^{\mathbf{ms}}(\overline{Y},\overline{X}))\cong \Hom_{\mssSet}(\overline{A}\times \overline{Y},\overline{X}).
	\]
	It follows from the pushout-product that this is a fibrant marked-scaled simplicial set, and thus that the underlying scaled simplicial set is an $\infty$-bicategory. We denote this $\infty$-bicategory by $\Map_{\mathbf{ms}}(\overline{Y},\overline{X})$.
	
	We can similarly define 
	\begin{itemize}
		\item A marked simplicial set $\Map_{\mathbf{ms}}^{\on{th}}(\overline{Y},\overline{X})$ be the full subsimplicial set of $\Fun^{\mathbf{ms}}(\overline{Y},\overline{X})$ consisting of the thin triangles.
		\item A simplicial set $\Map_{\mathbf{ms}}^{\simeq}(\overline{Y},\overline{X})$, which consists of precisely the marked edges in $\Map_{\mathbf{ms}}^{\on{th}}(\overline{Y},\overline{X})$.
	\end{itemize} 
\end{definition}

Finally, we can establish the existence of the model structure:

\begin{theorem}\label{thm:markedscaledmodel}
	There is a left-proper combinatorial simplicial model category structure on $\mssSet$ uniquely characterized by the following properties:
	\begin{itemize}
		\item[C)] A morphism $f:X\to Y$ in $\mssSet$ is a cofibration if and only if it is a monomorphism on underlying simplicial sets.
		\item[F)] An object $X\in \mssSet$ is fibrant if and only if the unique map $X\to \Delta^0$ has the right lifting property with respect to the morphisms in $\mathbf{MS}$.  
	\end{itemize}
\end{theorem}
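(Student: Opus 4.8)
The strategy is to realize this as an instance of the standard machinery for cofibrantly generated combinatorial model structures, so that the proof runs in close parallel with \autoref{thm:MBModelStructure}. I would first record that $\mssSet$ is locally presentable --- being built from $\Set_\Delta$ by adjoining a collection of marked edges and a collection of thin triangles, it sits as an accessible, limit-closed subcategory of an evident presheaf category --- so that the monomorphisms are the weakly saturated closure of a set $I$ of decorated boundary inclusions, and every object is cofibrant. Next I would define the class $W$ of weak equivalences: a map $f\colon X\to Y$ lies in $W$ if and only if, for every object $Z$ having the right lifting property against $\mathbf{MS}$, the induced map of Kan complexes
\[
\Map_{\mathbf{ms}}^{\simeq}(Y,Z)\longrightarrow \Map_{\mathbf{ms}}^{\simeq}(X,Z)
\]
is a homotopy equivalence. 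With the cofibrations fixed as the monomorphisms and the weak equivalences fixed as $W$, the fibrations are forced, and the content of the theorem is that these classes assemble into a left-proper combinatorial model structure.

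To produce the model structure I would invoke the existence criterion for combinatorial model categories (Jeff Smith's theorem, in the form of \cite[Prop.~A.2.6.13]{HTT}), verifying its hypotheses exactly as in \cite{AGS_CartI}. The generating cofibrations are the set $I$; that $W$ satisfies two-out-of-three and is closed under retracts is immediate from the mapping-space definition, while its accessibility (perfectness) follows from the fact that the $\mathbf{MS}$-fibrant objects form an accessible subcategory, being defined by lifting against a set. The crucial input throughout is the pushout-product axiom \autoref{prop:PPMS}: it guarantees that whenever $Z$ is $\mathbf{MS}$-fibrant and $j$ is $\mathbf{MS}$-anodyne, the restriction map $\Map_{\mathbf{ms}}^{\simeq}(j,Z)$ is a trivial Kan fibration, so that every $\mathbf{MS}$-anodyne map lies in $W$ and is a trivial cofibration; the same axiom is what yields stability of $W$ under cobase change along monomorphisms, and hence left properness. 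Applied against the generating cofibrations of $\Set_\Delta$, \autoref{prop:PPMS} also provides the $\Set_\Delta$-enriched compatibility needed to upgrade the result to a \emph{simplicial} model structure.

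The main obstacle is to identify the abstractly defined trivial cofibrations with the saturation of $\mathbf{MS}$, equivalently to show that the fibrant objects are exactly those with the right lifting property against $\mathbf{MS}$. Here I would follow the template of \cite[\S 3.1.3]{HTT} and \cite{LurieGoodwillie}: using the small object argument relative to $\mathbf{MS}$ to construct fibrant replacements, one shows that a morphism between $\mathbf{MS}$-fibrant objects lies in $W$ precisely when it is a homotopy equivalence for the enrichment, and that a map with the right lifting property against $\mathbf{MS}$ which additionally lies in $W$ has the right lifting property against all monomorphisms. These two facts identify the fibrations to the terminal object with the $\mathbf{MS}$-fibrations and close the loop. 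Finally, uniqueness of the model structure given its cofibrations and fibrant objects is the standard fact that these two classes determine a model structure. I expect the homotopy-equivalence characterization of $W$ between fibrant objects --- where the generators \ref{MS:composedeg4} and \ref{MS:composemarked5} governing thin-scaling of degenerate and marked $3$-simplices, together with the Kan-complex generator \ref{MS:kan}, do their real work --- to be the most delicate point, just as in the marked-biscaled case.
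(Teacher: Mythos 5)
Your proposal is correct and takes essentially the same route as the paper, which states the theorem without a written proof, deferring ``in total analogy'' to the marked-biscaled case of \cite{AGS_CartI}: weak equivalences defined by mapping spaces into $\mathbf{MS}$-fibrant objects, the combinatorial existence criterion verified as in that reference, and \autoref{prop:PPMS} as the key input making $\mathbf{MS}$-anodyne maps trivial cofibrations and yielding left properness and the simplicial structure. Your choice of $\Map_{\mathbf{ms}}^{\simeq}$ to define the weak equivalences agrees with one of the equivalent characterizations the paper records in the remark immediately following the theorem, so your write-up is exactly the argument the paper intends.
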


\begin{remark}
	It is not hard to see that we can tensor $\mssSet$ over $\msSet$ and $\Set_\Delta$ in a way compatible with the enrichments provided by $\Map_{\mathbf{ms}}^{\on{th}}(-,-)$ and $\Map_{\mathbf{ms}}^{\simeq}(-,-)$, respectively. The latter of these provides the simplicial structure in the preceding proposition.
	
	The weak equivalences  in the model structure are precisely those $f:\overline{A}\to \overline{B}$, which satisfy the equivalent conditions for any fibrant marked-scaled simplicial set $\overline{X}$:
	\begin{itemize}
		\item The induced map 
		\[
		\func{\Map_{\mathbf{ms}}(\overline{B},\overline{X})\to \Map_{\mathbf{ms}}(\overline{A},\overline{X})}
		\]
		is a bicategorical equivalence.
		\item The induced map 
		\[
		\func{\Map_{\mathbf{ms}}^{\on{th}}(\overline{B},\overline{X})\to \Map_{\mathbf{ms}}^{\on{th}}(\overline{A},\overline{X})}
		\]
		is a weak equivalence of marked simplicial sets.
		\item The induced map 
		\[
		\func{\Map_{\mathbf{ms}}^{\simeq}(\overline{B},\overline{X})\to \Map_{\mathbf{ms}}^{\simeq}(\overline{A},\overline{X})}
		\]
		is a weak equivalence of Kan complexes.
	\end{itemize}
\end{remark}

It is not hard to see that the adjunction $D\dashv R$ can be promoted to a simplicial adjunction. By construction, $L$ preserves cofibrations and $R$ preserves fibrant objects, and thus we see that 

\begin{lemma}
	The adjunction 
	\[
	\adj{\mssSet}{\mbsSet}{D}{R}
	\]
	is a simplicial Quillen adjunction. 
\end{lemma}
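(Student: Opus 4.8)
The plan is to verify directly that $D$ is a left Quillen functor, since the sentence preceding the statement already records the two ingredients we need: $D$ preserves cofibrations and $R$ preserves fibrant objects. Recall that an adjunction is Quillen precisely when the left adjoint preserves cofibrations and trivial cofibrations. The first is immediate: a cofibration in either $\mssSet$ or $\mbsSet$ is exactly a monomorphism of underlying simplicial sets (by \autoref{thm:markedscaledmodel} and \autoref{thm:MBModelStructure}), and $D$ is the identity on underlying simplicial sets, merely adjoining the lean scaling $C_X=T_X$. Hence $D$ carries monomorphisms to monomorphisms. In particular every object of $\mssSet$ is cofibrant, and likewise in $\mbsSet$.

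It remains to show that $D$ preserves trivial cofibrations, and here I would argue through mapping complexes. First recall that, since $\mbsSet$ carries a simplicial model structure in which every object is cofibrant, a map is a weak equivalence if and only if it induces a weak homotopy equivalence on the simplicial mapping complexes $\Map^{\isom}_{\Delta^0}(-,X)$ into every fibrant $X$; the analogous statement for $\mssSet$ (via $\Map_{\mathbf{ms}}^{\simeq}$) is recorded in the remark following \autoref{thm:markedscaledmodel}. Now let $j:A\to B$ be a trivial cofibration in $\mssSet$. We have seen that $Dj$ is a cofibration, so only weak equivalence remains. Fix a fibrant $X\in\mbsSet$. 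Because $D\dashv R$ is simplicial, there is a natural isomorphism of Kan complexes
\[
\Map^{\isom}_{\Delta^0}(DK,X)\cong \Map_{\mathbf{ms}}^{\simeq}(K,RX).
\]
Since $R$ preserves fibrant objects, $RX$ is a fibrant marked-scaled simplicial set, and since $j$ is a weak equivalence, the map $\Map_{\mathbf{ms}}^{\simeq}(B,RX)\to \Map_{\mathbf{ms}}^{\simeq}(A,RX)$ is a weak equivalence of Kan complexes. Transporting along the displayed isomorphism shows that $\Map^{\isom}_{\Delta^0}(DB,X)\to \Map^{\isom}_{\Delta^0}(DA,X)$ is a weak equivalence for every fibrant $X$, whence $Dj$ is a weak equivalence in $\mbsSet$. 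Thus $Dj$ is a trivial cofibration, $D$ is left Quillen, and $(D,R)$ is a Quillen adjunction; it is simplicial because the adjunction has already been promoted to a simplicial one.

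The verification above is formal once its two inputs are granted, so the genuine content sits in the assertion that $R$ preserves fibrant objects, and this is the step I expect to require real care. By adjunction, $RX$ is fibrant as soon as $X$ has the right lifting property against $D(\mathbf{MS})$, so the claim reduces to checking that $D$ carries each generating $\mathbf{MS}$-anodyne map to an $\mathbf{MB}$-anodyne map. For the marking and inner-horn generators this is a direct comparison of the two lists, but the outer and saturation generators \ref{MS:0horn}, \ref{MS:nhorn}, \ref{MS:composedeg4}, \ref{MS:composemarked5} produce a triangle that $D$ renders simultaneously thin and lean, whereas the corresponding $\mathbf{MB}$-generator scales it only lean. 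One bridges this gap by post-composing with a generator of type \ref{mb:coCartoverThin} (and, where needed, \ref{mb:innersaturation}), so that $D$ of the generator is exhibited as an iterated pushout of $\mathbf{MB}$-anodyne maps.

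A secondary point of care is purely bookkeeping of the enrichment: one must work with the $\simeq$/$\isom$ mapping complexes that actually furnish the simplicial structures on the two categories, so that the simplicial adjunction isomorphism is precisely the one detecting weak equivalences in both model structures. With that alignment in place, the argument closes, and no genuinely new estimate is needed beyond the two inputs supplied by the construction of $D$ and $R$.
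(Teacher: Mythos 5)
Your proof is correct and takes essentially the same approach as the paper, whose one-line argument rests on exactly the two inputs you isolate --- $D$ preserves cofibrations and $R$ preserves fibrant objects --- with the passage to a simplicial Quillen adjunction running, as in your mapping-complex argument, through the enriched adjunction and the detection of weak equivalences by mapping into fibrant objects. Your generator-by-generator check that $D$ carries $\mathbf{MS}$-anodynes to $\mathbf{MB}$-anodynes (bridging (MS3), (MS4), (MS7), (MS8) via pushouts of the generator (S2)) simply makes explicit what the paper leaves as ``by construction.''
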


Further, we can define an adjunction 
\[
\adj{\Set_\Delta^{\mathbf{sc}}}{\mssSet}{(-)^\flat}{G}
\] 
where $G(X,E_X,T_X)=(X,T_X)$.

\begin{lemma}\label{lem:ms-sc_adjunction}
	The adjunction 
	\[
	\adj{\Set_\Delta^{\mathbf{sc}}}{\mssSet}{(-)^\flat}{G}
	\] 
	is a Quillen adjunction.
\end{lemma}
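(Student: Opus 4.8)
The plan is to verify directly the two defining conditions for $(-)^\flat$ to be a left Quillen functor: preservation of cofibrations and of trivial cofibrations. The first is immediate, since the cofibrations in both $\Set_\Delta^{\mathbf{sc}}$ and $\mssSet$ are exactly the monomorphisms on underlying simplicial sets, and $(-)^\flat$ leaves the underlying simplicial set unchanged. Thus the entire content of the lemma lies in the trivial cofibrations, and my strategy is to reduce that to the behaviour of the right adjoint $G$ on fibrant objects, in parallel with the preceding lemma for $D \dashv R$.

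First I would compare generating anodyne families. Applying $(-)^\flat$ to the generating scaled anodyne maps $\sS$ of \autoref{def:scanodyne}, one checks directly that each of the three families lands in $\mathbf{MS}$: the inner horns become the maps \ref{MS:inner}, the $4$-simplex saturation map becomes \ref{MS:wonky4}, and the outer horns become the maps \ref{MS:0horn} (the range $n\geq 3$ there being contained in the range $n\geq 2$ allowed by \ref{MS:0horn}). Consequently $(-)^\flat$ carries scaled anodyne maps to $\mathbf{MS}$-anodyne maps. From this I would deduce that $G$ preserves fibrant objects: given a fibrant $(X,E_X,T_X)\in\mssSet$ and a lifting problem of $G(X,E_X,T_X)=(X,T_X)\to \Delta^0_\sharp$ against some $s\in\sS$, the adjunction $(-)^\flat\dashv G$ transposes it to a lifting problem of $(X,E_X,T_X)\to\Delta^0$ against $s^\flat$, which is $\mathbf{MS}$-anodyne; as $(X,E_X,T_X)$ is fibrant the transposed lift exists, so $(X,T_X)$ has the right lifting property against $\sS$ and is therefore an $\infty$-bicategory.

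It then remains to pass from "$(-)^\flat$ preserves cofibrations and $G$ preserves fibrant objects" to the Quillen condition, and this is the step requiring care. For arbitrary model categories this implication is not formal; here it holds because the weak equivalences in these model structures are detected by (derived) mapping objects into fibrant objects. Concretely, using the simplicial enrichment and the natural isomorphism $\Map\bigl((-)^\flat(j),Z\bigr)\cong \Map\bigl(j,G(Z)\bigr)$, for any trivial cofibration $j$ in $\Set_\Delta^{\mathbf{sc}}$ and any fibrant $Z\in\mssSet$ the object $\Map(j,G(Z))$ is a trivial fibration, since $G(Z)$ is fibrant and $j$ is a trivial cofibration; hence $\Map\bigl((-)^\flat(j),Z\bigr)$ is a trivial fibration for all fibrant $Z$, which forces $(-)^\flat(j)$ to be a weak equivalence. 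Combined with its being a cofibration, $(-)^\flat(j)$ is a trivial cofibration, and the adjunction is Quillen.

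The main obstacle is thus not any of the explicit computations --- the generator comparison is routine, and $G$ visibly preserves fibrant objects --- but rather ensuring that the criterion "preserves cofibrations $+$ preserves fibrant objects" is legitimately available in this setting, i.e. that the weak equivalences are genuinely characterized by the relevant mapping objects. If one wishes to avoid relying on that characterization, the same conclusion can be obtained by transporting along the already-established Quillen adjunction $D\dashv R$: the composite $D\circ(-)^\flat$ is the evident left adjoint $\Set_\Delta^{\mathbf{sc}}\to\mbsSet$ sending $(X,T_X)$ to $(X,\flat,T_X\subseteq T_X)$, and one deduces that $(-)^\flat$ preserves trivial cofibrations from the corresponding statement in $\mbsSet$, whose model structure was related to $\Set_\Delta^{\mathbf{sc}}$ in \cite{AGS_CartI}.
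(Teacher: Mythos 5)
Your proposal is correct and follows essentially the same route as the paper: both arguments rest on the isomorphism $\on{Map}_{\mathbf{sc}}(A,G(Z))\cong\on{Map}_{\mathbf{ms}}(A^\flat,Z)$, the fact that $G$ preserves fibrant objects, and the detection of weak equivalences in $\mssSet$ by mapping into fibrant objects. The only differences are cosmetic: the paper checks that $(-)^\flat$ preserves all weak equivalences rather than just the trivial cofibrations, and it dismisses the fibrancy of $G(Z)$ as easy to see, a step your generator-by-generator comparison of the scaled anodyne and $\mathbf{MS}$-anodyne maps makes explicit.
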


\begin{proof}
	It is immediate that $(-)^\flat$ preserves cofibrations.  Suppose that $f:(X,T_X)\to (Y,T_Y)$ is a weak equivalence. Let $(Z,E_Z,T_Z)$ be a fibrant object in $\mssSet$. It is easy to see that $G(Z,E_Z,T_Z)=(Z,T_Z)$ is a fibrant object in $\Set_\Delta^{\mathbf{sc}}$. We can then note that, by definition, there is an isomorphism of mapping scaled simplicial sets 
	\[
	\on{Map}_{\mathbf{sc}}((X,T_X),(Z,T_Z))\cong \on{Map}_{\mathbf{ms}}((X,\flat,T_X),(Z,E_Z,T_Z)).
	\] 
	Thus, since $f$ induces a bicategorical equivalence 
	\[
	\on{Map}_{\mathbf{sc}}((Y,T_Y),(Z,T_Z))\to \on{Map}_{\mathbf{sc}}((X,T_X),(Z,T_Z))
	\]
	we see that the map
	\[
	\on{Map}_{\mathbf{ms}}((Y,\flat,T_Y),(Z,E_Z,T_Z))\to \on{Map}_{\mathbf{ms}}((X,\flat,T_X),(Z,E_Z,T_Z))
	\]
	induced by $(f)^\flat$ is also an equivalence. We therefore see that $(f)^\flat$ is a weak equivalence in $\mssSet$, as desired.
\end{proof}

\begin{lemma}\label{lem:GpresWEs}
	The functor $G$ preserves weak equivalences. 
\end{lemma}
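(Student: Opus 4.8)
The plan is to bootstrap from the Quillen adjunction of \autoref{lem:ms-sc_adjunction}. Since $(-)^\flat$ is left Quillen, $G$ is right Quillen, so Ken Brown's lemma already yields that $G$ preserves weak equivalences between fibrant marked-scaled simplicial sets. To extend this to an arbitrary weak equivalence $f\colon \overline{A}\to\overline{B}$, I would fix a functorial fibrant replacement $(-)\to\mathrm{Fib}(-)$ in $\mssSet$ and apply $G$ to the resulting naturality square: its top edge $\mathrm{Fib}(f)$ is a weak equivalence of fibrant objects, so $G(\mathrm{Fib}(f))$ is a scaled weak equivalence, and by two-out-of-three the whole statement reduces to showing that $G$ carries each fibrant-replacement map $\overline{A}\to\mathrm{Fib}(\overline{A})$ to a scaled weak equivalence. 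As $G$ is computed on underlying simplicial sets it preserves monomorphisms, colimits and retracts, so every such trivial cofibration is sent to a retract of a transfinite composite of pushouts of the maps $G(i)$ for $i\in\mathbf{MS}$; since scaled trivial cofibrations are weakly saturated, it is enough to check that $G$ sends each generating $\mathbf{MS}$-anodyne map to a scaled weak equivalence.

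For most generators this is immediate. Forgetting the marking takes the inner-horn maps \ref{MS:inner} to the scaled inner horns of \autoref{def:scanodyne}, and the four-simplex map \ref{MS:wonky4} to the scaled-anodyne map of the same definition; the Kan-complex maps \ref{MS:kan} become identities on underlying scaled simplicial sets; and the composition generators \ref{MS:Compose}, \ref{MS:composedeg4}, \ref{MS:composemarked5} become scaling enlargements which are scaled weak equivalences by the arguments establishing the scaled model structure. In each of these cases $G(i)$ is thus a scaled trivial cofibration, and no further work is needed.

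The hard part will be the Cartesian-type generators \ref{MS:0horn}, \ref{MS:nhorn} and \ref{MS:Cartlifts}. After forgetting the marking these become (collapsed) outer-horn inclusions and a vertex inclusion, none of which is scaled-anodyne, so one cannot simply appeal to \autoref{def:scanodyne}; the marking that $G$ discards is precisely the data forcing the relevant edges to behave as equivalences. I would attempt to treat these by passing through the Quillen equivalence $D\dashv R$ with $\mbsSet$ and the identification of the $\mbsSet$-model structure over $\Delta^0$ with $\infty$-bicategories, where the marked edges of a fibrant object are exactly the equivalences: the aim would then be to realize the underlying scaled inclusions as scaled weak equivalences by producing explicit scaled homotopy inverses from the coherence data generated during fibrant replacement. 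I expect this verification, rather than the formal reduction above, to be the crux of the argument and to absorb the bulk of the work.
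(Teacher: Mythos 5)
Your formal reductions (Ken Brown's lemma, functorial fibrant replacement, weak saturation) are all valid, but they funnel the entire proof into a claim that is not merely hard but false: $G$ does \emph{not} send the marked generators of $\mathbf{MS}$ to weak equivalences of scaled simplicial sets. The cleanest failure is \ref{MS:Cartlifts}: its image under $G$ is the terminal-vertex inclusion $\Delta^0 \to \Delta^1_\flat$, and $\mathfrak{C}^{\sc}[\Delta^1_\flat]$ is the walking arrow $[1]$, which is not equivalent to $[0]$ as a $\Set_\Delta^+$-enriched category; so $G$ of this generator is a cofibration that is not a weak equivalence, and no construction of homotopy inverses can change that. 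The generator \ref{MS:nhorn} fails similarly, and --- contrary to your classification of it as immediate --- so does \ref{MS:composemarked5}: once the marking on $\Delta^{\{2,3\}}$ is forgotten, the map $(\Delta^3,U_3)\to(\Delta^3,\sharp)$ is not a scaled weak equivalence, because whiskering a 2-morphism with a non-invertible 1-morphism does not reflect invertibility (map $\Delta^3$ into the scaled nerve of a 2-category so that the last edge is a functor to a terminal category). (Conversely \ref{MS:0horn}, which you list among the hard cases, is harmless: its marking is flat, so $G$ takes it to a scaled anodyne map.) The obstruction is conceptual rather than technical: your scheme, if completable, would prove that $G$ preserves \emph{all} weak equivalences of $\mssSet$, and that statement fails --- $(\Delta^1)^\sharp_\sharp \to \Delta^0$ is a weak equivalence (its section is exactly \ref{MS:Cartlifts}, hence $\mathbf{MS}$-anodyne; apply 2-out-of-3), while its image $\Delta^1_\flat \to \Delta^0$ is not. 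The marking of a non-fibrant marked-scaled simplicial set is genuine homotopical data which $G$ destroys, so no generator-by-generator analysis of the forgetful functor can succeed.

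The paper's proof accordingly takes a completely different, non-saturation route, calibrated to the situation in which the lemma is actually applied (the fibrant-replacement maps of flat-marked objects in \autoref{thm:QE_mb_ms_sc}): it shows that every $\infty$-bicategory $(Z,T_Z)$ underlies a \emph{fibrant} marked-scaled simplicial set $(Z,E_Z,T_Z)$, obtained by marking the edges of the maximal Kan complex of $Z^{\on{th}}$, with fibrancy checked against the generators using \cite{AGS_CartI}. Given such lifts, weak equivalences on both sides are detected by the same mapping spaces: by the proof of \autoref{lem:ms-sc_adjunction} one has $\on{Map}_{\mathbf{sc}}((X,T_X),(Z,T_Z))\cong \on{Map}_{\mathbf{ms}}((X,\flat,T_X),(Z,E_Z,T_Z))$, and for fibrant targets the marking carries no extra information, since marked edges are precisely the equivalences and every scaled map of $\infty$-bicategories preserves equivalences. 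That last fact is exactly what is invisible at the level of generating anodynes, and it is why your approach stalls where it does; to repair your write-up, restrict the statement to weak equivalences out of flat-marked objects and run this mapping-space argument instead of a saturation argument.
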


\begin{proof}
	If, for any $\infty$-bicategory $(Z,T_Z)$, there exists a set $E_Z$ of marked edges for $Z$ such that  $(Z,E_Z,T_Z)$ is a fibrant marked-scaled simplicial set, then this follows from the characterization in terms of mapping $\infty$-bicategories. 
	
	To see that this is the case, let $(Z,T_Z)$ be an $\infty$-bicategory. Then $Z^{\on{th}}$ is an $\infty$-category, and so we can define a marking $E_Z$ on $Z$ by declaring an edge to be marked if it lies in the maximal Kan complex in $Z^{\on{th}}$. From the definition, it is immediate that $(Z,E_Z,T_Z)$ has the extension property with respect to \ref{MS:inner}, \ref{MS:wonky4},\ref{MS:0horn},\ref{MS:Cartlifts},\ref{MS:Compose}, and \ref{MS:kan}. 
	
	It follows from \cite[Cor 4.20 ]{AGS_CartI} and \cite[Cor 4.23 ]{AGS_CartI} that  $Z\to \Delta^0$ is a 2-Cartesian fibration in which the strongly Cartesian edges are precisely the equivalences, and so we see that $(Z,E_Z,T_Z)$ has the extension property with respect to \ref{MS:nhorn}, \ref{MS:composedeg4}, and \ref{MS:composemarked5} as well.
\end{proof}

\begin{lemma}\label{lem:MS_max_Kan}
	Given a fibrant marked-scaled simplicial set $(Y,E_Y,T_Y)$, the full simplicial subset $Y^{\simeq}$ on the marked edges and scaled triangles is a Kan complex.
\end{lemma}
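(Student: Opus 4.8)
The plan is to invoke the classical criterion of Joyal: a quasi-category whose homotopy category is a groupoid is a Kan complex (see \cite{HTT}). Accordingly the argument splits into two parts --- first showing that $Y^\simeq$ is a quasi-category, and then showing that every edge of $Y^\simeq$ becomes invertible in $hY^\simeq$. The governing principle throughout is that any horn filler we obtain by fibrancy of the ambient object $Y$ must be checked to factor through $Y^\simeq$; since $Y^\simeq$ is full on marked edges and thin triangles, this amounts to verifying that every new edge of the filler is marked and every new $2$-face is thin.

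First I would establish inner horn filling. Given $\Lambda^n_k\to Y^\simeq$ with $0<k<n$, all its edges are marked and all its triangles are thin, so in particular $\Delta^{\{k-1,k,k+1\}}$ is thin; fibrancy of $Y$ against \ref{MS:inner} produces a filler $\sigma\colon\Delta^n\to Y$. For $n\geq 4$ no new edges or $2$-faces are created and $\sigma$ already lies in $Y^\simeq$. For $n=3$ the only new $2$-face is the missing face $d_k$, which I would scale thin by applying fibrancy against \autoref{lem:Ui_MS-anodyne} with $i=k$, the other three faces being thin. For $n=2$ the new edge $\Delta^{\{0,2\}}$ is seen to be marked by applying fibrancy against \ref{MS:Compose}, since $\Delta^{\{0,1\}}$ and $\Delta^{\{1,2\}}$ are marked and the (filled) triangle is thin. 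In every case $\sigma$ factors through $Y^\simeq$, so $Y^\simeq$ is a quasi-category.

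Next I would show every edge $f\colon x\to y$ of $Y^\simeq$ is invertible in $hY^\simeq$; as every edge of $Y^\simeq$ is marked, this makes $hY^\simeq$ a groupoid. Realize $f$ as the face $\Delta^{\{1,2\}}$ and $\mathrm{id}_y$ as the face $\Delta^{\{0,2\}}$ of a horn $\Lambda^2_2\to Y$ (sending $0,2\mapsto y$ and $1\mapsto x$); fibrancy against \ref{MS:nhorn} then yields a thin $2$-simplex whose remaining edge $g:=\Delta^{\{0,1\}}\colon y\to x$ satisfies $f\circ g\simeq \mathrm{id}_y$. Applying fibrancy against the $\mathbf{MS}$-anodyne morphism $\theta$ (whose source has $\Delta^{\{1,2\}}$ and $\Delta^{\{0,2\}}$ marked) to this thin simplex shows that $g$ is marked, and the witnessing triangle then lies in $Y^\simeq$. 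Thus every marked edge admits a marked right inverse. Applying this once more to $g$ produces a marked $h\colon x\to y$ with $g\circ h\simeq\mathrm{id}_x$, and the formal computation $h\simeq (f\circ g)\circ h\simeq f\circ(g\circ h)\simeq f$ in $hY^\simeq$ gives $g\circ f\simeq\mathrm{id}_x$; hence $f$ is invertible.

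Combining the two parts, $Y^\simeq$ is a quasi-category with groupoidal homotopy category, and Joyal's criterion finishes the proof. I expect the main obstacle to be the bookkeeping in the first part: confirming that the fillers produced in $Y$ genuinely descend to $Y^\simeq$, which is precisely where the saturation lemma \autoref{lem:Ui_MS-anodyne} and the composition-type generators \ref{MS:Compose} (together with $\theta$) are indispensable. The invertibility argument is comparatively formal once one knows that marked edges are preserved by the relevant fillings.
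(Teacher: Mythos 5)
Your proof is correct, but it takes a genuinely different route from the paper's. The paper's entire proof is the observation that $(Y^{\on{th}},E_Y)$ --- the full simplicial subset of $Y$ on the thin triangles, with its inherited marking --- is a fibrant marked simplicial set, i.e.\ a quasi-category marked by its equivalences; the lemma then follows because $Y^{\simeq}$ is exactly the core of that quasi-category, and the core of a naturally marked quasi-category is a Kan complex by the theory of \cite[\S 3.1]{HTT}. You bypass the intermediate object $(Y^{\on{th}},E_Y)$ and the Cartesian model structure on $\msSet$ entirely, and instead verify Joyal's criterion directly on $Y^{\simeq}$: inner horns fill (via \ref{MS:inner}, with \autoref{lem:Ui_MS-anodyne} scaling the missing face when $n=3$ and \ref{MS:Compose} marking the missing edge when $n=2$), and every edge becomes invertible in $hY^{\simeq}$ (via \ref{MS:nhorn} to produce a one-sided inverse, the map $\theta$ to see that it is marked so that the witnessing triangle descends to $Y^{\simeq}$, and the standard two-sided-inverse argument). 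The trade-off is clear: the paper's proof is a one-liner that reuses established machinery, but its ``immediate from the definitions'' silently contains essentially the checks you carry out --- that $Y^{\on{th}}$ admits inner horn fillers is your $n=3$ scaling argument, and that the marked edges are precisely the equivalences is your invertibility argument. Your version is longer but self-contained, making the dependence on the individual $\mathbf{MS}$-generators explicit; both routes ultimately rest on the same theorem of Joyal, that a quasi-category whose homotopy category is a groupoid is a Kan complex.
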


\begin{proof}
	It is immediate from the definitions that $(Y^{\on{th}},E_Y)$ is a fibrant marked simplicial set, and the lemma follows.
\end{proof}

We now can state and prove the main proposition of this section. 

\begin{theorem}\label{thm:QE_mb_ms_sc}
	The Quillen adjunctions 
	\[
	\adj{\mssSet}{\mbsSet}{D}{R}
	\]
	and
	\[
	\adj{\Set_\Delta^{\mathbf{sc}}}{\mssSet}{(-)^\flat}{G}
	\] 
	are Quillen equivalences.
\end{theorem}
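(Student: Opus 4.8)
The strategy is to use the standard criterion that a Quillen adjunction is a Quillen equivalence exactly when its derived unit is a weak equivalence on every cofibrant object and its derived counit is a weak equivalence on every fibrant object. Since in $\scsSet$, $\mssSet$, and $\mbsSet$ the cofibrations are the monomorphisms, \emph{every} object is cofibrant; thus the derived-unit condition must be checked on all objects, while the derived-counit condition is tested only on fibrant objects. The decisive simplification is that \emph{both} adjunctions have an invertible unit: one computes directly that $G\circ(-)^\flat=\mathrm{id}_{\scsSet}$ (as $G(X,\flat,T_X)=(X,T_X)$) and $R\circ D=\mathrm{id}_{\mssSet}$ (as $R$ discards the lean scaling that $D$ sets equal to the thin scaling). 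Hence the entire proof reduces to analyzing the two counits on fibrant objects.

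For $D\dashv R$ the unit is the identity, so the derived unit is a weak equivalence. For the counit $\epsilon\colon DR\Rightarrow\mathrm{id}_{\mbsSet}$, evaluate at a fibrant $\overline{Y}=(Y,E_Y,T_Y\subseteq C_Y)$. Because $\overline{Y}$ is a $2$-Cartesian fibration over $\Delta^0$, every triangle of $Y$ lies over the thin degenerate triangle of $\Delta^0$, so the generator \ref{mb:coCartoverThin} forces every lean triangle to be thin; combined with $T_Y\subseteq C_Y$ this yields $T_Y=C_Y$. Consequently $DR\,\overline{Y}=(Y,E_Y,T_Y\subseteq T_Y)=\overline{Y}$ and $\epsilon_{\overline{Y}}$ is an isomorphism. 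As all objects are cofibrant, the derived counit at $\overline{Y}$ is $\epsilon_{\overline{Y}}$, so it is a weak equivalence and $D\dashv R$ is a Quillen equivalence.

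For $(-)^\flat\dashv G$ the unit is again the identity, and the derived unit at $(X,T_X)$ is the map $(X,T_X)=G(X,\flat,T_X)\to G\bigl((X,\flat,T_X)^{\mathrm{fib}}\bigr)$ obtained by applying $G$ to a trivial cofibration, hence a weak equivalence by \autoref{lem:GpresWEs}. For the counit at a fibrant $\overline{Z}=(Z,E_Z,T_Z)$ we must show that $j\colon(Z,\flat,T_Z)\to(Z,E_Z,T_Z)$ is a weak equivalence, and the plan is to exhibit it as $\mathbf{MS}$-anodyne. Let $Z^\simeq\subseteq Z$ be the simplicial subset spanned by the marked edges and the thin triangles among them; by \autoref{lem:MS_max_Kan} it is a Kan complex, its edge set is precisely $E_Z$, and its triangles lie in $T_Z$ by construction. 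Then $j$ is the pushout of the generating $\mathbf{MS}$-anodyne map \ref{MS:kan}, namely $(Z^\simeq,\flat,\sharp)\to(Z^\simeq,\sharp,\sharp)$, along the inclusion $(Z^\simeq,\flat,\sharp)\hookrightarrow(Z,\flat,T_Z)$; hence $j$ is $\mathbf{MS}$-anodyne and in particular a trivial cofibration. Thus the derived counit is a weak equivalence and $(-)^\flat\dashv G$ is a Quillen equivalence.

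The only genuinely nontrivial point is the counit of $(-)^\flat\dashv G$, which encodes the fact that in a fibrant marked-scaled simplicial set the marking is forced to consist of precisely the equivalences; the argument rests entirely on the identification of $Z^\simeq$ as a Kan complex with edge set $E_Z$ supplied by \autoref{lem:MS_max_Kan}, together with the equivalence-marking generator \ref{MS:kan}. As a consistency check, one could alternatively deduce either equivalence from the other: the composite of left adjoints $D\circ(-)^\flat$ sends $(X,T_X)\mapsto(X,\flat,T_X\subseteq T_X)$, which is the left Quillen equivalence $\scsSet\simeq\mbsSet$ recorded at the start of this section via \cite{AGS_CartI}, so the two-out-of-three property for Quillen equivalences relates the two adjunctions and recovers the cited equivalence.
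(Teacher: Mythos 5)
Your treatment of $(-)^\flat \dashv G$ is correct and is essentially the paper's own argument: identity unit, derived unit handled by applying $G$ to a fibrant replacement and invoking \autoref{lem:GpresWEs}, and the counit $(Z,\flat,T_Z)\to (Z,E_Z,T_Z)$ exhibited as a pushout of the generator \ref{MS:kan} along $(Z^\simeq,\flat,\sharp)\hookrightarrow (Z,\flat,T_Z)$ via \autoref{lem:MS_max_Kan}. The gap is in your direct verification for $D\dashv R$, specifically the sentence ``the unit is the identity, so the derived unit is a weak equivalence.'' The derived unit at $X$ is not the unit: it is the composite $X=RDX\to R\bigl((DX)^{\mathrm{fib}}\bigr)$, i.e.\ $R$ applied to a fibrant replacement of $DX$, and $R$ --- unlike $G$ --- does \emph{not} preserve weak equivalences. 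Indeed, $R$ applied to the \textbf{MB}-anodyne generator \ref{mb:coCartoverThin} is the map $(\Delta^2,\flat,\flat)\to(\Delta^2,\flat,\sharp)$, which is not a weak equivalence in $\mssSet$ (it is the inclusion of the free-living $2$-morphism into the free-living invertible $2$-morphism). So there is no analogue of \autoref{lem:GpresWEs} for $R$, and the step you carried out carefully in the $G$ case is exactly the step you skipped here. Your counit computation for $D\dashv R$ (fibrancy against \ref{mb:coCartoverThin} forces $T_Y=C_Y$, so the counit is an isomorphism at fibrant objects) is correct, but on its own it only makes the right derived functor of $R$ fully faithful, not an equivalence.

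The fix is something you already wrote down but demoted to a ``consistency check'': prove $(-)^\flat\dashv G$ directly (as you did), cite \cite[Thm 3.39]{AGS_CartI} for the composite adjunction $D\circ(-)^\flat\dashv G\circ R$ being a Quillen equivalence, and conclude that $D\dashv R$ is a Quillen equivalence by two-out-of-three. This is precisely the paper's proof, and it should be your actual argument for $D\dashv R$ rather than a remark. Alternatively, the direct route can be salvaged, but it requires genuine extra content that your proof does not contain: one must show that $D$ carries fibrant objects of $\mssSet$ to fibrant objects of $\mbsSet$ (a generator-by-generator check, using for instance \autoref{lem:Ui_MS-anodyne} to handle \ref{mb:innersaturation}); then every fibrant $X$ satisfies $X=R(DX)$ with $DX$ fibrant, which gives essential surjectivity of the right derived functor of $R$ and, combined with your counit computation, the equivalence.
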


\begin{proof}
	By \cite[Thm 3.39]{AGS_CartI}, the composite adjunction $D\circ (-)^\flat\dashv G\circ R$ is a Quillen equivalence. It thus suffices for us to check that the adjunction $(-)^{\flat}\dashv G$ is a Quillen equivalence. We will check explicitly that the derived adjunction unit and counit are equivalences. 
	
	First, let $(X,T_X)\in \Set_\Delta^{\mathbf{sc}}$. The derived adjunction unit on $(X,T_X)$ is the composite
	\[
	\func{
		(X,T_X) \to G(X,\flat,T_X) \to G((X,\flat,T_X)^{\on{fib}})
	}
	\]
	where the superscript $\on{fib}$ denotes fibrant replacement. The first of these maps is the identity (since $G(X,\flat,T_X)=(X,T_X)$) and the latter is the image under $G$ of an equivalence of marked-scaled simplicial sets. By \autoref{lem:GpresWEs}, this is an equivalence. 
	
	Now, let $(Y,E_Y,T_Y)\in \mssSet$ be a fibrant object. The derived adjunction counit on $(Y,E_Y,T_Y)$ is the composite
	\[
	\func{
		(G(Y,E_Y,T_Y)^{\on{cof}})^\flat\to G(Y,E_Y,T_Y)^\flat \to[\eta_Y] (Y,E_Y,T_Y)
	}
	\]
	Since every scaled simplicial set is cofibrant, the first map is an isomorphism, leaving us to check that the usual adjunction counit $\eta_Y$ is an equivalence. Note that $\eta_Y$ is simply the inclusion $(Y,\flat,T_Y)\to (Y,E_Y,T_Y)$.
	
	We have a pushout square 
	\[
	\begin{tikzcd}
		(Y^{\simeq},\flat,\sharp)\arrow[r,"\psi"]\arrow[d,hookrightarrow] & (Y^\simeq,\sharp, \sharp)\arrow[d,hookrightarrow] \\
		(Y,\flat,T_Y) \arrow[r,"\eta_Y"'] & (Y,E_Y,T_Y) 
	\end{tikzcd}
	\]
	and, by \autoref{lem:MS_max_Kan} the morphism $\psi$ is a morphism in $\mathbf{MS}$ of type \ref{MS:kan}. Thus, $\eta_Y$ is $\mathbf{MS}$-anodyne, and is a weak equivalence.
\end{proof}

\subsubsection{The $\Set_\Delta^+$-enrichment on $\Set_\Delta^{\mathbf{ms}}$}

We have already constructed a model structure on the category $\Set_\Delta^{\mathbf{ms}}$ of marked-scaled simplicial sets, and shown that it is a simplicial model category with respect to the mapping spaces $\Map^{\simeq}_{\mathbf{ms}}(-,-)$. However, we will need to consider $\Set_\Delta^+$-enriched functors in our analysis of the Grothendieck construction. Our aim in this section is therefore to show that our model structure can, additionally, be viewed as $\msSet$-enriched. The following lemma constitutes an easy first check in this direction.

\begin{lemma}
	The category $\Set_\Delta^{\mathbf{ms}}$ is powered and tensored over $\Set_\Delta^+$ via the maps 
	\[
	\func*{
		\Set_\Delta^+\times \Set_\Delta^{\mathbf{ms}}\to \Set_\Delta^{\mathbf{ms}};
		(K,X) \mapsto K_\sharp \times X 
	}
	\]
	and 
	\[
	\func*{{[-,-]}: \Set_\Delta^+\times \Set_\Delta^{\mathbf{ms}} \to \Set_\Delta^{\mathbf{ms}}; 
		(K,X) \mapsto \Fun^{\mathbf{ms}}(K_\sharp,X) }
	\]
	The tensoring and powering is compatible with the mapping spaces $\Map^{\on{th}}_{\mathbf{ms}}(-,-)$. 
\end{lemma}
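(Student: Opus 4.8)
The plan is to deduce everything formally from two ingredients already in hand: the universal property defining the internal hom $\Fun^{\mathbf{ms}}$, and the enrichment formula $\Hom_{\msSet}(A,\mssSet(Y,Z))=\Hom_{\mssSet}(A_\sharp\times Y,Z)$. Write $\Fun^+(-,-)$ for the internal hom of $\msSet$ with respect to the cartesian product (the marked function complex), which exists because $\msSet$ is cartesian closed. I first record two purely formal facts. First, $\mssSet$ is itself cartesian closed, so that $\Fun^{\mathbf{ms}}(K_\sharp,-)$ is defined on \emph{all} of $\mssSet$ and is right adjoint to $K_\sharp\times(-)$; the fibrancy hypothesis in the original definition of $\Fun^{\mathbf{ms}}$ is needed only to guarantee that the \emph{output} is an $\infty$-bicategory, not for the construction itself. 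Second, the functor $(-)_\sharp\colon\msSet\to\mssSet$ is strong monoidal for the cartesian products, i.e. $A_\sharp\times K_\sharp=(A\times K)_\sharp$, since in both cases the marking is the product marking and both objects are maximally scaled. The whole lemma then reduces to establishing three natural isomorphisms of marked simplicial sets, each proved by the marked Yoneda lemma (testing on the representables $(\Delta^n)^\flat$ and $(\Delta^1)^\sharp$).

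The crux is the identification $\mssSet(Y,Z)\cong\Map^{\on{th}}_{\mathbf{ms}}(Y,Z)$ of the $\msSet$-valued enriched hom with the thin mapping complex. For a marked simplicial set $A$, the enrichment formula and the universal property give
\[
\Hom_{\msSet}(A,\mssSet(Y,Z))=\Hom_{\mssSet}(A_\sharp\times Y,Z)=\Hom_{\mssSet}(A_\sharp,\Fun^{\mathbf{ms}}(Y,Z)).
\]
Now a map $A_\sharp=(A,E_A,\sharp)\to\Fun^{\mathbf{ms}}(Y,Z)$ is a map of underlying simplicial sets carrying $E_A$ into the marked edges and --- because every triangle of $A_\sharp$ is thin --- carrying every triangle into a thin triangle. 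The second condition says exactly that the map factors through the thin part $\Map^{\on{th}}_{\mathbf{ms}}(Y,Z)$, while the first is precisely the condition to be a map of marked simplicial sets into $\Map^{\on{th}}_{\mathbf{ms}}(Y,Z)$ with its inherited marking. Hence the right-hand side is $\Hom_{\msSet}(A,\Map^{\on{th}}_{\mathbf{ms}}(Y,Z))$, and Yoneda yields the claimed natural isomorphism together with the matching of markings.

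With this identification the tensoring and powering isomorphisms follow by the same computation. For the tensoring, for every $A\in\msSet$,
\[
\Hom_{\msSet}(A,\Map^{\on{th}}_{\mathbf{ms}}(K_\sharp\times X,Y))=\Hom_{\mssSet}(A_\sharp\times K_\sharp\times X,Y)=\Hom_{\mssSet}((A\times K)_\sharp\times X,Y),
\]
which equals $\Hom_{\msSet}(A\times K,\Map^{\on{th}}_{\mathbf{ms}}(X,Y))=\Hom_{\msSet}(A,\Fun^+(K,\Map^{\on{th}}_{\mathbf{ms}}(X,Y)))$ by cartesian closedness of $\msSet$; Yoneda gives $\Map^{\on{th}}_{\mathbf{ms}}(K_\sharp\times X,Y)\cong\Fun^+(K,\Map^{\on{th}}_{\mathbf{ms}}(X,Y))$. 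For the powering, the identical chain with $\Fun^{\mathbf{ms}}(K_\sharp,Y)$ in place of $Y$, using the universal property to move $K_\sharp$ across the product, gives $\Map^{\on{th}}_{\mathbf{ms}}(X,\Fun^{\mathbf{ms}}(K_\sharp,Y))\cong\Fun^+(K,\Map^{\on{th}}_{\mathbf{ms}}(X,Y))$. These two isomorphisms are precisely the statement that $(K,X)\mapsto K_\sharp\times X$ and $(K,Y)\mapsto\Fun^{\mathbf{ms}}(K_\sharp,Y)$ constitute a compatible tensoring and powering over $\msSet$ for the enrichment $\Map^{\on{th}}_{\mathbf{ms}}$.

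The only genuinely non-formal step --- and the one I would write out in full --- is the second paragraph: identifying the enriched hom $\mssSet(Y,Z)$ with $\Map^{\on{th}}_{\mathbf{ms}}(Y,Z)$ and, in particular, checking that the two markings agree. Everything else is bookkeeping with the two cartesian closed structures and the monoidality $A_\sharp\times K_\sharp=(A\times K)_\sharp$. I expect no real obstacle, only the need to keep the decorations straight: the essential mechanism is simply that maximal scaling of the source forces any map into $\Fun^{\mathbf{ms}}$ to land in its thin part, which is exactly where the passage between $\mssSet$-maps and $\msSet$-maps takes place.
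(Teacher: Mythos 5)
Your proof is correct. The paper actually states this lemma without any proof (the authors call it ``an easy first check''), so there is no argument to compare against; your route --- first identifying the enriched hom $\mssSet(Y,Z)$ defined by the formula $\Hom_{\msSet}(A,\mssSet(Y,Z))=\Hom_{\mssSet}(A_\sharp\times Y,Z)$ with $\Map^{\on{th}}_{\mathbf{ms}}(Y,Z)$, using that maximal scaling of $A_\sharp$ forces maps into $\Fun^{\mathbf{ms}}(Y,Z)$ to land in the thin part, and then deducing the tensoring and powering isomorphisms from the strong monoidality $A_\sharp\times K_\sharp=(A\times K)_\sharp$ together with the two cartesian closed structures --- is exactly the formal verification the authors leave implicit, and it simultaneously establishes the representability that the paper's definition of the enrichment tacitly assumes. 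Your opening remark that $\Fun^{\mathbf{ms}}(K_\sharp,X)$ exists for arbitrary, not just fibrant, targets (via local presentability and the adjoint functor theorem) is also a necessary clarification for the statement of the lemma to make sense, and it is correct.
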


Our aim throughout the rest of the section will be to show that the tensoring is a left Quillen bifunctor. We will follow the strategy of \cite{GHL_Gray}, showing first that the model structure on $\Set_\Delta^{\mathbf{ms}}$ is a Cisinski-Olschok model structure (as with $\Set_\Delta^{\mathbf{sc}}$ in \cite{GHL_Equivalence}), and then using testing pushout-products with the concomitant interval objects. 

We first show that the model structure on $\Set_\Delta^{\mathbf{ms}}$ is Cartesian-closed. This will follow immediately from \autoref{prop:PPMS} and the following 

\begin{lemma}
	Let $f:X\to Y$ and $g:A\to B$ be two weak equivalences in $\Set_\Delta^{\mathbf{mb}}$, then the product
	\[
	\func{f\times g: X\times A \to Y\times B}
	\]
	is a weak equivalence. 
\end{lemma}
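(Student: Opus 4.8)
The plan is to first reduce to a one-variable statement and then exploit the Cartesian closedness of the underlying category $\mssSet$ together with \autoref{prop:PPMS}. Factoring the product map as
\[
X\times A \xrightarrow{\id_X\times g} X\times B \xrightarrow{f\times \id_B} Y\times B,
\]
and using the symmetry of the product, two-out-of-three reduces the lemma to the claim that for every marked-scaled simplicial set $C$ and every weak equivalence $f\colon X\to Y$, the map $f\times \id_C$ is a weak equivalence.

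The first key point I would establish is that the internal hom $\Fun^{\mathbf{ms}}(C,Z)$ is fibrant whenever $Z$ is. Indeed, by the defining adjunction, a lift of an $\mathbf{MS}$-anodyne map $j$ against $\Fun^{\mathbf{ms}}(C,Z)\to \Delta^0$ is the same datum as a lift of $j\times \id_C$ against $Z\to \Delta^0$; and applying \autoref{prop:PPMS} to $j$ and the cofibration $\varnothing\to C$ shows that $j\times \id_C = j\wedge(\varnothing\to C)$ is itself $\mathbf{MS}$-anodyne, where we use that $X\times\varnothing=\varnothing$ to identify the pushout-product with $j\times\id_C$. Since $Z$ is fibrant, the desired lift exists.

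With this in hand, I would test the map $f\times\id_C$ against the mapping spaces. For any fibrant $Z$, the currying isomorphism of the Cartesian closed category $\mssSet$ yields a natural isomorphism $\Fun^{\mathbf{ms}}(W\times C,Z)\cong \Fun^{\mathbf{ms}}(W,\Fun^{\mathbf{ms}}(C,Z))$, and hence an isomorphism on the associated mapping $\infty$-bicategories $\Map_{\mathbf{ms}}(W\times C,Z)\cong \Map_{\mathbf{ms}}(W,\Fun^{\mathbf{ms}}(C,Z))$. Taking $W=X$ and $W=Y$, and using that $\Fun^{\mathbf{ms}}(C,Z)$ is fibrant, the hypothesis that $f$ is a weak equivalence shows that
\[
\Map_{\mathbf{ms}}(Y\times C,Z)\to \Map_{\mathbf{ms}}(X\times C,Z)
\]
is a bicategorical equivalence for every fibrant $Z$; that is, $f\times\id_C$ is a weak equivalence, as desired.

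The main obstacle is already resolved by \autoref{prop:PPMS}: once the pushout-product property is available, the only genuinely new inputs are the identification $j\wedge(\varnothing\to C)\cong j\times\id_C$ and the fibrancy of $\Fun^{\mathbf{ms}}(C,Z)$. I should note an alternative, equally valid route that avoids the internal hom: one checks directly that $(-)\times C$ preserves $\mathbf{MS}$-anodyne maps, deduces via a saturation argument that it preserves trivial cofibrations, and then invokes Ken Brown's lemma (all objects of $\mssSet$ being cofibrant, since cofibrations are monomorphisms) to extend this to all weak equivalences.
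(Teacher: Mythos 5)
Your main argument is correct, and it takes a genuinely different route from the paper's. The paper's proof is essentially an appeal to \cite[Lemma 4.2.6]{LurieGoodwillie}: that argument is invoked to reduce, via fibrant replacement, to a morphism $Y\times A\to Y\times B$ in which $Y$, $A$, $B$ are all fibrant, and the fibrant case is then settled by noting that a map between fibrant objects of $\mssSet$ is a weak equivalence if and only if its underlying map of scaled simplicial sets is a bicategorical equivalence, so that the product statement for scaled simplicial sets in loc.\ cit.\ applies. Your proof, by contrast, never leaves $\mssSet$ and never passes to fibrant replacements: it combines the one-variable reduction, the identification of $j\times \id_C$ with the pushout-product $j\wedge(\varnothing\to C)$ (so that \autoref{prop:PPMS} gives fibrancy of $\Fun^{\mathbf{ms}}(C,Z)$ for fibrant $Z$, a fact the paper also records when defining $\Fun^{\mathbf{ms}}$), and the currying isomorphism $\Map_{\mathbf{ms}}(W\times C,Z)\cong\Map_{\mathbf{ms}}(W,\Fun^{\mathbf{ms}}(C,Z))$, together with the mapping-space characterization of weak equivalences. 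What your approach buys is self-containment, with no dependence on the scaled analogue or on the reduction machinery; what the paper's buys is brevity, at the cost of leaving the transfer between the marked-scaled and scaled settings implicit.

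One caution: the ``alternative route'' you sketch at the end is not equally valid as written. From the fact that $(-)\times C$ preserves $\mathbf{MS}$-anodyne maps you cannot conclude ``via a saturation argument'' that it preserves all trivial cofibrations: nothing in the paper identifies the trivial cofibrations of \autoref{thm:markedscaledmodel} with the weak saturation of the $\mathbf{MS}$-anodyne maps, and in structures of this Cisinski--Olschok type the saturation of the generating anodyne maps is in general a proper subclass of the trivial cofibrations (lifting against anodynes detects fibrant objects and fibrations between fibrant objects, not arbitrary fibrations). Indeed, were that identification available, \autoref{prop:PPMS} alone would already yield the Cartesian-closed model structure and the present lemma would be superfluous. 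Repairing the aside needs a genuinely different input (for instance, the criterion that a left adjoint preserving cofibrations, whose right adjoint preserves fibrant objects and fibrations between fibrant objects, is left Quillen), so you should present the adjunction argument as the proof and drop, or substantially rework, the aside.
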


\begin{proof}
	Precisely the same argument as in \cite[Lemma 4.2.6]{LurieGoodwillie} allows us to reduce to the case of the morphism 
	\[
	\func{Y\times A\to Y\times B}
	\]
	where $Y$, $A$, and $B$ are all fibrant objects. By the characterization of fibrant objects, this morphism is a weak equivalence if and only if the morphism on underlying scaled simplicial sets is an equivalence, which follows from loc. cit. 
\end{proof}

\begin{corollary}
	For any cofibrations $f:X\to Y$ and $g:A\to B$, the pushout-product
	\[
	\func{
		f\wedge g: Y\times A \coprod_{X\times A} X\times B \to Y\times B
	}
	\]
	is an equivalence if one of $f$ or $g$ is. 
\end{corollary}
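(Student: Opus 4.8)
The plan is to run the standard pushout--product argument, using the preceding lemma (products of weak equivalences are weak equivalences) as the only nonformal input. Since the pushout--product is symmetric in $f$ and $g$, it suffices to treat the case in which $f$ is the map that is additionally a weak equivalence; the case where $g$ is the weak equivalence follows by exchanging the two factors.

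First I would record that $f\times \id_A\colon X\times A\to Y\times A$ is a trivial cofibration: it is a monomorphism on underlying simplicial sets, hence a cofibration, and it is a weak equivalence by the preceding lemma, applied to the weak equivalence $f$ and the (trivial) weak equivalence $\id_A$. The same reasoning shows that $f\times\id_B\colon X\times B\to Y\times B$ is a weak equivalence. Now consider the defining pushout square
\[
\begin{tikzcd}
X\times A \arrow[r,"\id_X\times g"] \arrow[d,"f\times \id_A"'] & X\times B \arrow[d,"j"] \\
Y\times A \arrow[r] & P
\end{tikzcd}
\]
where $P=Y\times A\coprod_{X\times A}X\times B$ and $f\wedge g\colon P\to Y\times B$ is the induced map. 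The right-hand vertical $j\colon X\times B\to P$ is precisely the cobase change of $f\times\id_A$ along $\id_X\times g$. As trivial cofibrations are closed under cobase change in any model category, $j$ is again a trivial cofibration, in particular a weak equivalence.

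To conclude, I would observe that the composite $X\times B\xrightarrow{\,j\,}P\xrightarrow{\,f\wedge g\,}Y\times B$ is exactly $f\times\id_B$, since $f\wedge g$ restricts to $f\times\id_B$ on the $X\times B$ summand. Both $j$ and $f\times\id_B$ are weak equivalences, so two-out-of-three forces $f\wedge g$ to be a weak equivalence as well. There is no genuine obstacle here: the entire argument is formal once the preceding product lemma is in hand, and the only point requiring care is correctly identifying $j$ as the cobase change of the trivial cofibration $f\times\id_A$ so that closure of trivial cofibrations under pushout applies.
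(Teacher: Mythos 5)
Your proof is correct, but it takes a genuinely different route from the paper's. The paper's argument uses the small object argument to factor the trivial cofibration $f$ as an $\mathbf{MS}$-anodyne map $h:X\to Z$ followed by a map $k:Z\to Y$, which is a weak equivalence by 2-out-of-3; it then compares $f\wedge g$ with $h\wedge g$ in a commutative square whose right-hand vertical $h\wedge g$ is $\mathbf{MS}$-anodyne by \autoref{prop:PPMS}, whose bottom edge $k\times\id_B$ is an equivalence by the product lemma, and whose top edge is an equivalence because it is induced on (homotopy) pushouts along cofibrations by the natural equivalence $k\times \id_A$; the conclusion follows by 2-out-of-3. You bypass both the factorization and \autoref{prop:PPMS} entirely: you note that $f\times\id_A$ is itself a trivial cofibration (a monomorphism, and an equivalence by the product lemma), so its cobase change $j:X\times B\to P$ is a trivial cofibration by the axioms of the model structure of \autoref{thm:markedscaledmodel}, which is already available at this point; since $(f\wedge g)\circ j=f\times\id_B$ is an equivalence, 2-out-of-3 finishes. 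Your version is the more economical one, isolating the product lemma as the only nonformal input and relying otherwise on purely formal model-category facts (pushout-stability of trivial cofibrations, 2-out-of-3), whereas the paper's proof also invokes the anodyne pushout-product machinery and left properness. The paper's route has the mild virtue of staying within the explicit combinatorial toolkit (anodyne maps and their pushout-products) used throughout the rest of the section; also, your explicit reduction by symmetry to the case where $f$ is the equivalence makes precise a step the paper leaves implicit.
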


\begin{proof}
	We can use the small object argument to factor $f$ as
	\[
	\func{X\to[h] Z\to[k] Y}
	\]
	where $h$ is \textbf{MS}-anodyne. Consequently, $k$ is a weak equivalence. We consider the diagram 
	\[
	\begin{tikzcd}
		Y\times A \coprod_{X\times A} X\times B\arrow[d]  & Z\times A \coprod_{X\times A} X\times B\arrow[d]\arrow[l] \\
		Y\times B &Z\times B\arrow[l]
	\end{tikzcd}
	\]
	It follows from the lemma that the bottom horizontal arrow is a weak equivalence, and the top horizontal arrow is the induced map on homotopy colimits by a natural weak equivalence. from \autoref{prop:PPMS}, it follows that the right-hand morphism is an equivalence, and the corollary follows. 
\end{proof}

\begin{corollary}
	The model structure on $\Set_\Delta^{\mathbf{ms}}$ is Cartesian-closed.
\end{corollary}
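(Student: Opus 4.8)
The plan is to recognize that this corollary asserts precisely that the Cartesian product equips the model structure of \autoref{thm:markedscaledmodel} with the structure of a closed symmetric monoidal model category. Accordingly there are three conditions to verify: that $\Set_\Delta^{\mathbf{ms}}$ is Cartesian closed as a category, that the unit axiom holds, and that the pushout-product axiom holds. The bulk of the work has already been carried out in the preceding results, so the proof amounts to assembling them.

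First I would observe that $\Set_\Delta^{\mathbf{ms}}$ is Cartesian closed: the right adjoint to the functor $(-)\times \overline{Y}$ is the internal hom $\Fun^{\mathbf{ms}}(\overline{Y},-)$ defined above, whose characterizing universal property is exactly the exponential adjunction $\Hom_{\mssSet}(\overline{A}\times \overline{Y},\overline{X})\cong \Hom_{\mssSet}(\overline{A},\Fun^{\mathbf{ms}}(\overline{Y},\overline{X}))$. The monoidal unit for the Cartesian structure is the terminal object $\Delta^0$, which is cofibrant; since every object of $\Set_\Delta^{\mathbf{ms}}$ is cofibrant (the cofibrations being the monomorphisms), the unit axiom is automatic.

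It then remains to check the pushout-product axiom. For cofibrations $f:X\to Y$ and $g:A\to B$, i.e. monomorphisms on underlying simplicial sets compatible with the markings and scalings, the pushout-product $f\wedge g$ is again injective on underlying simplicial sets --- this is the standard stability of monomorphisms of simplicial sets under pushout-product --- and a direct inspection shows that it respects the product markings and scalings; hence $f\wedge g$ is a cofibration. For the nontrivial half of the axiom, suppose in addition that one of $f$ or $g$ is a weak equivalence. Then the preceding corollary shows that $f\wedge g$ is a weak equivalence, and combined with the previous sentence it is a trivial cofibration. Since these two statements together constitute the pushout-product axiom, and the unit axiom has already been verified, the model structure on $\Set_\Delta^{\mathbf{ms}}$ is Cartesian-closed.

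The only genuinely new input beyond \autoref{prop:PPMS} and the two preceding corollaries is the stability of cofibrations under pushout-product, and I expect no serious obstacle here: it reduces to the elementary fact for monomorphisms of simplicial sets, the sole subtlety being the routine bookkeeping that the product markings and scalings on the domain of $f\wedge g$ are faithfully carried into $Y\times B$.
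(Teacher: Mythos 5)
Your proof is correct and takes essentially the same route as the paper: the paper states this corollary without proof, treating it as an immediate consequence of the two preceding results. Your assembly --- categorical Cartesian closure via $\Fun^{\mathbf{ms}}$, the unit axiom being automatic since the unit $\Delta^0$ (like every object) is cofibrant, stability of monomorphisms under pushout-product for the cofibration half, and the preceding corollary for the trivial-cofibration half --- is exactly the intended argument.
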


We now wish to show that the Cisinski-Olschok model structure on $\Set_\Delta^{\mathbf{ms}}$ with interval $\Delta^0\amalg \Delta^0 \to (\Delta^1)^\sharp_\sharp$  and generating anodyne maps the \textbf{MS}-anodyne maps is, in fact the model structure constructed in our previous section. We first note that, since one of the morphism $\Delta^0\to (\Delta^1)^\sharp_\sharp$ is \textbf{MS}-anodyne, it follows that both such morphisms are trivial cofibrations.

\begin{definition}
	We write $(\Set_\Delta^{\mathbf{ms}})_{\on{CO}}$ for the Cisinski-Olschok model structure on $\Set_\Delta^{\mathbf{ms}}$ with interval $\Delta^0\amalg\Delta^0\to (\Delta^1)^\sharp_\sharp$, and generating set of anodyne morphisms the set of \textbf{MB}-anodyne morphisms. 
	
	For ease, we will write $(\Set_\Delta^{\mathbf{ms}})_{\on{AH}}$ for the model structure previously defined
\end{definition}  

\begin{proposition}
	The two model structures $(\Set_\Delta^{\mathbf{ms}})_{\on{CO}}$ and $(\Set_\Delta^{\mathbf{ms}})_{\on{AH}}$ coincide. 
\end{proposition}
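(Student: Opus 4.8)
The plan is to exploit the fact that both model structures have the same cofibrations (the monomorphisms), so that every object is cofibrant in each. It then suffices to prove that the two structures share the same fibrant objects and the same weak equivalences: once cofibrations and weak equivalences are fixed, the fibrations are determined as the maps with the right lifting property against the trivial cofibrations, and the whole structure is pinned down. So the proof reduces to two comparisons, fibrant objects and weak equivalences.

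First I would identify the fibrant objects. By Olschok's theorem the fibrant objects of $(\Set_\Delta^{\mathbf{ms}})_{\on{CO}}$ are exactly those with the right lifting property against the weakly saturated class generated by $\mathbf{MS}$ together with the pushout-products of the interval endpoint inclusions with cofibrations. The relevant endpoint inclusion $\Delta^0\to (\Delta^1)^\sharp_\sharp$ is the generator \ref{MS:Cartlifts}, hence is $\mathbf{MS}$-anodyne, and by the pushout-product property \autoref{prop:PPMS} the set $\mathbf{MS}$ is already closed under pushout-product with cofibrations. Consequently these extra generators lie in the weakly saturated hull of $\mathbf{MS}$, so the $\on{CO}$-anodyne maps coincide with the $\mathbf{MS}$-anodyne maps. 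In particular the $\on{CO}$-fibrant objects are precisely the objects with the right lifting property against $\mathbf{MS}$, which are exactly the fibrant objects of $(\Set_\Delta^{\mathbf{ms}})_{\on{AH}}$ by \autoref{thm:markedscaledmodel}.

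Next I would show that the weak equivalences agree, the key device being a cylinder object common to both structures. For any $X$, the object $X\times (\Delta^1)^\sharp_\sharp$ is a cylinder object in each: the fold inclusion $X\sqcup X \to X\times (\Delta^1)^\sharp_\sharp$ is a monomorphism, hence a cofibration in both; and the projection $X\times (\Delta^1)^\sharp_\sharp \to X$ is a weak equivalence in both --- in the $\on{AH}$ structure by the lemma that products of weak equivalences are weak equivalences, applied to the projection $(\Delta^1)^\sharp_\sharp \to \Delta^0$ (itself a weak equivalence by two-out-of-three, since $\Delta^0\to(\Delta^1)^\sharp_\sharp$ is a trivial cofibration), and in the $\on{CO}$ structure by the interval axiom. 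Since every object is cofibrant, a map $f:A\to B$ is a weak equivalence in either structure if and only if, for every fibrant $Z$, the induced map on homotopy classes $[B,Z]\to[A,Z]$ is a bijection. Using the common cylinder $(-)\times(\Delta^1)^\sharp_\sharp$ to compute left homotopy, the sets $[A,Z]$ and $[B,Z]$ are literally the same in both structures, and by the previous step the fibrant $Z$ over which we quantify are the same. Hence the two classes of weak equivalences coincide, and since the cofibrations already agree, the two model structures are identical.

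The main obstacle is the middle step: verifying that $\on{CO}$-fibrancy reduces to the right lifting property against $\mathbf{MS}$ alone, i.e. that the endpoint pushout-products contribute nothing new to the anodyne class. This is exactly where \autoref{prop:PPMS} together with the identification of the anodyne endpoint \ref{MS:Cartlifts} do the work; the remaining homotopy-theoretic bookkeeping --- exhibiting the common cylinder and detecting weak equivalences by mapping into fibrant objects --- is routine once the fibrant objects are known to coincide.
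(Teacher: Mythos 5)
Your overall reduction is sound: both structures have the monomorphisms as cofibrations, and your cylinder argument showing that equal fibrant objects force equal weak equivalences is a correct, explicit version of the reduction the paper leaves implicit. The genuine gap is in the middle step, where you claim that the $\on{CO}$-anodyne class coincides with the saturated hull of $\mathbf{MS}$. The Cisinski--Olschok anodyne class is generated by $\mathbf{MS}$ together with pushout-products of cofibrations with \emph{both} endpoint inclusions $\{0\},\{1\}\to(\Delta^1)^\sharp_\sharp$. Your appeal to \ref{MS:Cartlifts} and \autoref{prop:PPMS} handles only the terminal endpoint $\{1\}\to(\Delta^1)^\sharp_\sharp$. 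The initial endpoint $\{0\}\to(\Delta^1)^\sharp_\sharp$ is itself one of the Olschok generators (it is the pushout-product with $\emptyset\to\Delta^0$), and it is \emph{not} $\mathbf{MS}$-anodyne, so your claimed equality of anodyne classes is false, not merely unproven. To see this, observe that the map $\{0\}\to(\Delta^1)^\sharp_\sharp$ has the right lifting property against every generator of $\mathbf{MS}$: each generator is either a bijection on underlying simplicial sets, or a horn-type inclusion whose source contains every vertex of its target, or is \ref{MS:Cartlifts}; in every case, commutativity of a lifting square forces the map into $(\Delta^1)^\sharp_\sharp$ to be constant at $0$, hence to factor through $\{0\}$, so a lift exists. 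Since $\{0\}\to(\Delta^1)^\sharp_\sharp$ visibly fails to lift against itself, it cannot lie in the saturated hull of $\mathbf{MS}$. This is exactly why the paper asserts only that both endpoint inclusions are \emph{trivial cofibrations} in $(\Set_\Delta^{\mathbf{ms}})_{\on{AH}}$, rather than anodyne.

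The repair is to compare fibrant objects asymmetrically, which is what the paper's proof does. One direction is free: $\mathbf{MS}$ is contained in the $\on{CO}$-anodyne class, so every $\on{CO}$-fibrant object is $\on{AH}$-fibrant. For the converse one does not need $\on{CO}$-anodyne maps to be $\mathbf{MS}$-anodyne, only that they are trivial cofibrations in $(\Set_\Delta^{\mathbf{ms}})_{\on{AH}}$: both endpoint inclusions are $\on{AH}$ trivial cofibrations (one is \ref{MS:Cartlifts}, the other by two-out-of-three against the projection to $\Delta^0$), and since $(\Set_\Delta^{\mathbf{ms}})_{\on{AH}}$ is Cartesian-closed, pushout-products of trivial cofibrations with cofibrations remain trivial cofibrations; hence all Olschok generators, and therefore their saturation, are $\on{AH}$ trivial cofibrations. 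Consequently $\on{AH}$-fibrant objects lift against all $\on{CO}$-anodyne maps and are $\on{CO}$-fibrant. With this substitution in place of your claimed equality of anodyne classes, the rest of your argument goes through.
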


\begin{proof}
	It will suffice to show that the fibrant objects coincide. By construction, every fibrant object of $(\Set_\Delta^{\mathbf{ms}})_{\on{CO}}$ is a fibrant object of $(\Set_\Delta^{\mathbf{ms}})_{\on{AH}}$. However, since $(\Set_\Delta^{\mathbf{ms}})_{\on{AH}}$ is a Cartesian-closed model category, and the interval object for $(\Set_\Delta^{\mathbf{ms}})_{\on{CO}}$ is a cylinder in $(\Set_\Delta^{\mathbf{ms}})_{\on{AH}}$, every anodyne map in $(\Set_\Delta^{\mathbf{ms}})_{\on{CO}}$ is a trivial cofibration in $(\Set_\Delta^{\mathbf{ms}})_{\on{AH}}$. Thus every fibrant object of $(\Set_\Delta^{\mathbf{ms}})_{\on{AH}}$ is a fibrant object of $(\Set_\Delta^{\mathbf{ms}})_{\on{CO}}$. 
\end{proof}

As a consequence, we will now drop the unwieldy subscript notation for the model structure on $\Set_\Delta^{\mathbf{ms}}$. We can now prove the following.

\begin{proposition}\label{prop:markedscaledenriched}
	The model category $\Set_\Delta^{\mathbf{ms}}$ is a $\Set_\Delta^+$-enriched model category. 
\end{proposition}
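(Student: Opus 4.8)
The plan is to verify the two conditions that upgrade the tensored–powered structure of the preceding lemma to a genuine enrichment: the (relative) pushout–product axiom and the unit axiom. The unit axiom is immediate, since the monoidal unit $\Delta^0$ of $\mssSet$ is cofibrant and the unit of $\Set_\Delta^+$ is sent to it, so tensoring with the unit is the identity. For the pushout–product axiom the first thing to record is that the tensor $K_\sharp\times X$ is literally the Cartesian product in $\mssSet$ of $X$ with the image of $K$ under the functor
\[
(-)_\sharp\colon \Set_\Delta^+ \longrightarrow \mssSet, \qquad (K,E_K)\longmapsto (K,E_K,\sharp).
\]
Consequently, for a cofibration $i\colon K\to L$ in $\Set_\Delta^+$ and a cofibration $j\colon X\to Y$ in $\mssSet$, the $\Set_\Delta^+$–pushout–product of $i$ and $j$ is exactly the internal Cartesian pushout–product $i_\sharp\wedge j$ in $\mssSet$.

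From here I would reduce everything to a single statement about $(-)_\sharp$. Since $(-)_\sharp$ preserves monomorphisms and Cartesian pushout–products of monomorphisms are monomorphisms, $i_\sharp\wedge j$ is always a cofibration. By the Cartesian–closedness of $\mssSet$ established above --- the corollary that the pushout–product of two cofibrations is a weak equivalence as soon as one of them is --- the morphism $i_\sharp\wedge j$ is a trivial cofibration whenever $j$ is trivial, with no further work. By the same corollary the remaining case, $i$ a trivial cofibration, follows once we know that $i_\sharp$ is itself a trivial cofibration in $\mssSet$. Thus the whole proposition reduces to the claim that $(-)_\sharp$ is a left Quillen functor for the Cartesian model structure on $\Set_\Delta^+$.

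To prove this claim I would first observe that $(-)_\sharp$ is a left adjoint: its right adjoint $R$ sends $(X,E_X,T_X)$ to the maximal simplicial subset on which every $2$–simplex is thin, with the inherited marking (a map $K_\sharp\to X$ is precisely a map $K\to X$ carrying every triangle to a thin one, i.e. landing in this subset). Since $(-)_\sharp$ preserves cofibrations and all objects on both sides are cofibrant, it suffices to show that $(-)_\sharp$ preserves weak equivalences, as a monomorphism–preserving left adjoint with this property automatically preserves trivial cofibrations. The key computation is an adjunction identification of mapping objects: for $K\in\Set_\Delta^+$ and $\overline{X}\in\mssSet$ one has a natural isomorphism
\[
\Map_{\mathbf{ms}}^{\on{th}}(K_\sharp,\overline{X}) \cong \Fun^{+}(K,R(\overline{X})),
\]
between the $\Set_\Delta^+$–valued mapping object of $\mssSet$ and the internal hom $\Fun^{+}$ of $\Set_\Delta^+$, obtained from the identity $(A\times K)_\sharp = A_\sharp\times K_\sharp$ and the adjunction $(-)_\sharp\dashv R$. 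When $\overline{X}$ is fibrant, $R(\overline{X})$ is its underlying $(\infty,1)$–category with the equivalences marked, which is fibrant in $\Set_\Delta^+$ by the same reasoning used in \autoref{lem:GpresWEs} and \autoref{lem:MS_max_Kan}. Then for a weak equivalence $f\colon K\to L$ in $\Set_\Delta^+$ the induced map $\Fun^{+}(L,R(\overline{X}))\to \Fun^{+}(K,R(\overline{X}))$ is a weak equivalence, because the Cartesian model structure is itself $\Set_\Delta^+$–enriched; transporting across the isomorphism and invoking the detection of weak equivalences by $\Map_{\mathbf{ms}}^{\on{th}}(-,\overline{X})$ on fibrant $\overline{X}$ (the characterization recorded after \autoref{thm:markedscaledmodel}) shows $f_\sharp$ is a weak equivalence. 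This proves $(-)_\sharp$ is left Quillen and completes the argument.

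The step I expect to be the \textbf{main obstacle} is the mapping–object identification together with the fibrancy of $R(\overline{X})$: one must check that $R$ genuinely lands in fibrant marked simplicial sets and that the displayed isomorphism is natural and compatible with markings, not merely with underlying simplicial sets. A cleaner but more laborious alternative, should the adjoint bookkeeping prove delicate, is to verify directly that $(-)_\sharp$ carries each generating marked anodyne map of \cite[3.1.1.1]{HTT} to an $\mathbf{MS}$–anodyne map --- inner horns go over by combining \ref{MS:inner} with the scaling \autoref{lem:Ui_MS-anodyne}, the right–horn generators to \ref{MS:nhorn}, the marked–composition generator to \ref{MS:Compose}, and the Kan–complex markings to \ref{MS:kan} --- and then to bridge from marked anodynes to arbitrary trivial cofibrations using the mapping–object characterization of weak equivalences above.
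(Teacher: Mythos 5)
Your argument is correct, but it takes a genuinely different route from the paper's. The paper's proof exploits the Cisinski--Olschok presentation of both model structures (established in the proposition immediately preceding this one): for such structures the pushout--product axiom only needs to be tested on generating cofibrations against the generating anodyne maps and the interval inclusions, after which the anodyne cases are dispatched by \autoref{prop:PPMS} and the interval cases follow from the Cartesian-closedness corollary. You instead collapse the whole statement to the claim that $(-)_\sharp\colon \msSet\to \mssSet$ is left Quillen --- using that the $\msSet$-pushout-product of $i$ and $j$ is literally the Cartesian pushout-product $i_\sharp\wedge j$ in $\mssSet$ --- and you prove that claim by adjunction: the right adjoint $R$ is the thin core with inherited marking, the identification $\Map_{\mathbf{ms}}^{\on{th}}(K_\sharp,\overline{X})\cong \Fun^{+}(K,R(\overline{X}))$ (internal hom of marked simplicial sets) holds naturally and compatibly with markings, $R(\overline{X})$ is fibrant for fibrant $\overline{X}$ (this is precisely the assertion made in the proof of \autoref{lem:MS_max_Kan}), and the $\Map_{\mathbf{ms}}^{\on{th}}$-characterization of weak equivalences recorded after \autoref{thm:markedscaledmodel} then shows that $(-)_\sharp$ preserves all weak equivalences. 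Both proofs rest on the Cartesian-closedness corollary for the case where the $\mssSet$-leg is trivial; they differ in how the $\msSet$-leg is handled. The paper's version is shorter given the Cisinski--Olschok scaffolding already in place, though it tacitly needs that $(-)_\sharp$ carries marked anodynes into the scope of \autoref{prop:PPMS} (essentially the check your fallback paragraph describes); your version pays for the adjoint bookkeeping but proves the stronger, reusable statement that $(-)_\sharp\dashv R$ is a Quillen adjunction --- a marked companion to \autoref{lem:ms-sc_adjunction} --- and handles arbitrary trivial cofibrations in $\msSet$ with no reduction to generators.
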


\begin{proof}
	We need only show that the tensoring satisfies the pushout-product axiom, i.e., that for cofibrations $f:K\to S$ in $\msSet$ and $g:X\to Y$ in $\Set_\Delta^{\mathbf{ms}}$, the pushout-product $f\wedge g$ is a trivial cofibration that either $f$ or $g$ is. Since both model structures are Cisinski-Olschok model structures, it suffices to test generating monomorphisms against the two interval inclusions and against the generating anodyne morphisms. 
	
	It is immediate from \autoref{prop:PPMS} that if $f$ (resp. $g$) is marked (resp. \textbf{MS}) anodyne, then $f\wedge g$ is a trivial cofibration. It remains for us to test the cases when $f$ is $\{0\}\to (\Delta^1)^\sharp$ or $\{1\}\to (\Delta^1)^\sharp$, and the cases when $g$ is $\{0\}\to (\Delta^1)^\sharp_\sharp$ or $\{1\}\to (\Delta^1)^\sharp_\sharp$.
	
	However, since the morphisms $\{0\}\to (\Delta^1)^\sharp_\sharp$ or $\{1\}\to (\Delta^1)^\sharp_\sharp$ are trivial  cofibrations, and the model structure on $\Set_\Delta^{\mathbf{ms}}$ is Cartesian-closed, this follows immediately.
\end{proof}

\section{The bicategorical Grothendieck construction}\label{sec:Grothendieck}

Our first step towards an $\infty$-bicategorical Grothendieck construction is defining the functors which will realize the desired equivalence. These definitions will constitute an upgrade of the straightening and unstraightening constructions of \cite[Section 3.2]{HTT} to the more highly decorated setting of marked-biscaled simplicial sets and marked-scaled simplicial sets. These functors will define a Quillen equivalence of model categories between $(\mbsSet)_{/S}$ and a model category we now define.

\begin{definition}
	Let $\scr{C}$ be a $\on{Set}_{\Delta}^+$-category. We denote by $\left(\on{Set}^{\mathbf{ms}}_{\Delta}\right)^{\C}$ the category of $\on{Set}^{+}_{\Delta}$-enriched functors and natural transformations. We endow the category of enriched functors with the projective model structure (See, e.g., \cite[A.3.3.2]{HTT}).
\end{definition}

\begin{definition}
	Let $(Y,E_Y,T_Y)$ be a marked scaled simplicial set. We define a scaled simplicial set which we denote $(Y^{\triangleright},T_{Y^{\triangleright}})$ whose underlying simplicial set is given by $Y^{\triangleright}=Y \ast \Delta^0$ and whose non-degenerate scaled simplices are either those that factor through $Y$ or those of the form $f \ast \on{id}_{\Delta^0}$ where $f:\Delta^1 \to Y$ belongs to $E_Y$. 
\end{definition}

\begin{remark}[Important convention]
	Let $(X,M_X,T_X\subseteq C_X)$ be an $\bS$ simplicial set. By the \emph{underlying scaled simplicial set}, we will mean the scaled simplicial set $(X,T_X)$. 
\end{remark}

\begin{remark}[Notation for $\op$s]
	Given a simplicial set $X$ with \emph{any} decoration (marking, scaling, etc.), we will denote by $X^\op$ the opposite simplicial set with the same decoration. 
	
	Given an enriched category $\scr{C}$ (a $\Set_\Delta^+$-enriched category, a 2-category, etc.), we will denote $\scr{C}^\op$ the enriched category with the same objects and $\scr{C}^\op(x,y)=\scr{C}(y,x)$. In the specific case of a 2-category $\CC$, we will occasionally write $\CC^{(\op,-)}$ to denote $\CC^\op$. We will only rarely make use of the 2-morphism dual $\CC^{(-,\op)}$. 
\end{remark}

We now provide the underlying left Quillen functor of our bicategorical Grothendieck construction. 

\begin{construction}
	Fix a scaled simplicial set $S\in \scsSet$ and a functor of $\msSet$-enriched categories $\phi:\mathfrak{C}^{\sc}[S]\to \scr{C}$. Let $p:X\to S$ be an object of $(\mbsSet)_{/S}$. We define a scaled simplicial set $X_S$ via the pushout diagram 
	\[
	\begin{tikzcd}
		X \arrow[r,hookrightarrow]\arrow[d,"p"'] & X^\triangleright\arrow[d] \\
		S \arrow[r] & X_S 
	\end{tikzcd}
	\]
	We generically denote both the cone point of $X^\triangleright$ and its image in $X_S$ by $\ast$. We  then define a $\msSet$-enriched category 
	\[
	X_\phi:=\scr{C}\coprod_{\mathfrak{C}^{\sc}[S]} \mathfrak{C}^{\sc}[X_S].
	\] 
	Note that this is equivalently the pushout 
	\[
	\begin{tikzcd}
		{\mathfrak{C}^{\sc}[X]}\arrow[r]\arrow[d,"{\phi\circ \mathfrak{C}[p]}"'] & {\mathfrak{C}^{\sc}[X^\triangleright]}\arrow[d] \\
		\scr{C} \arrow[r] & X_\phi
	\end{tikzcd}
	\]
	of $\msSet$-enriched categories.
	
	Applying the enriched Yoneda embedding on the cone point $\ast$, this provides a $\msSet$-enriched functor 
	\[
	\func*{
		\St_\phi^+(X): \scr{C}^\op\to \msSet;
		s\mapsto X_\phi(s,\ast).  
	}
	\]
	We promote this functor to a $\msSet$-enriched functor 
	\[
	\func{
		\ST_\phi(X):\scr{C}^\op\to \mssSet 
	}
	\] 
	by equipping its values on objects with a scaling. 
	
	After a single, fairly ad-hoc definition, we are able to do this in a highly functorial way. The ad-hoc definition will be a promotion of $\mathfrak{C}^{\sc}[X^\triangleright]$ to a $\mssSet$-enriched category, such that the subcategory $\mathfrak{C}^{\sc}[X]\subset \mathfrak{C}^{\sc}[X^\triangleright]$ has all mapping spaces maximally scaled. We will denote the resulting $\mssSet$-enriched category $\mathfrak{C}^\sc[X^\triangleright]_\dagger$. More generally, we will denote scalings on the mapping spaces of a marked-simplicially enriched category $\scr{C}$ using subscripts, e.g. $\scr{C}_\sharp$ for maximally marked mapping spaces.

	We will define the scaling on $\mathfrak{C}^{\sc}[X^\triangleright]$ in three steps:
	\begin{enumerate}
		\item We define the scaling 
		\[
		\mathfrak{C}^\sc[X^\triangleright]_\dagger(s,t):= \mathfrak{C}^\sc[X^\triangleright](s,t)_\sharp 
		\]
		for $s,t\in X$. 
		\item We define an auxiliary scaling $P_{X^\triangleright}^s$ on each marked simplicial set $\mathfrak{C}^{\sc}[X^\triangleright](s,\ast)$. iven a map $\func{\sigma:\Delta^n\to X}$, we can pass to the associated $n+1$-simplex $\func{\sigma\star \id_0:\Delta^{n+1}\to X^\triangleright}$ and obtain a map of simplicial sets
		\[
		\func{\mathfrak{C}^{\sc}[\Delta^{n+1}](0,n+1)\to \mathfrak{C}^{\sc}[X^{\triangleright}](\sigma(0),\ast).}
		\]
		Each 2-simplex in $\mathfrak{C}^{\sc}[\Delta^{n+1}](0,n+1)$ is of the form 
		\[
		S_0\cup \{n+1\}\subset S_1\cup \{n+1\}\subset S_2 \cup \{n+1\}
		\]
		where $S_i\subseteq [n]$ contains $0$. We declare the image of such a 2-simplex to be scaled in $\mathfrak{C}^\sc[X^\triangleright](s,\ast)$ precisely when either
		\begin{itemize}
			\item $\max(S_i)=\max(S_j)$ for some $i,j\in \{0,1,2\}$; or
			\item the simplex $\sigma$ is lean in $X$ (i.e., lies in $C_X$) and the 2-simplex is $03\to 013\to 0123$. 
		\end{itemize}
		The auxiliary scaling $P^s_{X^\triangleright}$ then consists of all such 2-simplices.
		\item We extend the scaling $P^s_{X^\triangleright}$ by functoriality. That is, we declare a 2-simplex $\sigma:\Delta^2\to \mathfrak{C}[X^\triangleright](s,\ast)$ to be scaled if there is a $t$ in $X$ and a 2-simplex 
		\[
		\func{\theta=(\theta_1,\theta_2): \Delta^2 \to  \mathfrak{C}^\sc[X^\triangleright](s,t) \times \mathfrak{C}^\sc[X^\triangleright](t,\ast)}
		\] 
		such that $\theta_2\circ \theta_1=\sigma$, where $\theta_2\in P^s_{X^\triangleright}$. We would like to stress to the reader that this also adds scaled 2-simplices in the case where $\theta_2$ is \emph{degenerate}. 
	\end{enumerate}
	
	We can then define a $\mssSet$-enriched variant of $X_\phi$ to be the pushout of $\mssSet$-enriched categories 
	\[
	\begin{tikzcd}
		{\mathfrak{C}^{\sc}[X]_\sharp}\arrow[r]\arrow[d,"{\phi\circ \mathfrak{C}[p]}"'] & {\mathfrak{C}^{\sc}[X^\triangleright]_\dagger}\arrow[d] \\
		\scr{C}_\sharp \arrow[r] & \overline{X_\phi}
	\end{tikzcd}
	\]
	Unwinding the definitions, we see that a 2-simplex $\sigma:\Delta^2\to \overline{X_\phi}(s,\ast)$ is scaled if and only if it satisfies the following condition:
	\begin{itemize}
		\item There is a $t\in X_\phi$ and a 2-simplex 
		\[
		\func{\theta=(\theta_1,\theta_2): \Delta^2 \to  \overline{X_\phi}(s,t) \times \overline{X_\phi}(t,\ast)}
		\]
		such that (1) $\sigma=\theta_2\circ \theta_1$, and, (2) $\theta_2$ is either in the image of an element of $P^s_{X^\triangleright}$ or is degenerate. 
	\end{itemize}
	The \emph{bicategorical straightening of $X$} is then the restriction of the  $\mssSet$-enriched Yoneda embedding: \glsadd{ST}
	\[
	\func*{
		\ST_\phi(X): \scr{C}^\op \to \mssSet; 
		s \mapsto \overline{X_\phi}(s,\ast). 
	}
	\]
	A priori, this is an $\mssSet$-enriched functor. However, since we required the mapping spaces in $\scr{C}$ to be maximally scaled, this formula in fact defines an $\msSet$-enriched functor. This construction then yields a functor 
	\[
	\func{\ST_\phi: (\mbsSet)_{/S}\to (\mssSet)^{\scr{C}^\op}}
	\]
	which we call the \emph{(bicategorical) straightening functor}. 
\end{construction}

\begin{notation}
	We will denote by $\ST_S(X)$ the special case in which $\phi:\mathfrak{C}^{\sc}[S]\to \mathfrak{C}^{\sc}[S]$ is the identity. 
\end{notation}

\begin{remark}
	In line with the philosophy of \cite{Verity}, there should be a model for $(\infty,3)$-categories on the category of simplicial sets with decorations on 1-, 2-, \emph{and} 3-simplices. The ad-hoc construction of the $\mssSet$-enriched category $\overline{X_\phi}$ above seems likely to fit into some --- as-yet-undefined --- $(\infty,3)$-categorical version of the rigidification functor, which turns decorated 3-simplices in scaled 2-simplices in the corresponding mapping space. 
\end{remark}

\begin{remark}
	Given a 2-Cartesian fibration $p:X\to S$, we note that if \emph{every} triangle in $X$ is lean, the map $\ST_S(X)(i)\to \ST_S(X)(i)_\sharp$ is an equivalence of marked-scaled simplicial sets. More generally, we obtain a diagram 
	\[
	\begin{tikzcd}
		(\mbsSet)_{/S} \arrow[r,"\ST_S"] & (\Set_\Delta^{\mathbf{ms}})^{\mathfrak{C}^{\sc}[S]^\op} \\
		(\Set_\Delta^{\mathbf{ms}})_{/S} \arrow[r,"\St^+"'] \arrow[u,"(-)_{T\subset\sharp}"] & (\Set_\Delta^{+})^{\mathfrak{C}^{\sc}[S]^\op}\arrow[u,"(-)_\sharp"'] 
	\end{tikzcd}
	\]
	which commutes up to natural weak equivalence. 
	
	While we will not formalize this statement here, there should be a model structure on $(\mssSet)_{/S}$ modeling $\infty$-bicategories fibred in $(\infty,1)$-categories, such that $\St^+$ becomes a left Quillen equivalence to the projective model structure. The diagram above would then represent the restriction of our straightening-unstraightening equivalence to this special case.
\end{remark}

\subsection{First properties}

Before proceeding to the technical nitty-gritty of the Quillen equivalences, we establish some basic properties of the straightening functor. 

\begin{proposition}\label{prop:formalproperties}
	Let $S\in \scsSet$ and let $\phi:\mathfrak{C}^{\sc}[S]\to \scr{C}$ be a $\msSet$-enriched functor. Then the following hold
	\begin{enumerate}
		\item The straightening functor $\ST_\phi$ preserves colimits.
		\item (Base change for scaled functors) Given a morphism of scaled simplicial sets $f: T\to S$ there a diagram 
		\[
		\begin{tikzcd}
			(\mbsSet)_{/S}\arrow[dr,"\ST_\phi"] \ & \\
			& (\mssSet)^{\scr{C}^\op} \\
			(\mbsSet)_{/T}\arrow[ur,"{\ST_{\phi\circ \mathfrak{C}[f]}}"']\arrow[uu,"f\circ-"] &
		\end{tikzcd}
		\]
		which commutes up to natural isomorphism of functors. 
		\item (Base change for $\msSet$-functors) Given a $\msSet$-enriched functor $\psi:\C\to \D$ there is a diagram 
		\[
		\begin{tikzcd}
			& (\mssSet)^{\scr{D}^\op} \\
			(\mbsSet)_{/S}\arrow[ur,"\ST_{\psi\circ \phi}"]\arrow[dr,"\ST_{\phi}"'] & \\
			& (\mssSet)^{\C^\op} \arrow[uu,"\psi_!"']
		\end{tikzcd}
		\]
		which commutes up to natural isomorphism of functors. 
	\end{enumerate}
\end{proposition}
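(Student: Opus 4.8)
The plan is to reduce all three statements to the explicit pushout description of the $\mssSet$-enriched category $\overline{X_\phi}=\scr{C}_\sharp\coprod_{\mathfrak{C}^{\sc}[X]_\sharp}\mathfrak{C}^{\sc}[X^\triangleright]_\dagger$ together with the corepresentable functor at the cone point, $\ST_\phi(X)(s)=\overline{X_\phi}(s,\ast)$. Two standing facts will be used throughout. First, colimits in $(\mssSet)^{\scr{C}^\op}$ are computed pointwise, and colimits in $\mssSet$ are computed on underlying simplicial sets with marking and scaling given by the \emph{images} of the markings and scalings of the terms. Second, the scaled rigidification $\mathfrak{C}^{\sc}$ is a left adjoint, hence cocontinuous, and the decorations defining $\mathfrak{C}^{\sc}[X^\triangleright]_\dagger$ (and the scaling $P^s_{X^\triangleright}$ with its functorial extension) are specified by conditions of the form ``lies in the image of a scaled $2$-simplex'', which are stable under the formation of such colimits.

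For (1), since colimits in the slice $(\mbsSet)_{/S}$ are created in $\mbsSet$, and colimits in the target are pointwise, it suffices to fix $s$ and show $X\mapsto\overline{X_\phi}(s,\ast)$ preserves colimits. The underlying marked simplicial set here is the value of the scaled-rigidification analogue of Lurie's straightening $\St^+_\phi$, so I would first argue, mirroring the computation of \cite[\S 3.2]{HTT}, that $X\mapsto \mathfrak{C}^{\sc}[X^\triangleright](s,\ast)$ is cocontinuous: this uses that $\mathfrak{C}^{\sc}$ is cocontinuous and that the cone point is adjoined \emph{freely} --- it emits no morphisms --- so the relevant mapping spaces are assembled from the simplices of $X$ in a manner that commutes with colimits, and then that pushing out along the fixed span $\scr{C}_\sharp\leftarrow\mathfrak{C}^{\sc}[X]_\sharp$ preserves colimits in $X$. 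Compatibility of the scalings with colimits then follows from the image-description of decorations recalled above. Together with presentability of both sides, this cocontinuity is exactly the input for the adjoint functor theorem producing $\UN_\phi$.

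Statement (2) is essentially bookkeeping. Fix $q\colon Y\to T$ in $(\mbsSet)_{/T}$; its image under $f\circ-$ is $Y\xrightarrow{f\circ q}S$. The base enters the construction of the pushout only through the gluing functor $\mathfrak{C}^{\sc}[Y]\to\scr{C}$, which for $\ST_\phi(f\circ q)$ is $\phi\circ\mathfrak{C}^{\sc}[f\circ q]$ and for $\ST_{\phi\circ\mathfrak{C}^{\sc}[f]}(q)$ is $(\phi\circ\mathfrak{C}^{\sc}[f])\circ\mathfrak{C}^{\sc}[q]$. Functoriality of $\mathfrak{C}^{\sc}$ gives $\mathfrak{C}^{\sc}[f\circ q]=\mathfrak{C}^{\sc}[f]\circ\mathfrak{C}^{\sc}[q]$, so the two gluing functors agree; since $Y^\triangleright$, its decorations, and the pushout depend only on $Y$ and this functor, the two copies of $\overline{Y_{(-)}}$ are literally equal, hence so are their corepresentables at $\ast$. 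I would then observe that this identification is natural in $Y$, giving the asserted natural isomorphism.

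For (3), I would apply the pasting law for pushouts. Since the gluing functor for $\ST_{\psi\circ\phi}$ is $\psi\circ(\phi\circ\mathfrak{C}^{\sc}[p])$, we obtain
\[
\overline{X_{\psi\phi}}
=\scr{D}_\sharp\coprod_{\mathfrak{C}^{\sc}[X]_\sharp}\mathfrak{C}^{\sc}[X^\triangleright]_\dagger
\cong\scr{D}_\sharp\coprod_{\scr{C}_\sharp}\Bigl(\scr{C}_\sharp\coprod_{\mathfrak{C}^{\sc}[X]_\sharp}\mathfrak{C}^{\sc}[X^\triangleright]_\dagger\Bigr)
=\scr{D}_\sharp\coprod_{\scr{C}_\sharp}\overline{X_\phi}.
\]
The remaining, genuinely substantive step is to identify the corepresentable $\bigl(\scr{D}_\sharp\coprod_{\scr{C}_\sharp}\overline{X_\phi}\bigr)(d,\ast)$ with the left Kan extension $\psi_!\bigl(\ST_\phi(X)\bigr)(d)$. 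This should hold because $\ast$ is reachable from $d\in\scr{D}$ only by travelling inside $\scr{D}$ to the image of some $c\in\scr{C}$ and then from $c$ into $\ast$ inside $\overline{X_\phi}$ (the cone point emits no morphisms, and morphisms between $\scr{C}$-objects in $\overline{X_\phi}$ are just those of $\scr{C}$), so that this mapping space is computed by the coend $\int^{c\in\scr{C}}\scr{D}(d,\psi c)\otimes\overline{X_\phi}(c,\ast)$, which is precisely the coend formula for $\psi_!$ along $\psi^\op$. I expect this Kan-extension identification to be the main obstacle: it is the one step that is neither formal functoriality nor a direct appeal to \cite{HTT}, and it rests on a careful analysis of mapping spaces in a pushout of $\msSet$-enriched categories in which one glued object is adjoined freely --- the same analysis underpinning the cocontinuity in (1). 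Once it is in place, naturality in both $X$ and $\psi$ is routine.
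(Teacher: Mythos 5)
Your proposal is correct, and its overall architecture --- reduce to the underlying marked-level construction and then check that the extra scalings, being generated by images of decorated simplices of $X$, are compatible with everything in sight --- matches the paper's. The difference is in how much you re-derive versus cite: the paper disposes of all three statements by observing that they hold for Lurie's $\St^+_\phi$ (citing \cite[Prop.\ 3.2.1.4]{HTT} and \cite[Rmk.\ 3.5.16]{LurieGoodwillie}), verifies the scaling compatibility explicitly only for (1) (by tracing a scaled simplex of $\ST_\phi(\colim_I D(i))(s)$ back to a simplex of some $D(k)$, exactly your ``image-description'' argument), and declares (2) and (3) analogous. You instead give self-contained arguments: for (2), the literal identity $\phi\circ\mathfrak{C}^{\sc}[f\circ q]=(\phi\circ\mathfrak{C}^{\sc}[f])\circ\mathfrak{C}^{\sc}[q]$ of gluing functors, which is also what underlies the paper's claim; and for (3), the pasting law for pushouts together with the coend identification of $\bigl(\scr{D}_\sharp\coprod_{\scr{C}_\sharp}\overline{X_\phi}\bigr)(d,\ast)$ with $\int^{c}\scr{D}(d,\psi c)\otimes\overline{X_\phi}(c,\ast)=\psi_!\bigl(\ST_\phi(X)\bigr)(d)$. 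That coend computation is precisely the analysis the paper's citation leaves implicit, and it does go through: since $\ast$ emits no morphisms, the full subcategory of $\overline{X_\phi}$ on the objects of $\scr{C}$ is $\scr{C}_\sharp$, every morphism $d\to\ast$ in the pushout factors as $d\to\psi(c)\to\ast$ modulo exactly the coend relations, and the scaling condition in the paper's construction (that the ``final'' factor $\theta_2$ lie in $P^s_{X^\triangleright}$ or be degenerate) is unaffected by absorbing $\scr{C}$- or $\scr{D}$-morphisms into $\theta_1$. What your route buys is a proof of (3) that does not defer to op.\ cit.; what it costs is carrying out this mapping-space analysis carefully, which you correctly flag as the substantive step. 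One imprecision in (1) should be repaired: the span $\scr{C}_\sharp\leftarrow\mathfrak{C}^{\sc}[X]_\sharp\to\mathfrak{C}^{\sc}[X^\triangleright]_\dagger$ is not fixed, and $X\mapsto \overline{X_\phi}$ is \emph{not} cocontinuous as a functor to enriched categories (it already fails at $X=\emptyset$, because the join $X\mapsto X^\triangleright$ does not preserve colimits); cocontinuity holds only after passing to the mapping spaces $\overline{X_\phi}(s,\ast)$ into the freely adjoined cone point, which is the statement your remark that the cone point emits no morphisms actually supports, and it is on that statement --- not on cocontinuity of the intermediate pushout of categories --- that the argument must rest.
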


\begin{proof}
	All three statements hold on the level of $\St_\phi^+$, and so the proof amounts to checking scalings. We prove (1), and leave the other two statements to the reader. 
	
	It is follows from the definition that 
	\[
	\func{\on{St}^+_\phi:(\Set_\Delta^{\mathbf{mb}})_{/S}\to (\msSet)^{\C^\op}}
	\]
	preserves colimits. Since colimits in functor categories are computed pointwise, it will thus suffice to show that, given a diagram 
	\[
	\func{D:I\to (\Set_\Delta^{\mathbf{mb}})_{/S}}
	\]
	the scalings on $\colim_I \SSt_\phi(D(i))$ and $\SSt_\phi(\colim_I D(i))$ coincide. Indeed, applying the universal property, it will suffice to show that the map 
	\[
	\func{
		\SSt_\phi(\colim_I D(i)) \to \colim_I \SSt_\phi(D(i))
	}
	\]
	which is the identity on underlying marked simplicial sets preserves the scalings. 
	
	Fix $s\in \C$, we will first show that the map 
	\[
	\func{f_s:
		(\SSt_\phi(\colim_I D(i)))(s) \to (\colim_I \SSt_\phi(D(i)))(s)
	}
	\]
	preserves the scalings inherited from $P_{X^\triangleright}^s$. To this end, suppose given a scaled simplex $\sigma$ in $P_{(\SSt_\phi(\colim_I D(i)))_S}^s$ which does not come from a lean simplex in the colimit.  Tracing through the definition, we note that there must be a simplex $\eta:\Delta^n\to \colim_I(D(i))$ and a simplex $\mu:=\{S_0\cup \{n+1\}\to S_1\cup\{n+1\}\to S_2\cup \{n+1\}\}$ in $\mathfrak{C}^{\on{sc}}[\Delta^{n+1}](0,n+1)$ with $\max(S_i)=\max(S_j)$ for some $i,j=0,1,2$ such that $\sigma$ is the image of $\mu$ under the canonical map 
	\[
	g_1:\func{\mathfrak{C}^{\on{sc}}[\Delta^{n+1}](0,n+1)\to (\SSt_\phi(\colim_I D(i)))(s)}
	\]
	is not scaled.
	
	By the construction of colimits in simplicial sets, this means that there is an $k\in I$ and a simplex $\hat{\eta}:\Delta^n\to D(k)$ such that $\eta$ factors through the canonical map $D(k)\to \colim_I D(i)$ as $\hat{\eta}$. We can then note that $\hat{\eta}$ will yield a map 
	\[
	\func{
		g_2:\mathfrak{C}^{\on{sc}}[\Delta^{n+1}](0,n+1)\to \SSt_\phi(D(k))(s)\to (\colim_I\SSt_\phi(D(i)))(s)
	}
	\]
	such that the diagram 
	\[
	\begin{tikzcd}
		& (\SSt_\phi(\colim_I D(i)))(s)\arrow[dd,"f_s"] \\
		\mathfrak{C}^{\on{sc}}[\Delta^{n+1}](0,n+1)\arrow[ur,"g_1"]\arrow[dr,"g_2"']& \\
		& (\colim_I \SSt_\phi(D(i)))(s)
	\end{tikzcd}
	\]
	commutes. We thus see that $g_2(\mu)=f_s(g_1(\mu))=f_s(\sigma)$ is scaled, as desired. The same argument holds, \emph{mutatis mutandis}, for $\sigma \in P_{(\SSt_\phi(\colim_I D(i)))_S}^s$ coming from a lean 2-simplex in the colimit.
	
	We can now easily check that the full scalings $T_{X_S}^s$ are preserved by $f_s$ by simply noting that the diagram 
	\[
	\begin{tikzcd}
		\C(s^\prime,s)\times (\on{St}^+_S(\colim_I D(i)))(s)\arrow[d,"f_s"']\arrow[rr,"\circ"] & & (\on{St}^+_S(\colim_I D(i)))(s^\prime)\arrow[d,"f_{s^\prime}"] \\ 
		\C(s^\prime,s)\times (\colim_I \SSt_S(D(i)))(s)\arrow[rr,"\circ"'] & & (\colim_I \SSt_S(D(i)))(s^\prime)
	\end{tikzcd}
	\]
	commutes.
	
	To show (2) and (3), we again note that the statements are immediate if we replace $\SSt_\phi$ with $\on{St}_\phi^+$ (cf. \cite[Rmk 3.5.16]{LurieGoodwillie} and \cite[Prop 3.2.1.4]{HTT}). A similar check to the above assures us that the scalings coincide.  
\end{proof}

\begin{remark}\label{rmk:basechange}
	Note that, in the case where we consider $\phi$ to be the identity on $\mathfrak{C}^{\sc}[S]$ and are given a morphism $f:T\to S$, combining (2) and (3) in \autoref{prop:formalproperties} yields a diagram 
	\[
	\begin{tikzcd}
		(\mbsSet)_{/S}\arrow[r,"\ST_S"] & (\mssSet)^{\mathfrak{C}^{\sc}[S]^\op} \\
		(\mbsSet)_{/T}\arrow[r,"\ST_T"']\arrow[u,"f\circ-"] & (\mssSet)^{\mathfrak{C}^{\sc}[T]^\op}\arrow[u,"{\mathfrak{C}^{\sc}[f]_!}"']
	\end{tikzcd}
	\]
	which commutes up to natural isomorphism. 
\end{remark}

\begin{corollary}
	Let $S$ be an scaled simplicial set and $\phi:\mathfrak{C}^{\sc}[S]\to \scr{C}$ a $\msSet$-enriched functor. Then the straightening functor $\ST_\phi$ has a right adjoint 
	\[
	\func{\UN_\phi:\left(\mssSet\right)^{\C^\op} \to (\mbsSet)_{/S}}
	\]
	which we call the (bicategorical) unstraightening functor. \glsadd{UN}
\end{corollary}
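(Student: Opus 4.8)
The plan is to obtain $\UN_\phi$ as a formal consequence of the adjoint functor theorem, the essential input being the preservation of colimits already recorded in \autoref{prop:formalproperties}. Beyond that, all that is required is to recognize that both categories in sight are locally presentable.

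First I would note that the source $(\mbsSet)_{/S}$ is locally presentable. Indeed, by \autoref{thm:MBModelStructure} it underlies a combinatorial model category, and the underlying category of any combinatorial model category is locally presentable. For the target, recall from \autoref{thm:markedscaledmodel} that $\mssSet$ is itself combinatorial; since $\C$ is a small $\msSet$-enriched category, the category $(\mssSet)^{\C^\op}$ of enriched functors equipped with the projective model structure is again combinatorial, and hence its underlying category is locally presentable. Alternatively, one argues directly that $\mbsSet$ and $\mssSet$ are presheaf-type categories (simplicial sets carrying finitely many combinatorial decorations), that slices of locally presentable categories remain locally presentable, and that a category of enriched functors from a small enriched category into a locally presentable target is locally presentable, with limits and colimits computed pointwise.

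With both categories identified as locally presentable, the conclusion is immediate: by \autoref{prop:formalproperties}(1) the functor $\ST_\phi$ preserves all small colimits, and a colimit-preserving functor between locally presentable categories is a left adjoint (the presentable form of the adjoint functor theorem). This produces the desired right adjoint $\UN_\phi:(\mssSet)^{\C^\op}\to (\mbsSet)_{/S}$. The argument is entirely formal once colimit-preservation is in hand, so there is no genuine obstacle to overcome; the only point deserving a moment's care is the local presentability of the enriched functor category, which is standard and may be imported wholesale from the theory of combinatorial model categories.
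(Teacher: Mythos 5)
Your proposal is correct and coincides with the paper's own argument: the paper likewise deduces the existence of $\UN_\phi$ from colimit-preservation (\autoref{prop:formalproperties}(1)) via the adjoint functor theorem, leaving the local presentability of both categories implicit where you spell it out.
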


\begin{proof}
	This follows from the first part in \autoref{prop:formalproperties} using the adjoint functor theorem.
\end{proof}

Let $\Delta^n_{\flat}$ denote the minimally scaled $n$-simplex and consider $(\Delta^n)^{\flat}_{\flat}=(\Delta^n,\flat,\flat)$ as an object of $(\mbsSet)_{/\Delta^n_{\flat}}$ via the identity map. To ease the notation we will denote the straightening of this object as $\ST_{\Delta^n_\flat}\!(\Delta^n)$. 

\begin{definition}\label{def:Lni}
	Let $n \geq 0$ and $0\leq s \leq n$. We denote by $L^n(s)$ the poset of subsets $S \subseteq [n]$ such that $\min(S)=s$ ordered by inclusion. Let $\sigma:S_0 \subseteq S_1 \subseteq S_2$ be a 2-simplex in the (nerve of) $L^n(s)$ and denote $s_i=\max(S_i)$ for $i=0,1,2$. We say that $\sigma$ is \emph{thin} if there exists a pair of indices $i,j$ such that $s_i=s_j$. We endow $L^n(s)$ with a scaling given by thin simplices and with the minimal marking. The resulting marked scaled simplicial set will be denoted by $\mathcal{L}_{\flat}^n(s)$
\end{definition}

\begin{lemma}
	Let $n\geq 0$ and $0 \leq s \leq n$. Then there is an isomorphism
	\[
	\func{\ST_{\Delta^n_\flat}\!(\Delta^n)(s) \to[\simeq] \mathcal{L}_{\flat}^n(s)}
	\]
	of marked scaled simplicial sets
\end{lemma}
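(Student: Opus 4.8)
The plan is to unwind the straightening construction in this degenerate situation, where $\phi=\id$ and the structure map $p:(\Delta^n)^\flat_\flat\to \Delta^n_\flat$ is the identity, so that all the intermediate data simplifies drastically. First I would compute the auxiliary objects: since $p$ is an isomorphism on underlying simplicial sets and $(\Delta^n)^\flat_\flat$ carries the minimal marking and biscaling, the cone $X^\triangleright$ is the minimally scaled $\Delta^{n+1}$, with cone point $\ast=n+1$, and the pushout defining $X_S$ collapses to $X_S=\Delta^{n+1}_\flat$. Consequently $X_\phi=\mathfrak{C}^{\sc}[\Delta^{n+1}_\flat]$, and the underlying marked simplicial set of $\ST_{\Delta^n_\flat}(\Delta^n)(s)$ is $\St^+_{\Delta^n_\flat}(\Delta^n)(s)=\mathfrak{C}^{\sc}[\Delta^{n+1}](s,n+1)$. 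This is the nerve of the poset of subsets $U\subseteq\{s,\dots,n+1\}$ with $s,n+1\in U$, and the assignment $U\mapsto U\setminus\{n+1\}$ is a poset isomorphism onto $L^n(s)$. Since $\Delta^{n+1}_\flat$ is minimally scaled, $\mathfrak{C}^{\sc}$ equips its mapping spaces with the trivial marking, matching the minimal marking of $\mathcal{L}^n_\flat(s)$.

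The heart of the argument is to check that the scaling produced by the construction agrees with the thin scaling of $\mathcal{L}^n_\flat(s)$, i.e. that a $2$-simplex $U_0\subseteq U_1\subseteq U_2$ is scaled precisely when $\max(U_i\setminus\{n+1\})=\max(U_j\setminus\{n+1\})$ for some $i\neq j$. For the auxiliary scaling $P^s_{X^\triangleright}$ of step (2), I would take $\sigma$ to be the maximal chain $\{s<s+1<\cdots<n\}$; then $\sigma\star\id_0$ induces an isomorphism $\mathfrak{C}^{\sc}[\Delta^{n-s+1}](0,n-s+1)\xrightarrow{\cong}\mathfrak{C}^{\sc}[\Delta^{n+1}](s,n+1)$, so every $2$-simplex is in its image. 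Because $C_X$ contains only degenerate triangles the lean clause is vacuous, and under the relabeling the scaling condition $\max(S_i)=\max(S_j)$ becomes exactly the thin condition above. Thus $P^s_{X^\triangleright}$ consists of precisely the thin $2$-simplices, and no others.

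It then remains to see that the functorial extension in step (3) scales nothing new. Here I would use that composition in $\mathfrak{C}^{\sc}[\Delta^{n+1}]$ is given by union of subsets: if $U_i=A_i\cup B_i$ with $A_i\in\mathfrak{C}^{\sc}[\Delta^{n+1}](s,t)$ and $B_i\in\mathfrak{C}^{\sc}[\Delta^{n+1}](t,n+1)$, then since every $A_i$ contains the pivot $t$ we have $\max(U_i\setminus\{n+1\})=\max(B_i\setminus\{n+1\})$; hence if the second factor lies in $P^t_{X^\triangleright}$ (so that two of the $B_i$ share this maximum), the composite already satisfies the thin condition. This shows the scaling is unchanged, and assembling the three observations yields the claimed isomorphism of marked-scaled simplicial sets. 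The main obstacle is entirely in this scaling bookkeeping — especially confirming that step (3) adds no spurious thin triangles — rather than in the routine identification of the underlying poset and marking.
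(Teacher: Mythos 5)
Your proposal is correct and takes essentially the same approach as the paper: the paper's entire proof of this lemma is ``Immediate from unraveling the definitions,'' and your argument is precisely that unraveling carried out in detail (identification of the underlying poset via $U\mapsto U\setminus\{n+1\}$, triviality of the marking, computation of $P^s_{X^\triangleright}$ using the maximal chain, and the verification that the functorial extension in step (3) scales nothing new). One pedantic remark: the lean clause is not literally vacuous, since degenerate $2$-simplices of $(\Delta^n,\flat,\flat)$ do lie in $C_X$, but its outputs are degenerate (hence already scaled) $2$-simplices of the mapping space, so your conclusion is unaffected.
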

\begin{proof}
	Immediate from unraveling the definitions.
\end{proof} 

\begin{definition}\label{def:saturation}
	Let $n\geq 0$ and consider a $\bS$ simplicial set $\Delta^n_{T}:=(\Delta^n,\flat,\flat \subseteq T)$ for some scaling $T$. Given $0\leq s \leq n$ we define a new scaling on $L^n(s)$ (see \autoref{def:Lni}) by declaring a 2-simplex $S_0 \subseteq S_1 \subseteq S_2$ if and only if the simplex defined by $\max(S_0)\leq \max(S_1)\leq \max(S_2)$ is lean in $\Delta^n_{T}$. We denote the resulting scaled simplicial set by $\mathcal{L}^n_{T}(s)$.
\end{definition}

\begin{lemma}\label{lem:saturation}
	Let $\Delta^n_{T}:=(\Delta^n,\flat,\flat \subseteq T)$ and denote by $\ST_{\Delta^n_\flat}\! (\Delta^n_T)$ the straightening of the map $\Delta^n_{T} \to \Delta^n_\flat$. Then for every $0\leq s \leq n$ the canonical map
	\[
	\func{\ST_{\Delta^n_\flat}\!( \Delta^n_T)(s) \to \mathcal{L}^n_{T}(s)}
	\]
	is $\mathbf{MS}$-anodyne.
\end{lemma}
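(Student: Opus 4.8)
The plan is to observe that the map is the identity on underlying marked simplicial sets — both sides have $L^n(s)$ as underlying simplicial set carrying the minimal marking, and passing from the source to $\mathcal{L}^n_T(s)$ only enlarges the scaling — and then to fill in the missing thin triangles by an inductive pivot-style argument built on the inner-saturation move \autoref{lem:Ui_MS-anodyne}.

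First I would pin down exactly which triangles $\ST$ scales. Unwinding the construction, a $2$-simplex $S_0\subseteq S_1\subseteq S_2$ of $L^n(s)$ is scaled in $\ST_{\Delta^n_\flat}(\Delta^n_T)(s)$ in precisely two situations: (a) two of the maxima $\max S_0\leq\max S_1\leq\max S_2$ coincide, which is forced by the first clause of the auxiliary scaling $P^s_{X^\triangleright}$; or (b) writing $t=\max S_0$, the sets grow minimally above $t$, namely $S_1\cap(t,n]=\{c\}$ and $S_2\cap(t,n]=\{c,d\}$ with $(t,c,d)$ lean in $\Delta^n_T$. Clause (b) is the combination of the lean clause of $P^s_{X^\triangleright}$ (which gives the case $t=s$, $S_0=\{s\}$) with the composition/functoriality step: the factor $\theta_1$ lands in a maximally-scaled mapping space $\overline{X_\phi}(s,t)$ and absorbs all vertices $\leq t$, while $\theta_2$ must be the minimal lean full flag $\{t\}\subset\{t,c\}\subset\{t,c,d\}$, which forces $t=\max S_0$. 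The main subtlety of the whole proof is this computation — verifying that composition against the maximally-scaled mapping spaces scales precisely the triangles of type (b) and nothing more. Since both families have lean max-triangle, the source scaling is contained in that of $\mathcal{L}^n_T(s)$.

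Next I would organize the complement. For a triangle $\tau=(S_0\subseteq S_1\subseteq S_2)$ with lean non-degenerate max-triangle $(t,c,d)$ — the only triangles of $\mathcal{L}^n_T(s)$ not already covered by (a) — set $e(\tau)=(|S_1\cap(t,n]|-1)+(|S_2\cap(t,n]|-2)\geq 0$, which vanishes exactly on the source-scaled triangles of type (b). I would scale the rest by induction on $e$. If $|S_1\cap(t,n]|\geq 2$, pick $x\in S_1$ with $t<x<c$ and form the non-degenerate $3$-simplex $S_0\subseteq S_1\setminus\{x\}\subseteq S_1\subseteq S_2$; its faces other than $\tau$ either have a degenerate max-triangle (so lie in (a)) or have max-triangle $(t,c,d)$ with strictly smaller $e$, so the pushout along \autoref{lem:Ui_MS-anodyne} with $i=1$ scales $\tau$. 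If instead $|S_1\cap(t,n]|=1$ and $|S_2\cap(t,n]|\geq 3$, pick $y\in S_2$ with $t<y<d$ and $y\neq c$, and use the $3$-simplex $S_0\subseteq S_1\subseteq S_2\setminus\{y\}\subseteq S_2$, whose non-$\tau$ faces are again degenerate-max or of strictly smaller $e$, so \autoref{lem:Ui_MS-anodyne} with $i=2$ applies.

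Finally I would assemble these into a transfinite composition of pushouts, ordering by $e$ and collecting all triangles of a fixed value of $e$ into a single coproduct of generators (legitimate because the faces each relies on have strictly smaller $e$ or degenerate max-triangle, hence are already present). Each stage is then a pushout of $\mathbf{MS}$-anodyne maps, so the inclusion $\ST_{\Delta^n_\flat}(\Delta^n_T)(s)\to\mathcal{L}^n_T(s)$ is $\mathbf{MS}$-anodyne. The only bookkeeping beyond the scaling computation is checking that the auxiliary $3$-simplices are non-degenerate — which holds because $c\neq d$ forces $S_1\neq S_2$ and the removed vertex lies strictly between the relevant maxima — and that at each stage the three non-target faces are genuinely already scaled, both of which follow from the filtration by $e$.
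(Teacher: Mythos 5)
Your proposal is correct and follows essentially the same strategy as the paper: identify the scaling produced by the straightening (your types (a) and (b) are exactly what the construction yields) and then scale the remaining triangles of $\mathcal{L}^n_T(s)$ by pushouts along the $\mathbf{MS}$-anodyne morphisms of \autoref{lem:Ui_MS-anodyne}. The only difference is bookkeeping: where you peel off one vertex at a time by induction on your complexity $e$, the paper scales each triangle $S_0\subseteq S_1\subseteq S_2$ in just two pushouts, using the auxiliary $3$-simplices $S_0 \subseteq S_0\cup\{s_1\}\subseteq S_0\cup\{s_1,s_2\}\subseteq S_2$ (whose last face is your minimal-growth type (b)) and then $S_0\subseteq S_0\cup\{s_1\}\subseteq S_1\subseteq S_2$.
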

\begin{proof}
	The existence of the morphism is clear from the definitions. Suppose that we are given a thin 2-simplex $\sigma:S_0 \subseteq S_1 \subseteq S_2$ in $\mathcal{L}^n_{T}(i)$. As before, we adopt the convention that $s_i:=\max(S_i)$. We will show that $\sigma$ can be scaled by taking pushouts along $\mathbf{MS}$-anodyne morphisms. First let us consider the 3-simplex 
	\[
	\theta: S_0 \subseteq S_0 \cup \set{s_1} \subseteq S_0 \cup \set{s_1,s_2} \subseteq S_2
	\]
	We immediately observe that all of its faces are scaled in $\Sst_{\Delta^n_\flat}\!( \Delta^n)_T(s)$ except the face missing 2. It follows we can scale the remaining face using a pushout along a $\mathbf{MS}$-anodyne map of the type described in  \autoref{lem:Ui_MS-anodyne}. Now we consider another 3-simplex
	\[
	\rho: S_0 \subseteq S_0 \cup \set{s_1} \subseteq S_1 \subseteq S_2
	\]
	Again we observe that all of its faces are scaled except possibly the face missing 1 which is precisely $\sigma$. The conclusion easily follows from \autoref{lem:Ui_MS-anodyne}
\end{proof}

Let $S_{\sharp}$ be a scaled simplicial set and assume every triangle is thin. Denote by $S$ its underlying simplicial set and let $(\on{Set}_{\Delta}^{+})_{/S}$ denote the category of marked simplicial sets over $S$. We define a functor 
\[
\func{\iota:(\on{Set}_{\Delta}^{+})_{/S} \to (\mbsSet)_{/S}; (X,E_X) \mapsto (X,E_X, \sharp)}
\]
We view the $\infty$-categorical straightening functor $\on{St}_S$ (see 3.2.1 in \cite{HTT}) as a functor with values $\left(\on{Set}^{\mathbf{ms}}_{\Delta}\right)^{\mathfrak{C}^{\on{sc}}[S]^{\on{op}}}$ by maximally scaling the values of $\on{St}_S\!X(s)$.

\begin{proposition}\label{prop:infty1comparison}
	There exists a natural transformation
	\[
	\func{\epsilon:\Sst_{S}\circ \iota \nat \on{St}_S}
	\]
	which is objectwise a weak equivalence of marked scaled simplicial sets.
\end{proposition}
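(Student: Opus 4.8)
The plan is to build $\epsilon$ objectwise and then reduce its verification to the universal examples. Both functors in play preserve colimits: for $\Sst_S\circ\iota$ this is \autoref{prop:formalproperties}(1) together with the fact that $\iota$ and maximal scaling preserve colimits, and for the maximally-scaled $\on{St}_S$ it is the colimit-preservation of the classical straightening. Both also satisfy base change along maps of (scaled) simplicial sets: $\Sst_S$ by \autoref{prop:formalproperties}(2),(3), and $\on{St}_S$ by \cite[Prop. 3.2.1.4]{HTT}. After forgetting all decorations, the two functors agree on the underlying simplicial-set-valued straightening $s\mapsto \mathfrak{C}^{\sc}[X_S](s,\ast)$, since $\mathfrak{C}^{\sc}$ and $\mathfrak{C}$ have the same underlying simplicial category and the cone constructions coincide on underlying simplicial sets. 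I would define $\epsilon_X$ to be the natural comparison of decorations lying over this identification; naturality in $X$ is then automatic from the functoriality of both sides.

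To prove $\epsilon_X$ is an objectwise weak equivalence I would first reduce to the case $X=S=\Delta^n$. Writing $(X,E_X)$ as a transfinite composite of pushouts of the generating cofibrations of $(\on{Set}_\Delta^+)_{/S}$ — the boundary inclusions $(\partial\Delta^n)^\flat\to(\Delta^n)^\flat$ and the edge-marking map $(\Delta^1)^\flat\to(\Delta^1)^\sharp$ — colimit-preservation of both functors turns each cell attachment into a pushout, and naturality of $\epsilon$ exhibits these as pushouts of the values of $\epsilon$ on cells. The gluing lemma, available since the projective model structure on $(\mssSet)^{\mathfrak{C}^{\sc}[S]^\op}$ is left proper and both functors preserve cofibrations, then propagates the weak-equivalence property through the filtration. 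Base change reduces the value of $\epsilon$ on a simplex $\sigma\colon\Delta^n\to S$ to its value over $\Delta^n_\flat$, pushed forward along $\mathfrak{C}^{\sc}[\sigma]_!$; the edge-marking cells contribute no nondegenerate triangles to the relevant mapping objects $L^1(s)$ and are handled directly (there $\epsilon$ is an isomorphism).

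It then remains to treat $\epsilon$ on the simplex over $\Delta^n_\flat$. Here I would pass through the explicit poset models. The lemma preceding \autoref{def:saturation} identifies the underlying marked-scaled simplicial set of the bicategorical straightening with $\mathcal{L}^n_\flat(s)$, while the classical straightening $\on{St}_{\Delta^n}((\Delta^n)^\flat)(s)$, maximally scaled, is the maximally scaled poset $\mathcal{L}^n_\sharp(s)$. For the all-lean input relevant to $\iota$, \autoref{lem:saturation} (in the case $T=\sharp$) shows that the canonical saturation map $\Sst_{\Delta^n_\flat}(\Delta^n_\sharp)(s)\to\mathcal{L}^n_\sharp(s)$ is $\mathbf{MS}$-anodyne, hence a weak equivalence. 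This is the combinatorial heart of the argument and supplies the base case; the formalism above then delivers the general statement.

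The step I expect to be the main obstacle is the careful reconciliation of decorations underlying the construction of $\epsilon$. The bicategorical straightening of $\iota(X)$ acquires its marking from the scaled rigidification $\mathfrak{C}^{\sc}$ applied to the full thin scaling imposed by $\iota$, and its scaling from the auxiliary collections $P^s_{X^\triangleright}$ (which see only the leanness $C_X=\sharp$), whereas the classical side carries the Cartesian marking and maximal scaling. In particular one must compare the straightening of the all-thin object $\iota(\Delta^n)$ with that of the minimally-thin, all-lean simplex $\Delta^n_\sharp$ to which \autoref{lem:saturation} literally applies; the difference lies entirely in the marking produced by $\mathfrak{C}^{\sc}$, and the task is to check that this discrepancy is itself an $\mathbf{MS}$-anodyne change of marking so that $\epsilon$ is a well-defined weak equivalence of marked-scaled simplicial sets. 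Once this is settled on simplices, the colimit and base-change formalism mechanically yields the general case.
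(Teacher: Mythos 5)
Your proposal is correct and takes essentially the same route as the paper's own proof: define $\epsilon$ from the formal coincidence of the two constructions, use colimit-preservation, base change, and preservation of cofibrations to reduce to the generating cells $(\Delta^n)^\flat\to\Delta^n$ and $(\Delta^1)^\sharp\to\Delta^1$, and settle the simplex case by \autoref{lem:saturation} with $T=\sharp$ (the marking cell being an isomorphism). The decoration discrepancy you flag at the end is real but is precisely the routine check the paper leaves implicit: because $\on{St}_S$ is regarded as an $\msSet$-enriched functor on $\mathfrak{C}^{\sc}[S]^\op$, its values carry the same $\mathfrak{C}^{\sc}$-saturated marking that $\Sst_S\circ\iota$ produces, so the comparison adds an identical set of marked edges to both sides of the $\mathbf{MS}$-anodyne map of \autoref{lem:saturation}, and that map remains $\mathbf{MS}$-anodyne after pushout along this common change of marking.
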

\begin{proof}
	The existence of the natural transformation is automatic since both functors only differ on the scaling. It is clear that both functors preserve colimits and that they satisfy base change to respect to morphisms of simplicial sets $S \to T$. In addition, it is routine to verify that both functors respect cofibrations. An standard argument then shows that it suffices to check that the natural transformation is an equivalence (1) when $S=(\Delta^n)^{\flat}$ with $n\geq 0$ and $X \to S$ is the identity morphism, and (2) on $(\Delta^1)^{\sharp} \to \Delta^1$ when $S=\Delta^1$. This is a direct consequence of \autoref{lem:saturation}.
\end{proof}

We conclude this section with a first step towards showing that the bicategorical straightening is left Quillen.

\begin{proposition}\label{prop:St_pres_cof}
	Let $S$ be a scaled simplicial set and let $\phi:\mathfrak{C}^{\sc}[S]\to \mathcal{C}$ be a $\msSet$-enriched functor. Then the straightening functor
	\[
	\func{\mathbb{S}\!\on{t}_{\phi}\!:(\mbsSet)_{/S} \to \left(\on{Set}^{\mathbf{ms}}_{\Delta}\right)^{\mathfrak{C}[\mathcal{C}]^\op}}
	\]
	preserves cofibrations.
\end{proposition}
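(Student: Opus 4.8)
The plan is to reduce, by a sequence of standard formal steps, to an explicit computation over a scaled simplex, and then to exhibit the straightening of each generating cofibration as a cell complex built from free projective cells.

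First I would observe that, by part (1) of \autoref{prop:formalproperties}, the functor $\ST_\phi$ preserves colimits. Since the cofibrations of $(\mbsSet)_{/S}$ (the monomorphisms) form the weakly saturated class generated by a set of generating cofibrations, and the projective cofibrations of $(\mssSet)^{\mathfrak{C}[\mathcal{C}]^\op}$ likewise form a weakly saturated class, it suffices to check that $\ST_\phi$ carries each generating cofibration of $(\mbsSet)_{/S}$ to a projective cofibration. Next I would strip away the generality of $\phi$ and of $S$. Using part (3) of \autoref{prop:formalproperties} with $\psi=\phi$ and the identity in the role of the inner functor, we have $\ST_\phi \cong \phi_! \circ \ST_S$; since the restriction functor $\phi^\ast$ between projective model structures preserves (trivial) fibrations, its left adjoint $\phi_!$ is left Quillen and in particular preserves projective cofibrations, so it is enough to treat $\ST_S$. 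Finally, a generating cofibration of $(\mbsSet)_{/S}$ has target a decorated simplex whose structure map $\sigma\colon \Delta^n\to S$ classifies an $n$-simplex of $S$; base change along $\sigma$ (part (2) of \autoref{prop:formalproperties}) gives $\ST_S(\sigma\circ -)\cong \mathfrak{C}^{\sc}[\sigma]_!\,\ST_{\Delta^n_T}(-)$, where $\Delta^n_T$ carries the scaling inherited from $S$. As $\mathfrak{C}^{\sc}[\sigma]_!$ is again left Quillen, the problem is reduced to showing that $\ST_{\Delta^n_T}$ sends the generating cofibrations of $(\mbsSet)_{/\Delta^n_T}$ to projective cofibrations.

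It then remains to treat the finitely many types of generating cofibration over a scaled simplex: the boundary inclusion $\partial\Delta^m\hookrightarrow\Delta^m$ with minimal decorations, the edge-marking map $(\Delta^1,\flat,\flat\subseteq\flat)\to(\Delta^1,\sharp,\flat\subseteq\flat)$, the lean-scaling map $(\Delta^2,\flat,\flat\subseteq\flat)\to(\Delta^2,\flat,\flat\subseteq\sharp)$, and the thin-scaling map $(\Delta^2,\flat,\flat\subseteq\sharp)\to(\Delta^2,\flat,\sharp)$, each equipped with its structure map to $\Delta^n_T$. On underlying marked simplicial sets $\ST_{\Delta^n_T}$ agrees with the $(\infty,1)$-categorical straightening $\St^+$ of \cite[\S 3.2]{HTT}, which is known to send these generators to free projective cells $F_s(i)$; this accounts for the boundary-inclusion and edge-marking cells, which involve only the marked-simplicial structure. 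What remains is to control the scalings, and here I would use the explicit descriptions available over simplices --- the identification $\ST_{\Delta^n_\flat}(\Delta^n)(s)\cong \mathcal{L}_\flat^n(s)$ together with \autoref{lem:saturation} --- to read off exactly which triangles of $\ST_{\Delta^n_T}(B)(s)$ are scaled but do not lie in the image of $\ST_{\Delta^n_T}(A)(s)$.

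The main obstacle is precisely this last bookkeeping of scalings. By construction, the scaling of $\ST_{\Delta^n_T}(X)$ is obtained from the auxiliary scaling $P^s_{X^\triangleright}$ by closing up under the enriched composition, i.e.\ by declaring $\theta_2\circ\theta_1$ scaled whenever $\theta_2$ is (the image of) an element of $P^s_{X^\triangleright}$. This closure is exactly the effect of pushing out along free cells of the form $F_s(j)$, where $j\colon (\Delta^2,\flat,\flat)\to(\Delta^2,\flat,\sharp)$ is the triangle-scaling generator of $\mssSet$: a pushout along $F_s(j)$ scales all composites through a prescribed $2$-simplex. I would therefore argue that the scaled triangles separating $\ST_{\Delta^n_T}(B)$ from $\ST_{\Delta^n_T}(A)$ are attached one auxiliary generator at a time, each attachment being a pushout of some $F_s(j)$, so that $\ST_{\Delta^n_T}(A)\to\ST_{\Delta^n_T}(B)$ is a relative $\{F_s(i)\}$-cell complex and hence a projective cofibration. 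The delicate point --- and the crux of the verification --- is to check that no scaled triangle of the target is forced ``for free'' in a way incompatible with the source, i.e.\ that the relative cells can genuinely be chosen among the $F_s(i)$; this is where the combinatorial structure of $L^n(s)$ and the $\max$-condition defining thin simplices in \autoref{def:Lni} do the work.
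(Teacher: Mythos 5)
Your proposal is correct and takes essentially the same route as the paper's proof: reduce to the generating cofibrations (C1)--(C4) using that $\ST_\phi$ preserves colimits, then apply both base-change statements of \autoref{prop:formalproperties} to reduce to the universal case over the scaled simplex underlying the target with $\phi$ the identity, which is exactly where the paper stops and declares the rest ``a straightforward computation.'' For that last step your free-cell bookkeeping does work, but the ``delicate point'' you flag disappears if one observes that, for each generator, every simplex or decoration of $B$ not already in $A$ contains the vertex $0$, while $\ST(X)(s)$ depends only on the part of $X$ spanned by vertices $\geq s$; hence $\ST(A)(s)\to \ST(B)(s)$ is an isomorphism for all $s\neq 0$, and since $\mathfrak{C}^{\sc}[\Delta^n](s,0)=\emptyset$ for $s\neq 0$, the whole natural transformation is the pushout of $\mathfrak{C}^{\sc}[\Delta^n](-,0)\otimes\bigl(\ST(A)(0)\hookrightarrow\ST(B)(0)\bigr)$, which is a projective cofibration because tensoring with the corepresentable at $0$ is left Quillen and the level-zero map is a monomorphism.
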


\begin{proof}
	The generators of the class of cofibrations of marked biscaled simplicial sets are given by
	\begin{itemize}
		\myitem{(C1)}\label{cof:bndry} $\Bigr(\partial \Delta^n,\flat,\flat\Bigl) \rightarrow \Bigr( \Delta^n,\flat,\flat\Bigl)$.
		\myitem{(C2)}\label{cof:marked1} $\Bigr(\Delta^1,\flat,\flat \Bigl) \rightarrow \Bigr(\Delta^1,\sharp,\flat \Bigl) $.
		\myitem{(C3)}\label{cof:coCart} $\Bigr(\Delta^2,\flat,\flat\Bigl) \rightarrow \Bigr(\Delta^2,\flat,\flat \subset \sharp)$.
		\myitem{(C4)}\label{cof:thin} $\Bigl(\Delta^2,\flat,\flat \subset \sharp \Bigr) \rightarrow \Bigl( \Delta^2,\flat,\sharp\Bigr)$.
	\end{itemize}
	Note that \ref{cof:thin} and \ref{mb:coCartoverThin} are the same morphism. Therefore using standard arguments it will suffice to check our clainm on those generators.
	
	Let $i:A \to B$ be a cofibration. As stated above it will suffice to check in the case where $i$ is one of the generating cofibrations. Furthermore we can use \autoref{prop:formalproperties} to reduce to the case where $S$ is the underlying scaled simplicial set of $B$, and $\phi$ is $\id:\mathfrak{C}^{\sc}[S]\to \mathfrak{C}^{\sc}[S]$.  The result follows from a straightforward computation.
\end{proof}

\subsection{Products and tensoring}

Before we can proceed to proving that the straightening-unstraightening adjunction is a Quillen equivalence (indeed, before we can prove the straightening is left Quillen), we need to establish the relation of the straightening to the $\msSet$-tensoring. We will prove this as a corollary of a more general result --- on products of \textbf{MB} simplicial sets --- which will be of use to us in the sequel. 

Let $A,B \in \on{Set}_\Delta^{\mathbf{sc}}$ and consider a pair of objects $X_A \in (\on{Set}^{\mathbf{mb}}_\Delta)_{/A}$ , $X_B \in (\on{Set}^{\mathbf{mb}}_\Delta)_{/B}$ giving rise to $X \in (\on{Set}^{\mathbf{mb}}_\Delta)_{/A\times B}$  then we can form a pushout diagram
\[
\begin{tikzcd}[ampersand replacement=\&]
	\mathfrak{C}^{\on{sc}}[X] \arrow[r] \arrow[d,"\phi"] \& \mathfrak{C}^{\on{sc}}[X^{\vartriangleright}] \arrow[d] \\
	\mathfrak{C}^{\on{sc}}[A] \times \mathfrak{C}^{\on{sc}}[B] \arrow[r] \& X_{A,B}
\end{tikzcd}
\]
where the left-most vertical morphism is the composite $ \mathfrak{C}^{\on{sc}}[X] \to  \mathfrak{C}^{\on{sc}}[A\times B] \to \mathfrak{C}^{\on{sc}}[A] \times \mathfrak{C}^{\on{sc}}[B]$. Let 
\[
\SSt_A X_A \boxtimes \SSt_B X_B: \mathfrak{C}^{\on{sc}}[A]^{\op} \times \mathfrak{C}^{\on{sc}}[B]^{\op} \to \on{Set}^{\mathbf{ms}}_\Delta 
\] 
be the pointwise product of $\SSt_A X_A$ and $\SSt_B X_B $ and observe there is a canonical natural transformation
\[
\epsilon_X:\SSt_{\phi}X \xRightarrow{}  \SSt_A X_A \boxtimes \SSt_B X_B
\]
We will prove the following theorem:
\begin{theorem}\label{thm:product}
	The map $\epsilon_X:\SSt_{\phi}X \xRightarrow{}  \SSt_A (X_A) \boxtimes \SSt_B (X_B)$ is a pointwise weak equivalence.
\end{theorem}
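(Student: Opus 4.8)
The plan is to follow the by-now-standard pattern for results of this kind (compare \cite[\S 3.2]{HTT}): reduce to the case of simplices by a cellular induction, and then carry out an explicit comparison. Both of the functors $X_A\mapsto \SSt_\phi(X_A\times X_B)$ and $X_A\mapsto \SSt_A(X_A)\boxtimes \SSt_B(X_B)$ --- with $X_B$ held fixed, and symmetrically with the roles of $A$ and $B$ reversed --- preserve colimits: the straightening does so by \autoref{prop:formalproperties}(1), while the box product does so because product with a fixed object preserves colimits in the Cartesian-closed category $\mssSet$. Likewise both send cofibrations to pointwise cofibrations, by \autoref{prop:St_pres_cof} and the stability of monomorphisms under products. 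Since weak equivalences in the projective model structure are detected pointwise, and since $\mssSet$ is left proper (so that pushouts along cofibrations are homotopy pushouts), a standard cellular induction reduces the claim to the case where $X_A$ and $X_B$ are decorated simplices attached along the generating cofibrations \ref{cof:bndry}--\ref{cof:thin}. Invoking base change (\autoref{prop:formalproperties}(2)) we may further take $A$ and $B$ to be the underlying scaled simplices of $X_A$ and $X_B$, so that it remains to treat $X_A=(\Delta^n,\flat,\flat\subseteq T_A)$ over $\Delta^n$ and $X_B=(\Delta^m,\flat,\flat\subseteq T_B)$ over $\Delta^m$ (the cases involving markings being handled in the same way, with the marking data transported through the cone construction exactly as in the $\infty$-categorical setting).

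On the target side, \autoref{lem:saturation} supplies, for each $s$ and $t$, canonical $\mathbf{MS}$-anodyne maps $\SSt_A(X_A)(s)\to \mathcal{L}^n_{T_A}(s)$ and $\SSt_B(X_B)(t)\to \mathcal{L}^m_{T_B}(t)$. Since $\mathbf{MS}$-anodyne maps are weak equivalences and $\mssSet$ is Cartesian-closed (whence the product of a weak equivalence with an identity is again a weak equivalence, by \autoref{prop:PPMS} and the Cartesian-closedness established earlier), the value $(\SSt_A X_A\boxtimes \SSt_B X_B)(s,t)$ is weakly equivalent to the honest product $\mathcal{L}^n_{T_A}(s)\times \mathcal{L}^m_{T_B}(t)$ of the two explicit poset models. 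It therefore suffices to compare the source $\SSt_\phi(\Delta^n\times\Delta^m)(s,t)=\overline{(\Delta^n\times\Delta^m)_\phi}\big((s,t),\ast\big)$ with this product, the comparison being induced by $\epsilon_X$.

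The core of the argument is this last comparison, and its scaling part is the step I expect to be the main obstacle. On underlying marked simplicial sets it is the assertion that the $\infty$-categorical straightening $\St^+$ is compatible with external products: concretely, the canonical map $\mathfrak{C}^{\sc}[\Delta^n\times\Delta^m]\to \mathfrak{C}^{\sc}[\Delta^n]\times\mathfrak{C}^{\sc}[\Delta^m]$ is a weak equivalence, which together with the behaviour of the cone-point mapping space (cf. \cite[\S 3.2]{HTT}) settles the underlying map --- indeed both sides are weakly contractible here, the relevant posets each possessing a terminal object. The genuinely new content is matching the \emph{scalings}. By construction the thin $2$-simplices of $\overline{(\Delta^n\times\Delta^m)_\phi}((s,t),\ast)$ are generated under the functoriality rule from the auxiliary scaling $P^{(s,t)}$, whose "maximal-collision'' clause and lean-triangle clause must be shown, after saturating along the $\mathbf{MS}$-anodyne maps of \autoref{lem:Ui_MS-anodyne}, to coincide with the product scaling on $\mathcal{L}^n_{T_A}(s)\times \mathcal{L}^m_{T_B}(t)$, in which a $2$-simplex is thin precisely when each coordinate projection records a collision of maxima or lies over a lean triangle of the corresponding factor. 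The plan is to unwind the shuffle decomposition of the mapping space of the rigidified product-with-cone and verify that the collision condition for the product is exactly the conjunction of the two coordinatewise collision conditions, so that, once both scalings are saturated by $\mathbf{MS}$-anodyne moves, the comparison becomes an isomorphism onto its image; pointwise weak equivalence of $\epsilon_X$ then follows.
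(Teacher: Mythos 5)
Your reduction steps are fine and agree with the paper's: the cellular induction to decorated simplices (the paper invokes the analogue of \cite[3.2.1.13]{HTT} to reduce to $A,B\in\{\Delta^n_\flat,\Delta^2_\sharp\}$ with $X_A\to A$, $X_B\to B$ identities on underlying simplicial sets), and the use of \autoref{lem:saturation} to replace the targets by the explicit posets $\mathcal{L}^n_{T}(s)$. The gap is in what you treat as the easy part. You claim the comparison of underlying marked simplicial sets is settled because ``both sides are weakly contractible, the relevant posets each possessing a terminal object.'' This is a category error: weak equivalence in $\mssSet$ is \emph{bicategorical} equivalence, not weak homotopy equivalence of underlying simplicial sets. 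The poset $\mathcal{L}^n_\flat(s)$ does have a maximal element, so its nerve is spatially contractible, but it is nowhere near equivalent to a point in the marked-scaled model structure --- already $\mathcal{L}^1_\flat(0)\cong(\Delta^1)^\flat$ is the walking arrow. The entire content of \autoref{thm:product} is that $\epsilon_X$ is an equivalence between these non-trivial $\infty$-bicategories, so nothing has been proved at this step. Relatedly, quoting the equivalence $\mathfrak{C}^{\sc}[\Delta^n\times\Delta^m]\to\mathfrak{C}^{\sc}[\Delta^n]\times\mathfrak{C}^{\sc}[\Delta^m]$ does not suffice either: the straightening is formed by a pushout of enriched categories along precisely this map, which is not a cofibration, so homotopy invariance of that pushout is exactly what needs proof, not something one may assume.

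Your endgame also fails: the map $\epsilon^{n,k}_{(i,j)}:\mathbb{P}^{n,k}_{(i,j)}\to\mathbb{S}^{n,k}_{(i,j)}$ never becomes ``an isomorphism onto its image'' after saturating scalings. It is surjective but badly non-injective on underlying simplicial sets --- e.g.\ for $n=k=1$ the chains $\{(0,0)<(1,1)\}$ and $\{(0,0)<(0,1)<(1,1)\}$ have the same coordinate projections --- so no amount of $\mathbf{MS}$-anodyne bookkeeping on thin triangles turns the comparison into an isomorphism. The paper's proof supplies the missing homotopy-theoretic input by interposing $\mathbb{E}^{n,k}_{(i,j)}=\mathfrak{C}[(\Delta^n\times\Delta^k)^{\triangleright}]((i,j),\ast)$ and proving two genuine equivalences: $\xi^{n,k}_{(i,j)}:\mathbb{E}^{n,k}_{(i,j)}\to\mathbb{S}^{n,k}_{(i,j)}$ via the rigid-chain deformation retraction and covering/homotopy-colimit argument (\autoref{lem:boxiso}, \autoref{lem:boxcover}, \autoref{prop:EtoSequiv}), and $\pi^{n,k}_{(i,j)}:\mathbb{E}^{n,k}_{(i,j)}\to\mathbb{P}^{n,k}_{(i,j)}$ via the distance filtration through the quotients $\mathcal{O}^{n,k}$ (\autoref{lem:alphabeta} and the proposition following it); the theorem then follows by two-out-of-three (\autoref{prop:productkey}). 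Some argument of this kind --- a contraction or filtration identifying the many preimage chains up to marked homotopy --- is unavoidable, and it is entirely absent from your proposal.
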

Before proceeding with the proof of the theorem we need to do some preliminary work. First we will do a careful study of the case where $A=(\Delta^n,\flat)$ and $B=(\Delta^k,\flat)$, $X_A=(\Delta^n,\flat,\flat)$ and $X_B=(\Delta^k,\flat,\flat)$. We will assume that the maps $X_A \to A$ and $X_B \to B$ are the identity on the underlying scaled simplicial sets. In this particular situation we will denote $\SSt_{\phi}X(i,j):=\mathbb{P}^{n,k}_{(i,j)}$ and $\SSt_{\Delta^n}\Delta^n(i) \times \SSt_{\Delta^k}\Delta^k(j):=\mathbb{S}^{n,k}_{(i,j)}$.

\begin{definition}
	Let $n,k \geq 0$ and let $i \in [n], j \in [k]$. We define marked scaled simplicial set $\mathbb{E}^{n,k}_{(i,j)}$ whose underlying simplicial set is given by $\mathfrak{C}[(\Delta^n \times \Delta^k)^{\triangleright}]((i,j),\ast)$. To define the marking and the scaling we construct a morphism
	\[
	\func{\xi^{n,k}_{(i,j)}:\mathbb{E}^{n,k}_{(i,j)} \to \mathbb{S}^{n,k}_{(i,j)}}
	\]
	an equip $\mathbb{E}^{n,k}_{(i,j)}$ with the induced marking and scaling. Recall that objects of $\mathbb{E}^{n,k}_{(i,j)}$ are given by a chain or sequence of inequalities $(a_0,b_0)<(a_1,b_1)< \cdots (a_\ell,b_\ell)$ where $a_i \in [n]$ and $b_i \in [n]$ for $i=0,\dots,\ell$ and with the property that $(a_0,b_0)=(i,j)$. We will use the notation $C=\set{(a_i,b_i)}_{i=0}^{\ell}$. A morphism between chains $C_1 \to C_2$ is simply given by an inclusion $C_1 \subset C_2$ which we call a refinement of the chain $C_1$. Then we define $\xi(C)=(S_a,S_b)$ where $S_a=\set{a_0,a_1,\dots,a_\ell}$ and similarly for $S_b$.
\end{definition}

\begin{remark}
	It is immediate to see that the map $\xi^{n,k}_{(i,j)}$ constructed before factors as
	\[
	\func{\mathbb{E}^{n,k}_{(i,j)} \to \mathbb{P}^{n,k}_{(i,j)} \to \mathbb{S}^{n,k}_{(i,j)}}
	\]
	where the second morphism is the component of the natural transformation $\epsilon_X$ at the object $(i,j)$ and the first morphism is a canonical collapse map. We will denote the first morphism by $\pi^{n,k}_{(i,j)}$ and the second morphism by $\epsilon^{n,k}_{(i,j)}$.
\end{remark}

\begin{definition}
	Let $C \in \mathbb{E}^{n,k}_{(i,j)}$ be a chain denoted by $C=\set{(a_i,b_i)}_{i=0}^{\ell}$. We set $|C|=\ell$ and we call it the \emph{length} of the chain.
\end{definition}

\begin{definition}
	Let $C \in \mathbb{E}^{n,k}_{(0,0)}$. We define $\mathcal{E}_{C}$ to be the full subposet (with the induced marking and scaling) of $\mathbb{E}^{n,k}_{(0,0)}$ consisting of those chains $K$ contained in $C$. 
\end{definition}

\begin{definition}
	Let $C \in \mathbb{E}^{n,k}_{(0,0)}$ be a chain. We say that $K \in \mathcal{E}_{C}$ is a \emph{rigid chain} if there is no marked morphism in $\mathcal{E}_{C}$ with source $K$. We denote the by $\mathcal{E}_{C}^{r}$ the full subposet of $\mathcal{E}_{C}$ on rigid chains.
\end{definition}

\begin{lemma}\label{lem:boxiso}
	Let $C \in \mathbb{E}^{n,k}_{(0,0)}$ be a chain and denote by $\mathcal{U}_C$ the image of the morphism $\mathcal{E}_{C} \to \mathbb{S}^{n,k}_{(0,0)}$. Then $\xi^{n,k}_{(0,0)}$ induces an isomorphism of marked scaled simplicial sets
	\[
	\begin{tikzcd}
		\xi^r_C:\mathcal{E}^r_{C} \arrow[r,"\cong"] & \mathcal{U}_C
	\end{tikzcd}
	\]
\end{lemma}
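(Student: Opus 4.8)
The plan is to reduce the statement to an explicit combinatorial bijection, exploiting the fact that both the marking and the scaling on $\mathbb{E}^{n,k}_{(0,0)}$ are by definition pulled back along $\xi^{n,k}_{(0,0)}$ from $\mathbb{S}^{n,k}_{(0,0)}$. First I would pin down the marking. Since $\mathbb{S}^{n,k}_{(0,0)} = \mathcal{L}^n_\flat(0) \times \mathcal{L}^k_\flat(0)$ and, by \autoref{def:Lni}, each factor carries the minimal marking, the product carries the minimal marking as well (an edge of a product of marked simplicial sets is marked precisely when both of its projections are). Consequently an edge $K \subseteq K'$ of $\mathbb{E}^{n,k}_{(0,0)}$ is marked if and only if its image $\xi(K) \leq \xi(K')$ is degenerate, i.e. if and only if $\xi(K) = \xi(K')$; writing $\xi(K) = (S_a, S_b)$, this says the refinement $K \subseteq K'$ enlarges neither the set $S_a$ of first coordinates nor the set $S_b$ of second coordinates. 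Thus a chain $K \in \mathcal{E}_C$ is rigid exactly when no vertex of $C$ can be inserted into $K$ without enlarging $S_a$ or $S_b$.

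Second, I would produce the inverse on objects. For $(S_a, S_b) \in \mathcal{U}_C$ set $K_{(S_a,S_b)} := \{(a,b) \in C : a \in S_a, \ b \in S_b\}$, the largest sub-chain of $C$ supported on $S_a \times S_b$. Because $(S_a,S_b)$ lies in the image, some chain of $C$ already realizes all of $S_a$ and all of $S_b$, so $\xi(K_{(S_a,S_b)}) = (S_a,S_b)$; and by construction no further vertex of $C$ can be added without leaving $S_a \times S_b$, so $K_{(S_a,S_b)}$ is rigid. Conversely, if $K$ is rigid with $\xi(K) = (S_a,S_b)$ then $K \subseteq K_{(S_a,S_b)}$, and any vertex of $K_{(S_a,S_b)} \setminus K$ would be an admissible image-preserving insertion, contradicting rigidity; hence $K = K_{(S_a,S_b)}$. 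This shows $\xi^r_C$ is a bijection on vertices with inverse $(S_a,S_b) \mapsto K_{(S_a,S_b)}$, and identifies the rigid chains as the \emph{saturated} chains. Establishing this unique-representative statement is the crux of the argument.

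Finally I would upgrade the vertex bijection to an isomorphism of marked-scaled simplicial sets. Since $\mathcal{E}_C$, $\mathcal{E}_C^r$ and $\mathcal{U}_C$ are (full sub)posets, it suffices to check $\xi^r_C$ is order-preserving and order-reflecting: $K \subseteq K'$ clearly forces $\xi(K) \leq \xi(K')$, while $(S_a,S_b) \leq (S'_a,S'_b)$ gives $K_{(S_a,S_b)} \subseteq K_{(S'_a,S'_b)}$ directly from the defining formula. Hence $\xi^r_C$ is an isomorphism of posets, and its nerve is an isomorphism onto its image; moreover $\mathrm{image}(\mathcal{E}_C^r) = \mathrm{image}(\mathcal{E}_C) = \mathcal{U}_C$, since every image simplex is obtained from a lifting chain in $\mathcal{E}_C$ by replacing each vertex with its saturation, which again forms a (nested) chain of rigid chains with the same image. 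The decorations then match for free: the marking and scaling on both $\mathcal{E}_C^r$ and $\mathcal{U}_C$ are induced from $\mathbb{S}^{n,k}_{(0,0)}$ along $\xi$, so an edge (resp. triangle) is marked (resp. thin) on one side exactly when its $\xi$-image is, and this is preserved by the poset isomorphism. I expect no difficulty beyond bookkeeping here; the only genuine content is the rigidity/saturation dictionary of the second step.
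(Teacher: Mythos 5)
Your proof is correct and takes essentially the same route as the paper's: both arguments rest on the observation that a rigid chain is precisely one to which no element of $C$ can be adjoined without enlarging the $\xi$-image, which gives uniqueness of the rigid representative over each vertex of $\mathcal{U}_C$ and allows simplices of $\mathcal{U}_C$ to be lifted to chains of rigid chains. Your explicit saturation $K_{(S_a,S_b)}$ merely packages as a two-sided inverse what the paper proves as injectivity plus surjectivity, and your handling of higher simplices (saturating each vertex of a lifted chain) is, if anything, slightly more careful than the paper's lifting argument.
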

\begin{proof}
	The map $\xi^r_C$ is clearly surjective on vertices. Moreover, given a morphism $U\to K$ in $\mathcal{E}_C$, choose a marked morphism $U\to U^r$ to a rigid chain in $\mathcal{E}_C$. Then for every $(a,b)\in U^r\setminus U$, the object $K\cup \{(a,b)\}$ will lie in $\mathcal{E}_C$ over the same element of $\mathcal{U}_C$ as $K$. We thus obtain a morphism $U^r\to \hat{K}$ lying over the original morphism in $\mathcal{U}_C$, showing that $\xi^r_C$ is surjective on morphisms, and thus on higher simplices.
	
	Moreover $\xi^r_C$ detects and preserves marked edges and thin simplices. It will therefore suffice to show that $\xi_C^r$ is injective. Let $K_i \in \mathcal{E}^r_C$ for $i=1,2$ such that $\xi^r_C(K_1)=\xi^r_C(K_2)$. Let us denote $K_i=\set{(a^i_j,b^i_j)}_{j=0}^{\ell_i}$ for $i=1,2$ . Without loss of generality let us assume that we have some $(a^1_s,b^1_s)$ such that this pair is not an element in $K_2$. However, note that since $K_i \subset C$ for $i=1,2$ then there exists a map $K_2 \to \hat{K}_2$ where $\hat{K}_2$ is obtained from $K_2$ by appending the element $(a^1_s,b^1_s)$. By construction it follows that $\xi^r_C(K_2)=\xi^r_C(\hat{K}_2)$ since $K_2$ is rigid it follows that $\hat{K}_2=K_2$  and therefore $K_1=K_2$.
\end{proof}

\begin{lemma}\label{lem:boxcover}
	Let $C \in \mathbb{E}^{n,k}_{(0,0)}$. Then the induced morphism
	\[
	\func{\xi_C:\mathcal{E}_C \to[\simeq] \mathcal{U}_C}
	\]
	is an equivalence of marked scaled simplicial sets.
\end{lemma}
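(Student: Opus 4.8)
The plan is to reduce the statement to \autoref{lem:boxiso} by a two-out-of-three argument, and then prove the missing input by exhibiting an explicit strong deformation retraction onto the rigid chains. Write $\iota\colon \mathcal{E}^r_C \hookrightarrow \mathcal{E}_C$ for the inclusion. By construction $\xi^r_C = \xi_C\circ \iota$, and \autoref{lem:boxiso} tells us $\xi^r_C$ is an isomorphism, hence an equivalence of marked scaled simplicial sets. Thus, by the two-out-of-three property for weak equivalences, it suffices to show that $\iota$ is itself a weak equivalence; the equivalence $\xi_C$ then follows formally. Since the model structure on $\mssSet$ is the Cisinski--Olschok structure with interval $(\Delta^1)^\sharp_\sharp$, it is enough to realize $\iota$ as a homotopy equivalence with respect to this interval.

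First I would construct a retraction $r\colon \mathcal{E}_C \to \mathcal{E}^r_C$. For $K\in\mathcal{E}_C$ with $\xi^{n,k}_{(0,0)}(K)=(S_a,S_b)$, set $r(K)$ to be the subchain of $C$ consisting of all pairs $(a,b)\in C$ with $a\in S_a$ and $b\in S_b$. Because $C$ is a chain, this subset is automatically totally ordered, so $r(K)$ is a legitimate object of $\mathcal{E}_C$; one checks that $\xi^{n,k}_{(0,0)}(r(K))=(S_a,S_b)$ (no new coordinates are introduced), that $r(K)$ is maximal among subchains of $C$ with this image — the poset of such subchains is closed under union — and hence that $r(K)$ is rigid. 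Since $K\subseteq r(K)$, we obtain a marked edge $K\to r(K)$, and the assignment is visibly monotone in $K$, so $r$ is a map of simplicial sets with $r\circ\iota=\mathrm{id}_{\mathcal{E}^r_C}$. The crucial bookkeeping identity is $\xi^r_C\circ r = \xi_C$, which holds because $r$ leaves the $\xi$-image unchanged; as the markings and scalings on $\mathcal{E}_C$ are by definition pulled back along $\xi^{n,k}_{(0,0)}$, this identity immediately shows that $r$ preserves marked edges and thin triangles, i.e. that $r$ is a genuine morphism of marked scaled simplicial sets.

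Next I would produce the homotopy. The inclusions $K\subseteq r(K)$ assemble into a natural transformation $\mathrm{id}_{\mathcal{E}_C}\Rightarrow \iota\circ r$ of endomaps of the poset $\mathcal{E}_C$, which corresponds to a map
\[
H\colon \mathcal{E}_C\times (\Delta^1)^\sharp_\sharp \longrightarrow \mathcal{E}_C,
\qquad H|_{\{0\}}=\mathrm{id},\quad H|_{\{1\}}=\iota\circ r.
\]
To see that $H$ is marking- and scaling-preserving, note that on vertices $\xi_C\circ H$ agrees with $\xi_C\circ \mathrm{pr}_{\mathcal{E}_C}$ (both send $(K,\epsilon)$ to $\xi_C(K)$), and since these maps of (nerves of) posets are determined by their values on vertices, $\xi_C\circ H=\xi_C\circ \mathrm{pr}_{\mathcal{E}_C}$ identically. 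As $(\Delta^1)^\sharp_\sharp$ is maximally marked and scaled, an edge (resp. triangle) of the product is marked (resp. thin) exactly when its $\mathcal{E}_C$-projection is, and the displayed identity then shows $H$ carries such a cell to a marked (resp. thin) cell of $\mathcal{E}_C$. Together with $r\circ\iota=\mathrm{id}$, this exhibits $\iota$ and $r$ as mutually inverse homotopy equivalences for the interval $(\Delta^1)^\sharp_\sharp$, so $\iota$ is a weak equivalence and we conclude by two-out-of-three as above.

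The main obstacle is purely the verification that $r$ and $H$ are well defined as morphisms of \emph{marked scaled} simplicial sets — in particular that the maximal subchain $r(K)$ really exists and is a chain (which rests on $C$ being totally ordered and on closure of the relevant subchains under union), and that thin triangles, not merely marked edges, are respected. All of this is forced by the single identity $\xi^{n,k}_{(0,0)}\circ r = \xi^{n,k}_{(0,0)}$ and the fact that the decorations on $\mathcal{E}_C$ are pulled back along $\xi^{n,k}_{(0,0)}$; once this is isolated, the remaining checks are routine.
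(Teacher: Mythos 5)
Your proof is correct and is essentially the paper's own argument: the paper also inverts the isomorphism $\xi^r_C$ of \autoref{lem:boxiso} to get a section $s_C=\iota\circ(\xi^r_C)^{-1}$ of $\xi_C$, and then observes that $s_C\circ\xi_C$ (which is exactly your $\iota\circ r$, sending $K$ to the unique rigid chain with the same $\xi$-image) is connected to the identity by the marked homotopy coming from the inclusions $K\subseteq K^r$. Your only deviation is packaging this as a deformation retraction onto $\mathcal{E}^r_C$ plus two-out-of-three rather than directly exhibiting $s_C$ as a homotopy inverse of $\xi_C$, which is a cosmetic difference.
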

\begin{proof}
	Let $\iota:\mathcal{E}_C^r \to \mathcal{E}_C$ denote the obvious inclusion. Using \autoref{lem:boxiso} we can construct a map $s_C=\iota \circ (\xi^r_C)^{-1}$. It is clear that $\xi_C \circ s_C=\on{id}$. Given $K$, observe that by construction $s_C\circ \xi_C(K)$ is rigid. Let $K \to K^r$ be a marked edge where $K^r$ is rigid. Since the restriction of $\xi_C$ to rigid objects is injective it follows that $s_C \circ \xi_C(K)= K^r$. This yields a marked homotopy from $s_C \circ \xi_C$ to the identity and the result follows.
\end{proof}

\begin{lemma}
	Let $C_i \in \mathbb{E}^{n,k}_{(0,0)}$ for $i=1,2$. Then there exists a chain $K$ such that the intersection $\mathcal{E}_{C_1} \cap \mathcal{E}_{C_2}=\mathcal{E}_K$.
\end{lemma}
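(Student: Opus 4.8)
The plan is to reduce everything to the level of vertices and then invoke fullness. By construction $\mathcal{E}_{C}$ is the \emph{full} subposet of $\mathbb{E}^{n,k}_{(0,0)}$ spanned by those chains $K$ contained in $C$, carrying the inherited marking and scaling. Consequently each $\mathcal{E}_{C_i}$, as well as the intersection $\mathcal{E}_{C_1}\cap \mathcal{E}_{C_2}$, is a full sub-marked-scaled-simplicial-set of $\mathbb{E}^{n,k}_{(0,0)}$, and is therefore completely determined by its set of vertices. Thus it suffices to produce a single chain $K$ whose vertex set of subchains coincides with the common subchains of $C_1$ and $C_2$; the decorations will then match automatically, since all three objects inherit their marking and scaling from the ambient $\mathbb{E}^{n,k}_{(0,0)}$.

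First I would set $K := C_1\cap C_2$, the intersection taken inside the product poset $[n]\times[k]$ of which $C_1$ and $C_2$ are chains. I claim $K$ is a legitimate object of $\mathbb{E}^{n,k}_{(0,0)}$. Indeed, as a subset of the totally ordered set $C_1$ it is again totally ordered, hence a chain; and since $(0,0)$ is the initial vertex of both $C_1$ and $C_2$, it lies in $K$, so $K$ starts at $(0,0)$ as required. This elementary closure property --- that the set-theoretic intersection of two chains through $(0,0)$ is again such a chain --- is really the only point that needs checking, and it poses no genuine difficulty.

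Next I would compare vertex sets directly. A chain $L$ (necessarily containing $(0,0)$) is a vertex of $\mathcal{E}_{C_1}\cap \mathcal{E}_{C_2}$ if and only if $L\subseteq C_1$ and $L\subseteq C_2$, which holds precisely when $L\subseteq C_1\cap C_2 = K$; and this last condition is exactly the requirement for $L$ to be a vertex of $\mathcal{E}_{K}$. Hence $\mathcal{E}_{C_1}\cap \mathcal{E}_{C_2}$ and $\mathcal{E}_{K}$ have the same vertices. Since both are full subposets of $\mathbb{E}^{n,k}_{(0,0)}$ with the induced marking and scaling, agreement on vertices forces them to coincide as marked-scaled simplicial sets, which is the desired identity $\mathcal{E}_{C_1}\cap \mathcal{E}_{C_2} = \mathcal{E}_{K}$.
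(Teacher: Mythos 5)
Your proof is correct: taking $K=C_1\cap C_2$ (which is again a chain through $(0,0)$, being a subset of the totally ordered set $C_1$) and comparing vertex sets of full subposets is exactly the argument the paper has in mind — its proof is simply the word \enquote{Immediate}, and you have supplied the details it omits.
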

\begin{proof}
	Immediate.
\end{proof}

\begin{proposition}\label{prop:EtoSequiv}
	Let $n,k$ two non-negative integers and consider $i \in [n]$ and $j \in [k]$. Then the morphism
	\[
	\func{\xi^{n,k}_{(i,j)}:\mathbb{E}^{n,k}_{(i,j)} \to \mathbb{S}^{n,k}_{(i,j)}}   
	\]  
	is an equivalence of marked scaled simplicial sets.
\end{proposition}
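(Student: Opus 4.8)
The plan is to reduce to the based case $(i,j)=(0,0)$ and then glue the local equivalences of \autoref{lem:boxcover} over the cover of $\mathbb{E}^{n,k}_{(0,0)}$ by the subposets $\mathcal{E}_C$. First I would record the reduction. The full subposet of $[n]\times[k]$ on pairs $\geq (i,j)$ is isomorphic, via the coordinate shift $(a,b)\mapsto (a-i,b-j)$, to $[n-i]\times[k-j]$; this isomorphism is compatible with adjoining the cone point, hence induces $\mathbb{E}^{n,k}_{(i,j)}\cong \mathbb{E}^{n-i,k-j}_{(0,0)}$. On the target, $\mathbb{S}^{n,k}_{(i,j)}=\mathcal{L}^n_\flat(i)\times\mathcal{L}^k_\flat(j)$ and $\mathcal{L}^n_\flat(i)\cong \mathcal{L}^{n-i}_\flat(0)$, so the same shift gives $\mathbb{S}^{n,k}_{(i,j)}\cong \mathbb{S}^{n-i,k-j}_{(0,0)}$; unraveling definitions shows $\xi^{n,k}_{(i,j)}$ corresponds to $\xi^{n-i,k-j}_{(0,0)}$. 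Thus it suffices to treat $(i,j)=(0,0)$.

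Next I would set up the cover. Write $\mathbb{E}^{n,k}_{(0,0)}=\bigcup_C \mathcal{E}_C$, where $C$ ranges over the \emph{maximal} chains, i.e.\ monotone lattice paths from $(0,0)$ to $(n,k)$; every chain lies in some maximal $C$, so this is a cover by subobjects. By \autoref{lem:boxcover} each $\xi_C:\mathcal{E}_C\to \mathcal{U}_C$ is a weak equivalence onto its image $\mathcal{U}_C\subseteq \mathbb{S}^{n,k}_{(0,0)}$, and the surjectivity established in the proof of \autoref{lem:boxiso} shows the $\mathcal{U}_C$ cover $\mathbb{S}^{n,k}_{(0,0)}$. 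Two structural facts drive the argument: the intersection lemma gives $\mathcal{E}_{C_1}\cap\mathcal{E}_{C_2}=\mathcal{E}_K$ for a chain $K$, so the cover is closed under intersection; and $\xi^{n,k}_{(0,0)}$ carries this intersection onto $\mathcal{U}_{C_1}\cap\mathcal{U}_{C_2}=\mathcal{U}_K$, matching the two covers.

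Then I would run the gluing induction. Enumerate the maximal chains $C_1,\dots,C_m$ and set $E_t=\bigcup_{s\leq t}\mathcal{E}_{C_s}$ and $U_t=\bigcup_{s\leq t}\mathcal{U}_{C_s}$, so that $E_m=\mathbb{E}^{n,k}_{(0,0)}$ and $U_m=\mathbb{S}^{n,k}_{(0,0)}$. At stage $t$ there is a pushout $E_t=E_{t-1}\cup_{E_{t-1}\cap\mathcal{E}_{C_t}}\mathcal{E}_{C_t}$ along monomorphisms, and an analogous pushout computing $U_t$; since the model structure of \autoref{thm:markedscaledmodel} is left proper and all maps are cofibrations, the gluing lemma reduces the inductive step to knowing that $\xi$ is a weak equivalence on $E_{t-1}$ (inductive hypothesis), on $\mathcal{E}_{C_t}$ (\autoref{lem:boxcover}), and on the overlap $E_{t-1}\cap\mathcal{E}_{C_t}=\bigcup_{s<t}(\mathcal{E}_{C_s}\cap\mathcal{E}_{C_t})=\bigcup_{s<t}\mathcal{E}_{K_s}$. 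This overlap is again a union of subposets $\mathcal{E}_K$ sitting inside the single $\mathcal{E}_{C_t}$, so an inner induction on the number of such pieces (each a \emph{proper} subobject of $\mathcal{E}_{C_t}$) applies the identical gluing argument, using that $\xi$ matches its intersection cover to that of $\mathcal{U}_{C_t}$.

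The main obstacle is entirely in the bookkeeping of intersections: one must verify that $\xi^{n,k}_{(0,0)}$ sends the intersection-closed cover $\{\mathcal{E}_K\}$ of $\mathbb{E}^{n,k}_{(0,0)}$ onto the cover $\{\mathcal{U}_K\}$ of $\mathbb{S}^{n,k}_{(0,0)}$ compatibly, in particular that $\mathcal{U}_{C_1}\cap\mathcal{U}_{C_2}=\mathcal{U}_K$ whenever $\mathcal{E}_{C_1}\cap\mathcal{E}_{C_2}=\mathcal{E}_K$, so that the pushout squares on the $\mathbb{E}$-side and the $\mathbb{S}$-side genuinely correspond under $\xi$ and the gluing lemma can be invoked at every stage. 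Once this compatibility is in hand, the statement follows formally from \autoref{lem:boxcover}, left properness, and the two nested inductions.
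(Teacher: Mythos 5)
Your proof follows the paper's skeleton up to the last step: the reduction to $(i,j)=(0,0)$, the cover of $\mathbb{E}^{n,k}_{(0,0)}$ by the subposets $\mathcal{E}_C$ indexed by maximal chains, and the local equivalences $\xi_C\colon\mathcal{E}_C\to\mathcal{U}_C$ from \autoref{lem:boxcover} are all exactly what the paper does. The divergence is in how the local equivalences are assembled: the paper invokes \cite[Lemma 3.2.13]{AGSRelNerve} to express $\mathbb{E}^{n,k}_{(0,0)}$ and $\mathbb{S}^{n,k}_{(0,0)}$ as colimits of homotopy cofibrant diagrams over one and the same index category, whereas you substitute an explicit pairwise gluing induction over an enumeration of the maximal chains. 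That substitution is where your argument breaks, because the identity you call ``bookkeeping'' and leave unverified --- that $\mathcal{U}_{C_1}\cap\mathcal{U}_{C_2}=\mathcal{U}_K$ whenever $\mathcal{E}_{C_1}\cap\mathcal{E}_{C_2}=\mathcal{E}_K$ --- is not merely unproved; it is false.

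Take $n=k=2$ and the two extreme lattice paths $C_1=\{(0,0),(0,1),(0,2),(1,2),(2,2)\}$ and $C_2=\{(0,0),(1,0),(2,0),(2,1),(2,2)\}$. Then $C_1\cap C_2=\{(0,0),(2,2)\}$, so $\mathcal{E}_{C_1}\cap\mathcal{E}_{C_2}=\mathcal{E}_{C_1\cap C_2}$, and $\mathcal{U}_{C_1\cap C_2}$ has exactly the two vertices $(\{0\},\{0\})$ and $(\{0,2\},\{0,2\})$. But the vertex $(\{0,1,2\},\{0,2\})$ of $\mathbb{S}^{2,2}_{(0,0)}$ lies in $\mathcal{U}_{C_1}\cap\mathcal{U}_{C_2}$: it is the image of the chain $\{(0,0),(1,2),(2,2)\}\subseteq C_1$ and also of the chain $\{(0,0),(1,0),(2,2)\}\subseteq C_2$. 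Hence $\mathcal{U}_{C_1}\cap\mathcal{U}_{C_2}\supsetneq\mathcal{U}_{C_1\cap C_2}$: the images do \emph{not} intersect according to the pattern of their sources. The effect on your induction is fatal as stated. At stage $t$ the gluing lemma needs the restriction $E_{t-1}\cap\mathcal{E}_{C_t}\to U_{t-1}\cap\mathcal{U}_{C_t}$ to be a weak equivalence, but $E_{t-1}\cap\mathcal{E}_{C_t}=\bigcup_{s<t}\mathcal{E}_{C_s\cap C_t}$ has image only $\bigcup_{s<t}\mathcal{U}_{C_s\cap C_t}$, which by the example above is a proper simplicial subset of $U_{t-1}\cap\mathcal{U}_{C_t}$ (already at the second stage of the enumeration $C_1,C_2,\dots$ the map fails to be surjective on vertices). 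Your inner induction therefore proves an equivalence onto the wrong target, and no argument is offered for the remaining inclusion $\bigcup_{s<t}\mathcal{U}_{C_s\cap C_t}\hookrightarrow U_{t-1}\cap\mathcal{U}_{C_t}$ being an equivalence --- which is a genuinely nontrivial claim, of the same order of difficulty as the proposition itself. This mismatch of intersections is precisely the difficulty that the paper's appeal to \cite[Lemma 3.2.13]{AGSRelNerve} is designed to sidestep: the homotopy colimit comparison there never requires identifying pairwise intersections of the images $\mathcal{U}_C$. To repair your approach you would need an independent proof that each overlap map $E_{t-1}\cap\mathcal{E}_{C_t}\to U_{t-1}\cap\mathcal{U}_{C_t}$ is an equivalence, or else replace the pairwise-union induction by the cited colimit argument.
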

\begin{proof}
	First let us observe that the map $\xi^{n,k}_{(i,j)}$ is an isomorphism if either $n$ or $k$ is equal to $0$. Using an inductive argument it will suffice to show that the map $\xi^{n,k}_{(0,0)}$ is an equivalence. Note that we can cover $\mathbb{E}^{n,k}_{(i,j)}$ with the subsimplicial sets $\mathcal{E}_C$ where $C$ is a chain of maximal length. Since $\xi^{n,k}_{(0,0)}$ is surjective is covered by the subsimplicial sets $\mathcal{U}_C$. Applying \cite[Lemma 3.2.13]{AGSRelNerve}, we express $\mathbb{E}^{n,k}_{(0,0)}$ and $\mathbb{S}^{n,k}_{(0,0)}$ as the colimit over the same diagram of two homotopy cofibrant diagrams. We can now identify $\xi^{n,k}_{(0,0)}$ as the map induced by the natural transformation whose components are $\xi_C$. Therefore using \autoref{lem:boxcover} it follows that $\xi^{n,k}_{(0,0)}$ is a weak equivalence.
\end{proof}

\begin{definition}
	Let $\mathbb{O}^{n,k}=\mathfrak{C}[\Delta^n \times \Delta^k]$. We define a marking on $\mathbb{O}^{n,k}((i,j),(a,b))$ by declaring an edge marked if an only if its image in $\mathbb{O}^n(i,a)\times \mathbb{O}^k(j,b)$ is degenerate. If $a,b=n,k$ we set the notation $\mathbb{O}^{n,k}((i,j),(a,b))=\mathbb{O}^{n,k}_{(i,j)}$.
\end{definition}

\begin{lemma}\label{lem:productOO}
	The canonical morphism $p:\mathbb{O}^{n,k}((i,j),(a,b)) \to \mathbb{O}^n(i,a) \times \mathbb{O}^k(j,b)$ is a weak equivalence of marked simplicial sets.
\end{lemma}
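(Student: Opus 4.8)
The plan is to reduce \autoref{lem:productOO} to the same rigid-chain analysis already carried out for \autoref{prop:EtoSequiv}. First I would record the combinatorial description of the objects. Unwinding the rigidification functor, $\mathbb{O}^{n,k}((i,j),(a,b))$ is the nerve of the poset of chains $(i,j)=(a_0,b_0)<\dots<(a_\ell,b_\ell)=(a,b)$ in $[n]\times[k]$ ordered by inclusion, exactly as in the definition of $\mathbb{E}^{n,k}_{(i,j)}$ but with the cone point $\ast$ replaced by the fixed target $(a,b)$; the map $p$ sends a chain $C$ to the pair of supports $(\pi_1(C),\pi_2(C))\in \mathbb{O}^n(i,a)\times \mathbb{O}^k(j,b)$, and an edge $C\subseteq C'$ is marked precisely when it leaves both supports unchanged. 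Since a chain from $(i,j)$ to $(a,b)$ stays inside the interval $[i,a]\times[j,b]\cong [a-i]\times[b-j]$, and the same holds on the target, I would reduce at once to the case $(i,j)=(0,0)$, $(a,b)=(n,k)$. I would also stress that the marking is genuinely needed here: on underlying simplicial sets $p$ collapses all support-preserving refinements to degeneracies and is \emph{not} a categorical equivalence, so one cannot simply invoke a product formula for $\mathfrak{C}$ and must instead reuse the rigid-chain analysis.

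With this in hand, the argument runs parallel to \autoref{prop:EtoSequiv}. I would cover $\mathbb{O}^{n,k}((0,0),(n,k))$ by the full subposets $\mathcal{E}_C$ spanned by subchains of a maximal chain $C$ (a monotone lattice path from $(0,0)$ to $(n,k)$), and call a subchain $K\subseteq C$ rigid if it admits no marked morphism inside $\mathcal{E}_C$. Exactly as in \autoref{lem:boxiso}, within a fixed $C$ the rigid subchains are uniquely determined by their support pair, so $p$ restricts to an isomorphism $\mathcal{E}_C^r\xrightarrow{\cong}\mathcal{U}_C$ onto the image; and exactly as in \autoref{lem:boxcover}, the assignment sending a subchain $K$ to the unique rigid subchain of $C$ with the same support pair (together with the canonical marked edge from $K$ to it) furnishes a marked deformation retraction exhibiting $\mathcal{E}_C\xrightarrow{\simeq}\mathcal{U}_C$ as an equivalence of marked scaled simplicial sets. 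The intersection of two such subposets is again of the form $\mathcal{E}_{C\cap C'}$, so the cover of the source and the induced cover $\{\mathcal{U}_C\}$ of the target are indexed by the same diagram with matching intersections.

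Finally I would glue: both $\mathbb{O}^{n,k}((0,0),(n,k))$ and $\mathbb{O}^n(0,n)\times\mathbb{O}^k(0,k)$ are expressed as colimits over the common poset of maximal chains of the two homotopy-cofibrant diagrams $C\mapsto \mathcal{E}_C$ and $C\mapsto \mathcal{U}_C$, and $p$ is the map induced by the objectwise equivalences $\mathcal{E}_C\to \mathcal{U}_C$; invoking \cite[Lemma 3.2.13]{AGSRelNerve} then yields that $p$ is a weak equivalence. The only genuinely new bookkeeping compared with \autoref{prop:EtoSequiv}—and the step I expect to require the most care—is checking that the images $\mathcal{U}_C$ of the maximal-chain subcomplexes actually cover all of $\mathbb{O}^n(0,n)\times\mathbb{O}^k(0,k)$ (i.e. every support pair is realized by some subchain of some lattice path) and that the rigid-chain retraction stays well-defined now that the terminal vertex $(n,k)$ is fixed rather than free. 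Once these are confirmed, the remainder is identical, \emph{mutatis mutandis}, to the proof of \autoref{prop:EtoSequiv}.
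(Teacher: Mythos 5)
Your proposal is correct and takes essentially the same route as the paper: the paper's proof of \autoref{lem:productOO} consists precisely of the remark that the argument is virtually identical to \autoref{lem:boxiso}, \autoref{lem:boxcover}, and \autoref{prop:EtoSequiv}, i.e.\ exactly the rigid-chain isomorphism onto the image, the marked deformation retraction $\mathcal{E}_C\to\mathcal{E}_C^r$, and the covering by maximal chains glued via \cite[Lemma 3.2.13]{AGSRelNerve} that you describe. The extra checks you flag (that the $\mathcal{U}_C$ cover the target and that rigidity behaves well with the fixed endpoint $(a,b)$ in place of the cone point) are exactly the routine verifications the paper leaves implicit.
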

\begin{proof}
	The argument here is virtually identical to that given in \autoref{lem:boxiso}, \autoref{lem:boxcover}, and \autoref{prop:EtoSequiv}.
\end{proof}

\begin{definition}\label{defn:eqrelnOnk}
	Let $\sigma:K_0 \subset K_1 \cdots \subset K_\ell$ be a simplex in $\mathbb{O}^{n,k}((i,j),(a,b))$ such that $K_0 \neq (i,j)$. Given $(x,y) \in K_0$ then it follows that $\sigma$ is in the image of the map
	\[
	\func{\gamma_{x,y}:\mathbb{O}^{n,k}((i,j),(x,y))\times \mathbb{O}^{n,k}((x,y),(a,b)) \to \mathbb{O}^{n,k}((i,j),(a,b)).}
	\]
	Given a pair of simplices $\sigma_1, \sigma_2$ as above, let $(A_i,B_i)$ denote the preimages of $\gamma_i$ for $i=1,2$ under $\gamma_{x,y}$. We define $\mathcal{O}^{n,k}((i,j),(a,b))$ as a quotient of $\mathbb{O}^{n,k}((i,j),(a,b))$ by identifying those simplices $\sigma_1,\sigma_2$ as above such that their corresponding $A_i$'s get identified in $\mathbb{O}^n(i,x)\times \mathbb{O}^k(j,y)$ and $B_1=B_2$.
\end{definition}

\begin{remark}\label{rem:alphabeta}
	Observe that the previous definition yields a factorization
	\[
	\func{\mathbb{O}^{n,k}((i,j),(a,b))  \to[\alpha] \mathcal{O}^{n,k}((i,j),(a,b)) \to[\beta] \mathbb{O}^n(i,a) \times \mathbb{O}^k(j,b)}
	\]
\end{remark}

\begin{lemma}\label{lem:alphabeta}
	The morphisms in \autoref{rem:alphabeta} are equivalences of marked simplicial sets. 
\end{lemma}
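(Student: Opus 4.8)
The plan is to reduce to \autoref{lem:productOO} by a two-out-of-three argument. By \autoref{rem:alphabeta} the composite $\beta\circ\alpha$ is precisely the canonical projection $p\colon \mathbb{O}^{n,k}((i,j),(a,b)) \to \mathbb{O}^n(i,a)\times\mathbb{O}^k(j,b)$, which is a weak equivalence of marked simplicial sets by \autoref{lem:productOO}. Hence it suffices to prove that one of $\alpha,\beta$ is a weak equivalence; the other then follows by two-out-of-three. I would establish this for $\beta$ and deduce it for $\alpha$.

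To treat $\beta$, I would observe that the quotient defining $\mathcal{O}^{n,k}((i,j),(a,b))$ has already collapsed each initial segment of a chain (the factor $A$ running from $(i,j)$ to a chosen intermediate $(x,y)$) to its image in $\mathbb{O}^n(i,x)\times\mathbb{O}^k(j,y)$, while keeping the terminal segment $B$ on the nose. Consequently $\beta$ performs no further identification on the initial segments and acts only on the terminal blocks, where it is a collapse map of the same shape as $p$ but for $\mathbb{O}^{n,k}((x,y),(a,b))$. I would therefore rerun the rigid-chain machinery of \autoref{lem:boxiso}--\autoref{prop:EtoSequiv} on these terminal blocks: the rigid chains (those admitting no marked refinement, the marked edges being the refinements that become degenerate after projection to the product) map isomorphically onto their product image, and the marked-homotopy retraction of \autoref{lem:boxcover} exhibits the collapse as a weak equivalence locally.

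To globalize I would cover both $\mathcal{O}^{n,k}((i,j),(a,b))$ and $\mathbb{O}^n(i,a)\times\mathbb{O}^k(j,b)$ by the subobjects indexed by maximal chains, using the intersection lemma (the analogue of $\mathcal{E}_{C_1}\cap\mathcal{E}_{C_2}=\mathcal{E}_K$ established just before \autoref{prop:EtoSequiv}) to present both sides as colimits over a common diagram of homotopy-cofibrant diagrams. Applying \cite[Lemma 3.2.13]{AGSRelNerve} exactly as in \autoref{prop:EtoSequiv}, the pointwise equivalences assemble into a weak equivalence $\beta$. Since $\beta\circ\alpha=p$ is a weak equivalence, two-out-of-three then gives that $\alpha$ is a weak equivalence as well.

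The main obstacle is the bookkeeping forced by the quotient: unlike the clean subposets $\mathcal{E}_C$ of the box-product analysis, the objects $\mathcal{O}^{n,k}((i,j),(a,b))$ are genuine quotients, so one must check that the rigid-chain retraction descends along the identifications of \autoref{defn:eqrelnOnk}, that it still detects and creates marked edges after passage to the quotient, and that the cover by maximal-chain subobjects remains well-behaved under the quotient so that the colimit presentation and the intersection lemma continue to apply. Once these compatibilities are verified, the argument is, as asserted, virtually identical to the box-product case.
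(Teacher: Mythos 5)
Your opening reduction coincides with the paper's: since $\beta\circ\alpha$ is the projection $p$, which is an equivalence by \autoref{lem:productOO}, two-out-of-three reduces the lemma to proving that a single one of the two maps is an equivalence. But the paper proves this for $\alpha$, not $\beta$, and the choice is not cosmetic: your proposed direct argument for $\beta$ has a genuine gap. The structural claim it rests on --- that the quotient has ``already collapsed each initial segment'' so that $\beta$ ``acts only on the terminal blocks'' --- is false. The relation of \autoref{defn:eqrelnOnk} only identifies simplices admitting a \emph{common} intermediate decomposition point; for instance, in $\mathbb{O}^{2,1}((0,0),(2,1))$ the three chains $\{(0,0),(2,1)\}$, $\{(0,0),(0,1),(2,1)\}$ and $\{(0,0),(2,0),(2,1)\}$ are pairwise non-identified in $\mathcal{O}^{2,1}((0,0),(2,1))$ (the first has no intermediate element, and the other two have no intermediate element in common with anything), yet all three map to the same vertex of $\mathbb{O}^2(0,2)\times\mathbb{O}^1(0,1)$. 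So $\beta$ still has to do homotopical work that is not localized in any terminal block, and $\mathcal{O}^{n,k}$ is not the ``product on initial segments, untouched terminal segments'' object your argument treats it as.

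Relatedly, the ``compatibilities'' you defer at the end are not routine bookkeeping; they are the entire content of the lemma. The analogues of the subposets $\mathcal{E}_C$ (chains contained in a fixed maximal chain $C$) do not embed into the quotient: in the example above, for $C=\{(0,0),(1,0),(1,1),(2,1)\}$ the subposet $\mathcal{E}_C$ has four vertices but its image in $\mathcal{O}^{2,1}((0,0),(2,1))$ only three, and nondegenerate marked edges of $\mathcal{E}_C$ become degenerate in the image. Hence neither the rigid-chain argument of \autoref{lem:boxiso}--\autoref{lem:boxcover} nor the intersection lemma preceding \autoref{prop:EtoSequiv} applies formally, and there is no evident way to verify your ``compatibilities'' short of controlling the generated equivalence relation outright. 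The paper's mechanism for gaining that control is exactly the idea missing from your proposal: it inducts on the distance from $(i,j)$ to $(a,b)$ and exhibits $\alpha$ as a finite composite $X_0\to X_1\to\cdots\to X_r=\mathcal{O}^{n,k}((i,j),(a,b))$, where $X_{\ell+1}$ is obtained from $X_\ell$ by pushing out, over the $(x,y)$ at distance $\ell$, along $\mathcal{O}^{n,k}((i,j),(x,y))\times\mathbb{O}^{n,k}((x,y),(a,b))\to\mathbb{O}^n(i,x)\times\mathbb{O}^k(j,y)\times\mathbb{O}^{n,k}((x,y),(a,b))$; the relations already imposed at smaller distances are precisely what makes this attaching map a cofibration and the glued map an equivalence (inductive hypothesis plus \autoref{lem:productOO} and two-out-of-three). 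If you try to repair your route, you will find yourself needing a statement of comparable strength about how the quotient interacts with your covers --- at which point you are effectively reproving this induction.
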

\begin{proof}
	By \autoref{lem:productOO}, it suffices to show that $\alpha$ is an equivalence. 
	For $(i,j)<(x,y)$, let the \emph{distance} from $(x,y)$ to $(i,j)$ be the maximal length of a chain in $\mathbb{O}^{n,k}((i,j),(x,y))$, using the convention that we count neither $(i,j)$ nor $(x,y)$ towards this length. 
	
	It is clear that if the distance from $(a,b)$ to $(i,j)$ is $0$, $\alpha$ is an isomorphism. We then proceed by induction. Suppose that that statement is true for all $(i,j)$ and $(a,b)$ with distance less than $r$, and let $(i,j)$ and $(a,b)$ be distance $r$ apart. We define a sequence of marked simplicial sets by setting 
	\[
	X_{0}=\mathbb{O}^{n,k}((i,j),(a,b))
	\]
	and then defining 
	\[
	\begin{tikzcd}
		\coprod\limits_{d((i,j),(x,y))=\ell} \mathbb{O}^{n,k}((i,j),(x,y))\times \mathbb{O}^{n,k}((x,y),(a,b))\arrow[d] \arrow[r]& X_{\ell}\arrow[d]\\
		\coprod\limits_{d((i,j),(x,y))=\ell}\mathbb{O}^{n}(i,x) \times \mathbb{O}^k(j,y) \times \mathbb{O}^{n,k}((x,y),(a,b))\arrow[r] & X_{\ell+1}
	\end{tikzcd}
	\] 
	We then note two facts:
	\begin{itemize}
		\item For $(i,j)<(x,y)$ distance $0$ apart, the canonical map 
		\[
		\mathbb{O}^{n,k}((i,j),(x,y)) \times \mathbb{O}^{n,k}((x,y),(a,b))\to X_0 
		\]
		descends through an isomorphism to a map
		$\mathcal{O}^{n,k}((i,j),(x,y)) \times \mathbb{O}^{n,k}((x,y),(a,b)) \to X_0$
		\item For $(i,j)<(x,y)$ distance $\ell$ apart, the canonical map 
		\[
		\mathbb{O}^{n,k}((i,j),(x,y))\times \mathbb{O}^{n,k}((x,y),(a,b))\to X_\ell  
		\]
		descends to a map
		$\mathcal{O}^{n,k}((i,j),(x,y))\times \mathbb{O}^{n,k}((x,y),(a,b))\to X_\ell$, since we have already quotiented out by the relations involving intermediate elements of lesser distance. 
	\end{itemize}  
	We can thus replace the pushout above with the pushout
	\[
	\begin{tikzcd}
		\coprod\limits_{d((i,j),(x,y))=\ell} \mathcal{O}^{n,k}((i,j),(x,y))\times \mathbb{O}^{n,k}((x,y),(a,b))\arrow[d] \arrow[r,hookrightarrow]& X_{\ell}\arrow[d]\\
		\coprod\limits_{d((i,j),(x,y))=\ell}\mathbb{O}^{n}(i,x) \times \mathbb{O}^k(j,y) \times \mathbb{O}^{n,k}((x,y),(a,b))\arrow[r] & X_{\ell+1}
	\end{tikzcd}
	\] 
	where the upper horizontal map is now a cofibration. This means that, by our inductive hypothesis and \autoref{lem:productOO}, $X_\ell\to X_{\ell+1}$ is an pushout of an equivalence along a cofibration, and thus an equivalence of marked simplicial sets.
	
	Since any intermediate element $(i,j)<(x,y)<(a,b)$ must have distance from $(i,j)$ strictly less than $r$, we see that $X_r=\mathcal{O}^{n,k}((i,j),(a,b))$. Thus, the composite map 
	\[
	\func{\alpha: \mathbb{O}^{n,k}((i,j),(a,b))=X_0\to X_r=\mathcal{O}^{n,k}((i,j),(a,b))}
	\] 
	is an equivalence, as desired.
\end{proof}

\begin{proposition}
	Let $n,k\geq 0$ then the morphism $\pi^{n,k}_{(i,j)}: \mathbb{E}^{n,k}_{(i,j)} \to \mathbb{P}^{n,k}_{(i,j)}$ is a weak equivalence of marked scaled simplicial sets.
\end{proposition}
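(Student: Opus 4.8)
The plan is to identify $\pi^{n,k}_{(i,j)}$ with a collapse map of exactly the type analysed in \autoref{lem:alphabeta}, and to re-run the inductive pushout argument given there. The factorization $\xi^{n,k}_{(i,j)}=\epsilon^{n,k}_{(i,j)}\circ\pi^{n,k}_{(i,j)}$ cannot be used together with \autoref{prop:EtoSequiv} to conclude directly, since establishing that $\epsilon^{n,k}_{(i,j)}$ is an equivalence is precisely the content of \autoref{thm:product}; so the argument must treat $\pi^{n,k}_{(i,j)}$ on its own terms.

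First I would unwind the pushout defining $X_{A,B}$ in order to describe $\mathbb{P}^{n,k}_{(i,j)}=X_{A,B}((i,j),\ast)$ explicitly as a quotient of $\mathbb{E}^{n,k}_{(i,j)}=\mathfrak{C}[X^{\triangleright}]((i,j),\ast)$. The only mapping spaces altered by the pushout are those between vertices of $X$, which are replaced by their images in $\mathfrak{C}[A]\times\mathfrak{C}[B]$; the cone edges into $\ast$ are untouched. Consequently $\pi^{n,k}_{(i,j)}$ identifies precisely those simplices of $\mathbb{E}^{n,k}_{(i,j)}$ which factor through a common intermediate vertex $(x,y)$ and whose initial segments from $(i,j)$ to $(x,y)$ have the same image in $\mathbb{O}^n(i,x)\times\mathbb{O}^k(j,y)$. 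This is verbatim the equivalence relation of \autoref{defn:eqrelnOnk}, with the cone point $\ast$ playing the role of the fixed target. I would record this as the assertion that $\pi^{n,k}_{(i,j)}$ is the canonical map from $\mathbb{E}^{n,k}_{(i,j)}$ to its $\mathcal{O}$-type quotient.

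With this identification in hand, the argument of \autoref{lem:alphabeta} applies essentially unchanged. I would filter $\mathbb{P}^{n,k}_{(i,j)}$ by the distance from $(i,j)$ to the last genuine vertex of a chain before $\ast$, building it up by a sequence of pushouts. At stage $\ell$ the piece glued in is attached along the span whose relevant leg is the projection $\mathbb{O}^{n,k}((i,j),(x,y))\to\mathbb{O}^n(i,x)\times\mathbb{O}^k(j,y)$; by \autoref{lem:productOO} this is a weak equivalence, and the gluing is along a cofibration exactly as in \autoref{lem:alphabeta}. Hence each stage is a pushout of a weak equivalence along a cofibration, so a weak equivalence, and $\pi^{n,k}_{(i,j)}$, being the composite of these stages, is a weak equivalence of marked scaled simplicial sets. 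The marking and scaling cause no trouble: they are pulled back along $\xi^{n,k}_{(i,j)}$, and the projection maps of \autoref{lem:productOO} preserve and detect the relevant decorations, just as in the case of rigid chains treated in \autoref{lem:boxcover}.

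The main obstacle is the first step --- verifying that the mapping space of the pushout of $\msSet$-enriched categories is computed by precisely the $\mathcal{O}$-type relation on chains, so that the combinatorics of \autoref{lem:alphabeta} transfer without change. Once the equivalence relation defining $\mathbb{P}^{n,k}_{(i,j)}$ is pinned down, everything else is the bookkeeping of the inductive filtration already carried out there. I note that, combined with \autoref{prop:EtoSequiv} and two-out-of-three applied to $\xi^{n,k}_{(i,j)}=\epsilon^{n,k}_{(i,j)}\circ\pi^{n,k}_{(i,j)}$, this proposition yields that $\epsilon^{n,k}_{(i,j)}$ is a weak equivalence, which is exactly the base case of \autoref{thm:product}.
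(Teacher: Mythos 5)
Your proposal is correct and follows essentially the same route as the paper: after identifying $\mathbb{P}^{n,k}_{(i,j)}$ as the quotient of $\mathbb{E}^{n,k}_{(i,j)}$ by the relation of \autoref{defn:eqrelnOnk} (which the paper leaves implicit in the assertion that the final stage of its filtration equals $\mathbb{P}^{n,k}_{(0,0)}$), the paper builds it up by exactly your distance-indexed sequence of pushouts, with $\mathbb{E}^{n,k}_{(x,y)}$ playing the role of the second factor $\mathbb{O}^{n,k}((x,y),(a,b))$ from \autoref{lem:alphabeta}, and concludes since each stage is a pushout of a weak equivalence along a cofibration. One precision: at stage $\ell$ the attaching object must be the partially quotiented $\mathcal{O}^{n,k}((i,j),(x,y))\times \mathbb{E}^{n,k}_{(x,y)}$ rather than $\mathbb{O}^{n,k}((i,j),(x,y))\times \mathbb{E}^{n,k}_{(x,y)}$ --- the latter does not map into the $\ell$-th stage by a monomorphism once the identifications at closer vertices have been made --- so the left vertical equivalence is the map $\beta$ of \autoref{lem:alphabeta} rather than the projection of \autoref{lem:productOO}; this is precisely the maneuver inside \autoref{lem:alphabeta} that your phrase ``applies essentially unchanged'' defers to, and it is what the paper's pushout squares record explicitly.
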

\begin{proof}
	Since $\pi^{n,k}_{(i,j)}$ is an isomorphism whenever either $n$ or $k$ is equal to $0$ it follows by an inductive argument that it will suffice to show that $\pi^{n,k}_{(0,0)}$ is an equivalence. We define a sequence of marked scaled simplicial sets beggining with 
	\[
	Y_0=\mathbb{E}^{n,k}_{(0,0).}
	\]
	Then we define
	\[
	\begin{tikzcd}[ampersand replacement=\&]
		\coprod\limits_{d((0,0),(x,y))=\ell} \mathcal{O}^{n,k}((0,0),(x,y)) \times \mathbb{E}^{n,k}_{(x,y)} \arrow[d] \arrow[r] \& Y_{\ell} \arrow[d] \\
		\coprod\limits_{d((i,j),(x,y))=\ell} \mathbb{O}^{n}(0,x)\times \mathbb{O}^k (0,y)\times \mathbb{E}^{n,k}_{(x,y)} \arrow[r] \& Y_{\ell+1}
	\end{tikzcd}
	\]
	and observe that the top horizontal morphism is a cofibration. Additionally one sees that the left-most vertical morphism is an equivalence due to \autoref{lem:alphabeta}. It follows by construction that $Y_{n+k}=\mathbb{P}^{n,k}_{(0,0)}$ and since each $Y_{\ell} \to Y_{\ell+1}$ is a weak equivalence the result now follows.
\end{proof}

\begin{corollary}\label{prop:productkey}
	Let $n,k$ two non-negative integers and consider $i \in [n]$ and $j \in [k]$. Then the morphism
	\[
	\func{\epsilon^{n,k}_{(i,j)}:\mathbb{P}^{n,k}_{(i,j)} \to \mathbb{S}^{n,k}_{(i,j)}}   
	\]  
	is an equivalence of marked scaled simplicial sets.
\end{corollary}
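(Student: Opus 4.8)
The plan is to obtain the corollary as a purely formal consequence of the results already in place, via the two-out-of-three property for weak equivalences. Recall from the remark following the definition of $\mathbb{E}^{n,k}_{(i,j)}$ that the map $\xi^{n,k}_{(i,j)}$ factors as
\[
\func{\mathbb{E}^{n,k}_{(i,j)} \to \mathbb{P}^{n,k}_{(i,j)} \to \mathbb{S}^{n,k}_{(i,j)}},
\]
where the first arrow is the collapse map $\pi^{n,k}_{(i,j)}$ and the second is the arrow $\epsilon^{n,k}_{(i,j)}$ whose equivalence we wish to establish. In other words, $\xi^{n,k}_{(i,j)} = \epsilon^{n,k}_{(i,j)} \circ \pi^{n,k}_{(i,j)}$.

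First I would invoke \autoref{prop:EtoSequiv}, which asserts that the composite $\xi^{n,k}_{(i,j)}$ is a weak equivalence of marked scaled simplicial sets. Next I would invoke the proposition immediately preceding this corollary, which establishes that the collapse map $\pi^{n,k}_{(i,j)}$ is itself a weak equivalence. Since the weak equivalences of the model structure on $\mssSet$ furnished by \autoref{thm:markedscaledmodel} satisfy the two-out-of-three property, the factorization $\xi^{n,k}_{(i,j)} = \epsilon^{n,k}_{(i,j)} \circ \pi^{n,k}_{(i,j)}$ together with the fact that $\xi^{n,k}_{(i,j)}$ and $\pi^{n,k}_{(i,j)}$ are both equivalences forces $\epsilon^{n,k}_{(i,j)}$ to be an equivalence as well, which is exactly the assertion.

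There is no real obstacle remaining at this point: the substantive work has already been discharged in the two cited propositions, whose proofs rest on the rigid-chain identification of \autoref{lem:boxiso} and \autoref{lem:boxcover} and on the distance induction of \autoref{lem:alphabeta}. The only point worth a moment's attention is bookkeeping --- confirming that all three objects $\mathbb{E}^{n,k}_{(i,j)}$, $\mathbb{P}^{n,k}_{(i,j)}$, and $\mathbb{S}^{n,k}_{(i,j)}$, together with the three maps among them, genuinely live in $\mssSet$, so that two-out-of-three is being applied in the correct category; this is clear from their construction. The argument therefore collapses to a single application of two-out-of-three.
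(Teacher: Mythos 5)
Your proposal is correct and is exactly the argument the paper intends: the corollary is stated without proof precisely because it follows from the factorization $\xi^{n,k}_{(i,j)} = \epsilon^{n,k}_{(i,j)} \circ \pi^{n,k}_{(i,j)}$ noted in the remark, together with \autoref{prop:EtoSequiv}, the preceding proposition on $\pi^{n,k}_{(i,j)}$, and two-out-of-three in the model structure of \autoref{thm:markedscaledmodel}. Nothing further is needed.
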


\begin{proof}[Proof of \autoref{thm:product}]
	As in \cite[3.2.1.13]{HTT}, it suffices to check this in the special case when $X_A\to A$ and $X_B\to B$ are identity morphisms on underlying simplicial sets, and both $A$ and $B$ are one of the following cases 
	\begin{itemize}
		\item The scaled 2-simplex $\Delta^2_\sharp$. 
		\item The unscaled $n$-simplex $\Delta^n_\flat$. 
	\end{itemize}
	In the case where $A=\Delta^n_\flat$ and $B=\Delta^k_\flat$, the morphism 
	\[
	\func{
		\epsilon_X: \ST_{\phi}(\Delta^n\times \Delta^k)(i,j) \to \left(\ST_{\Delta^n}(\Delta^n)\boxtimes \ST_{\Delta^k}(\Delta^k)\right)(i,j)
	}
	\] 
	is precisely the morphism 
	\[
	\func{\epsilon^{n,k}_{(i,j)}:\mathbb{P}^{n,k}_{(i,j)} \to \mathbb{S}^{n,k}_{(i,j)}}   
	\]  
	and thus is an equivalence of marked-scaled simplicial sets by \autoref{prop:productkey}. Each other case is a pushout of some $\epsilon_{(i,j)}^{n,k}$ by a cofibration, and thus is also an equivalence. 
\end{proof}

\subsection{Straightening and anodyne morphisms} \label{subsec:STMBann}

This section serves as a stepping-stone to see that the bicategorical straightening is a left Quillen functor. in particular, we will show that $\ST_S$ preserves \textbf{MB}-anodyne morphisms for any $S\in\scsSet$. 

\begin{definition}\label{def:lnhorns}
	Consider $\Lambda^n_i$ for $0 \leq i \leq n$. For every $0\leq s \leq n$ we define $\Lambda\mathcal{L}^n_i (s)$ to be the scaled subsimplicial set of $\mathcal{L}^n_\flat(s)$ consisting of those simplices $\sigma: S_0 \subseteq S_1 \subseteq \cdots \subseteq S_n$ satisfying at least one of the following conditions:
	\begin{itemize}
		\item There exists $k\in[n]$ with $k\neq i$ such that, for every $j\in [n]$, $k\notin S_j$.
		\item There exists some $0<j\leq n$ such that $j \in S_0$ and there exists $0\leq \ell  < j$ such that $\ell \neq i$.
	\end{itemize}
	Given $\Delta^n_{T}$ as in \autoref{def:saturation} we define $(\Lambda \mathcal{L}^n_i)_{T}(s)$ using the inherited scaling from $\mathcal{L}^n_{T}(s)$.
\end{definition}

\begin{definition}
	Given a \textbf{MB} simplicial set of the form $\Delta^n_T:= (\Delta^n,\flat,\flat\subset T)$ for some $T$, we denote by $(\Lambda^n_i)_T$ the horn with the induced marking and biscaling. We write $\ST_{\Delta^n_\flat}\!(\Lambda^n_i)_T$ for the functor associated to the object $(\Lambda^n_i)_{T} \to \Delta^n_\flat$.
\end{definition}

\begin{remark}
	In some specific instances we will have $\Delta^n_T:= (\Delta^n,\flat,\flat\subset T)$ where $T=\Delta^{\set{i,j,k}}$ a chosen 2-simplex in $\Delta^n$. In that situation we will chose a subscript notation $\Delta^n_\dagger=(\Delta^n,\flat,\flat \subset \Delta^{\set{i,j,k}})$. This convention will also applied to previously defined constructions like for example $(\Lambda^n_i)_\dagger$ or $\ST_{\Delta^n}\!(\Delta^n)_\dagger$.
\end{remark}

\begin{lemma}
	Let $\Delta^n_{T}=(\Delta^n,\flat,\flat \subseteq T)$. Then for every $0 \leq s \leq n$ the canonical morphism
	\[
	\func{\SSt_{\Delta^n_\flat}\!(\Lambda^n_i)_T(s) \to[\simeq] (\Lambda \mathcal{L}^n_i)_{T}(s)}
	\]
	is $\mathbf{MS}$-anodyne.
\end{lemma}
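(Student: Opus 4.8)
The plan is to follow the proof of \autoref{lem:saturation} essentially verbatim, the only new ingredient being a combinatorial check that the scaling procedure never escapes the horn subobject $(\Lambda\mathcal{L}^n_i)_T(s)$.

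First I would identify the two sides as marked simplicial sets. Applying the colimit-preservation and base-change properties of \autoref{prop:formalproperties} to the decomposition $\Lambda^n_i=\bigcup_{k\neq i}\Delta^{[n]\setminus\set{k}}$, together with the identification $\SSt_{\Delta^n_\flat}\!(\Delta^n_T)(s)\cong\mathcal{L}^n_T(s)$ from \autoref{lem:saturation}, the map $\SSt_{\Delta^n_\flat}\!((\Lambda^n_i)_T)(s)\to\mathcal{L}^n_T(s)$ is a monomorphism of marked-scaled simplicial sets (straightening preserves cofibrations). Unwinding \autoref{def:lnhorns}, its image has underlying marked simplicial set exactly $(\Lambda\mathcal{L}^n_i)_T(s)$: the first condition records chains factoring through a face $\Delta^{[n]\setminus\set{k}}$ with $k\neq i$, and the second records the additional chains created by the straightening of the horn. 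Hence the canonical map is an isomorphism on underlying marked simplicial sets, and all that remains is to scale those thin triangles of $(\Lambda\mathcal{L}^n_i)_T(s)$ not yet scaled by the straightening.

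Next I would run the two-tetrahedron argument of \autoref{lem:saturation}. For a thin triangle $\sigma\colon S_0\subseteq S_1\subseteq S_2$, writing $s_j=\max(S_j)$, I introduce $\theta\colon S_0\subseteq S_0\cup\set{s_1}\subseteq S_0\cup\set{s_1,s_2}\subseteq S_2$ and then $\rho\colon S_0\subseteq S_0\cup\set{s_1}\subseteq S_1\subseteq S_2$. As in loc.\ cit., every face of $\theta$ other than the one omitting vertex $2$ is already scaled, so \autoref{lem:Ui_MS-anodyne} scales the triangle $S_0\subseteq S_0\cup\set{s_1}\subseteq S_2$; with this in hand, every face of $\rho$ other than $\sigma$ is scaled, and a second application of \autoref{lem:Ui_MS-anodyne} scales $\sigma$. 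Ordering these pushouts over all such $\sigma$ exactly as in \autoref{lem:saturation} then exhibits the map as a transfinite composite of pushouts of $\mathbf{MS}$-anodyne maps.

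The one genuinely new point — and the step I expect to be the main obstacle — is verifying that $\theta$, $\rho$, and the auxiliary data stay inside $(\Lambda\mathcal{L}^n_i)_T(s)$ rather than merely inside $\mathcal{L}^n_T(s)$, since otherwise the pushouts would be formed in the wrong object. This reduces to checking that the conditions of \autoref{def:lnhorns} are inherited by every vertex and sub-chain of $\theta$ and $\rho$. Each such vertex lies between $S_0$ and $S_2$ (it contains $S_0$ and, using $s_1\in S_1\subseteq S_2$, is contained in $S_2$). Consequently, if $\sigma$ satisfies the first condition via some $k\neq i$ with $k\notin S_2$, then $k$ is absent from every vertex and the whole configuration inherits it; and if $\sigma$ satisfies the second condition via $j\in S_0$ with some $\ell<j$, $\ell\neq i$, then $j$ belongs to the minimum of every sub-chain (each minimum contains $S_0$), so it too is inherited. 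The same case analysis must also confirm that the face of $\theta$ relying on the lean structure (the case $s_0<s_1<s_2$) is scaled within the \emph{horn's} straightening: this amounts to checking that the lean $2$-simplex $\set{s_0,s_1,s_2}$ witnessing the thinness of $\sigma$ (via \autoref{def:saturation}) itself lies in $\Lambda^n_i$. For $n\geq 4$ this is automatic since a three-element set cannot contain $[n]\setminus\set{i}$; for $n=3$ the only potential exception, $\set{s_0,s_1,s_2}=[n]\setminus\set{i}$, forces $s_0=\min=s$ and hence $S_0=\set{s_0}$, which makes $\sigma$ fail both conditions of \autoref{def:lnhorns} and so lie outside the horn to begin with. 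Once this containment is established, the proof concludes exactly as in \autoref{lem:saturation}.
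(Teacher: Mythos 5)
Your proof is correct and takes essentially the same approach as the paper: the paper's own (very terse) proof merely records that the map is an isomorphism on underlying simplicial sets and that the two-tetrahedron argument of \autoref{lem:saturation} ``still holds in this setting'', which is precisely what you spell out, including the containment check inside the horn that the paper leaves implicit. Two harmless slips in your exceptional-case analysis: the case $n=2$ (where the potential exception is $\{s_0,s_1,s_2\}=[2]$) is omitted, and for $n=3$, $i=0$ your claim that $S_0=\{s_0\}$ is forced is false (e.g.\ one can have $S_0=\{0,1\}$ with $s=0$); in both situations, however, $S_2$ is forced to contain $[n]\setminus\{i\}$ while $S_0$ contains no $j>0$ admitting $\ell<j$ with $\ell\neq i$, so $\sigma$ still fails both conditions of \autoref{def:lnhorns} and lies outside the horn, exactly as your argument requires.
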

\begin{proof}
	It is clear that for every $0 < s \leq n$ we can pick the morphism to be an isomorphims on the underlying simplicial sets. We further note that the proof of \autoref{lem:saturation} still holds in this setting. Consequently, the claim follows.
\end{proof}

\begin{definition}\label{def:pathorder}
	Let $n\geq 0$ and $0 \leq s \leq n$. We say that a (non-degenerate) simplex $\sigma$ in $\mathcal{L}^n(s)$ is a \emph{path} if it is of maximal dimension. Let $\mathcal{P}^n_s$ be the set of such paths. We will define an total order on $\mathcal{P}^n_s$ as follows:
	
	Given a path $\sigma:S_0 \subset S_1 \subset S_2 \subset \cdots S_\ell$ one sees that $S_{i+1}\setminus S_{i}=\set{a_{i+1}}$ consists precisely in one element. Therefore we can identify $\sigma$ with a list of elements 
	\[
	S_\sigma=\{ a_{i}\}_{i=1}^{\ell}.
	\]
	Note that by the maximality of $\sigma$, $S_0=\{s\}$. 
	
	Suppose we are given two such lists $S_\sigma=\{ a_{i}\}_{i=1}^{\ell}$ and $S_\theta=\{ b_{i}\}_{i=1}^{\ell}$. We declare $\sigma < \theta$ if for the first index $j$ for which $a_{j} \neq b_{j}$ then we have $a_{j} < b_{j} $.
\end{definition}

\begin{lemma}\label{lem:anodyneA3}
	Let $\Delta^n_{\diamond}=(\Delta^n,\flat,\flat \subset \Delta^{\set{0,1,n}})$ and consider the induced morphism $(\Lambda \mathcal{L}^n_0)_{\diamond}(0) \to \mathcal{L}^n_{\diamond}(0)$. Collapsing the morphism $0 \to 01$ to a degenerate edge  on both sides yields a map of scaled simplicial sets
	\[
	\func{\Lambda\mathcal{R}^n_0 \to \mathcal{R}^n}
	\]
	which is scaled anodyne.
\end{lemma}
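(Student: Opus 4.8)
The plan is to realize the inclusion $\Lambda\mathcal{R}^n_0 \hookrightarrow \mathcal{R}^n$ as a finite composite of pushouts of inner scaled horn inclusions of type \ref{item:anodyne-inner}; in particular I will show it is inner scaled anodyne, which is stronger than the asserted scaled anodyne-ness. First I would read off the nondegenerate simplices of $\mathcal{R}^n$ that do not lie in $\Lambda\mathcal{R}^n_0$. Unwinding \autoref{def:lnhorns}, a nondegenerate chain $\sigma : S_0 \subsetneq \cdots \subsetneq S_m$ in $\mathcal{L}^n(0)$ fails both defining conditions of the horn at $i=0$ exactly when $S_m=[n]$ and $S_0\in\set{\set{0},\set{0,1}}$. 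Collapsing the edge $\set{0}\subseteq\set{0,1}$ kills those chains containing it consecutively (they become degenerate) and identifies the two edges $\set{0}\subseteq[n]$ and $\set{0,1}\subseteq[n]$. Hence the missing nondegenerate simplices of $\mathcal{R}^n$ are precisely the images of chains ending at $[n]$, starting at the collapsed vertex $v$, and not containing $\set{0}\subseteq\set{0,1}$; the top-dimensional ones are the images of the paths of \autoref{def:pathorder}, indexed by permutations of $\set{1,\dots,n}$.

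A key preliminary observation I would record is that for every such missing simplex $\mu$, both its initial face $d_0\mu$ and its final face $d_m\mu$ already lie in $\Lambda\mathcal{R}^n_0$: removing $[n]$ produces a chain no longer reaching $[n]$, so the first condition of \autoref{def:lnhorns} applies, while removing the initial set leaves a chain whose new minimum properly contains $\set{0,1}$ or equals $\set{0,a}$ with $a\ge 2$, so the second condition applies. Consequently every horn I need to fill omits an interior vertex, and the relevant generator is always the inner one \ref{item:anodyne-inner}; the collapse of $\set{0}\subseteq\set{0,1}$ is exactly what forces $d_0\mu$ into the horn, so no genuine outer horn is required despite the notation.

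The essential difficulty is that attaching a single maximal path does not in general expose a unique new face: its absent interior faces are indexed by the ascents of the corresponding permutation, so a naive path-order filtration does not present single horn fillings. To remedy this I would organize the complement by a canonical pivot-insertion matching in the spirit of \autoref{lem:innerpivot}: to each missing simplex associate, using the distinguished element $1$ governing the collapse, a partner of one higher dimension obtained by inserting a single canonical vertex, so that each matched simplex is the unique absent face of its partner. Ordering partners by dimension, breaking ties by the path order of \autoref{def:pathorder}, then presents $\mathcal{R}^n$ over $\Lambda\mathcal{R}^n_0$ as an iterated pushout in which every elementary step is the inclusion of an inner horn $\Lambda^m_k\to\Delta^m$ with $0<k<m$.

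Finally I would match scalings. The thin triangles of $\mathcal{R}^n$ are of two kinds: those with a repeated maximum, and the images of the $\diamond$-lean triangles $\set{0}\subseteq\set{0,1}\subseteq S_2$, which become degenerate, hence thin, after the collapse. At each elementary step I must check that, apart from triangles already scaled on previously attached faces, the only triangle scaled in $\mathcal{R}^n$ is the face $\Delta^{\set{k-1,k,k+1}}$ prescribed by the generator \ref{item:anodyne-inner}; this is the same face-by-face verification carried out in \autoref{lem:saturation}, and the low-dimensional degenerate cases created by the collapse are handled exactly as there. I expect constructing the matching and discharging this scaling bookkeeping to be the main obstacle, the remaining steps being essentially formal.
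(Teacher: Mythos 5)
There is a genuine gap, and it sits at the very foundation of your strategy: the map $\Lambda\mathcal{R}^n_0\to\mathcal{R}^n$ is \emph{not} inner scaled anodyne, so no matching scheme can realize it as a composite of pushouts of generators of type (i). The error originates in your description of $\mathcal{R}^n$. ``Collapsing'' here means forming the pushout $\mathcal{L}^n_{\diamond}(0)\coprod_{\Delta^{\set{0,1}}}\Delta^0$, and such a pushout neither degenerates the chains containing $\set{0}\subsetneq\set{0,1}$ nor identifies the two edges $\set{0}\subseteq[n]$ and $\set{0,1}\subseteq[n]$: a chain through the collapsed edge maps to a \emph{nondegenerate} simplex of $\mathcal{R}^n$ whose initial edge is degenerate (exactly the shape $\Delta^m\coprod_{\Delta^{\set{0,1}}}\Delta^0$). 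Consequently your enumeration of the missing simplices omits precisely the problematic ones, and your ``key preliminary observation'' fails for them: for the maximal chain $T:\set{0}\subset\set{0,1}\subset\set{0,1,2}\subset\cdots\subset[n]$, the face $d_0T$ is the chain beginning at $\set{0,1}$ and ending at $[n]$, which satisfies neither condition of \autoref{def:lnhorns} and hence does \emph{not} lie in $\Lambda\mathcal{R}^n_0$. Moreover $T$ is the unique coface of $d_0T$, so $d_0T$ cannot be supplied by attaching any other simplex first; and $d_0T$ itself admits no inner-horn attachment compatible with the scaling, since none of its $2$-faces is scaled in $\mathcal{R}^n$ (their max-triangles never equal $\set{0,1,n}$).

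This is not a repairable bookkeeping issue: one can prove the map lies outside the weakly saturated hull of the inner generators (i) (even together with (ii)). Let $E\subset\mathcal{R}^n$ be the scaled simplicial subset obtained by deleting the image of $T$ and of $d_0T$; this is a simplicial subset by the uniqueness of the coface above. The inclusion $E\hookrightarrow\mathcal{R}^n$ has the right lifting property against generators (i) and (ii): any inner horn of $T$ already contains $d_0T\notin E$, and no map $(\Delta^{n-1},\set{\Delta^{\set{k-1,k,k+1}}})\to\mathcal{R}^n$ can have underlying simplex $d_0T$ because $d_0T$ has no scaled triangles (the case $n=2$ is even simpler, as there $d_0T$ is an edge). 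Yet the square with top arrow $\Lambda\mathcal{R}^n_0\hookrightarrow E$ and bottom arrow the identity of $\mathcal{R}^n$ admits no lift, since a lift would be a retraction of the proper inclusion $E\hookrightarrow\mathcal{R}^n$. Since classes defined by a left lifting property are weakly saturated, the inclusion $\Lambda\mathcal{R}^n_0\to\mathcal{R}^n$ cannot lie in the hull of (i) and (ii). This is exactly why the third generator of \autoref{def:scanodyne} exists, and it is how the paper's proof concludes: after filtering by paths in reverse path order (\autoref{def:pathorder}) and handling all other attachments by pivot-style inner arguments much as you outline, the final ``calm'' path $T$ is attached along
\[
\Lambda^n_0\coprod_{\Delta^{\set{0,1}}}\Delta^0 \to \Delta^n\coprod_{\Delta^{\set{0,1}}}\Delta^0
\]
with $\Delta^{\set{0,1,n}}$ scaled --- and it is only at this step that the $\diamond$-scaling and the collapse of $0\to 01$ are genuinely used. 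Your outline becomes essentially the paper's proof once this outer attachment is reinstated; as stated, however, the claim of inner anodyne-ness is false.
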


\begin{proof}
	We use the order from \autoref{def:pathorder} to add simplices to $\Lambda\mathcal{R}^n_0$. We will add simplices in \emph{reverse order}, i.e. for any path $\sigma$, we denote by $X^{\geq \sigma}$ the scaled simplicial subset of $\mathcal{R}^n$ obtained by adding to $\Lambda\mathcal{R}^n_0$ all paths $\theta$ such that $\theta\geq \sigma$. 
	
	The procedure yields a filtration
	\[
	\func{\Lambda \mathcal{R}^n_0=X^{\geq \sigma_0} \to X^{\geq \sigma_1} \to X^{\geq \sigma_2} \to \cdots \to \mathcal{R}^n }
	\]
	where we have labeled our paths $\sigma_i$ so that $\sigma_i>\sigma_{i+1}$. The proof proceeds by showing that $X^{\geq \sigma_{i-1}}\to X^{\geq \sigma_{i}}$ is scaled anodyne for any $i$. 
	
	The proof proceeds by cases. We fix the notation that $S_{\sigma_i}=\{a_k\}_{k=1}^n$. 
	\begin{enumerate}
		\item Suppose that $a_1\neq 1$. We prove this case by showing that the top horizontal map in the pullback diagram 
		\[
		\begin{tikzcd}[ampersand replacement=\&]
			A_{\sigma_i} \arrow[r,"\alpha_i"] \arrow[d] \& \Delta^n \arrow[d,"\sigma_i"] \\
			X^{\geq \sigma_{i-1}} \arrow[r] \& X^{\geq \sigma_{i}}
		\end{tikzcd}
		\]
		is itself scaled anodyne.

		We see that $A_{\sigma_i}$ is the union of the following faces of $\sigma_i$: 
		\begin{itemize}
			\item The face $d_0(\sigma_i)$, since we will have $0\leq 1<a_1$ in each $S_k$.
			\item The face $d_n(\sigma_i)$, since this face will always be missing $a_{n}\neq 0$. 
			\item The face $d_j(\sigma_i)$ for every $j$ such that $a_{j+1}>a_j$. This is because this will, equivalently, be the $j$\textsuperscript{th} face of the (greater) simplex with vertex list
			\[
			\{a_1,\ldots, a_{j-1},a_{j+1},a_j,a_{j+2},\ldots, a_n\}.
			\]  
		\end{itemize}
		To see that the inclusion of $A_{\sigma_i}\to \Delta^n$ is scaled anodyne, we first note that, by necessity, there is at least one face of $\Delta^n$ not contained in $A_{\sigma_i}$. We then choose $t\in [n]$ such that $d_t(\Delta^n)$ is not contained in $A_{\sigma_i}$. 
		
		If we let $j\in [n]$ be the an element such that $j< t$ and $d_j(\Delta^n)\subset A_{\sigma_i}$. Similarly, let $\ell\in [n]$ be the smallest element such that $\ell>t$ and $d_\ell(\Delta^n)\subset A_{\sigma_i}$. A similar argument to \cite[Lemma 1.10]{AGS_Twisted} (or \autoref{lem:innerpivot}) shows that it will suffice to see that the simplex $\Delta^{\{j,t,\ell\}}$ is scaled for every such $j$, $t$, and $\ell$. It is easy to see that $\max(S_\ell)=\max(S_t)$. Consequently, we see that $\alpha_i$ is scaled anodyne, as desired.
		\item Now suppose that $a_1=1$, and $S_{\sigma_i}\neq \{1,2,\ldots,n-1,n\}$. We now must instead consider the pullback diagram
		\[
		\begin{tikzcd}[ampersand replacement=\&]
			B_{\sigma_i} \arrow[r,"\beta_i"] \arrow[d] \& \Delta^n \coprod_{\Delta^{\set{0,1}}}\Delta^0 \arrow[d] \\
			X^{\geq\sigma_{i-1}} \arrow[r] \& X^{\geq \sigma_i}
		\end{tikzcd}
		\]
		as above, we see that $B_{\sigma_i}$ consists of the faces 
		\begin{itemize}
			\item $d_n(\sigma_i)$
			\item $d_j(\sigma_i)$ for each $j$ such that $a_j<a_{j+1}$. 
		\end{itemize}
		Since $S_{\sigma_i}\neq \{1,2,\ldots,n-1,n\}$, there exists some $1<t<n$ such that $B_{\sigma_i}$ does not contain the $t$\textsuperscript{th} face of $\Delta^n\coprod_{\Delta^{\{0,1\}}}\Delta^0$.  
		
		We can then consider the pullback diagram 
		\[
		\begin{tikzcd}[ampersand replacement=\&]
			C_{\sigma_i} \arrow[r,"\gamma_i"] \arrow[d] \& \Delta^{n-1} \arrow[d,"d_0"] \\
			B_{\sigma_i} \arrow[r] \& \Delta^n \coprod_{\Delta^{\set{0,1}}}\Delta^0
		\end{tikzcd}   
		\]
		an apply precisely the argument from the first case to $t\in[n-1]$ described above to find that $\gamma_i$ is scaled anodyne. This means that, via a pushout, we may assume that $B_{\sigma_i}$ contains the $0$\textsuperscript{th} face of  $\Delta^n\coprod_{\Delta^{\{0,1\}}}\Delta^0$. We can then repeat essentially the same argument, and thereby see that $\beta_i$ is scaled anodyne. 
		\item If $S_{\sigma_i}=\{1,2,\ldots,n-1,n\}$, then the map $\func{X^{\geq \sigma_{i-1}}\to X^{\geq \sigma_i}=\mathcal{R}^n}$ is an inclusion 
		\[
		\func{\Lambda^n_0\coprod_{\Delta^{\{0,1\}}}\Delta^0 \to \Delta^n\coprod_{\Delta^{\{0,1\}}}\Delta^0}
		\]
		where $\Delta^{\{0,1,n\}}$ is scaled. \qedhere
	\end{enumerate}
\end{proof}

\begin{lemma}\label{lem:anodyneA4}
	Let $\Delta^n_{\dagger}=(\Delta^n,\flat,\flat \subset \Delta^{\set{0,n-1,n}})$ and consider the induced morphism $(\Lambda \mathcal{L}^n_n)_{\dagger}(0) \to \mathcal{L}^n_{\dagger}(0)$. Denote by $\mathcal{T}^n$ (resp. $\Lambda \mathcal{T}^n_n$) the marked scaled simplicial set obtained from $\mathcal{L}^n_n(0)$ (resp. $(\Lambda \mathcal{L}^n_n)_{\dagger}(0)$) by marking the edges associated to the edge $\func{(n-1) \to n}$ in $\Delta^n$.  Then the associated map
	\[
	\func{\Lambda \mathcal{T}^n_n \to \mathcal{T}^n}
	\]
	is $\mathbf{MS}$-anodyne.
\end{lemma}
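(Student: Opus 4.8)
The plan is to mirror the strategy of \autoref{lem:anodyneA3}, replacing the inner/left pivot trick by the right-horn pivot trick \autoref{lem:previousRightpivot} and carefully tracking the new marking. First I would recall the combinatorics of $\mathcal{L}^n(0)$: its vertices are the subsets $S\subseteq[n]$ with $0\in S$, and its maximal non-degenerate simplices (\emph{paths}, in the sense of \autoref{def:pathorder}) correspond bijectively to orderings $a_1,\dots,a_n$ of $\{1,\dots,n\}$, recording the elements successively adjoined to $S_0=\{0\}$ until $S_n=[n]$. Under this dictionary the decorations of $\mathcal{T}^n$ are read off from the $\max$-projection exactly as inherited from the $\mathbf{A4}$ generator: a triangle $S_0\subseteq S_1\subseteq S_2$ is thin precisely when $(\max S_0,\max S_1,\max S_2)$ is degenerate or equals $(0,n-1,n)$, and an edge $S_0\subseteq S_1$ is marked precisely when $\max$ jumps from $n-1$ to $n$. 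As in \autoref{lem:anodyneA3}, conditions (a)--(b) of \autoref{def:lnhorns} show that no complete path lies in $\Lambda\mathcal{T}^n_n$, so every path must be attached.

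Next I would build the filtration. Ordering the paths by the total order of \autoref{def:pathorder} and attaching them in the reverse scheme of \autoref{lem:anodyneA3}, I obtain
\[
\Lambda\mathcal{T}^n_n = X^{\geq\sigma_0}\to X^{\geq\sigma_1}\to\cdots\to X^{\geq\sigma_N}=\mathcal{T}^n,
\]
and it suffices to prove that each $X^{\geq\sigma_{i-1}}\to X^{\geq\sigma_i}$ is $\mathbf{MS}$-anodyne. Forming the pullback square
\[
\begin{tikzcd}[ampersand replacement=\&]
A_{\sigma_i}\arrow[r]\arrow[d] \& \Delta^n\arrow[d,"\sigma_i"]\\
X^{\geq\sigma_{i-1}}\arrow[r] \& X^{\geq\sigma_i}
\end{tikzcd}
\]
reduces the claim to showing that $A_{\sigma_i}\to\Delta^n$ is $\mathbf{MS}$-anodyne, where $A_{\sigma_i}$ is the union of those faces of $\sigma_i$ that either lie in $\Lambda\mathcal{T}^n_n$ (via \autoref{def:lnhorns}) or are shared with a strictly larger, already-attached path; since the intersection of two distinct paths lies in an earlier stage, the order of attachment is immaterial.

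The heart of the argument is then a case analysis governed by the last adjoined element $a_n$, the pivot now being the top vertex $S_n=[n]$ (dually to \autoref{lem:anodyneA3}, where the decisive element was $a_1$ and the pivot was the bottom). When $a_n=n$ the edge $S_{n-1}\subset S_n$ is marked and the triangle $\{S_0,S_{n-1},S_n\}$ is scaled --- exactly the decorations of a type \ref{MS:nhorn} horn --- so the same combinatorial argument as in \autoref{lem:previousRightpivot}, transported to the marked-scaled setting (with type \ref{mb:2Cartesianmorphs} replaced by type \ref{MS:nhorn}), exhibits $A_{\sigma_i}\to\Delta^n$ as a composite of pushouts of morphisms of type \ref{MS:nhorn}, and one isolates the single extreme path for which this inclusion is the generator of type \ref{MS:nhorn} itself. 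When $a_n\neq n$ the element $n$ has been adjoined earlier, so $\max S_{n-1}=n$ and the top edge carries only the trivial decoration; the filling is then inner, and one applies \autoref{lem:innerpivot} to realize $A_{\sigma_i}\to\Delta^n$ through morphisms of type \ref{MS:inner}.

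The main obstacle I anticipate is bookkeeping that is genuinely new relative to \autoref{lem:anodyneA3}: at each attachment one must verify that the already-present faces of $A_{\sigma_i}$, together with the induced marking and scaling on $\sigma_i$, satisfy the precise hypotheses of the right-dull pivot trick --- in particular that the marked edge into the pivot and the scaled triangle $\{0,n-1,n\}$ are present, and that the only missing face is the one opposite the pivot. Tracking the marking on the edges associated to $(n-1)\to n$, which is absent from the purely scaled situation of \autoref{lem:anodyneA3}, is the only substantive addition, and confirming that it always lines up to permit a type \ref{MS:nhorn} filling is where the real content of the proof lies.
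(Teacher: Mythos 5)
Your proposal is correct and shares the paper's skeleton: the same reverse-order filtration by the paths of \autoref{def:pathorder}, the same pullback squares $A_{\sigma_i}\to \Delta^n$, and the same basic dichotomy according to whether $n$ is the last element adjoined (equivalently, whether $d_n(\sigma_i)$ lies in $A_{\sigma_i}$). Where you genuinely diverge is in the case $a_n=n$: the paper splits it further, handling $S_{\sigma_i}\neq\{1,\ldots,n\}$ by the mechanism of case (2) of \autoref{lem:anodyneA3} (first attach the missing outer face by a lower-dimensional argument, then finish with inner fillings), and reserving a type \ref{MS:nhorn} filling for the single identity path; you instead treat all of $a_n=n$ uniformly via the right-dull pivot trick \autoref{lem:previousRightpivot}, transported to the marked-scaled setting, so that every attachment in this case is built from right-horn fillings, with the identity path as the degenerate instance where the inclusion is literally the generator. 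This uniform route is legitimate, and the hypothesis check you flag as the real content does go through: when $a_n=n$ one has $\max S_{n-1}=n-1$, so the triangle over $(0,n-1,n)$ is scaled and the edge over $(n-1)\to n$ is marked; every marked edge necessarily contains the vertex $n$, so the condition on marked edges missing the pivot is vacuous; and a scaled triangle missing $n$ that does not factor through $A_{\sigma_i}$ must have the form $\{0,b,n-1\}$ with $\max S_b=n-1$, whence its union with $\{n\}$ is fully scaled and the needed edge is marked. In short, the paper's version buys verbatim reuse of the \autoref{lem:anodyneA3} cases, while yours buys uniformity and an explicit reduction to a single previously proven combinatorial lemma (which, amusingly, the paper proves but only invokes later, in \autoref{lem:keylemX}).
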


\begin{proof}
	The argument is nearly identical to the proof of \autoref{lem:anodyneA3}. Using the same order as in that proof, we produce a filtration 
	\[
	\func{\Lambda \mathcal{T}^n_n=X^{\geq \sigma_{0}} \to X^{\geq \sigma_{1}}\to \cdots \to \mathcal{T}^n }
	\]
	and show each step is scaled anodyne. 
	
	As before, we set $S_{\sigma_i}=\{a_j\}_{j=1^n}$, and consider the pullback diagram
	\[
	\begin{tikzcd}[ampersand replacement=\&]
		A_{\sigma_{i}} \arrow[r,"\alpha_i"] \arrow[d] \& \Delta^n \arrow[d,"\sigma_i"] \\
		X^{\geq \sigma_{i-1}} \arrow[r] \& X^{\geq \sigma_i}
	\end{tikzcd}
	\]
	
	The case distinction now rests on whether or not $d_n\sigma_i$ factors through $A_{\sigma_i}$. The case when it does is formally identical to case (1) from \autoref{lem:anodyneA3}. 
	
	If $d_n(\sigma_i)$ does not factor through $A_{\sigma_i}$, then $a_n=n$. There are again two cases, based on whether $S_{\sigma_i}=\{1,2\ldots,n-1,n\}$. The case $S_{\sigma_i}\neq\{1,2\ldots,n-1,n\}$ is identical to the corresponding case in \autoref{lem:anodyneA3}. In the case $S_{\sigma_i}=\{1,2\ldots,n-1,n\}$ we find that $A_{\sigma_i}=\Lambda^n_n$, the last edge is marked, and $\Delta^{\{0,n-1,n\}}$ is scaled. This is a morphism of type \ref{MS:nhorn}, and thus is $\mathbf{MS}$-anodyne.
\end{proof}

\begin{lemma}\label{lem:anodyneA1}
	Let $\Delta^n_{\ast_i}=(\Delta^n,\flat,\flat \subset \Delta^{\set{i-1,i,i+1}})$ and consider the induced morphism $(\Lambda \mathcal{L}^n_i)_{\ast_i}(0) \to \mathcal{L}_{\ast_{i}}(0)$. Let $\mathcal{S}^n$ (resp $\Lambda \mathcal{S}^n_i$) denote the marked scaled simplicial set obtained by marking the edges of the form $S \to S'$ such that $i, i+1 \in S$ but $i \notin S$ and such that $S'=S \cup \set{i}$. Then the induced morphism
	\[
	\func{\Lambda \mathcal{S}^n_i \to \mathcal{S}^n_i }
	\]
	is $\mathbf{MS}$-anodyne.
\end{lemma}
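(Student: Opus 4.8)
The plan is to follow the template established in \autoref{lem:anodyneA3} and \autoref{lem:anodyneA4}, now organised around the pivot point $i$ and the inner-horn generator \ref{MS:inner}. Recall from \autoref{def:pathorder} that the maximal non-degenerate simplices of $\mathcal{L}^n_{\ast_i}(0)$ are the \emph{paths}: a path $\sigma$ is a maximal chain $\set{0}=S_0\subset S_1\subset\cdots\subset S_n=[n]$, recorded by the list $S_\sigma=\set{a_j}_{j=1}^n$ of successively adjoined elements, and these are totally ordered lexicographically. I would attach the paths of $\mathcal{S}^n_i$ that are not already present in $\Lambda\mathcal{S}^n_i$ one at a time according to this order, producing a filtration
\[
\Lambda\mathcal{S}^n_i = X_0 \to X_1 \to \cdots \to \mathcal{S}^n_i,
\]
and it then suffices to prove that each attachment is $\mathbf{MS}$-anodyne. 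For the path $\sigma$ attached at a given stage, pulling back $\sigma\colon\Delta^n\to\mathcal{S}^n_i$ along the previous stage yields an inclusion $A_\sigma\hookrightarrow\Delta^n$, where $A_\sigma$ is the union of those faces of $\sigma$ already present, and the task reduces to showing that this inclusion is $\mathbf{MS}$-anodyne.

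The combinatorial heart of the argument is the identification of $A_\sigma$. As in \autoref{lem:anodyneA3}, an interior face $d_j(\sigma)$ with $0<j<n$ is shared with exactly one other path, namely the one obtained by transposing $a_j$ and $a_{j+1}$, so it is present precisely when that neighbour has already been attached; the outer faces $d_0(\sigma)$ and $d_n(\sigma)$ are governed by membership in the horn $\Lambda\mathcal{L}^n_i(0)$ of \autoref{def:lnhorns}. The scaling is supplied by \autoref{def:saturation}: a triangle of $\mathcal{L}^n_{\ast_i}(0)$ is thin exactly when the associated triple of maxima is degenerate or equals $(i-1,i,i+1)$, the latter being the image of the unique non-degenerate lean triangle $\Delta^{\set{i-1,i,i+1}}$ of $\Delta^n_{\ast_i}$; the marked edges are those $S\to S\cup\set{i}$ with $i+1\in S$ and $i\notin S$, i.e. the edges recording that $i$ is adjoined after $i+1$. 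At the appropriate stage, where the only missing interior face of $\sigma$ is $d_i$, the inclusion $A_\sigma\to\Delta^n$ is exactly the inner-horn inclusion $\Lambda^n_i\to\Delta^n$ carrying the scaled triangle $\Delta^{\set{i-1,i,i+1}}$, a generator of type \ref{MS:inner}.

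For the remaining attachments I would invoke the pivot-point trick in its marked-scaled form, exactly as \autoref{lem:innerpivot} is used, via inner horns \ref{MS:inner} together with the $U_i$-saturation of \autoref{lem:Ui_MS-anodyne}: taking the missing face determined by a descent of $\sigma$ as the pivot coordinate, the required thinness of the triangles $\set{r,i,s}$ with $r<i<s$ is guaranteed by the $(i-1,i,i+1)$-scaling just described. When the last missing face of $\sigma$ carries a marked edge $S\to S\cup\set{i}$, that marked edge occupies the terminal position of the simplex and the relevant fill is instead the outer generator \ref{MS:nhorn}, with $\Delta^{\set{0,n-1,n}}$ in its customary role; this is the point at which the argument genuinely departs from the purely inner situation and where the marking (absent in a bare inner horn) enters.

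The main obstacle is precisely this bookkeeping: keeping each fill inside the \emph{flat-marked} generators \ref{MS:inner} and the $U_i$-saturation, while routing every genuinely marked face through \ref{MS:nhorn}. Concretely, one must check in each configuration that all marked edges of the attached path $\sigma$ — the $1$-simplices $\set{j-1,j}$ with $a_j=i$ and $i+1\in\set{a_1,\dots,a_{j-1}}$ — already lie in $A_\sigma$ whenever an inner horn is used, so that no spurious marked horn is required, and that such an edge presents itself as the final $(n-1,n)$-edge whenever \ref{MS:nhorn} is used. As in \autoref{lem:anodyneA3}, this forces a case analysis on the position of the pivot $i$ relative to the ascents and descents of $S_\sigma$ (together with an auxiliary pullback, in the spirit of the $B_\sigma$, $C_\sigma$ of that proof), and carrying this analysis through uniformly in $n$ is the crux.
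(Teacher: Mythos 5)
Your local moves are the right ones (the $\Lambda^n_i$-fill with the $\ast_i$-scaled triangle, the pivot trick for faces indexed by descents, the \ref{MS:nhorn}-fill for the marked edge), but your global scheme --- a single monotone filtration in the order of \autoref{def:pathorder}, following the template of \autoref{lem:anodyneA3} --- breaks down, and the breakdown is exactly where the marking matters. Work in the reverse (descending) order, as in \autoref{lem:anodyneA3}, and consider the path $\tau$ with vertex list $\set{1,\dots,i-1,i+1,\dots,n,i}$, i.e.\ the path adjoining $i$ last. When its turn comes, \emph{two} faces are missing: $d_{n-1}(\tau)$, which is shared only with the lexicographically \emph{smaller} path transposing $n$ and $i$ (hence not yet attached), and $d_n(\tau)$, which is never shared with another path and does not lie in the horn because the last adjoined vertex is $i$. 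Generator \ref{MS:nhorn} needs $d_n$ to be the \emph{only} missing face, so it does not apply; the inner pivot trick (\autoref{lem:innerpivot}) with pivot $n-1$ fails because all present faces have index $\leq n-2$, so there are no present faces on both sides of the pivot; and a right-pivot argument fails because the edges $r \to n$ with $r\leq n-2$ adjoin more than the single element $i$ and hence are unmarked. (For $n=2$ this step is literally attaching $\Delta^2$ along the single face $d_0$, which is hopeless.) Worse still, the identity path $(1,2,\dots,n)$ is lexicographically smallest and would be attached last, at which point \emph{every} face of it is already present --- each interior face $d_j$ is shared with a larger path, and $d_0$, $d_n$ lie in the horn since the last entry is $n\neq i$ --- so your final step would be $\partial\Delta^n \to \Delta^n$, which is not anodyne. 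In particular the ``appropriate stage where the only missing interior face is $d_i$'' never occurs in your filtration; and running the order in the ascending direction instead fails already at the first step, where only $d_0\cup d_n$ is present.

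The missing idea, which is the actual content of the paper's proof, is an order \emph{reversal} on a distinguished family at the end. Call a path \emph{calm} if deleting $i$ from its vertex list leaves the increasing sequence $1,\dots,i-1,i+1,\dots,n$; these are exactly the paths obtained by inserting $i$ at some position $\ell$, and $\tau$ and the identity path are the largest and smallest among those $\leq\tau$. The paper first attaches, in descending order, all paths $>\tau$ and then all non-calm (``disordered'') paths $\leq \tau$, where your pivot-style arguments do go through; but it then attaches the calm paths in \emph{ascending} order. With that order the identity path is reached first and is missing only $d_i$ (its face $d_i$ is shared with the calm path at position $i+1$, still absent), giving your \ref{MS:inner}-fill with the $\ast_i$-scaling; each subsequent calm path at position $\ell<n$ is missing only $d_\ell$, an inner horn whose relevant triangle is thin by degeneracy of maxima; and $\tau$, reached last, is missing only $d_n$ --- since $d_{n-1}(\tau)$ has by then been supplied by the calm path at position $n-1$ --- which is precisely the configuration where \ref{MS:nhorn} applies with the marked edge $\Delta^{\set{n-1,n}}$ and scaled triangle $\Delta^{\set{0,n-1,n}}$. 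Without this two-directional filtration the marked generator is never usable and the proof cannot be completed.
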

\begin{proof}
	Let $S_\tau=\{1,2,\ldots,i-1,i+1,\ldots,n-1,n,i\}$ and denote by $\hat{\tau}$ the smallest maximal simplex  such that $\hat{\tau}>\tau$. We define a filtration as in \autoref{lem:anodyneA3} and \autoref{lem:anodyneA4} up until the stage $X^{\geq \hat{\tau}}$, yielding 
	\[
	\func{\Lambda\mathcal{S}^n_i \to X^{\geq \sigma_1} \to \cdots \to X^{\geq \hat{\tau}} \to \mathcal{S}^n_i.}
	\]
	We will first prove that that each step of this factorization is $\mathbf{MS}$-anodyne, making a distinction into 2 cases. 
	
	We consider the map $X^{\geq \sigma_{k-1}}\to X^{\sigma_k}$, and set $S_{\sigma_k}:=\{a_j\}_{j=1}^n$. We again form the pullback 
	\[
	\begin{tikzcd}[ampersand replacement=\&]
		A_{\sigma_{k}} \arrow[r,"\alpha_k"] \arrow[d] \& \Delta^n \arrow[d,"\sigma_k"] \\
		X^{\geq \sigma_{k-1}} \arrow[r] \& X^{\geq \sigma_k}
	\end{tikzcd}
	\]
	We then have two cases.
	\begin{enumerate}
		\item If $S_{\sigma_k}$ has as its last entry anything other than $i$, then $A_{\sigma_k}$ consists of 
		\begin{itemize}
			\item The face $d_0(\sigma_k)$. 
			\item The face $d_n(\sigma_k)$. 
			\item The face $d_j(\sigma_k)$ for each $j$ such that $a_{j+1}> j_j$. 
		\end{itemize}
		The argument is then nearly identical to case (1) from \autoref{lem:anodyneA3}. 
		\item If the last entry of $S_{\sigma_k}$ is $i$, then $A_{\sigma_k}$ consists of 
		\begin{itemize}
			\item The face $d_0(\sigma_k)$.
			\item The face $d_j(\sigma_k)$ for each $j$ such that $a_{j+1}>a_j$. 
		\end{itemize}
		The remainder of the argument is nearly identical to case (2) of \autoref{lem:anodyneA3}. 
	\end{enumerate}
	It now remains only for us to show that $X^{\geq \hat{\tau}}\to \scr{S}^n_i$ is $\mathbf{MS}$-anodyne. For ease of notation, we set $Z:=X^{\geq \hat{\tau}}$.
	
	We now need to add the remaining simplices. Write $\Sigma^\leq$ for the set of maximal simplices which are less than or equal to $\tau$. Given $\theta\in \Sigma^{\leq}$, we write $S_\theta=\{b_j\}_{j=1}^n$ for the ordered vertex sequence, as usual. We further denote by $\widehat{S_\theta}$ the vertex sequence obtained by removing $i$. We will call a simplex $\theta\in \Sigma^{\leq}$ \emph{disordered} if $\widehat{S_\theta}>\widehat{S_{\tau}}$. If $\widehat{S_\theta}=\widehat{S_{\tau}}$, we will call $\theta$ \emph{calm}.\footnote{If $\theta\in \Sigma^{\leq}$ is calm, then the entries of $\widehat{S_\theta}$ are in the linear order induced by the order on the integers. If $\theta$ is disordered, they are not.}
	
	Our first order of business is to add the disordered simplices in $\Sigma^{\leq}$. For each disordered $\theta$, we define $Z^{\geq \theta}$ to be obtained from $Z$ by adding all the disordered simplices $\sigma$ for which $\sigma\geq\theta$. Applying the order induced on disordered simplices, we again obtain a filtration 
	\[
	\func{Z\to Z^{\geq \sigma_1} \to Z^{\geq \sigma_2}\to \cdots \to Z^{\geq \gamma}}
	\]  
	where $\gamma$ is the minimal disordered simplex under the order $<$.
	
	As before, we form a pullback diagram 
	\[
	\begin{tikzcd}[ampersand replacement=\&]
		B_{\sigma_k} \arrow[r,"\beta_k"]\arrow[d] \&\Delta^n \arrow[d,"\sigma_k"]\\
		Z^{\geq \sigma_{k-1}}\arrow[r] \& Z^{\geq \sigma_k}
	\end{tikzcd}
	\]
	and show that $\beta_k$ is $\mathbf{MS}$-anodyne. Note that $B_{\sigma_k}$ consists precisely of 
	\begin{itemize}
		\item The face $d_0(\sigma_k)$. 
		\item The face $d_n(\sigma_k)$ (since the final entry of $S_{\sigma_k}$ cannot be $i$).
		\item The face $d_j(\sigma_k)$ for each $j$ such that $a_{j+1}>a_j$. 
	\end{itemize} 
	The argument that $\beta_k$ is anodyne is, by now, routine.
	
	We now turn to adding the calm simplices. Notice that $\tau$ is the maximal calm simplex. We now set $Y:=Z^{\geq \gamma}$, and define a filtration 
	\[
	\func{Y\to Y^{\leq \sigma_1} \to Y^{\leq \sigma_2} \to  \cdots \to Y^{\leq\tau}=\scr{S}^n_i}
	\] 
	By defining $Y^{\leq \theta}$ to be the union of $Y$ with all of the calm simplices less than or equal to $\theta$. 
	
	For every calm $\sigma_k$ other than $\tau$ itself, we obtain a pullback diagram 
	\[
	\begin{tikzcd}[ampersand replacement=\&]
		\Lambda^n_\ell \arrow[r,"{\eta_k}"]\arrow[d] \& \Delta^n\arrow[d,"\sigma_k"] \\
		Y^{\leq \sigma_{k-1}}\arrow[r] \& Y^{\leq \sigma_k }
	\end{tikzcd}
	\]
	where $\Lambda^n_\ell$ is an inner horn. If $S_{\sigma_k}=\{1,2,3,\ldots, n-1,n\}$, then this is a $\Lambda^n_i$, and the scaling on $\Delta^n_{\ast_i}$ shows us that the simplex $\Delta^{\{i-1,i,i+1\}}\subset \sigma_k$ is scaled. On the other hand, if $S_{\sigma_k}\neq \{1,2,3,\ldots, n-1,n\}$, the simplex $\Delta^{\{\ell-1,\ell,\ell+1\}}\subset \Lambda^n_\ell$ is already scaled in $\scr{L}^n_\flat(0)\subseteq \scr{L}^n_{\ast_i}(0)$. The morphism $\eta_k$ is thus a scaled anodyne map, and the pushout is therefore $\mathbf{MS}$-anodyne.
	
	We are left only to add $\tau$. However, in this case, we obtain a pullback diagram
	\[
	\begin{tikzcd}[ampersand replacement=\&]
		\Lambda^n_n\arrow[d]\arrow[r,"\mu"] \& \Delta^n \arrow[d,"\tau"]\\
		Y^{<\tau}\arrow[r] \& \scr{S}^n_i
	\end{tikzcd}
	\]
	where the 2-simplex $\Delta^{\{0,n-1,n\}}$ is scaled and the edge $\Delta^{\{n-1,n\}}$ is marked. The result then follows from a pushout of type \ref{MS:nhorn}.
\end{proof}

\begin{proposition}
	Let $S$ be a scaled simplicial set and let $\mathfrak{C}^{\sc}[S]\to \mathcal{C}$ be a functor of $\msSet$-enriched categories. Consider an \bS-anodyne morphism $i:A \to B$ in $(\mbsSet)_{/S}$. Then for every $s \in S$ then induced map
	\[
	\func{\Sst_{\phi}\! A(s) \to \Sst_{\phi}\!B(s) }
	\]
	is a trivial cofibration of marked scaled simplicial sets.
\end{proposition}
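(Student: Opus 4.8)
The plan is to combine the naturality and base-change formalism of \autoref{prop:formalproperties} with the explicit combinatorial computations of \autoref{lem:anodyneA1}, \autoref{lem:anodyneA3}, and \autoref{lem:anodyneA4}. Since $\ST_\phi$ preserves colimits (\autoref{prop:formalproperties}(1)) and the trivial cofibrations of the projective model structure on $(\mssSet)^{\C^\op}$ form a weakly saturated class, it suffices to treat the generating $\bS$-anodyne maps of \autoref{def:mbsanodyne}; by \autoref{prop:St_pres_cof} each such image is already a cofibration, so only the weak-equivalence half remains. I would first reduce to the universal situation. Writing $\phi=\phi\circ\id$ and applying \autoref{prop:formalproperties}(3) gives $\ST_\phi\cong \phi_!\circ\ST_S$; as $\phi_!$ is left Quillen for the projective structures and every straightening is projectively cofibrant (being the image of $\varnothing$ under the colimit- and cofibration-preserving $\ST_S$), it carries trivial cofibrations between straightenings to trivial cofibrations, so we may assume $\phi=\id$. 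Applying \autoref{prop:formalproperties}(2) along the map $f\colon\bar B\to S$ from the underlying scaled simplicial set $\bar B$ of the target replaces $\ST_S$ by $\mathfrak{C}^{\sc}[f]_!\circ\ST_{\bar B}$, again left Quillen; we are thus reduced to showing that $\ST_{\bar B}(i)(s)$ is an $\mathbf{MS}$-trivial cofibration for every vertex $s$, with $i$ a generator over its own underlying scaled base.

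The generators then divide into three families. For the horn-type generators \ref{mb:innerhorn}, \ref{mb:leftdeglefthorn}, and \ref{mb:2Cartesianmorphs}, whose sources are horns and whose targets are (quotients of) simplices, \autoref{lem:saturation} and its horn variant supply, for each $s$, a commuting square whose vertical comparison maps $\ST_{\bar B}(\Lambda)(s)\to \Lambda\mathcal{L}(s)$ and $\ST_{\bar B}(\Delta)(s)\to \mathcal{L}(s)$ are $\mathbf{MS}$-anodyne. By two-out-of-three the problem becomes that of showing the combinatorial map $\Lambda\mathcal{L}(s)\to \mathcal{L}(s)$ is a weak equivalence; after the collapse of the edge $0\to 01$ (resp. the marking of the edges lying over $(n-1)\to n$), which one checks are weak equivalences, this is exactly \autoref{lem:anodyneA1}, \autoref{lem:anodyneA3}, and \autoref{lem:anodyneA4}, respectively.

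The second family consists of \ref{mb:2CartliftsExist}, \ref{mb:composeacrossthin}, and \ref{mb:equivalences}, each of which is all-lean (biscaling $\sharp$) and whose base $\bar B$ has all triangles thin, so that both $\iota$ and \autoref{prop:infty1comparison} apply: they are $\iota$ applied to the marked-anodyne generators $\{1\}\hookrightarrow(\Delta^1)^\sharp$, $(\Delta^2,\{\Delta^{\{0,1\}},\Delta^{\{1,2\}}\})\to(\Delta^2)^\sharp$, and $K^\flat\to K^\sharp$ for $K$ a Kan complex. Here \autoref{prop:infty1comparison} identifies $\ST_{\bar B}(i)(s)$ up to pointwise weak equivalence with the maximally scaled image of the $(\infty,1)$-straightening $\St^+$, which sends marked-anodyne maps to trivial cofibrations by \cite[\S3.2]{HTT}; two-out-of-three then gives the claim. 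The third family is the purely scaling-theoretic generators \ref{mb:wonky4}, \ref{mb:coCartoverThin}, \ref{mb:innersaturation}, \ref{mb:dualcocart2of3}, and \ref{mb:coCart2of3}. For these $\ST_{\bar B}(i)(s)$ is the identity on underlying marked simplicial sets together with additional scaled triangles, and a direct inspection of the auxiliary scaling $P^s$ shows that these triangles are produced by iterated pushouts of the $\mathbf{MS}$-generators \ref{MS:wonky4}, \ref{MS:composedeg4}, \ref{MS:composemarked5} and of the map of \autoref{lem:Ui_MS-anodyne}; hence each such map is $\mathbf{MS}$-anodyne.

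The main obstacle is the pair of outer-horn cases \ref{mb:leftdeglefthorn} and \ref{mb:2Cartesianmorphs}. Their combinatorial core --- the path-filtration of $\mathcal{L}^n(s)$ together with the pivot-point bookkeeping separating the disordered from the calm simplices --- is precisely what is carried out in \autoref{lem:anodyneA3} and \autoref{lem:anodyneA4}; the work specific to this proposition is to certify that the saturation comparisons and the collapse/marking operations faithfully transport those anodyne statements back across $\ST$. This is where tracking the induced scalings on the $\mathcal{L}$-models, and the cofibrancy of straightenings needed for the base-change reductions, must be handled with care.
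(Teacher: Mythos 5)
Your proposal reproduces the paper's own proof architecture: the same reduction (via \autoref{prop:formalproperties} and \autoref{prop:St_pres_cof}) to the generating \bS-anodyne maps over their own underlying scaled bases with $\phi=\id$, and the same three-fold case division --- the horn generators \ref{mb:innerhorn}, \ref{mb:leftdeglefthorn}, \ref{mb:2Cartesianmorphs} handled by \autoref{lem:anodyneA1}, \autoref{lem:anodyneA3}, \autoref{lem:anodyneA4} through the saturation comparisons, the maximally thin-scaled generators \ref{mb:2CartliftsExist}, \ref{mb:composeacrossthin}, \ref{mb:equivalences} handled by \autoref{prop:infty1comparison} together with \cite[Prop. 3.2.1.11]{HTT}, and the remaining scaling generators handled by explicit $\mathbf{MS}$-anodyne pushouts. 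All the load-bearing lemmas are correctly identified.

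One step, however, fails as written. For \ref{mb:leftdeglefthorn} and \ref{mb:2Cartesianmorphs} you reduce to showing that $\Lambda\mathcal{L}(s)\to\mathcal{L}(s)$ is a weak equivalence and then propose to transport \autoref{lem:anodyneA3} and \autoref{lem:anodyneA4} across the collapse of the edge $0\to 01$ (resp. the marking of the edges lying over $(n-1)\to n$), ``which one checks are weak equivalences.'' Those comparison maps are \emph{not} weak equivalences: the collapse $\mathcal{L}^n_\diamond(0)\to\mathcal{R}^n$ crushes an edge of a poset that is neither marked nor invertible, and the marking map adjoins markings on non-equivalences. Worse, if they were weak equivalences, two-out-of-three in your square would show that the straightening of the plain flat outer horn inclusion $(\Lambda^n_0)_\diamond\to(\Delta^n)_\diamond$ --- which is not \bS-anodyne --- is a pointwise equivalence; since $\ST_{\Delta^n_\flat}$ is a Quillen equivalence and every object of $(\mbsSet)_{/\Delta^n_\flat}$ is cofibrant, this would force that horn inclusion to be an \textbf{MB}-weak equivalence, which is false (restriction along it is not an equivalence of mapping spaces for a general 2-Cartesian fibration; this is precisely why the generator includes the collapse). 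The point you are missing is that no transport is needed: the collapse and the marking are built into the generators themselves --- the source and target of \ref{mb:leftdeglefthorn} have the edge $0\to 1$ already collapsed, those of \ref{mb:2Cartesianmorphs} have $\Delta^{\{n-1,n\}}$ already marked, and likewise the thin triangle $\Delta^{\{i-1,i,i+1\}}$ in \ref{mb:innerhorn} is what produces the markings of $\mathcal{S}^n_i$ --- so the straightening at the exceptional vertex is, after the $\mathbf{MS}$-anodyne comparison of \autoref{lem:saturation} and its horn variant, \emph{exactly} the collapsed object $\mathcal{R}^n$ (resp. $\mathcal{T}^n$, resp. $\mathcal{S}^n_i$), and the three lemmas apply on the nose. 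A smaller point: to push the generator computation forward along $\phi_!\circ\mathfrak{C}^{\sc}[f]_!$ you need a \emph{projective} trivial cofibration, not merely a pointwise one; this does follow, but only after combining the pointwise equivalence with the projective cofibration supplied by \autoref{prop:St_pres_cof}, and that bridge should be made explicit.
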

\begin{proof}
	As in the proof of \autoref{prop:St_pres_cof}, we can assume that $S=B$, that $\phi$ is $\id:\mathfrak{C}^{\sc}[S]\to \mathfrak{C}^{\sc}[S]$, and that $i$ is one of the generators in \autoref{def:mbsanodyne}. We proceed to verify each case.
	\begin{itemize}[noitemsep]
		\myitem{A1)} It is immediate that $\SSt_B A(s)\to \SSt_B B(s)$ is an isomorphism when $s\neq 0$. \autoref{lem:anodyneA1} shows that the map is $\mathbf{MS}$-anodyne when $s=0$. 
		\myitem{A2)} Note that the morphism $\Sst_{B}\!A(s) \to \Sst_B\!B(s)$ is an isomorphism for $s \neq 0$. If $s=0$ the map is an isomorphism on the underlying marked simplicial sets. Let $\hat{T}=T \cup \Delta^{\set{0,1,4}}\cup \Delta^{\set{0,3,4}}$ (see \autoref{def:mbsanodyne}) and let $\mathcal{L}^4_{T}(0)$ and $\mathcal{L}^4_{\hat{T}}$ be the simplicial sets defined in \autoref{def:saturation} equipped with the marking given by the thin simplices in the base. We obtain a commutative diagram
		\[
		\begin{tikzcd}[ampersand replacement=\&]
			\Sst_{B}\! A(0) \arrow[r] \arrow[d,"\simeq"] \& \Sst_{B}\!B(0) \arrow[d,"\simeq"] \\
			\mathcal{L}^4_{T}(0) \arrow[r] \& \mathcal{L}^4_{\hat{T}}(0)
		\end{tikzcd} 
		\] 
		where the vertical morphisms are equivalences due to \autoref{lem:saturation}. We will show that the bottom morphism is  an equivalence. Observe that once we manage to scale the simplices $0\to 01 \to 014$ and $0 \to 03 \to 034$ then rest of the scaling follows using the argument given in \autoref{lem:saturation}. We start by considering the 4-simplex given by
		\[
		0 \to 01 \to 012 \to 0123 \to 01234
		\]
		The only faces that are not scaled are precisely $\set{0,01,01234}$ and $\set{0,0123,01234}$. Consequently we can scale them using a pushout of type \ref{MS:wonky4}. Now we consider a 3-simplex
		\[
		0 \to 01 \to 014 \to 01234
		\]
		where all of its faces are now scaled except possibly the 3rd face. We further note that we can factor the last morphism as $014 \to 0134 \to 01234$ where both morphisms are marked. Therefore we can assume without loss of generality that the map $014 \to 01234$ is also marked. This allows us to scale the 3rd face using a pushout along a map of type \ref{MS:composemarked5}. Inspecting the 3-simplex
		\[
		0 \to 03 \to 0123 \to 01234 
		\]
		we see that we can add to the scaling $\set{0,03,01234}$. Finally let us consider
		\[
		0 \to 03 \to 034 \to 01234.
		\]
		As we did before we factor the last map as a composite of marked morphisms $034 \to 0134 \to 01234$. The claim follows by a totally analogous argument as before.\\
		\myitem{A3)} Let $\ast$ denote the vertex to which $0$ and $1$ get identified. Then it follows that the induced map $\Sst_{B}\!A(s) \to \Sst_B\!B(s)$ is an isomorphism for $s \neq \ast$. \autoref{lem:anodyneA3} shows that the map is $\mathbf{MS}$-anodyne when $s=0$.
		\myitem{A4)} It is immediate that $\Sst_{B}\!A(s) \to \Sst_B\!B(s)$ is an isomorphism for $s \neq 0$. \autoref{lem:anodyneA4} shows that the map is $\mathbf{MS}$-anodyne when $s=0$. 
		\myitem{S2)} The induced map is an isomorphism for every object of $\Delta^2$.\\
		\myitem{S3)} The map is an isomorphism for every $s \in \Delta^3$ such that $s \neq 0$. We will prove the case $i=1$ leaving the case $i=2$ as an exercise to the reader. Let $\mathcal{L}^3_{U_1}(0)$ and $\mathcal{L}^3_{\sharp}(0)$ be as in \autoref{def:saturation} and equip them with the marking induced by the thin simplex $\Delta^{\set{0,1,2}}$. We obtain a commutative diagram
		\[
		\begin{tikzcd}[ampersand replacement=\&]
			\Sst_{B}\! A(0) \arrow[r] \arrow[d,"\simeq"] \& \Sst_{B}\!B(0) \arrow[d,"\simeq"] \\
			\mathcal{L}^3_{U_1}(0) \arrow[r] \& \mathcal{L}^3_{\sharp}(0)
		\end{tikzcd} 
		\] 
		where the vertical morphisms are equivalences due to \autoref{lem:saturation}. Therefore it will enough to show that the bottom morphism is an anodyne map of marked scaled simplicial sets. Consider the simplex $0 \to 01 \to 012 \to 0123$ and observe that all of its faces are scaled except the face missing $1$. Therefore we can scale the 1-face using a pushout along an anodyne morphism as described in \autoref{lem:Ui_MS-anodyne}. Now we consider $0 \to 02 \to 012 \to 0123$ and we observe that we can scale the face missing 2 by another pushout. Finally we look at $0 \to 02 \to 023 \to 0123$ and we note that the last edge must be marked and that all of the faces are scaled except the face missing the vertex $3$. Thefore another pushout along a morphism of type \ref{MS:composemarked5} yields the result.\\
		
		\myitem{S4) \& S5)} The proof is very similar to the previous case and left to the reader.\\

		\myitem{A5,S1 \& E)} Since these maps are always maximally thin scaled we can use \autoref{prop:infty1comparison} and apply the pertinent proofs in Proposition 3.2.1.11 in \cite{HTT}.
	\end{itemize}
\end{proof}

\subsection{Straightening over the point} \label{subsec:STequivPt}

In this section, we will prove two important results. We will show that the the bicategorical straightening functor is left Quillen over any scaled simplicial set, and we will show that the straightening is an equivalence over the point. We fix the notation $\SSt_{\Delta^0}=\SSt_*$.
\begin{definition}
	We define a an adjunction
	\[
	L: \on{Set}_{\Delta}^{\mathbf{mb}} \llra \on{Set}_{\Delta}^{\mathbf{ms}}:R
	\]
	where $L(X,E_X,T_X \subseteq C_X):=(X,E_X,C_X)$ and $R(Y,E_Y,T_Y)=(Y,E_Y,T_Y \subseteq T_Y)$. We note that $L\circ R=\on{id}$ and that the unit natural transformation $(X,E_X,T_X \subseteq C_X) \to (X,E_X,C_X \subseteq C_X)$ is \bS-anodyne. It is easy to see that $L$ preserves cofibrations and trivial cofibrations. In particular, we see that $L \dashv R$ is a Quillen equivalence.
\end{definition}

Our goal in this section is to construct a natural transformation $\SSt_{*} \xRightarrow{} L$ which is a levelwise weak equivalence. By general abstract nonsense, it will suffice to construct morphisms $ \alpha_X: \Sst_*(X) \to L(X)$ whenever $X$ is one of the following generators
\begin{itemize}
	\item $\Delta^n_{\flat}:=(\Delta^n,\flat,\flat)$, for $n\geq 0$,
	\item $\Delta^2_{\dagger}:=(\Delta^2,\flat,\flat \subset \Delta^2)$,
	\item $\Delta^2_{\sharp}:=(\Delta^2,\flat,\Delta^2)$,
	\item $(\Delta^1)^\sharp :=(\Delta^1,\Delta^1,\flat)$,
\end{itemize}
and to prove that that the maps $\alpha_X$ are natural with respect to morphisms among generators. The next step is to give a precise description of the straightening functor applied to those generators.

\begin{definition}\label{def:Qn}
	Let $n\geq 0$ and define a simplicial set
	\[
	Q^n:=\bigsqcup\limits_{0 \leq i \leq n}\mathbb{O}^{n+1}(i,n+1)_{\big/ \sim}
	\]
	where the relation $\sim$ identifies simplices $n$-simplices $\sigma_1 \in \mathbb{O}^{n+1}(i,n+1)$ and $\sigma_2 \in \mathbb{O}^{n+1}(j,n+1)$ with $i \leq j$ whenever $\sigma_1$ is in the image of the map
	\[
	\begin{tikzcd}
		\mathbb{O}^{n+1}(i,j)\times \Delta^n \arrow[r,"\on{id}\times \sigma_2"] &  \mathbb{O}^{n+1}(i,j) \times \mathbb{O}^{n+1}(j,n+1) \arrow[r] & \mathbb{O}^{n+1}(i,n+1)
	\end{tikzcd}
	\]
	We further observe that the morphisms
	\[
	\func{\mathbb{O}^{n+1}(i,n+1) \to \Delta^n; S \mapsto \max\left(S \setminus \set{n+1}\right)}
	\]
	assemble into a map $\alpha_n:Q^n \to \Delta^n$. We wish now to upgrade $Q^n$ to a scaled simplcial set. We do so by declaring a triangle $\sigma:\Delta^2 \to Q^n$ thin if and only if its image under $p$ is degenerate in $\Delta^n$. We denote this collection of thin triangles by $T_{Q^n}$.
\end{definition}

\begin{remark}
	Given an order preserving morphism $f:[n] \to [m]$ then it is straightforward to check that we can produce a commutative diagram
	\[
	\begin{tikzcd}
		Q^n \arrow[r,"Q(f)"] \arrow[d,"\alpha_n"] & Q^m \arrow[d,"\alpha_m"] \\
		\Delta^n \arrow[r,"f"] & \Delta^m
	\end{tikzcd}
	\]
\end{remark}

\begin{lemma}\label{lem:StQ}
	We have the following isomorphisms of marked scaled simplicial sets
	\begin{itemize}
		\item $\Sst_*(\Delta^n_{\flat}) \isom (Q^n,\flat,T_{Q^n})$.
		\item $\SSt_*(\Delta^2_{\dagger})=\SSt_*(\Delta^2_{\sharp}) \isom (Q^2,\flat,\sharp)$.
		\item $\SSt_*((\Delta^1)^{\sharp})=(Q^1,\sharp,\flat)$.
	\end{itemize}
\end{lemma}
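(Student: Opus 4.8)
The plan is to specialize the general straightening construction to the case $S = \Delta^0$ with $\phi = \id$, and then to read off the three objects by a direct computation of a single rigidified mapping space. First I would record that over the point the enriched category $\scr{C} = \mathfrak{C}^{\sc}[\Delta^0]$ has a single object, so that $\Sst_*(X)$ is determined by its unique value $\overline{X_\phi}(s,\ast)$, where $s$ denotes the image of the collapsed base and $\ast$ the cone point. Since here the pushout defining $X_{\Delta^0}$ collapses all of $X$ to the single vertex $s$, we have $X_{\Delta^0} = X^{\triangleright} \sqcup_X \Delta^0$, and hence (rigidification being a left adjoint, so colimit-preserving)
\[
X_\phi \;\cong\; \mathfrak{C}^{\sc}[X^{\triangleright}] \sqcup_{\mathfrak{C}^{\sc}[X]} \ast
\]
as $\msSet$-enriched categories on the two objects $s, \ast$.

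For the underlying marked simplicial set I would compute the hom-space $\overline{X_\phi}(s,\ast)$ in this pushout. When $X = \Delta^n$ we have $X^{\triangleright} = \Delta^{n+1}$ and $\mathfrak{C}^{\sc}[\Delta^{n+1}](i, n+1) = \mathbb{O}^{n+1}(i, n+1)$ for each $i \in \{0, \ldots, n\}$. Because the functor $\mathfrak{C}^{\sc}[\Delta^n] \to \ast$ only identifies the objects $0, \ldots, n$ (to $s$) and the target $\ast = n+1$ receives morphisms solely from the $\Delta^{n+1}$-factor, the hom from $s$ to $\ast$ in the pushout is the coproduct $\bigsqcup_{0 \le i \le n} \mathbb{O}^{n+1}(i, n+1)$ modulo the identifications generated by pre-composition along morphisms $i \to j$ of the collapsed vertices, i.e.\ along the maps $\mathbb{O}^{n+1}(i,j) \times \mathbb{O}^{n+1}(j, n+1) \to \mathbb{O}^{n+1}(i, n+1)$. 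This is exactly the relation $\sim$ of \autoref{def:Qn}, so the underlying simplicial set is $Q^n$, with $\alpha_n$ identified with the straightening of the structure map $p: X \to \Delta^0$ read through the $\max$-maps of \autoref{def:Qn}. The marking is handled in the same breath: for $\Delta^n_\flat$ there are no marked edges, giving $\flat$, while the marked edge of $(\Delta^1)^\sharp$ straightens to a marked edge, giving the full marking $\sharp$ on $Q^1$.

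It then remains to match the scalings, which I would do by comparing the auxiliary scaling $P^s_{X^{\triangleright}}$ of the construction with $T_{Q^n}$. A $2$-simplex $S_0 \cup \{n+1\} \subset S_1 \cup \{n+1\} \subset S_2 \cup \{n+1\}$ lies in $P^s_{X^{\triangleright}}$ precisely when $\max(S_i) = \max(S_j)$ for some $i, j$, and this is exactly the condition that its image under $\alpha_n$ be degenerate, i.e.\ that it be thin in $T_{Q^n}$; the functorial extension in step (3) of the construction matches the propagation of thinness across the quotient $\sim$. This settles the first isomorphism. For $\Delta^2_\dagger$ and $\Delta^2_\sharp$ I would note that the scaling half of the construction reads only the lean collection $C_X$ (not $T_X$), which equals $\Delta^2$ in both cases; hence the two straightenings coincide. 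The extra clause of step (2), triggered when $\sigma$ is lean, scales the simplex $03 \to 013 \to 0123$, and together with the $\max$-degeneracy simplices this exhausts all $2$-simplices of $Q^2$, yielding $(Q^2, \flat, \sharp)$.

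The main obstacle is the explicit identification of the pushout hom-space $\overline{X_\phi}(s, \ast)$ with the quotient $\bigsqcup_i \mathbb{O}^{n+1}(i, n+1)/\!\sim$: one must check that the enriched pushout produces neither spurious new simplices nor identifications beyond those recorded by $\sim$. I would handle this by observing that, since $\ast$ is not among the collapsed objects and the collapse functor is the identity on the relevant hom-spaces, any path from $s$ to $\ast$ reduces to a single $\Delta^{n+1}$-morphism with its source freely varying among $0, \ldots, n$; the resulting reflexive coequalizer is computed degreewise and collapses precisely to the stated quotient. Everything else is a routine unwinding of \autoref{def:Qn} and of the definition of $P^s_{X^{\triangleright}}$.
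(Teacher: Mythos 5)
Your proposal is correct, and it follows the only route available here: the paper states this lemma without any proof (treating it, like the preceding lemma, as immediate from unraveling the straightening construction over the point), and your computation — identifying $\overline{X_\phi}(s,\ast)$ with the precomposition quotient $\bigsqcup_i \mathbb{O}^{n+1}(i,n+1)/\!\sim$ of \autoref{def:Qn} via full faithfulness of $\mathfrak{C}^{\sc}[\Delta^n]\hookrightarrow\mathfrak{C}^{\sc}[\Delta^{n+1}]$ on the collapsed objects, then matching $P^s_{X^\triangleright}$ with the $\max$-degeneracy scaling $T_{Q^n}$ and invoking the lean clause for the two decorated $2$-simplices — is precisely that unwinding. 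The only point stated more loosely than it deserves is the marking/$T_X$-independence claim in the second item, but the conclusion is right since any marked edges generated from thin triangles among the collapsed vertices become degenerate in the quotient.
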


\begin{lemma}\label{lem:alphan}
	The morphism
	\[
	\func{\alpha_n: Q^n \to \Delta^n_{\flat}}
	\]
	is a weak equivalence of marked scaled simplicial sets.
\end{lemma}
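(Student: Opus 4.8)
The plan is to factor $\alpha_n$ through an explicit section and reduce to showing that section is a trivial cofibration. First I would construct a map $s\colon \Delta^n_\flat \to Q^n$ sending a nondegenerate chain $j_0 < \cdots < j_k$ in $[n]$ to the chain of subsets $\{j_0,n+1\}\subset \{j_0,j_1,n+1\}\subset\cdots\subset\{j_0,\dots,j_k,n+1\}$, all of which lie in the single summand $\mathbb{O}^{n+1}(j_0,n+1)$ of \autoref{def:Qn}. Unwinding the definition of $\alpha_n$ one checks immediately that $\alpha_n\circ s=\on{id}_{\Delta^n}$, and that $s$ is a monomorphism carrying the (degenerate) thin triangles of $\Delta^n_\flat$ into $T_{Q^n}$, so $s$ is a cofibration in $\mssSet$. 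By two-out-of-three it then suffices to prove that $s$ is a weak equivalence, and for this I would show it is \textbf{MS}-anodyne.

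The key structural observation is that $\alpha_n$ is a bijection on vertices: under the gluing $\sim$ every vertex $S$ of $\mathbb{O}^{n+1}(i,n+1)$ is identified with $\{\on{max}(S\setminus\{n+1\}),n+1\}$, so the vertices of $Q^n$ correspond bijectively to $[n]$ via $\alpha_n$, and these are exactly the vertices in the image of $s$. Every remaining nondegenerate simplex of $Q^n$ lies over a degenerate simplex of $\Delta^n$ in at least one edge, and I would attach such simplices to $s(\Delta^n)$ by a filtration ordered by dimension together with a secondary order measuring the distance of a chain from its normal form $\varrho := s\circ \alpha_n$. At each stage the simplex being added, together with its already-present faces, forms an inner horn, so the attaching map is a pushout of one of type \ref{MS:inner}; and the places where a face sits over a degeneracy of $\Delta^n$ are exactly where the defining condition of $T_{Q^n}$ (a triangle is thin precisely when its image in $\Delta^n$ is degenerate) lets me invoke the scaling generators via \autoref{lem:Ui_MS-anodyne}. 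This is the same mechanism already exploited in \autoref{lem:saturation} and in the horn-filling argument of \autoref{lem:anodyneA3}, on which I would model the bookkeeping.

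The main obstacle is precisely this combinatorial bookkeeping: one must choose the ordering and pivots so that at every step the relevant horn is genuinely inner, and so that all triangles one is forced to scale really do lie over degeneracies in $\Delta^n$ (equivalently, are already thin in $Q^n$). The subtlety is that $Q^n$ carries the \emph{minimal} marking, so the naive homotopy coming from the pointwise inclusion $\varrho \le \on{id}$ is not a homotopy in the marked sense and must be replaced by the anodyne filtration above. An alternative I would keep in reserve is an induction on $n$: using the functoriality square for $Q$ and the fact that the summand over the top vertex is the nerve of a poset with an initial object, hence contractible (cf.\ \autoref{rem:initialmapping}), one can present $Q^n$ as a homotopy pushout assembled from $Q^{n-1}$ and this contractible piece, reducing the claim to $\alpha_{n-1}$. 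Either route, once $s$ is shown to be a trivial cofibration, combines with $\alpha_n\circ s=\on{id}$ and \autoref{lem:StQ} to identify $\alpha_n$ with the corresponding component of $\SSt_*$ and to conclude that it is a weak equivalence of marked scaled simplicial sets.
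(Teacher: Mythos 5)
Your overall strategy (produce a section $s$ of $\alpha_n$ and show it is a trivial cofibration, then apply two-out-of-three) is reasonable in outline, but two of its load-bearing claims fail. First, your formula for $s$ does not define a simplicial map. Sending $j_0<\cdots<j_k$ to the chain $\{j_0,n+1\}\subseteq\{j_0,j_1,n+1\}\subseteq\cdots\subseteq\{j_0,\dots,j_k,n+1\}$ is incompatible with inner faces: the $m$-th entry of $d_m(s(\sigma))$ is $\{j_0,\dots,j_{m+1},n+1\}$, which still contains $j_m$, whereas $s(d_m\sigma)$ contains $j_m$ nowhere; and the relation $\sim$ cannot identify these two chains, because any chain whose first entry is $\{j_0,n+1\}$ admits no nontrivial decomposition as a union (every entry of a decomposable chain must contain the intermediate vertex). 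The repair is to use intervals, as the paper does: $s(i)=[0,i]\cup\{n+1\}$ is induced by an honest poset map $[n]\to\mathbb{O}^{n+1}(0,n+1)$, so the extension to simplices is automatic. Second, your structural claim that every nondegenerate simplex of $Q^n$ outside $s(\Delta^n)$ has an edge lying over a degenerate edge of $\Delta^n$ is false: bijectivity of $\alpha_n$ on vertices does not give injectivity on higher simplices. For instance, in $Q^3$ the chain $\{0,4\}\subseteq\{0,2,4\}\subseteq\{0,2,3,4\}$ is a nondegenerate $2$-simplex, it is reduced with respect to $\sim$, it lies over the nondegenerate simplex $0<2<3$ of $\Delta^3$, and it is distinct from the section's simplex $\{0,4\}\subseteq\{0,1,2,4\}\subseteq\{0,1,2,3,4\}$ over the same base simplex. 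So the proposed inner-horn filtration rests on a false premise, and the "bookkeeping" you defer is not merely tedious but impossible as described.

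The deeper problem is your parenthetical dismissal of the naive homotopy, which is exactly where the actual proof lives. The pointwise inclusion $S\subseteq[i,\max(S)]$ \emph{does} give a marked homotopy, even though $Q^n$ carries the minimal marking: for a vertex represented by $S\in\mathbb{O}^{n+1}(i,n+1)$ with $s_0=\max(S\setminus\{n+1\})$, the edge $S\subseteq[i,s_0]\cup\{n+1\}$ decomposes as the union of the edge $(S\setminus\{n+1\})\subseteq[i,s_0]$ in $\mathbb{O}^{n+1}(i,s_0)$ with the degenerate edge at $\{s_0,n+1\}\in\mathbb{O}^{n+1}(s_0,n+1)$, hence is identified under $\sim$ with a \emph{degenerate} edge of $Q^n$. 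Degenerate edges are marked in the minimal marking, and since the homotopy preserves maxima it commutes with $\alpha_n$ and therefore also preserves the scaling $T_{Q^n}$. Thus $\on{id}_{Q^n}$ and $s\circ\alpha_n$ are connected by a marked homotopy, $\alpha_n$ is a marked homotopy equivalence, and the lemma follows in a few lines --- this is the paper's argument, and neither an anodyne filtration nor an induction on $n$ is needed.
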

\begin{proof}
	We construct a section $s:\Delta^n_{\flat} \to Q^n$ by sending $i \in [n]$ to the set $[0,i] \cup \set{n+1}$ and note that $\alpha_n \circ s=\on{id}_{\Delta^n}$. To finish the proof we will construct a marked homotopy between $\on{id}_{Q^n}$ and $s \circ \alpha_n$.
	
	Let $\sigma:\Delta^k \to Q^n$ and pick a representative $S_0 \subseteq S_1 \subseteq \cdots \subseteq S_k$ with $S_j \in \mathbb{O}^{n+1}(i,n+1)$ for $0 \leq j \leq k$. To ease the notation we will omit the element $n+1$ from the subsets $S_j$. Let us denote $s_j=\max(S_j)$ and observe that we can produce a diagram $H(\mathblank,\sigma):\Delta^1 \times \Delta^n \to Q^n$
	\[
	\begin{tikzcd}
		S_0 \arrow[r] \arrow[d] & S_1 \arrow[d] \arrow[r] & \cdots \arrow[d] \arrow[r] & S_{k-1} \arrow[d] \arrow[r] & S_k \arrow[d] \\
		{[i,s_0]} \arrow[r]     & {[i,s_1]} \arrow[r]     & \cdots \arrow[r]           & {[i,s_{k-1}]} \arrow[r]     & {[i,s_k]}    
	\end{tikzcd}
	\]
	It is straightforward to check that if $\sigma \sim \theta$ then $H(\mathblank,\sigma) = H(\mathblank,\theta)$. We have constructed now a natural transformation $\Delta^1 \times Q^n \to Q^n$. It is immediate to see that $H(0,\mathblank)=\on{id}_{Q^n}$. In addition the fact that the bottom row in the diagram is equivalent to 
	\[
	\begin{tikzcd}
		{[i,s_0]} \arrow[r]     & {[i,s_1]} \arrow[r]     & \cdots \arrow[r]           & {[i,s_{k-1}]} \arrow[r]     & {[i,s_k]}    
	\end{tikzcd}
	\]
	ensures that $H(1,\mathblank)=s \circ \alpha_n$. We conclude the proof by noting that the morphism $S_0 \to [i,s_0]$ gets collapsed to a degenerate edge and thus the homotopy is marked.
\end{proof}

\begin{proposition}\label{prop:STptToL}
	There exists a natural transformation $\alpha:\SSt_* \xRightarrow{} L$ which is a levelwise weak equivalence. 
\end{proposition}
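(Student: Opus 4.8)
The plan is to use the density of the generating cells to reduce the construction to the four listed generators, and \autoref{lem:alphan} to supply the weak equivalence. Since $\SSt_*$ preserves colimits by \autoref{prop:formalproperties} and $L$ preserves colimits as a left adjoint, and since every marked-biscaled simplicial set is canonically the colimit of its cells --- the flat simplices $\Delta^n_\flat$, the marked edge $(\Delta^1)^\sharp$, the lean triangle $\Delta^2_\dagger$, and the thin triangle $\Delta^2_\sharp$ --- it suffices to define $\alpha$ on these generators and to check naturality with respect to the morphisms among them. Using the identifications of \autoref{lem:StQ}, I would set $\alpha$ on each generator to be the map $\alpha_n\colon Q^n \to \Delta^n$ of \autoref{def:Qn}: concretely $\alpha_{\Delta^n_\flat} = \alpha_n\colon (Q^n,\flat,T_{Q^n}) \to (\Delta^n,\flat,\flat) = L(\Delta^n_\flat)$, while on the decorated low-dimensional generators the maps $\alpha_2\colon (Q^2,\flat,\sharp) \to (\Delta^2,\flat,\sharp)$ and $\alpha_1\colon (Q^1,\sharp,\flat) \to (\Delta^1,\sharp,\flat)$ land in $L(\Delta^2_\dagger) = L(\Delta^2_\sharp) = (\Delta^2,\flat,\sharp)$ and $L((\Delta^1)^\sharp) = (\Delta^1,\sharp,\flat)$ respectively. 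In each case one checks that $\alpha_n$ is a well-defined map of marked-scaled simplicial sets: it carries thin triangles of $Q^n$ to degenerate (hence thin) triangles by the definition of $T_{Q^n}$, and the remaining marking/scaling conditions on the targets are vacuous, those decorations being maximal.

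Next I would verify naturality. For the simplicial operators $f\colon [n]\to[m]$ this is exactly the commuting square relating $Q(f)$ and $f$ recorded in the remark following \autoref{def:Qn}, so the $\alpha_n$ assemble into a natural transformation of cosimplicial objects. For the remaining morphisms among generators --- the decoration inclusions $\Delta^1_\flat \to (\Delta^1)^\sharp$ and $\Delta^2_\flat \to \Delta^2_\dagger \to \Delta^2_\sharp$, together with the face maps of the low-dimensional simplices into the decorated ones --- naturality is immediate from \autoref{lem:StQ}, since both $\SSt_*$ and $L$ act on these maps through the identity on the underlying simplicial set ($Q^n$ on one side, $\Delta^n$ on the other), altering only the decoration, and $\alpha_n$ is the same underlying map throughout. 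Density then promotes these compatible assignments to a genuine natural transformation $\alpha\colon \SSt_* \nat L$.

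It remains to show $\alpha_X$ is a weak equivalence for every $X$. On each generator this is \autoref{lem:alphan}: the section $s$ and the marked homotopy $H$ constructed there exhibit $\alpha_n$ as a marked deformation retract, and since $H$ collapses only degenerate edges it persists as a homotopy after enlarging the scaling or the marking, so the decorated cases $\Delta^2_\dagger$, $\Delta^2_\sharp$, and $(\Delta^1)^\sharp$ are weak equivalences as well. To pass from generators to an arbitrary $X$, I would argue by induction along the cell structure: every object is a transfinite composite of pushouts of the generating cofibrations \ref{cof:bndry}--\ref{cof:thin}, and both $\SSt_*$ and $L$ preserve these colimits and send cofibrations to cofibrations (by \autoref{prop:St_pres_cof} for $\SSt_*$, and trivially for $L$). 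Applying the two functors to such a cell attachment produces homotopy pushouts in the left-proper model category $\mssSet$, and $\alpha$ furnishes a weak equivalence between the two defining spans; the gluing lemma then shows $\alpha$ is a weak equivalence at each stage, the base case being $\alpha_{\emptyset} = \id_{\emptyset}$.

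The hard part is this last step: making the inductive gluing genuinely apply. It rests on two inputs beyond the routine density bookkeeping --- first, that $\SSt_*$ preserves both cofibrations and the pushouts defining the cell attachments, so that these become homotopy pushouts in $\mssSet$; and second, that the \emph{decorated} generators, not merely the flat simplices treated directly in \autoref{lem:alphan}, are weak equivalences, which is precisely why I would check that the deformation retraction of \autoref{lem:alphan} survives scaling and marking up. Everything else reduces to the gluing lemma in the left-proper model structure on $\mssSet$.
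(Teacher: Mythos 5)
Your proposal is correct and follows essentially the same route as the paper's proof: define $\alpha$ on the four generators via \autoref{lem:StQ} and the maps $\alpha_n$, check naturality through the remark following \autoref{def:Qn}, establish the equivalence on generators by \autoref{lem:alphan} (with the decorated variants handled by the same homotopy), and reduce the general case to generators using that both $\SSt_*$ and $L$ preserve colimits and cofibrations. Your explicit cell-induction/gluing-lemma argument in left-proper $\mssSet$ is precisely the content of the paper's appeal to ``general abstract nonsense,'' so the two proofs differ only in how much of that standard reduction is spelled out.
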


\begin{proof}
	Using \autoref{lem:StQ}  is immediate to verify that the maps (together with decorated variants) $\alpha_n: Q^n \to \Delta^n$ assemble into a natural transformation $\alpha: \SSt_* \xRightarrow L$. To check that $\alpha$ is a levelwise equivalence, we note that due to the fact that both $\SSt_*$ and $L$ are left adjoints which preserve cofibrations it will suffice to check on generators. We proceed case by case
	\begin{itemize}
		\item $\alpha_{n}:(Q^n,\flat,T_{Q^n}) \to (\Delta^n,\flat,\flat)$ is an equivalence due to \autoref{lem:alphan}.
		\item $\alpha_2^{\sharp}:(Q^2,\flat,\sharp) \to (\Delta^2,\flat,\sharp)$ is an equivalence since we can repeat the proof above with maximally scaled simplicial sets.
		\item $\alpha_1^{\sharp}:(Q^1,\sharp,\flat) \to (\Delta^1,\sharp,\flat)$ is an isomorphism. \qedhere
	\end{itemize}
\end{proof}

\begin{theorem}\label{thm:STLQ}
	Let $S$ be an scaled simplicial set, then the bicategorical straightening functor
	\[
	\func{\mathbb{S}\!\on{t}_S\!:(\mbsSet)_{/S} \to \left(\on{Set}^{\mathbf{ms}}_{\Delta}\right)^{\mathfrak{C}^{\on{sc}}[S]^\op}}
	\]
	is a left Quillen functor. 
\end{theorem}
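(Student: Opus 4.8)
The functor $\ST_S$ admits a right adjoint $\UN_S$ (the corollary to \autoref{prop:formalproperties}) and preserves cofibrations by \autoref{prop:St_pres_cof}. Since every object of $(\mbsSet)_{/S}$ is cofibrant, it therefore suffices to prove that $\ST_S$ preserves all weak equivalences: this immediately yields that it carries trivial cofibrations to weak equivalences, and hence that it is left Quillen. The two nontrivial inputs are already in place. The final proposition of \autoref{subsec:STMBann} shows that $\ST_S$ sends \bS-anodyne morphisms to trivial cofibrations of marked-scaled simplicial sets, and \autoref{thm:product} controls the interaction of $\ST_S$ with the $\msSet$-tensoring.

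First I would record that $\ST_S$ weakly preserves the tensoring. For $K\in\msSet$ and $X\in(\mbsSet)_{/S}$ one has $K\tensor X = I(K)\times X$, so \autoref{thm:product}, applied with $A=\Delta^0_\sharp$, $X_A=I(K)$, $B=S$ and $X_B=X$, produces a natural pointwise weak equivalence
\[
\ST_S(K\tensor X)\xrightarrow{\ \simeq\ }\ST_*(I(K))\boxtimes\ST_S(X).
\]
By \autoref{prop:STptToL} there is a natural weak equivalence $\ST_*(I(K))\simeq L(I(K))=K_\sharp$, so the right-hand side is identified with the pointwise tensor $K\tensor\ST_S(X)$. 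Consequently the canonical comparison $\ST_S(K\tensor X)\to K\tensor\ST_S(X)$ is a pointwise weak equivalence.

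Next I would deduce that $\ST_S$ carries homotopy equivalences --- taken with respect to the cylinder $(\Delta^1)^\sharp\tensor(-)$ --- to weak equivalences. Passing to the homotopy category of $(\mssSet)^{\mathfrak{C}^{\sc}[S]^\op}$, the comparison of the previous paragraph becomes invertible, and its naturality identifies the images under $\ST_S$ of the two endpoint inclusions $X\rightrightarrows(\Delta^1)^\sharp\tensor X$ with the two endpoint inclusions of $\ST_S(X)$, which agree in the homotopy category. Thus, given a homotopy inverse $g$ to $f$ together with homotopies $gf\simeq\id$ and $fg\simeq\id$, applying $\ST_S$ shows that $\ST_S(gf)$ and $\ST_S(fg)$ become identities in the homotopy category; hence $\ST_S(f)$ is invertible there, i.e.\ a weak equivalence.

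Finally I would treat an arbitrary weak equivalence $f:X\to Y$ by fibrant replacement, modifying the argument of \cite[Cor.\ 3.2.1.16]{HTT}. Choosing \bS-anodyne maps $X\to X^{\on{fib}}$ and $Y\to Y^{\on{fib}}$ into $2$-Cartesian fibrations and lifting, one obtains a commuting square together with a map $f^{\on{fib}}:X^{\on{fib}}\to Y^{\on{fib}}$ which is a weak equivalence between fibrant objects by two-out-of-three. As all objects are cofibrant, $f^{\on{fib}}$ is then a simplicial homotopy equivalence, so $\ST_S(f^{\on{fib}})$ is a weak equivalence by the previous step, while $\ST_S$ of the two anodyne replacement maps are weak equivalences by the first input; two-out-of-three applied to the image square then forces $\ST_S(f)$ to be a weak equivalence. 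I expect the genuine obstacle to lie in the middle step, where weak preservation of the tensoring is converted into preservation of homotopy equivalences: this is the point at which the substantial technical content of \autoref{thm:product} is consumed and where the homotopy-category bookkeeping must be done with care, whereas the closing fibrant-replacement reduction is purely formal.
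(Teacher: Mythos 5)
Your proposal is correct and follows essentially the same route as the paper: reduce to fibrant objects via \textbf{MB}-anodyne fibrant replacement (using the preservation of anodyne maps from \autoref{subsec:STMBann}), then use \autoref{thm:product} together with \autoref{prop:STptToL} to identify $\ST_S$ of the cylinder $(\Delta^1)^\sharp_\sharp\times X$ with $\ST_S(X)$ up to weak equivalence, so that homotopy equivalences between fibrant objects are sent to isomorphisms in the homotopy category. The only difference is organizational — you state the weak preservation of the $\msSet$-tensoring for general $K$ and invoke it afterwards, while the paper specializes directly to $K=(\Delta^1)^\sharp$ inside the fibrant-replacement argument — but the inputs and logic are identical.
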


\begin{proof}
	Given a weak equivalence $\func{f:X\to Y}$ in  $(\mbsSet)_{/S}$, we can apply fibrant replacement to obtain a commutative diagram 
	\[
	\begin{tikzcd}
		X \arrow[r]\arrow[d,"f"'] &\widetilde{X}\arrow[d] \\
		Y\arrow[r] & \widetilde{Y}
	\end{tikzcd}
	\]
	where the horizontal morphisms are \textbf{MB}-anodyne, and there vertical morphisms are weak equivalences.
	
	Since $\ST_S$ preserves \textbf{MB}-andoyne morphisms, we may thus assume without loss of generality that $X$ and $Y$ are fibrant objects. By \cite[Lemma 3.29]{AGS_CartI}, $f$ then has a homotopy inverse $g$. Let 
	\[
	\func{H:(\Delta^1)^\sharp_\sharp \times X\to X}
	\] 
	be a marked homotopy between $g\circ f$ and $\id_X$ over $S$. Then $\ST_S(H)$ factors as 
	\[
	\begin{tikzcd}
		\ST_S(X\times (\Delta^1)^\sharp_\sharp)\arrow[r,"\epsilon"] & \ST_S(X)\boxtimes\ST_\ast((\Delta^1)^\sharp_\sharp) \arrow[r,"\alpha"] &\ST_S(X)\boxtimes(\Delta^1)^\sharp_\sharp\arrow[r] &  \ST_{\Delta^0_\flat}(K^\sharp)
	\end{tikzcd}
	\]
	Where $\epsilon$ is an equivalence by \autoref{thm:product}, $\alpha$ is an equivalence by \autoref{prop:STptToL}, and the final map is an equivalence since $(\Delta^1)^\sharp\to \Delta^0$ is an equivalence of marked simplicial sets. Since $\ST_S(g\circ f)$ and $\ST_S(\id_X)=\id_{\ST_S(X)}$ are both sections of $\ST_S(H)$, they are thus equivalent in the homotopy category. An identical argument shows that $\ST_S(f\circ g)\simeq \id_{\ST_S(Y)}$, completing the proof. 
\end{proof}

\begin{corollary}\label{prop:SToverPt}
	In particular the adjunction
	\[
	\ST_*: \left(\on{Set}_{\Delta}^{\mathbf{mb}}\right)_{/\Delta^0} \llra \on{Set}_{\Delta}^{\mathbf{ms}}: \UN_*
	\]
	is a Quillen equivalence.
\end{corollary}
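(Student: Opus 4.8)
The plan is to deduce the Quillen equivalence formally from the two ingredients already assembled in this subsection: the fact (\autoref{thm:STLQ}) that $\ST_*$ is left Quillen, and the levelwise weak equivalence $\alpha\colon \ST_*\Rightarrow L$ produced in \autoref{prop:STptToL}, together with the observation recorded at the start of the section that $L\dashv R$ is a Quillen equivalence. First I would note that \autoref{thm:STLQ}, specialized to $S=\Delta^0$, already guarantees that $\ST_*\dashv \UN_*$ is a Quillen adjunction, so the only remaining task is to verify that the induced functor on homotopy categories is an equivalence.

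The central simplification is that in both $\left(\mbsSet\right)_{/\Delta^0}$ and $\mssSet$ the cofibrations are precisely the monomorphisms of underlying simplicial sets, so \emph{every} object is cofibrant. Consequently, by Ken Brown's lemma, each of the left Quillen functors $\ST_*$ and $L$ preserves all weak equivalences, and hence descends directly---without any cofibrant replacement---to a total left derived functor $\on{L}\!\ST_*,\, \on{L}L\colon \ho\!\left(\mbsSet\right)\to \ho\!\left(\mssSet\right)$. Since $\alpha_X\colon \ST_*(X)\to L(X)$ is a weak equivalence for every $X$ by \autoref{prop:STptToL}, and every $X$ is cofibrant, the transformation $\alpha$ assembles into a natural isomorphism $\on{L}\!\ST_*\xRightarrow{\ \simeq\ }\on{L}L$ of these derived functors.

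Finally I would conclude as follows: because $L\dashv R$ is a Quillen equivalence, its total left derived functor $\on{L}L$ is an equivalence of homotopy categories; since $\on{L}\!\ST_*$ is naturally isomorphic to $\on{L}L$, it too is an equivalence of homotopy categories. As a Quillen adjunction whose total left derived functor is an equivalence of homotopy categories is itself a Quillen equivalence, we conclude that $\ST_*\dashv \UN_*$ is a Quillen equivalence, as desired.

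I do not anticipate a genuine obstacle here, as the argument is entirely formal once \autoref{prop:STptToL} is in hand. The only point deserving care is the passage from a pointwise weak equivalence between two left Quillen functors on cofibrant objects to an honest natural isomorphism of their derived functors; this is the standard principle that a natural weak equivalence of left Quillen functors transports one total derived functor onto the other, and it is entirely unproblematic here precisely because all objects are cofibrant, so that the derived functors may be computed on the nose.
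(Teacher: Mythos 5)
Your proposal is correct and follows essentially the same route as the paper: the paper's proof simply invokes \autoref{prop:STptToL} to say that $\ST_*$ is naturally weakly equivalent to the left Quillen equivalence $L$, and concludes immediately. Your write-up just supplies the standard formal details (all objects are cofibrant, so the levelwise equivalence $\alpha$ identifies the derived functors, and a Quillen adjunction whose derived left functor is an equivalence is a Quillen equivalence) that the paper leaves implicit.
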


\begin{proof}
	By \autoref{prop:STptToL}, $\St_\ast$ is naturally equivalent to a left Quillen equivalence. The corollary follows immediately.
\end{proof}

\subsection{Straightening over a simplex}\label{subsec:STequivSimp}

As in \cite[Ch. 2]{LurieGoodwillie}, the proof that our Grothendieck construction is a Quillen equivalence over a general scaled simplicial set will be bootstrapped from a proof over the $n$-simplices $(\Delta^n)_\flat$. In analogy to the method in op. cit., we will prove this case by constructing a \emph{mapping simplex} for each 2-Cartesian fibration $X\to \Delta^n_\flat$ --- a tractible $\bS$ simplicial set $\mathcal{M}_X\to \Delta^n_\flat$ which is equivalent to $X$ over $\Delta^n_\flat$.\footnote{In contrast to the approach in \cite[Ch. 2]{LurieGoodwillie}, we will not construct this `mapping simplex' from an enriched functor, but rather as a pushout of $\bS$ simplicial sets over $\Delta^n_\flat$.} The majority of this section is given over to showing that we can decompose a 2-Cartesian fibration $X\to \Delta^n_\flat$ as a homotopy pushout of the restriction of $X$ to $\Delta^{n-1}$, which enables the inductive step of our proof. 

\begin{remark}
	The term ``mapping simplex'' used above is potentially misleading. in \cite{HTT} and \cite{LurieGoodwillie}, a mapping simplex is a fibration over $\Delta^n$ explicitly constructed from a functor $\scr{F}:[n]\to \Set_\Delta^+$ or a $\scr{F}:\mathfrak{C}[\Delta^n]\to \Set_\Delta^+$. Our construction makes use of no such functor, and thus is not a true mapping simplex in this sense. The abuse of the term mapping simplex in the above exposition should be seen as suggestive of the role this construction fills in our proof --- one roughly analogous to the role of the mapping simplex in the proof of the $(\infty,1)$-categorical Grothendieck construction in \cite[\S 3.2]{HTT}.  
\end{remark}

\begin{definition}
	We define a marked biscaled simplicial set $(\Delta^n)^{\diamond}:=(\Delta^n,E^n_{\diamond},\flat \subset \sharp)$ where $E^n_{\diamond}$ is the collection of all edges containing the vertex $n$. It is not hard to verify that the inclusion of the terminal vertex $\Delta^{\set{n}} \to (\Delta^n)^{\diamond}$ is \bS-anodyne. 
\end{definition}

For the rest of this section, we fix be a 2-Cartesian fibration $p:X \to \Delta^n$ over the minimally scaled $n$-simplex. We consider the commutative diagram
\[
\begin{tikzcd}[ampersand replacement=\&]
	X_n \times \Delta^{\set{n}} \arrow[r] \arrow[d] \& X \arrow[d,"p"] \\
	X_n \times (\Delta^n)^{\diamond} \arrow[r] \arrow[ur,dotted,"\alpha"] \& \Delta^n
\end{tikzcd}
\]
where $X_n$ denotes the fibre over the vertex $n$ and the dotted arrow exists due to the fact that the left vertical morphism is \bS-anodyne. 

Consider the inclusion morphism $\iota:\Delta^{[0,n-1]} \to \Delta^n$ and equip $\Delta^{[0,n-1]}$ with the structure of an $\bS$ simplicial set by declaring and edge (resp. triangle) marked (resp. thin, resp. lean) if its image in $\Delta^n$ is marked (resp. thin, resp. lean) in $(\Delta^n)^{\diamond}$. Notice that this amounts to equipping $\Delta^{n-1}$ with the minimal marking and thin-scaling, and the maximal lean-scaling. 

We denote the restriction of $X$ to $\Delta^{n-1}$ by $X|_{\Delta^{n-1}}:=X\times_{\Delta^{n}}\Delta^{n-1}$, and denote the restriction of $\alpha$ to $X_n\times (\Delta^{n-1})^\diamond$ by $\alpha^\prime$. We construct an $\bS$ simplicial set $\mathcal{M}_X$ \glsadd{MX} over $\Delta^n$ by means of the pushout square
\[
\begin{tikzcd}[ampersand replacement=\&]
	X_n \times (\Delta^{n-1})^{\diamond} \arrow[r] \arrow[d,"\alpha'"] \& X_n \times (\Delta^n)^{\diamond} \arrow[d] \\
	X|_{\Delta^{n-1}} \arrow[r] \& \mathcal{M}_X
\end{tikzcd}
\]
Note that, since the top horizontal map is a cofibration, this is a homotopy pushout square in $(\mbsSet)_{/\Delta^n_\flat}$. The morphism $\alpha$ and the inclusion $X|_{\Delta^{n-1}}\to X$
yield a cone over this diagram,  and thus a canonical morphism $\omega:\mathcal{M}_X \to X$ over $\Delta^n$. The key technical element in this section will be to show that $\omega$ is a weak equivalence in the 2-Cartesian model structure.

\begin{definition}
	Let $\sigma:\Delta^k \to X$. Given $I\subset [k]$, we define 
	\[
	F_I(\sigma)=\set{\theta: \Delta^I \to \mathcal{M}_X\given \omega(\theta)=d_I(\sigma)} \cup \set{\ast}.
	\]
	Given $J \subset I \subseteq [k]$ and $\theta \in F_I(\sigma)$ such that $\theta \neq \ast$ we denote by $d_{J,I}(\theta)$ the image of $\theta$ in $\mathcal{M}_X$ under the degeneracy operator induced by the inclusion $J \subset I$.
\end{definition}

\begin{definition}
	We define a \textbf{MB} simplicial set $\mathcal{L}_X$ \glsadd{LX} whose simplices $\sigma:\Delta^k \to \mathcal{L}_X$ are given by:
	\begin{itemize}
		\item A simplex $\hat{\sigma}:\Delta^k \to X$.
		\item For every non-empty subset $I\subseteq [k]$  an element $\theta_I \in F_I(\sigma)$. If $\theta_I=*$ we use the empty set notation $\theta_I=\emptyset$.
	\end{itemize}
	We impose to this data the following compatibility conditions
	\begin{itemize}
		\item[H1)] Given $J \subset I \subseteq [k]$ and $\theta_I \in F_I(\sigma)$ such that $\theta_I \neq \emptyset$ it follows that $d_{J,I}(\theta_I)=\theta_J$. 
		\item[H2)] Given $I \subseteq [k]$ with $i_m=\max(I)$, then if $p \circ \hat{\sigma}(i_m) \neq n$ it follows that $\theta_I \neq \emptyset$.
		\item[H3)] Given $I \subseteq [k]$ such that for every $i \in I$ we have $p \circ \hat{\sigma}(i)=n $, then it follows that $\theta_I \neq \emptyset$. 
	\end{itemize}
	Notice that by construction there is a canonical projection map $v:\mathcal{L}_X \to X$. We equip $\mathcal{L}_X$ with the marking and biscaling induced by $v$. 
	
	Given a simplex $\sigma:\Delta^k\to \mathcal{L}_X$, we refer to the collection $\{\theta_I\}_{I\subset [k]}$ as the \emph{restriction data of $\sigma$}.
\end{definition}

\begin{lemma}
	The projection map $v:\mathcal{L}_X \to X$ is a trivial fibration of  \textbf{MB} simplicial sets.
\end{lemma}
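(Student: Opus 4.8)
The plan is to verify the right lifting property directly against the generating cofibrations of $\mbsSet$, namely \ref{cof:bndry}--\ref{cof:thin} from \autoref{prop:St_pres_cof}. The three decoration-upgrading cofibrations \ref{cof:marked1}, \ref{cof:coCart}, \ref{cof:thin} are handled at once: since the marking and biscaling on $\mathcal{L}_X$ are, by definition, induced along $v$ from those of $X$, the map $v$ both preserves and reflects all decorations. Hence any lifting square against one of these three morphisms has underlying map an isomorphism, and the unique underlying lift automatically respects the decorations because the relevant edge or triangle maps to a marked/lean/thin one in $X$. Consequently it suffices to show that $v$ is a trivial Kan fibration on underlying simplicial sets, i.e. that it has the right lifting property against the boundary inclusions $\partial \Delta^k \to \Delta^k$ of type \ref{cof:bndry}.

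Next I would unwind such a lifting problem. A commutative square with bottom map $\sigma \colon \Delta^k \to X$ forces $\hat\sigma = \sigma$, and a map $\partial\Delta^k \to \mathcal{L}_X$ over $\sigma|_{\partial\Delta^k}$ is precisely a choice of $\theta_I \in F_I(\sigma)$ for every nonempty proper $I \subsetneq [k]$, subject to H1--H3. Extending to $\Delta^k$ thus amounts to producing a single compatible element $\theta_{[k]} \in F_{[k]}(\sigma)$. Whether $\theta_{[k]} = \varnothing$ is permitted is governed entirely by H2 and H3 applied to $I = [k]$: it is forbidden exactly when either $p\sigma(\max[k]) \neq n$ (so, $p\sigma$ being order-preserving, $\sigma$ lands in $X|_{\Delta^{n-1}}$) or when all vertices of $\sigma$ map to $n$ (so $\sigma$ lands in $X_n$). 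In every remaining case the image of $p\sigma$ contains $n$ together with some smaller vertex, so I may simply set $\theta_{[k]} = \varnothing$; this imposes no H1 constraint and leaves the given boundary data untouched, so it is always consistent.

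The two forced cases are resolved by a uniqueness-of-lifts observation drawn from the pushout defining $\mathcal{M}_X$. Pulling that pushout back along $\Delta^{[0,n-1]} \hookrightarrow \Delta^n$ and along $\Delta^{\{n\}} \hookrightarrow \Delta^n$ identifies, on underlying simplicial sets,
\[
\mathcal{M}_X \times_{\Delta^n} \Delta^{[0,n-1]} \cong X|_{\Delta^{n-1}}, \qquad \mathcal{M}_X \times_{\Delta^n} \Delta^{\{n\}} \cong X_n,
\]
with $\omega$ restricting to the respective inclusions into $X$. Therefore any simplex of $X$ whose image avoids $n$, or is constant at $n$, has a \emph{unique} $\omega$-preimage, lying in $X|_{\Delta^{n-1}}$ respectively in $X_n \times \{n\}$. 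In the case $\sigma \subseteq X|_{\Delta^{n-1}}$ I take $\theta_{[k]} := \sigma$ viewed inside $X|_{\Delta^{n-1}} \subseteq \mathcal{M}_X$; in the case $\sigma \subseteq X_n$ I take $\theta_{[k]} := \sigma$ crossed with the degeneracy at the vertex $n$. In either situation every face of $\sigma$ again lies in the same pure locus, so the given $\theta_I$ (which are forced nonempty there by H2 or H3) must coincide with the unique lifts $d_I(\theta_{[k]})$; this verifies H1 and hence compatibility with the boundary. Surjectivity of $v$ on vertices, needed for $k=0$, follows from the same identifications.

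The main obstacle is not any single estimate but the organizational bookkeeping: one must carefully stratify the simplices of $\mathcal{M}_X$ by the position of their image in $\Delta^n$ relative to the terminal vertex, and it is precisely the uniqueness of $\omega$-lifts over the ``pure'' loci $\Delta^{[0,n-1]}$ and $\Delta^{\{n\}}$ that makes the canonical fillers in the forced cases automatically agree with the prescribed boundary data. Once these pullback identifications are established, the verification of H1--H3 in each of the three cases is routine.
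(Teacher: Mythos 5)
Your proof is correct and follows essentially the same route as the paper's: reduce to underlying simplicial sets using that $v$ detects all decorations, then solve the lifting problem against $\partial\Delta^k \to \Delta^k$ by taking $\theta_{[k]}$ to be the unique preimage of $\hat\sigma$ in $\mathcal{M}_X$ when H2/H3 force nonemptiness (i.e.\ when $p\circ\hat\sigma$ avoids $n$ or is constant at $n$) and $\theta_{[k]}=\emptyset$ otherwise. The paper dismisses the compatibility checks as "clearly satisfied"; your uniqueness-of-$\omega$-preimages argument over the loci $X|_{\Delta^{n-1}}$ and $X_n\times\{n\}$ is precisely the reason they hold, so your write-up is if anything more complete.
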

\begin{proof}
	Since $v$ by definition detects all possible decorations, it will suffice to show that $v$ is a trivial fibration on the underlying simplicial sets. Note that $v$ is a bijection on $0$-simplices. Given $k \geq 1$ we consider a lifting problem
	\[
	\begin{tikzcd}[ampersand replacement=\&]
		\partial \Delta^k \arrow[r] \arrow[d] \& \mathcal{L}_X \arrow[d] \\
		\Delta^k \arrow[r,"\hat{\sigma}"] \arrow[ur,dotted]\& X
	\end{tikzcd}
	\]
	To produce the dotted arrow we use the bottom horizontal morphism as our choice for simplex in $X$. If $p \circ \hat{\sigma}(k) \neq n$ or $p \circ \hat{\sigma}$ is constant on $n$, we set $\theta_{[k]}$ to be the unique preimage of $\hat{\sigma}$ in $\mathcal{M}_X$. If $p \circ \hat{\sigma}(k)=n$ and it is not constant on $n$, we set $\theta_{[k]}=\emptyset$. The rest of the $\theta_{I}$ are always chosen according to top horizontal morphism. The compatibilities are clearly satisfied.
\end{proof}

We construct a morphism $u:\mathcal{M}_X \to \mathcal{L}_X$ that sends a simplex $\theta:\Delta^k \to \mathcal{M}_X$ to the simplex $\omega(\theta)$ in $X$. For every $I \subseteq [k]$ we set $\theta_I=d_I(\theta)$. It is clear that $u$ is a cofibration. It is not hard to see that $u$ induces a bijection on the restriction to $\Delta^{[0,n-1]}$ and on the fibre over $n$.

\begin{remark}
	Let $\pi=p \circ v: \mathcal{L}_X \to \Delta^n$. Given $\sigma:\Delta^k \to \mathcal{L}_X$ we fix the notation $\overline{\sigma}=\pi \circ \sigma$.
\end{remark}

\begin{definition}\label{def:posetZ}
	Let $\sigma:\Delta^k \to \mathcal{L}_X$ be a simplex such that $\overline{\sigma}(k)=n$. Let $\kappa_{\overline{\sigma}}$ be the first element in $[k]$ such that $\overline{\sigma}(\kappa_{\overline{\sigma}})=n$. We define a full subposet $Z_{\sigma} \subset [k]\times [n]$ consisting of 
	\begin{itemize}
		\item Those vertices of the form $(x,\overline{\sigma}(x))$ with $x < \kappa_{\overline{\sigma}}$.
		\item Those vertices of the form $(x,y)$ with $x \geq \kappa_{\overline{\sigma}}$ and $y \geq \overline{\sigma}(0)$. 
	\end{itemize}
	
	We denote by $\mathcal{Z}_\sigma$ the nerve $\Nerv(Z_\sigma)$. Note that the projection $[k]\times [n]\to [n]$ yields a canonical map $\mathcal{Z}_{\sigma} \to \Delta^n$. We endow $\mathcal{Z}_{\sigma}$ with the structure of an $\bS$ simplicial set by declaring an edge $(x_1,y_1) \to (x_2,y_2)$ marked if $x_1 =x_2 \geq \kappa_{\overline{\sigma}}$ and $y_2=n$. A triangle is declared to be lean if the associated 2-simplex in $\Delta^k$ is degenerate. Finally we say that a triangle in $\mathcal{Z}_{\sigma}$ is thin if it is already lean and its image in $\Delta^n$ is degenerate.
	
	We call those non-degenerate simplices $\rho:\Delta^\ell\to \mathcal{Z}_{\sigma}$ which are not contained in any other non-degenerate simplex \emph{essential}. 
\end{definition}

\begin{figure}
	\begin{center}
		\begin{tikzpicture}
			\path (0,0) node (a00){} (2,-2) node (a11){} (4,-2) node (a21){} (6,-4) node (a32) {};
			\foreach \lab in {a00,a11,a21,a32}{
				\draw[fill=black] (\lab) circle (0.05);
			};
			\foreach \x in {4,5}{
				\foreach \y in {0,1,2,3,4}{	
					\path (2*\x,-2*\y) node (a\x\y) {};
					\draw[fill=black] (2*\x,-2*\y) circle (0.05);		
				};
			};
			\foreach \x in {4,5}{
				\foreach \y/\z in {0/1,1/2,2/3,3/4}{	
					\draw[->] (a\x\y) to (a\x\z);	
				};
			};
			\foreach \y in {0,1,2,3,4}{	
				\draw[->] (a4\y) to (a5\y);	
			};
			\draw[->] (a00) to (a40);
			\draw[->] (a00) to (a11);
			\draw[->] (a11) to (a21);
			\draw[->] (a21) to (a41);
			\draw[->] (a21) to (a32);
			\draw[->] (a32) to (a42);
			\draw[->] (a32) to (a44);
		\end{tikzpicture}
	\end{center}
	\caption{The poset $Z_\sigma$ corresponding to the map $[5]\to [4]$ given by the sequence of values $0,1,1,2,4,4$.}
\end{figure}
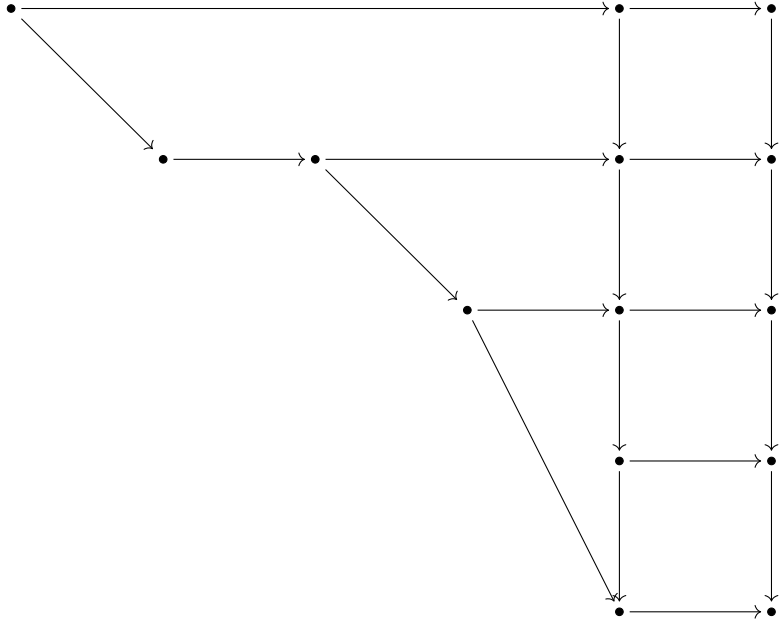

\begin{remark}
	The avid reader might complain that our definition of $\mathcal{Z}_\sigma$ only depends on $\overline{\sigma}$ and so should be denoted by $\mathcal{Z}_{\overline{\sigma}}$. The next definition will justify our notation.
\end{remark}

\begin{definition}\label{def:posetX}
	Let $\sigma:\Delta^k \to \mathcal{L}_X$ such that $\overline{\sigma}(k)=n$. We define a subsimplicial set $\mathcal{X}_{\sigma}\subset \mathcal{Z}_{\sigma}$ (with the inherited marking and scalings) consisting of those simplices $\set{(x_i,y_i)}_{i=0}^{\ell}$ satisfying at least one of the conditions below
	\begin{itemize}
		\item[i)]  We have $y_i=\overline{\sigma}(x_i)$ for $i=0,\dots,\ell$.
		\item[ii)] There exists $I \subseteq [k]$ such that $\theta_I \neq \emptyset$ with $\overline{\sigma}(\max(I))=n$ and $x_i \in I$ for $i=0,\dots,\ell$.
	\end{itemize}
\end{definition}

\begin{definition}\label{def:KI}
	Let $\sigma:\Delta^k \to \mathcal{L}_X$ such that $\overline{\sigma}(k)=n$ and suppose we are given a subset $I \subset [k]$ such that $\theta_{I}\neq \emptyset$ and such that $\overline{\sigma}(\max(I))=n$. We construct a morphism
	\[
	\func{\Delta^I \times \Delta^{[\overline{\sigma}(0),n]}\to (\Delta^n)^{\diamond}\times X_n}
	\]
	whose component at $(\Delta^n)^{\diamond}$ is given by $\Delta^I \times \Delta^{[\overline{\sigma}(0),n]} \to \Delta^{[\overline{\sigma}(0),n]} \to (\Delta^n)^{\diamond}$ and whose component at $X_n$ is given by $\Delta^I \times \Delta^{[\overline{\sigma}(0),n]} \to \Delta^I \to X_n$ where the last morphism is induced from $\theta_I$.
	
	We define a subposet $K_I \subset \Delta^I \times \Delta^{[\overline{\sigma}(0),n]}$ to be the intersection of $\Delta^I\times \Delta^{[\overline{\sigma}(0),n]}$ with $\mathcal{Z}_\sigma$. We denote $\mathcal{K}_I$ the $\bS$ simplicial set obtained by equipping $K_I$ with the decorations induced from $(\Delta^n)^{\diamond}\times X_n$.
\end{definition}

\begin{remark}
	Observe that we can construct $\mathcal{X}_\sigma$ as the union of $\Delta^k$ and every $\mathcal{K}_I$ inside of $\mathcal{Z}_\sigma$.
\end{remark}

\begin{remark}\label{rem:ftilde}
	Let $\sigma:\Delta^k \to \mathcal{L}_X$ such that $\overline{\sigma}(k)=n$. We define a morphism $\widetilde{f}_\sigma:\mathcal{X}_\sigma \to \mathcal{L}_X$ as follows:
	\begin{itemize}
		\item For simplices satisfying condition $i)$ in \autoref{def:posetX}, $\widetilde{f}_\sigma$ is simply $\sigma$.
		\item For simplices satisfying condition $ii)$ in \autoref{def:posetX}, $\widetilde{f}_\sigma$ is given by the composite
		\[
		\func{\mathcal{K}_I \to \Delta^I \times \Delta^{[\overline{\sigma}(0),n]} \to (\Delta^n)^{\diamond}\times X_n \to[u] \mathcal{L}_X}.
		\]
	\end{itemize}
	One observes that this definition is compatible in the various intersections $\mathcal{K}_I \cap \mathcal{K}_J$ thus defining the desired morphism.
\end{remark}

\begin{definition}\label{def:Xuparrow}
	Let $\sigma:\Delta^k \to \mathcal{L}_X$ such that $\overline{\sigma}(k)=n$. We define a subsimplicial subset $\mathcal{X}_\sigma^{\uparrow} \to \mathcal{Z}_\sigma$ (with the induced decorations) consisting of those simplices $\set{(x_i,y_i)}_{i=0}^{\ell}$ that are either in $\mathcal{X}_\sigma$ or satisfty the property:
	\begin{itemize}
		\item There exists $j \in [k]$ such that $x_i\neq j$ for every $i=0,\dots,\ell$ and such that $\overline{\sigma}(d^j(k-1))=n$.
	\end{itemize}
\end{definition}

\begin{remark}
	Note that we can equivalently define $\mathcal{X}_\sigma^{\uparrow}$ to consist of those simplices that are either in $\mathcal{X}_\sigma$ or that are contained in the image of
	\[
	\func{\mathcal{Z}_{d_j(\sigma)} \to \mathcal{Z}_\sigma}
	\]
	where $d_j(\sigma)(k-1)=n$.
\end{remark}

\begin{definition}\label{def:orderessential}
	We define an order on the set of essential simplices of $\mathcal{Z}_{\sigma}$ which we denote by ``$\prec$''. Let $\rho_i$ for $i=1,2$ be two essential simplices determined by the sequence of vertices $\set{(x^i_j,y^i_j)}_{j=0}^{r_i}$ for $i=1,2$. Let $\epsilon$ be the first index such that $\rho_1(\epsilon)\neq \rho_2(\epsilon)$. We say that $\rho_1 \prec \rho_2$ if precisely one the following conditions is satisfied: 
	\begin{itemize}
		\item We have that $x^1_{\epsilon}=\kappa_{\sigma}$.
		\item We have $x^i_\epsilon > \kappa_\sigma$ for $i=1,2$ and $y^1_\epsilon >y^2_\epsilon$.
	\end{itemize} 
\end{definition}

\begin{lemma}\label{lem:keylemX}
	Let $\sigma:\Delta^k \to \mathcal{L}_X$ such that $\overline{\sigma}(k)=n$. Then the following morphisms are \bS-anodyne:
	\[
	\func{\mathcal{X}_{\sigma} \to \mathcal{X}_\sigma^{\uparrow} \to \mathcal{Z}_{\sigma}.}
	\]
\end{lemma}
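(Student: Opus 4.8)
The plan is to treat the two inclusions by parallel but distinct devices: the first by induction on the dimension $k$ of $\sigma$, the second by a filtration governed by the order $\prec$ of \autoref{def:orderessential}. For the map $\mathcal{X}_\sigma \to \mathcal{X}_\sigma^\uparrow$ I would argue by induction on $k$. By the remark following \autoref{def:Xuparrow}, $\mathcal{X}_\sigma^\uparrow$ is the union of $\mathcal{X}_\sigma$ with the images of the maps $\mathcal{Z}_{d_j(\sigma)}\to \mathcal{Z}_\sigma$ over those $j$ with $\overline{\sigma}(d^j(k-1))=n$. The restriction-data compatibility (H1) ensures that under each such face map the subobject $\mathcal{X}_{d_j(\sigma)}$ lands inside $\mathcal{X}_\sigma$. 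Attaching the images one $j$ at a time, each stage is a pushout of a map of the form $\mathcal{X}_{d_j(\sigma)}\to \mathcal{Z}_{d_j(\sigma)}$ (restricted along the portion already attached), which is \bS-anodyne by the inductive hypothesis since $d_j(\sigma)$ has dimension $k-1$; the overlaps between distinct images are lower faces $\mathcal{Z}_{d_{j'}d_j(\sigma)}$ already present, so the order of attachment is immaterial.

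For the map $\mathcal{X}_\sigma^\uparrow \to \mathcal{Z}_\sigma$ I would build $\mathcal{Z}_\sigma$ by adjoining the essential (maximal) simplices of \autoref{def:posetZ} in the order $\prec$. Every essential simplex is a maximal chain in $Z_\sigma$, whose diagonal part over $\{x<\kappa_{\overline\sigma}\}$ is forced and already lies in $\mathcal{X}_\sigma$, and whose remaining part is a monotone lattice path through the rectangle ending at the corner $(k,n)$. For such a chain $\rho\colon\Delta^\ell\to \mathcal{Z}_\sigma$, write $A_\rho$ for the union of the faces already present — faces on the diagonal (in $\mathcal{X}_\sigma$ by condition (i) of \autoref{def:posetX}), faces factoring through a rectangle $\mathcal{K}_I$ (condition (ii)), faces omitting a vertex supplied by $\mathcal{X}_\sigma^\uparrow$, and faces of strictly $\prec$-earlier chains. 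The order $\prec$ is arranged precisely so that the omitted faces of $A_\rho\hookrightarrow\Delta^\ell$ cluster around a single pivot vertex; I would then verify that the set of omitted vertices is dull and invoke the pivot-point lemmas.

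Concretely, when the decisive pivot is interior to the chain, the omitted vertices form an inner-dull subset (\autoref{def:innerdull}) and \autoref{lem:innerpivot} presents the filling as a composite of morphisms of type \ref{mb:innerhorn} and \ref{mb:innersaturation}; the scaling hypotheses hold because a triangle of $\mathcal{Z}_\sigma$ is lean exactly when its image in $\Delta^k$ is degenerate and thin exactly when additionally its image in $\Delta^n$ is degenerate. When the chain terminates in a vertical run up to the top row $y=n$, the pivot is the terminal vertex (mapping to $n$), the omitted vertices form a right-dull subset, and \autoref{lem:Right_pivot} exhibits the filling as \bS-anodyne via morphisms of type \ref{mb:2Cartesianmorphs}, using exactly that the edges into $y=n$ are marked and the relevant triangles $\{s<r<n\}$ are lean. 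This is the same division into cases already seen in \autoref{lem:anodyneA3} and \autoref{lem:anodyneA4}.

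The main obstacle is the bookkeeping in the second map: for each maximal chain $\rho$ one must pin down precisely which faces are already present and check that the complementary vertex set satisfies all four conditions defining inner- or right-dullness — in particular the disjointness of the blocks and the basal condition — together with the decoration hypotheses of \autoref{lem:innerpivot} and \autoref{lem:Right_pivot}. These verifications depend on the fine shape of the lattice path, namely where its horizontal and vertical steps fall relative to the marked top row, and must be carried out case by case, in close analogy with the filtrations already constructed in \autoref{lem:anodyneA3}, \autoref{lem:anodyneA4}, and \autoref{lem:anodyneA1}.
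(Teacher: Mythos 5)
Your proposal follows essentially the same route as the paper's own proof: the first inclusion is handled by induction on $k$ by attaching the $\mathcal{Z}_{d_j(\sigma)}$'s along their lower-dimensional analogues, and the second by adjoining the essential simplices of $\mathcal{Z}_\sigma$ in the order $\prec$ and filling each attachment via \autoref{lem:innerpivot} or \autoref{lem:Right_pivot}, with exactly the paper's case split (last step of the chain horizontal versus vertical). The bookkeeping you defer is precisely what the paper carries out through its classification of vertices as anterior, recumbent, and plumb — together with the observation that essential simplices are determined by their downturns, which is what rules out a face of plumb vertices factoring through a $\prec$-earlier chain — so your outline is the paper's argument and the remaining verifications proceed as you indicate.
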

\begin{proof}
	If $\sigma$ factors through $\mathcal{M}_X$, then $\mathcal{X}_\sigma=\mathcal{Z}_\sigma$, so we may assume without loss of generality that $\sigma$ does not factor through $\mathcal{M}_X$. 
	
	We proceed by induction on $k$. Consider $k=1$, and note that $\mathcal{X}_\sigma=\mathcal{X}_\sigma^\uparrow$. Since $\sigma$ does not factor through $\mathcal{M}_X$, we may assume $\sigma(0)\neq n$.  There is thus some $\ell\geq 1$ such that the morphism  $\mathcal{X}_\sigma\to \mathcal{Z}_\sigma$ can be identified with the inclusion 
	\[
	\func{
		\psi_\ell:\Delta^1\coprod_{\Delta^0} (\Delta^\ell)^\diamond \to (\Delta^{\ell+1})^\dagger}
	\]
	Where $\dagger$ indicates marking where $i\to n$ is marked when $i\neq 0$, where every triangle is lean, and only those over degenerate triangles in $\Delta^n$ are thin. It follows immediately from \autoref{lem:Right_pivot} that this morphism is \textbf{MB}-anodyne.
	
	We now suppose that the lemma holds in dimension $k-1$. By the inductive hypothesis, 
	\[
	\func{\mathcal{X}_\sigma\to \mathcal{X}_\sigma^\uparrow} 
	\]
	is \bS-anodyne. Thus, it is sufficient for us to show that the morphism $\func{\mathcal{X}_\sigma^\uparrow\to \mathcal{Z}_\sigma }$ is \bS-anodyne. 
	
	We will add essential simplices of $\mathcal{Z}_\sigma$ according to the order $\prec$. Given an essential simplex $\rho$, we denote by $N_\rho$ the \textbf{MB} simplicial subset of $\mathcal{Z}_\sigma$ obtained by adding every essential simplex $\rho^\prime$ such that $\rho^\prime \preceq \rho$. We consider a pullback diagram 
	\[
	\begin{tikzcd}[ampersand replacement=\&]
		Q_\rho \arrow[r] \arrow[d] \& \Delta^r \arrow[d,"\rho"] \\
		N_{\rho^\prime} \arrow[r] \& N_\rho
	\end{tikzcd}
	\]
	and we turn our attention to proving that the top horizontal morphism is \bS-anodyne. Let us fix the notation $\rho=\set{(x_j,y_j)}_{j=0}^{r}$ and denote by $\theta$ the first index such that $x_\theta=\kappa_\sigma$.
	
	We define three types of vertices $\epsilon\in [r]$ of $\rho$. 
	\begin{itemize}
		\item[] \emph{Anterior vertices} are those $\epsilon$ which have $x_\epsilon<\kappa_\sigma$.
		\item[] \emph{Recumbent vertices}  are those $\epsilon\in [r]$ which have $x_\epsilon>\kappa_\sigma$ and $x_{\epsilon-1} <x_\epsilon$. Note that this necessarily implies $y_{\epsilon-1}=y_\epsilon$
		\item[] \emph{Plumb vertices} are those $\epsilon\in [r]$ which have $x_\epsilon\geq \kappa_\sigma$ and $x_{\epsilon-1}=x_\epsilon$. Note that this necessarily implies that $y_{\epsilon-1}<y_\epsilon$. Note also that every $\rho$ has at least one plumb vertex. 
	\end{itemize}
	Notice that the only vertex which is not anterior, recumbent or plumb is $\theta$. We call a vertex $\epsilon\in [r]$ a \emph{downturn} if $\epsilon$ is either recumbent or $\epsilon=\theta$ \textbf{and} $x_{\epsilon+1}=x_\epsilon$. Note that $\rho$ is uniquely determined by its set of downturns and the fact that it is essential.
	
	We will prove three claims about these  types of vertices, which then will enable us to complete the proof. 
	
	\vspace{1em}
	
	\noindent\textsc{Claim 1:} If $\epsilon$ is an anterior vertex, then $d_\epsilon(\Delta^r)\subset Q_\rho$. 
	
	\begin{subproof}[Subproof]
		Since $\epsilon$ is anterior, it is the only vertex of $\rho$ whose first coordinate is $x_\epsilon$. Consequently, $d_\epsilon(\rho)$ factors through $\mathcal{Z}_{d_\epsilon(\sigma)}$.
	\end{subproof} 
	
	\vspace{1em}
	
	\noindent\textsc{Claim 2:} Let $\epsilon$ be a recumbent vertex. Then $d_\epsilon(\Delta^r)\subset Q_\rho$. 
	
	\begin{subproof}[Subproof]
		There are two cases. If $x_{\epsilon+1}>x_\epsilon$, then as before $d_\epsilon(\rho)$ factors through $Z_{d_j(\sigma)}$ for some face operator $d_j$. If, on the other hand, $x_{\epsilon+1}=x_\epsilon$, then $d_\epsilon(\rho)$ factors through a previous essential simplex.  
	\end{subproof}
	
	\vspace{1em} 
	
	\noindent\textsc{Claim 3}: Let $X$ be a set of plumb vertices. Then $d_X(\Delta^r)\nsubseteq Q_\rho$. 
	
	\begin{subproof}[Subproof]
		Since $d_X(\rho)$ contains a point with first coordinate $j$ for every $j\in [k]$, we see that $d_X(\rho)$ cannot factor through $\mathcal{Z}_{d_j(\sigma)}$. Similarly, if $d_X(\rho)$ factors through $\mathcal{K}_I$ for $I\subset [k]$, then $I=[k]$, which would imply that $\sigma$ factors through $\mathcal{M}_X$. Moreover, $d_X(\rho)$ cannot factor through $\sigma$, since it contains the vertex $(x_\theta,y_\theta)$. 
		
		Finally, if $\gamma\prec \rho$ is a previous simplex in our factorization, then $d_X(\rho)$ cannot factor through $\gamma$, since $\gamma$ and $\rho$ are uniquely determined by their sequences of downturns, and the only sequence of downturns containing $d_X(\rho)$ determine simplices greater than $\rho$ under the order $\prec$.   
	\end{subproof}
	
	To finish the proof we will consider two different cases. Each of this cases will be solved used inner-dull subsets (resp. right-dull) subsets. It is important to remark that since we can assume that $\dim(\sigma) > 1$ it follows that all the decorations in $\mathcal{Z}_{\sigma}$ factor through $\mathcal{X}^{\uparrow}_\sigma$.
	
	The first case is given precisely when the vertex $r$ is recumbent. In this situation it follows that we can use \autoref{lem:innerpivot} where the pivot point is given by the biggest plumb vertex. Since $r \neq \theta$ it follows that if $r$ is not recumbent it must be plumb. In this cases the claim follow from \autoref{lem:Right_pivot}.
\end{proof}

\begin{definition}\label{defn:Bsigma}
	Let $\sigma:\Delta^k\to \mathcal{L}_X$ such that $\overline{\sigma}(k)=n$ and let $\ell=n-\overline{\sigma}(\kappa_{\overline{\sigma}}-1)$. For every morphism $f_\sigma: \mathcal{Z}_{\sigma} \to \mathcal{L}_X$ such that its restriction to $\mathcal{X}_\sigma$ equals $\widetilde{f}_\sigma$ as in \autoref{rem:ftilde}, we define a $(k+\ell)$-simplex $\sf{B}(\sigma)\in \mathcal{L}_X$ to be the composite 
	\[
	\func{\Delta^{k+\ell}\to[\rho_\sigma] \mathcal{Z}_{\sigma}\to[f_\sigma] \mathcal{L}_X}
	\]
	Where 
	\[
	\func*{
		\rho_\sigma :{[k+\ell]}\to Z_{\sigma};
		j\mapsto \begin{cases}
			(j,\overline{\sigma}(j)) & 0\leq j\leq \kappa_{\overline{\sigma}}-1 \\
			(\kappa_{\overline{\sigma}},\overline{\sigma}(\kappa_{\overline{\sigma}}-1)) &  j = \kappa_{\overline{\sigma}} \\
			(\kappa_{\overline{\sigma}},j), & \kappa_{\overline{\sigma}}<j \leq \kappa_{\overline{\sigma}}+\ell \\
			(j-\ell,n), & \kappa_{\overline{\sigma}}+\ell+1 \leq j \leq k+\ell
		\end{cases}
	}
	\]
\end{definition}

\begin{remark}
	We can equivalently characterise the simplex $\sf{B}(\sigma)$ in \autoref{defn:Bsigma} as the smallest essential simplex of $\mathcal{Z}_\sigma$ under the order $\prec$ of  \autoref{def:orderessential}  that does not factor through $\mathcal{X}_{\sigma}^\uparrow$.
\end{remark}

\begin{remark}
	There are two key parameters which we will use to analyze the simplices $\sf{B}(\sigma)$, for $\sigma: \Delta^k\to \mathcal{L}_X$. One is the fundamental vertex $\kappa_\sigma$ --- the first vertex such that $\overline{\sigma}(\kappa_\sigma)=n$. The other is the \emph{terminal size} of $\sigma$: the number of vertices   $j\in[k]$ such that $\overline{\sigma}(j)=n$. We will denote the terminal size of $\sigma$ by 
	\[
	\nu_\sigma=|\set{j \in [k] \given \overline{\sigma}(j)=n}|.
	\] 
	Notice that the terminal size of $\sf{B}(\sigma)$ is \emph{always} equal to the terminal size of $\sigma$. 
\end{remark}

To make use of the simplices $\sf{B}(\sigma)$ in an inductive pushout argument, we will need to ensure we can make sufficiently compatible choices of maps 
\[
\func{f_\sigma: \mathcal{Z}_\sigma \to \mathcal{L}_X}
\]
to define our choices of $\sf{B}(\sigma)$. 

\begin{proposition}\label{prop:Compatible_collection}
	There exists a collection indexed by the simplices of $\mathcal{L}_X$ 
	\[
	\scr{I}:=\set{f_\sigma:\mathcal{Z}_\sigma\to \mathcal{L}_X \given \sigma:\Delta^k \to \mathcal{L}_X}.
	\]
	With the following properties: 
	\begin{itemize}
		\item[i)] The restriction of $f_\sigma$ to $\mathcal{X}_{\sigma}$ equals $\widetilde{f}_\sigma$ as in \autoref{rem:ftilde}.
		\item[ii)] Given a face operator $d_j:[k-1] \to [k]$ such that $\overline{\sigma}(d^j(k-1))=n$ we have that the composite
		\[
		\func{\mathcal{Z}_{d_j(\sigma)} \to \mathcal{Z}_{\sigma}\to[f_\sigma] \mathcal{L}_X}
		\]
		equals $f_{d_j(\sigma)}$.
		\item[iii)] If $\sigma\subseteq \sf{B}(\tau)$, where $\tau\subsetneq \sigma$, then $\sf{B}(\sigma)$ is degenerate on a simplex  $\rho \subseteq \sf{B}(\tau)$.
	\end{itemize}
\end{proposition}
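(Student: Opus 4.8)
The plan is to construct the maps $f_\sigma$ by induction on the dimension $k$ of $\sigma$, obtaining each one as a lift along the $\bS$-anodyne inclusion $\mathcal{X}_\sigma^{\uparrow}\to\mathcal{Z}_\sigma$ provided by \autoref{lem:keylemX}. The first point to record is that the composite $\pi=p\circ v\colon\mathcal{L}_X\to\Delta^n$ is a $\bS$-fibration. Indeed, $v\colon\mathcal{L}_X\to X$ is a trivial fibration (proven just above \autoref{defn:Bsigma}), so it has the right lifting property against the monomorphisms underlying $\bS$-anodyne maps, while $p\colon X\to\Delta^n$ is a $2$-Cartesian fibration; a two-step lift then shows $\pi$ has the right lifting property against every $\bS$-anodyne morphism. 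Consequently any partial map into $\mathcal{L}_X$ lying over $\Delta^n$ extends along a $\bS$-anodyne inclusion, which is exactly what the inductive step requires.

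For the inductive step, fix $\sigma\colon\Delta^k\to\mathcal{L}_X$ with $\overline{\sigma}(k)=n$, and suppose $f_\tau$ has been built for all simplices of dimension $<k$ so as to satisfy (i)–(iii). Recall (\autoref{def:Xuparrow} and the remark following it) that $\mathcal{X}_\sigma^{\uparrow}$ is the union of $\mathcal{X}_\sigma$ with the images of the inclusions $\iota_j\colon\mathcal{Z}_{d_j(\sigma)}\to\mathcal{Z}_\sigma$ ranging over face operators with $\overline{\sigma}(d^j(k-1))=n$. On $\mathcal{X}_\sigma$ I prescribe the map $\widetilde{f}_\sigma$ of \autoref{rem:ftilde}, and on each image of $\iota_j$ I prescribe the already-constructed $f_{d_j(\sigma)}$. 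These prescriptions agree on overlaps: on $\mathcal{X}_\sigma\cap\operatorname{im}(\iota_j)$ this is the equality $f_{d_j(\sigma)}|_{\mathcal{X}_{d_j(\sigma)}}=\widetilde{f}_{d_j(\sigma)}$ granted by (i) for the face together with the compatibility of the collapse map $u$ with faces, while on $\operatorname{im}(\iota_j)\cap\operatorname{im}(\iota_{j'})$ it follows from property (ii) at the lower level via the simplicial identities. We thus obtain a well-defined map $\overline{f}_\sigma\colon\mathcal{X}_\sigma^{\uparrow}\to\mathcal{L}_X$ over $\Delta^n$. Since $\mathcal{X}_\sigma^{\uparrow}\to\mathcal{Z}_\sigma$ is $\bS$-anodyne and $\pi$ is a $\bS$-fibration, we may choose a lift $f_\sigma\colon\mathcal{Z}_\sigma\to\mathcal{L}_X$. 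By construction $f_\sigma$ restricts to $\widetilde{f}_\sigma$ on $\mathcal{X}_\sigma$ and to $f_{d_j(\sigma)}$ on the face images, which are precisely conditions (i) and (ii).

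It remains to verify (iii), which I expect to be the genuine obstacle, since (i) and (ii) follow the standard inductive-lifting template. Here one must trace the explicit path $\rho_\sigma$ of \autoref{defn:Bsigma}. Suppose $\tau\subsetneq\sigma\subseteq\sf{B}(\tau)$, and write $\sf{B}(\tau)=f_\tau\circ\rho_\tau$, so that $\sigma$ is cut out by a subset $S$ of the vertices of $\rho_\tau$. Projecting to $\Delta^n$, the value-sequence $\overline{\sigma}$ is a subsequence of that of $\rho_\tau$, which by the shape of $\rho_\tau$ is nondecreasing, idles at the value $\overline{\tau}(\kappa_{\overline{\tau}}-1)$ across the fundamental index $\kappa_{\overline{\tau}}$, then climbs vertically and is terminally equal to $n$. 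The plan is to read off $\kappa_{\overline{\sigma}}$ and $\ell=n-\overline{\sigma}(\kappa_{\overline{\sigma}}-1)$ from this subsequence, to use property (ii) to identify $f_\sigma$ on the relevant cells with the restriction of $f_\tau$, and then to exploit that on the cells $\mathcal{K}_I$ the $X_n$-component of the structure map factors through $\Delta^I$ (\autoref{def:KI}): the marked vertical segment $(\kappa,\,\cdot\,)\to(\kappa,n)$ of $\rho_\sigma$ is therefore collapsed by $f_\sigma$ onto a single vertex of $X_n$. Because $\tau$ is a \emph{proper} face already carrying the descent recorded in $\sf{B}(\tau)$, this collapse reproduces vertices already traversed by $\rho_\tau$, so that $f_\sigma\circ\rho_\sigma$ factors as a degeneracy onto a face $\rho\subseteq\sf{B}(\tau)$; the invariance of terminal size $\nu_{\sf{B}(\sigma)}=\nu_\sigma$ guarantees that this target face indeed lies inside $\sf{B}(\tau)$.

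The real difficulty of (iii) is the bookkeeping, not any single idea: one must match the anterior, recumbent, and plumb vertices of $\rho_\sigma$ against the corresponding segments of $\rho_\tau$ under the inclusion $S$, and check that deleting the vertices of $\rho_\tau$ outside $S$ introduces no new nondegenerate direction. I would therefore organize the argument as a case distinction, parallel to the trichotomy used in the proof of \autoref{lem:keylemX}, according to whether the first index at which $\overline{\sigma}$ attains the value $n$ is inherited directly from $\tau$ or is created only after deleting vertices; in each case one identifies the collapsing edge explicitly and exhibits $\rho$ as the evident face of $\sf{B}(\tau)$.
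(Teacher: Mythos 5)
Your setup for conditions i) and ii) is sound and matches the paper: the composite $\pi=p\circ v\colon \mathcal{L}_X\to\Delta^n$ has the right lifting property against $\bS$-anodyne maps (trivial fibration followed by a $\bS$-fibration), the prescriptions $\widetilde{f}_\sigma$ and $f_{d_j(\sigma)}$ glue to a map on $\mathcal{X}_\sigma^\uparrow$ by the inductive hypotheses, and any lift along the $\bS$-anodyne inclusion $\mathcal{X}_\sigma^\uparrow\to\mathcal{Z}_\sigma$ satisfies i) and ii). The gap is in your treatment of iii). You choose an \emph{arbitrary} lift and then propose to \emph{verify} that $\sf{B}(\sigma)=f_\sigma\circ\rho_\sigma$ is degenerate, arguing that the factorization of the structure maps through the cells $\mathcal{K}_I$ forces the vertical segment of $\rho_\sigma$ to collapse. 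But this cannot work: $\rho_\sigma$ is precisely the smallest essential simplex of $\mathcal{Z}_\sigma$ that does \emph{not} factor through $\mathcal{X}_\sigma^\uparrow$ (it hits every first coordinate $j\in[k]$, so it lies in no $\mathcal{Z}_{d_j(\sigma)}$ and in no $\mathcal{K}_I$ unless $\sigma$ factors through $\mathcal{M}_X$). Hence the value $f_\sigma(\rho_\sigma)$ is not determined by the prescribed data at all; only its intersection with $\mathcal{X}_\sigma^\uparrow$ is. An arbitrary solution of the lifting problem is free to send $\rho_\sigma$ to a non-degenerate simplex, and then iii) simply fails. Condition iii) is a property of the \emph{choice} of lift, not a consequence of i) and ii).

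The repair is to build the degeneracy into the extension \emph{before} lifting, which is what the paper does. One first needs the capsule machinery (\autoref{lem:containment_nu_enough}, \autoref{lem:tau12}, \autoref{cor:max_good_simp}) to know that when $\sigma$ is encapsulated there is a unique minimal $\tau$ with $\sigma\subseteq\sf{B}(\tau)$, $\tau\subsetneq\sigma$, and then the lemma preceding \autoref{cor:Can_force_Bsigma_degenerate} to show that the already-prescribed map on $\mathcal{R}_\sigma=\rho_\sigma^{-1}(\mathcal{X}_\sigma^\uparrow)$ is degenerate on the relevant vertices, so that it factors through a degeneracy $s_\alpha(\gamma)$ of a face $\gamma\subseteq\sf{B}(\tau)$. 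This allows one to extend $\widetilde{f}_\sigma^\uparrow$ over $\mathcal{Y}_\sigma^\uparrow:=\mathcal{X}_\sigma^\uparrow\cup\sf{B}(\sigma)$ by explicitly declaring $f_\sigma(\rho_\sigma)=s_\alpha(\gamma)$, and only then lift along $\mathcal{Y}_\sigma^\uparrow\to\mathcal{Z}_\sigma$, which is still $\bS$-anodyne by the argument of \autoref{lem:keylemX}. Your proposal contains neither the two-case split (encapsulated versus not) nor any argument that the prescribed boundary data is compatible with the forced degenerate value, and without those ingredients the induction does not close: property iii) at stage $k$ is needed (via the degeneracy lemma) to make the stage-$(k+1)$ choices, so it must be arranged, not checked after the fact.
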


\begin{remark}\label{rmk:terminal_sizes_agree}
	Of key importance to our argument is the fact that, if $\sigma\subseteq \sf{B}(\tau)$ and $\tau\subsetneq \sigma$, then $\overline{\sigma}^{-1}(n)$, $\overline{\tau}^{-1}(n)$, and $\overline{\sf{B}(\tau)}^{-1}(n)$ have the same cardinality, and $\sigma$, $\tau$, and $\sf{B}(\sigma)$ agree on corresponding simplex.
\end{remark}

\begin{definition}
	Set $\Xi_k=\set{\sigma: \Delta^r \to \mathcal{L}_X \given r\leq k}$. We will call a collection 
	\[
	\scr{I}:=\set{f_\sigma:\mathcal{Z}_\sigma\to \mathcal{L}_X}_{\sigma\in \Xi_k}
	\]
	a \emph{compatible $k$-collection} if it satisfies conditions i), ii), and iii) above.
\end{definition}

\begin{lemma}\label{lem:containment_nu_enough}
	Let $\scr{I}_{k-1}$ be a compatible $(k-1)$-collection, and let $\sigma:\Delta^k\to \mathcal{L}_X$ such that $\sigma$ does not factor through $\mathcal{M}_X$. Suppose that there is a simplex $\tau:\Delta^s\to \mathcal{L}_X$ with $s<k$ such that
	\begin{itemize}
		\item There is an inclusion $\sigma \subset \sf{B}(\tau)$.
		\item The terminal sizes agree, i.e. $\nu_\sigma=\nu_{\sf{B}(\tau)}$. 
	\end{itemize}
	Then there is a subsimplex $\gamma\subset \tau$ such that 
	\begin{enumerate}
		\item There is an inclusion $\gamma\subsetneq \sigma$
		\item There is an inclusion $\sigma \subset \sf{B}(\gamma)$
		\item The terminal sizes $\nu_\sigma$ and $\nu_{\sf{B}(\gamma)}$ agree.
	\end{enumerate}
\end{lemma}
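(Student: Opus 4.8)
The plan is to read off the combinatorial shape of $\sf{B}(\tau)$ and track how $\sigma$ sits inside it. Recall that $\sf{B}(\tau)=f_\tau\circ\rho_\tau$ is the image of the maximal staircase $\rho_\tau\colon\Delta^{s+\ell}\to\mathcal{Z}_\tau$, whose vertices fall into three families: the \emph{graph vertices} $(j,\overline{\tau}(j))$ with $0\le j<\kappa_{\overline{\tau}}$ (all of height $<n$); the \emph{climb vertices} $(\kappa_{\overline{\tau}},y)$ with $\overline{\tau}(\kappa_{\overline{\tau}}-1)\le y<n$; and the \emph{terminal vertices} $(x,n)$ with $\kappa_{\overline{\tau}}\le x\le s$. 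Since $v$ is a bijection on $0$-simplices, I would identify the vertices of $\sigma$ with a subset of those of the staircase and decompose it as $A\sqcup B\sqcup C$ according to these three families. The hypothesis $\nu_\sigma=\nu_{\sf{B}(\tau)}$ says precisely that $\sigma$ retains every height-$n$ vertex, i.e. $C$ is the full family of terminal vertices.

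The two remaining hypotheses are exactly what is needed to pin down $A$ and $B$. First a dimension count: the number of graph vertices is $\kappa_{\overline{\tau}}$ and $|C|=\nu_{\sf{B}(\tau)}=s-\kappa_{\overline{\tau}}+1$, so from $|A|+|B|+|C|=k+1$ and $|A|\le\kappa_{\overline{\tau}}$ I get $|B|\ge(k+1)-(s+1)=k-s\ge 1$ because $s<k$; hence $B\ne\varnothing$. Second, I claim $A\ne\varnothing$: if $\sigma$ used no graph vertex, then all of its vertices would lie in $\mathcal{K}_I$ for $I=\{\kappa_{\overline{\tau}},\dots,s\}$ the terminal set (the climb vertices have first coordinate $\kappa_{\overline{\tau}}\in I$ and height $\ge\overline{\tau}(0)$, and the terminal vertices have first coordinate in $I$), so $\sigma$ would factor through $X_n\times(\Delta^n)^\diamond$ and hence through $\mathcal{M}_X$, contrary to assumption. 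With both $A,B\ne\varnothing$ in hand I define $\gamma$ to be the common face of $\tau$ and $\sigma$ on the vertices $A\cup C$. By construction $\gamma\subset\tau$, and $\gamma\subsetneq\sigma$ since $B\ne\varnothing$; this gives (1).

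For (2) and (3) I would show that $\sf{B}(\gamma)$ reconstructs enough of $\sf{B}(\tau)$ to swallow $\sigma$. Because $\gamma$ keeps every terminal vertex, it is obtained from $\tau$ by deleting graph vertices only, each deletion preserving the property that the last vertex maps to $n$; iterating condition (ii) of the compatible $(k-1)$-collection therefore identifies $f_\gamma$ with the restriction of $f_\tau$ along the induced map $\mathcal{Z}_\gamma\to\mathcal{Z}_\tau$, while condition (i) guarantees that both agree with $\widetilde{f}_{(-)}$, and hence with the $\theta$-data, on the climb. Now $A\ne\varnothing$ gives $\kappa_{\overline{\gamma}}=|A|\ge 1$, so $\sf{B}(\gamma)$ is defined; its climb occupies the column of $\gamma$ that maps to column $\kappa_{\overline{\tau}}$ of $\mathcal{Z}_\tau$ and runs over every height from $\overline{\gamma}(\kappa_{\overline{\gamma}}-1)$ up to $n$. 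Since $\overline{\gamma}(\kappa_{\overline{\gamma}}-1)\le\overline{\tau}(\kappa_{\overline{\tau}}-1)$ while staying $\ge\overline{\tau}(0)$, this climb is a legitimate path of $\mathcal{Z}_\tau$ containing every climb vertex of $B$; together with $A$ on the graph part and $C$ on the top row, all vertices of $\sigma$ then lie on $\sf{B}(\gamma)$, and the staircase order restricts to the order of $\sigma$, giving $\sigma\subset\sf{B}(\gamma)$. Finally $\nu_{\sf{B}(\gamma)}=\nu_\gamma=|C|=\nu_\sigma$, since $\sf{B}$ preserves terminal size, which is (3).

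I expect the genuine obstacle to be (2): checking that the climb of $\sf{B}(\gamma)$ contains the climb vertices $B$ \emph{as simplices of $\mathcal{L}_X$}, not merely after projection to $\Delta^n$. This is where the compatibility of the collection must be used with care, to guarantee that $f_\gamma$ and $f_\tau$ send the matching column-$\kappa_{\overline{\tau}}$ vertices to the same points of $\mathcal{L}_X$. Everything else is bookkeeping on the poset $Z_\tau$ together with the two cardinality arguments that force $A,B\ne\varnothing$.
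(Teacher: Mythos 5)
Your proof is correct and follows the same route as the paper: your $\gamma$, the common face of $\tau$ and $\sigma$ on the vertex set $A\cup C$, is exactly the paper's choice $\tau\cap\sigma$ (the maximal common subsimplex), since the climb vertices $B$ are precisely the vertices of $\sigma$ not shared with $\tau$. The paper dismisses conditions (1), (3) as immediate and (2) as ``an easy check,'' whereas you supply the details --- the cardinality arguments forcing $A,B\neq\varnothing$, and the use of compatibility condition (ii) together with the $\theta$-data to match the climbs of $\sf{B}(\gamma)$ and $\sf{B}(\tau)$ in $\mathcal{L}_X$ --- all of which are sound.
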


\begin{proof}
	One need only restrict to the maximal sub-simplex $\tau\cap\sigma$ that factors through $\tau$ and $\sigma$ both. Conditions (1) and (3) are immediate, and it is an easy check to see that $\sigma\subset \sf{B}(\tau\cap \sigma)$. 
\end{proof}

\begin{lemma}\label{lem:tau12}
	Let $\scr{I}_{k-1}$ be a compatible $(k-1)$-collection. Let $\sigma:\Delta^k \to \mathcal{L}_X$ and assume that $\sigma$ does not factor through $\mathcal{M}_X$. Let us suppose there exists a pair $\tau_i:\Delta^{s_i} \to \mathcal{L}_X$ with $s_i <k$ for $i=1,2$; such that $\sigma \subseteq \sf{B}(\tau_i)$ and $\tau_i \subsetneq \sigma$. Then $\tau_1 \subseteq \tau_2$ or $\tau_2 \subseteq \tau_1$.
\end{lemma}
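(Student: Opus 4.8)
Looking at this lemma, I need to prove that if two faces $\tau_1, \tau_2$ of $\sigma$ each satisfy $\sigma \subseteq \sf{B}(\tau_i)$ and $\tau_i \subsetneq \sigma$, then they are nested. Let me think through the structure before writing.

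The plan is to exploit the explicit combinatorial description of $\sf{B}(\tau)$ from \autoref{defn:Bsigma}, which presents $\sf{B}(\tau)$ via the path $\rho_\tau$ through the poset $Z_\tau \subseteq [s]\times[n]$. The key observation is that $\sf{B}(\tau)$ is determined by $\overline{\tau}$ together with the fundamental vertex $\kappa_{\overline{\tau}}$ and terminal size data; the containment $\sigma\subseteq \sf{B}(\tau_i)$ forces $\overline{\sigma}$ to be reconstructible from $\overline{\tau_i}$. First I would use \autoref{rmk:terminal_sizes_agree} and \autoref{lem:containment_nu_enough}: replacing each $\tau_i$ by $\tau_i\cap\sigma$ if necessary, I may assume the terminal sizes $\nu_{\tau_i}$, $\nu_\sigma$, and $\nu_{\sf{B}(\tau_i)}$ all coincide, so that $\sigma$ and $\sf{B}(\tau_i)$ agree on the vertices mapping to $n$. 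This pins down $\kappa_{\overline{\tau_i}}$ in terms of $\sigma$.

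The main argument is then to show that each $\tau_i$ is \emph{recovered} from $\sigma$ in a canonical way. Unravelling \autoref{defn:Bsigma}: the simplex $\sf{B}(\tau)$ consists of an ``initial block'' $\{(j,\overline{\tau}(j))\}_{j<\kappa_{\overline{\tau}}}$ lying on the graph of $\overline{\tau}$, a ``vertical run'' at first-coordinate $\kappa_{\overline{\tau}}$, and a ``terminal block'' $\{(j-\ell, n)\}$ at height $n$. Since $\sigma\subseteq \sf{B}(\tau_i)$, the vertices of $\sigma$ are selected from these three regions. The crucial point is that $\tau_i$ is exactly the image of $\sf{B}(\tau_i)$ under the collapse that forgets the vertical run — equivalently, $\tau_i$ is determined by which vertices of $\sigma$ lie strictly below height $n$ (the initial block, recording $\overline{\tau_i}$ before $\kappa$) together with the terminal block. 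Because $\sigma$ and both $\sf{B}(\tau_i)$ share the same set of vertices at height $n$ (equal terminal sizes), the data distinguishing $\tau_1$ from $\tau_2$ is entirely the initial-block portion, i.e. the restriction of each $\overline{\tau_i}$ to its pre-$\kappa$ vertices. I would then compare the fundamental vertices: since $\sigma\subseteq \sf{B}(\tau_1)\cap \sf{B}(\tau_2)$ and both have the same terminal structure matching $\sigma$, the values $\kappa_{\overline{\tau_1}}$ and $\kappa_{\overline{\tau_2}}$ are comparable, and whichever is smaller yields the smaller initial block, forcing containment.

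Concretely, I would argue: write $\sigma=\set{(x_i,y_i)}$ inside $\sf{B}(\tau_1)$ and also inside $\sf{B}(\tau_2)$, and set $m_i=\kappa_{\overline{\tau_i}}$. Without loss of generality $m_1\leq m_2$. On vertices of $\sigma$ with first coordinate strictly below $m_1$, both $\sf{B}(\tau_1)$ and $\sf{B}(\tau_2)$ coincide with the graph of $\overline{\sigma}$, which reconstructs a common initial segment; on vertices at or above the fundamental vertex, the terminal-block matching (via equal $\nu$) forces $\tau_1$'s defining data to be a \emph{subset} of $\tau_2$'s, precisely because $\tau_1$ collapses a longer prefix into its fundamental vertex. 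Thus $\tau_1\subseteq \tau_2$. The main obstacle I anticipate is bookkeeping the identification of $\tau_i$ as a sub-simplex of $\sigma$ through the collapse map $\rho_{\tau_i}$ — one must verify that the vertices of $\sigma$ mapping below height $n$ genuinely recover the pre-$\kappa$ graph of $\overline{\tau_i}$ injectively, so that "smaller $\kappa$ gives a contained vertex set" is a legitimate inference rather than an artifact of the vertical run. Once that injectivity is established the nesting is forced, and I would close by noting the symmetric case $m_2\leq m_1$ is identical.

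\begin{proof}
  By \autoref{lem:containment_nu_enough}, after replacing each $\tau_i$ by $\tau_i\cap\sigma$ we may assume that the terminal sizes agree: $\nu_{\tau_i}=\nu_\sigma=\nu_{\sf{B}(\tau_i)}$ for $i=1,2$. In particular, by \autoref{rmk:terminal_sizes_agree} the simplices $\sigma$, $\tau_i$, and $\sf{B}(\tau_i)$ all agree on the vertices lying over $n$. Recall from \autoref{defn:Bsigma} that $\sf{B}(\tau_i)$ is the image of the path $\rho_{\tau_i}$, which splits the vertices of $\sf{B}(\tau_i)$ into an initial block lying on the graph of $\overline{\tau_i}$ (at first coordinates $j<\kappa_{\overline{\tau_i}}$), a vertical run at first coordinate $\kappa_{\overline{\tau_i}}$, and a terminal block at height $n$. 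The composite collapse forgetting the vertical run recovers $\tau_i$ from $\sf{B}(\tau_i)$; equivalently, $\tau_i$ is reconstructed from $\sf{B}(\tau_i)$ by the values of $\overline{\tau_i}$ on its pre-$\kappa$ vertices together with the common terminal block.

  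Set $m_i=\kappa_{\overline{\tau_i}}$ and, without loss of generality, assume $m_1\leq m_2$. Since $\sigma\subseteq \sf{B}(\tau_i)$ for both $i$, each vertex of $\sigma$ of height strictly below $n$ lies in the initial block of $\sf{B}(\tau_i)$ and hence on the graph of $\overline{\tau_i}$; these vertices therefore determine $\overline{\tau_i}$ on an initial segment injectively. As $\sigma$ and both $\sf{B}(\tau_i)$ share the same vertices at height $n$ (equal terminal sizes), the data distinguishing $\tau_1$ from $\tau_2$ is precisely the pre-$\kappa$ portion of $\overline{\tau_i}$ recorded by these below-$n$ vertices of $\sigma$. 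Because $m_1\leq m_2$, the prefix of $\overline{\sigma}$ collapsed into the fundamental vertex of $\tau_1$ is contained in the one collapsed for $\tau_2$, so the set of below-$n$ vertices recording $\tau_1$ is a subset of that recording $\tau_2$, while the terminal blocks coincide. It follows that $\tau_1\subseteq \tau_2$. The case $m_2\leq m_1$ is identical and yields $\tau_2\subseteq \tau_1$, completing the proof.
\end{proof}
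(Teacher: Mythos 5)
Your overall strategy --- recover each $\tau_i$ from $\sigma$ as an initial segment plus the common height-$n$ part, then compare fundamental vertices --- is the same as the paper's, and your final conclusion is the right one. But the central step of your proof is false. You claim that ``each vertex of $\sigma$ of height strictly below $n$ lies in the initial block of $\sf{B}(\tau_i)$ and hence on the graph of $\overline{\tau_i}$.'' By \autoref{defn:Bsigma}, the vertices of $\sf{B}(\tau_i)$ of height $<n$ comprise not only the initial block but also the vertical run at first coordinate $\kappa_{\overline{\tau_i}}$, and the vertical-run vertices strictly below height $n$ are precisely the vertices of $\sf{B}(\tau_i)$ that are \emph{not} vertices of $\tau_i$. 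Since by hypothesis $\tau_i\subsetneq\sigma$ and $\sigma\subseteq\sf{B}(\tau_i)$, the simplex $\sigma$ must contain at least one such vertical-run vertex, so your claim fails in \emph{every} instance of the lemma. Worse, if your claim were true, then together with the agreement of the height-$n$ parts it would give $\sigma\subseteq\tau_i$, contradicting $\tau_i\subsetneq\sigma$: your argument is inconsistent with the hypotheses it starts from, and the later sentence about ``the set of below-$n$ vertices recording $\tau_1$'' being a proper subset tacitly contradicts the claim as well.

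The dichotomy your claim erases is exactly the content of the lemma: the below-$n$ vertices of $\sigma$ split into the initial-block vertices of $\sf{B}(\tau_i)$ (which are vertices of $\tau_i$ and occupy the first $\kappa_{\overline{\tau_i}}$ positions of $\sigma$) and the vertical-run vertices (which are not vertices of $\tau_i$ and occupy the positions between $\kappa_{\overline{\tau_i}}$ and $\kappa_{\overline{\sigma}}$). Establishing this splitting is where the paper uses the hypothesis that $\sigma$ does not factor through $\mathcal{M}_X$ --- a hypothesis you state but never invoke: every simplex of $\mathcal{Z}_{\tau_i}$ supported on vertices with first coordinate $\geq\kappa_{\overline{\tau_i}}$ factors through $\mathcal{M}_X$ (via the $\mathcal{K}_I$ of \autoref{def:KI}), so the non-factoring hypothesis forces the initial vertex of $\sigma$ into the initial block and aligns the initial blocks of $\sigma$ and $\tau_i$. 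Once this is in place one gets $\tau_i=\sigma|_{[0,\kappa_{\overline{\tau_i}})\cup\overline{\sigma}^{-1}(n)}$, and comparing $\kappa_{\overline{\tau_1}}$ with $\kappa_{\overline{\tau_2}}$ yields the nesting; your closing sentences gesture at this, but their justification rests on the false claim and on the phrase ``the prefix of $\overline{\sigma}$ collapsed into the fundamental vertex of $\tau_1$,'' which does not correspond to anything in the construction (nothing of $\sigma$ is collapsed; vertices are deleted). Finally, your opening reduction via \autoref{lem:containment_nu_enough} is vacuous: since $\tau_i\subsetneq\sigma$ by hypothesis, $\tau_i\cap\sigma=\tau_i$, and the terminal-size agreement is already the content of \autoref{rmk:terminal_sizes_agree}.
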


\begin{proof}
	We can partition $\Delta^{s_i}$ into two parts: $\tau_i^{-1}([0,n-1])$ and $\tau_i^{-1}(n)$. We identify each of these with subsets of $\Delta^k$. By \autoref{rmk:terminal_sizes_agree}, $\tau_1^{-1}(n)=\tau_2^{-1}(n)=\sigma^{-1}(n)$. Since $\sigma$ does not factor through $\mathcal{M}_X$, the initial vertex of $\sigma$ must factor through each $\tau_i$. Since the only vertices $j$ of $\sf{B}(\tau_i)$ which are not vertices of $\tau_i$ satisfy with $j\geq \kappa_{\tau_i}$, we see that for $j\in [k]$ such that $j<\kappa_{\tau_i}$, $\sigma(j)$ must factor through $\tau_i$. 
	
	Thus, we see that $\tau_i$ obtained from $\sigma$ by deleting the vertices $j\in [k]$ such that $\kappa_{\tau_i}<j<\kappa_\sigma$. Thus, if $\kappa_{\tau_1}\leq \kappa_{\tau_2}$, then $\tau_1\subseteq \tau_2$. 
\end{proof}

\begin{corollary}\label{cor:max_good_simp}
	Let $\scr{I}_{k-1}$ be a compatible $(k-1)$-collection, and let $\sigma: \Delta^k\to \mathcal{L}_X$ be a simplex that does not factor through $\mathcal{L}_X$. If there is any $\tau:\Delta^s\to \mathcal{L}_X$ with $\sigma\subseteq \sf{B}(\tau)$ and $\tau\subsetneq \sigma$, then there is a unique minimal such simplex. Moreover, if there is such a simplex $\tau$ with $\sigma=\sf{B}(\tau)$, then one such simplex is minimal.
\end{corollary}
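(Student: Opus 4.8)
The plan is to derive both assertions from the structural description of the set
$T=\set{\tau \given \tau\subsetneq\sigma,\ \sigma\subseteq \sf{B}(\tau)}$
furnished by \autoref{lem:tau12} and its proof. By hypothesis $T$ is nonempty, and since each $\tau\in T$ is a proper face of the $k$-simplex $\sigma$, the set $T$ is finite. (Here I read the standing hypothesis as ``$\sigma$ does not factor through $\mathcal{M}_X$'', which is what the cited lemmas require.)

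For the first assertion I would invoke \autoref{lem:tau12} verbatim: since $\sigma$ does not factor through $\mathcal{M}_X$ and every $\tau\in T$ has dimension $s<k$, any two elements $\tau_1,\tau_2\in T$ satisfy $\tau_1\subseteq\tau_2$ or $\tau_2\subseteq\tau_1$. Thus $T$ is a finite nonempty chain for the face order, and hence has a unique minimal element $\tau_{\min}=\bigcap_{\tau\in T}\tau$. This settles existence and uniqueness.

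For the moreover clause, suppose $\tau'\in T$ satisfies $\sf{B}(\tau')=\sigma$; the claim reduces to showing that nothing lies strictly below $\tau'$ in the chain $T$, so that $\tau'=\tau_{\min}$. I would use the description from the proof of \autoref{lem:tau12}: each $\tau\in T$ is obtained from $\sigma$ by deleting exactly the block of vertices $\set{j\given \kappa_\tau<j<\kappa_\sigma}$ lying strictly between a chosen vertex $\kappa_\tau$ and the fundamental vertex $\kappa_\sigma$ of $\sigma$, so that the face order on $T$ corresponds to the order on the $\kappa_\tau$, and $\tau_{\min}$ is the element with smallest $\kappa_\tau$. From \autoref{defn:Bsigma} one computes $\dim\sf{B}(\tau)=\dim\tau+\ell_\tau$ with $\ell_\tau=n-\overline{\sigma}(\kappa_\tau)$, so that $\sf{B}(\tau)=\sigma$ is equivalent to the numerical identity $\kappa_\sigma-\kappa_\tau-1=n-\overline{\sigma}(\kappa_\tau)$, i.e. the deleted block and the inserted plumb column have equal length. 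To rule out a strictly smaller $\tau_0\in T$ (one with $\kappa_{\tau_0}<\kappa_{\tau'}$), I would argue as follows: the vertices of $\sigma$ in positions $\kappa_{\tau_0}<j\le\kappa_{\tau'}$ are anterior (pre-fundamental) vertices of $\sigma=\sf{B}(\tau')$, and hence genuine base vertices coming from $X|_{\Delta^{n-1}}\hookrightarrow X$; but for $\tau_0$ these same vertices fall inside the deleted block and would therefore have to be realised in the plumb column of $\sf{B}(\tau_0)$, every simplex of which factors through $u\colon \mathcal{M}_X\to\mathcal{L}_X$ by the second clause of \autoref{rem:ftilde}. Since $\sigma$ does not factor through $\mathcal{M}_X$, this forces $\sigma\not\subseteq\sf{B}(\tau_0)$, so $\tau_0\notin T$; thus $\tau'$ is minimal.

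The main obstacle I anticipate is precisely this last containment analysis. A bare dimension count does not suffice, because $\overline{\sigma}$ may increase by more than one across the relevant indices, so that the inequality $\dim\sf{B}(\tau_0)\ge k$ can hold; the genuine obstruction has to come from the marking, scaling and factorization-through-$\mathcal{M}_X$ data rather than from cardinalities alone. Making the slogan ``a genuine base portion of $\sigma$ cannot sit in the fibre-cone plumb column of $\sf{B}(\tau_0)$'' into a rigorous statement — by tracking, via the explicit $\rho_{\tau_0}$ of \autoref{defn:Bsigma} and the marked edges of $\mathcal{Z}_{\tau_0}$, exactly which simplices of $\sigma$ are permitted to lie over the plumb column — is where the real work lies. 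Once this incompatibility is established, the corollary follows formally, and I would also note in passing that it yields uniqueness of the equality witness, consistent with the uniqueness of $\tau_{\min}$ from the first part.
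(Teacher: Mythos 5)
Your handling of the first assertion is correct and identical to the paper's: \autoref{lem:tau12} makes the set $T$ of admissible $\tau$'s a finite nonempty chain under inclusion, so a unique minimal element exists. The genuine gap is in the ``moreover'' clause, and you acknowledge it yourself (``where the real work lies''); worse, the strategy you sketch for closing it cannot be made to work. Your intended contradiction is with ``$\sigma$ does not factor through $\mathcal{M}_X$'', but a containment $\sigma\subseteq\sf{B}(\tau_0)$ does not place $\sigma$ inside $\mathcal{M}_X$. By \autoref{rem:ftilde}, only the simplices of $\mathcal{Z}_{\tau_0}$ lying in the pieces $\mathcal{K}_I\subseteq\mathcal{X}_{\tau_0}$ (in particular those contained entirely in the plumb column) are sent through $u:\mathcal{M}_X\to\mathcal{L}_X$; the ``mixed'' faces of $\sf{B}(\tau_0)$ joining anterior vertices to plumb-column vertices lie outside $\mathcal{X}^{\uparrow}_{\tau_0}$ and are produced by the uncontrolled anodyne extension $f_{\tau_0}$ of \autoref{lem:keylemX}, so nothing forces them into $\mathcal{M}_X$. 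Hence $\sigma\subseteq\sf{B}(\tau_0)$ only forces certain \emph{proper faces} of $\sigma$ through $\mathcal{M}_X$, which is no contradiction ($u$ is even bijective on vertices). For the same reason your dichotomy ``genuine base vertices versus plumb-column vertices'' is not a dichotomy: the plumb-column vertices of $\sf{B}(\tau_0)$ lie over heights $<n$, so they are themselves vertices of $X|_{\Delta^{n-1}}$. The marking/scaling/$\mathcal{M}_X$-factorization data is the wrong invariant for this step.

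The paper's actual argument is purely combinatorial and uses only the projection to $\Delta^n$. Given $\rho\subseteq\tau$ with $\sf{B}(\tau)\subseteq\sf{B}(\rho)$ (your $\tau_0\subseteq\tau'$ with $\sigma=\sf{B}(\tau')\subseteq\sf{B}(\tau_0)$), one notes $\nu_\tau=\nu_\rho$ and compares ``horizontal steps'': in $\sf{B}(\rho)$ the largest vertex not lying over $n$ whose predecessor lies over the same vertex of $\Delta^n$ is exactly the vertex in position $\kappa_{\overline{\rho}}$ of \autoref{defn:Bsigma}, since plumb-column heights strictly increase and exceed all anterior heights; the analogous vertex of $\sf{B}(\tau)$, viewed inside $\sf{B}(\rho)$, is forced to coincide with it. Once the horizontal steps agree, the anterior parts, plumb columns and terminal parts of $\sf{B}(\tau)$ and $\sf{B}(\rho)$ all agree, and a dimension count gives $\sf{B}(\tau)=\sf{B}(\rho)$, hence $\tau=\rho$. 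Your numerical identity is the dimension-count ingredient (note the off-by-one: $\ell_{\tau'}=n-\overline{\sigma}(\kappa_{\tau'}-1)$, not $n-\overline{\sigma}(\kappa_{\tau'})$, so the correct identity is $\ell_{\tau'}=\kappa_\sigma-\kappa_{\tau'}$), but without the horizontal-step comparison it cannot by itself rule out a smaller $\tau_0$, as you correctly suspected.
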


\begin{proof}
	The first statement is an immediate consequence of the previous lemma. To prove the second claim suppose that we have $\rho \subseteq \tau$ such that $\sf{B}(\tau) \subseteq \sf{B}(\rho)$. First we observe that $\nu_\tau =\nu_\rho$. Let $\mu_\rho$ be the biggest element of $\sf{B}(\rho)$ that does not lie over $n$ and such that $\mu_\rho$ and $\mu_\rho -1$ lie over the same vertex of $\Delta^n$. We similarly define $\mu_\tau$ as the biggest element of $\sf{B}(\rho)$ contained in $\sf{B}(\tau)$ satisfying the analogous property as before. Note that such elements always exist by construction. An easy argument then shows that $\mu_\tau=\mu_\rho$ and our claim follows from dimension counting.
\end{proof}

\begin{definition}
	Suppose given a compatible $(k-1)$-collection $\scr{I}_{k-1}$ and $\sigma:\Delta^k\to \mathcal{L}_X$. If it exists, we call the minimal simplex of \autoref{cor:max_good_simp} the \emph{capsule} of $\sigma$. We say that $\sigma$ is \emph{encapsulated} if it admits a capsule. 
\end{definition}

There is one final fact to establish: that there is a way of choosing a compatible degeneracy to ensure condition iii). Given $\sigma: \Delta^k\to \mathcal{L}_X$ which does not factor through $\mathcal{M}_X$, we denote by $\mathcal{R}_\sigma$ the pullback 
\[
\begin{tikzcd}
	\mathcal{R}_\sigma \arrow[r]\arrow[d] & \Delta^{k+\ell}\arrow[d,"\rho_\sigma"] \\
	\mathcal{X}_\sigma^\uparrow \arrow[r] & \mathcal{Z}_\sigma 
\end{tikzcd}
\]
Given a compatible $(k-1)$-collection $\scr{I}_{(k-1)}$, the compatibilities (1) and (2) allow us to define a map 
\[
\func{
	\tilde{f}_\sigma^\uparrow: \mathcal{X}_\sigma^\uparrow \to \mathcal{L}_X 
}
\]
for each $\sigma:\Delta^k\to \mathcal{L}_X$ which 
extends $\tilde{f}_\sigma$, and which agrees with $f_{d_j(\sigma)}$ for each face operator $d_j$ such that $d_j(\sigma)(k-1)=n$. 

\begin{lemma}
	Let $\scr{I}_{k-1}$ be a compatible $(k-1)$-collection, and suppose that $\sigma:\Delta^k\to \mathcal{L}_X$ is encapsulated with capsule $\tau$. Then for each $\zeta:\Delta^r\to \mathcal{R}_\sigma$ such that $\zeta$ hits both $\rho_\sigma(\kappa_\sigma-1)$ and $\rho_\sigma(\kappa_\sigma)$, then $\widetilde{f}_\sigma^\uparrow(\zeta)$ is degenerate on those vertices. 
\end{lemma}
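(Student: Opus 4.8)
The plan is to reduce the statement to a single edge and then pin that edge down using the capsule. Write $\kappa=\kappa_{\overline\sigma}$, $y_0=\overline\sigma(\kappa-1)$ and $\xi=\hat\sigma(\kappa)\in X_n$. The two distinguished vertices are the \emph{corner} vertices of $\rho_\sigma$ from \autoref{defn:Bsigma}, namely $\rho_\sigma(\kappa-1)=(\kappa-1,y_0)$ and $\rho_\sigma(\kappa)=(\kappa,y_0)$; they are consecutive in $[k+\ell]$ and both lie over $y_0$, so the edge $e$ they span projects to a degenerate edge of $\Delta^n$. Since a simplex of $\mathcal{L}_X$ is degenerate on a pair of adjacent vertices exactly when its restriction to the edge they span is a degenerate $1$-simplex, it suffices to show that $\widetilde{f}_\sigma^\uparrow$ carries $e$ to a degenerate edge; the assertion for every $\zeta\in\mathcal{R}_\sigma$ hitting both vertices then follows at once.

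The first substantive step is to identify the image of the source vertex. By the explicit description of capsules in \autoref{lem:tau12} and \autoref{cor:max_good_simp}, $\tau$ is obtained from $\sigma$ by deleting a non-empty block of vertices lying strictly below $\kappa$; in particular $\hat\sigma(\kappa-1)$ is \emph{not} a vertex of $\tau$. Now $\sf{B}(\tau)$ is built from $\tau$ by inserting, at its fundamental vertex $\xi$, the vertical climb column $\{(\kappa_\tau,y):y_0\le y\le n\}$, and $\sigma\subseteq\sf{B}(\tau)$ with $\hat\sigma(\kappa)=\xi$ the first vertex over $n$ (\autoref{rmk:terminal_sizes_agree}). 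Hence the predecessor $\hat\sigma(\kappa-1)$, being over $y_0$ and not lying in $\tau$, must be the climb vertex of $\sf{B}(\tau)$ over $y_0$. On this climb column $f_\tau$ is computed, through the $\mathcal{K}$-component of \autoref{def:KI} and \autoref{rem:ftilde}, as the $\alpha$-transport of $\xi$; therefore the underlying point of $\hat\sigma(\kappa-1)$ in $X$ equals $\alpha(\xi,y_0)$.

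I would then trace $\widetilde{f}_\sigma^\uparrow$ directly on $e$. The source $\rho_\sigma(\kappa-1)=(\kappa-1,y_0)$ lies on the graph of $\overline\sigma$ (condition i) of \autoref{def:posetX}), so maps to $\sigma(\kappa-1)$, whose underlying point is $\hat\sigma(\kappa-1)=\alpha(\xi,y_0)$. The target $\rho_\sigma(\kappa)=(\kappa,y_0)$ lies in the climb column $x=\kappa$ and is sent, through $\mathcal{K}_I\to\Delta^I\times\Delta^{[\overline\sigma(0),n]}\to(\Delta^n)^\diamond\times X_n\xrightarrow{u}\mathcal{L}_X$, to $u(\xi,y_0)$, again with underlying point $\alpha(\xi,y_0)$. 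Since $v:\mathcal{L}_X\to X$ is a bijection on $0$-simplices, the two images coincide, and $e$ is realized as $u$ applied to the $1$-simplex of $X_n\times(\Delta^n)^\diamond$ whose $X_n$-coordinate is constant equal to $\xi$ and whose $(\Delta^n)^\diamond$-coordinate is constant equal to $y_0$. This $1$-simplex is totally degenerate, hence so is $\widetilde{f}_\sigma^\uparrow(e)$.

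The hard part is the identification $\hat\sigma(\kappa-1)=\alpha(\xi,y_0)$ carried out in the second paragraph: for a general $\sigma$ the corner edge need not collapse, and this equality is forced only by encapsulation. Making it rigorous requires matching the vertices of $\sigma$ with those of $\sf{B}(\tau)$ through the capsule description, and checking that whichever cell of $\mathcal{X}_\sigma^\uparrow$ actually contains $e$ — a $\mathcal{K}_I$ inside $\mathcal{X}_\sigma$, or the image of some $\mathcal{Z}_{d_j\sigma}$ on which $\widetilde{f}_\sigma^\uparrow=f_{d_j\sigma}$ — computes the map by the same $\alpha$-transport, which is exactly what compatibilities (i)--(ii) of \autoref{prop:Compatible_collection} guarantee. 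This is careful bookkeeping with the restriction data rather than new geometric input, but it is the delicate step on which the whole argument rests.
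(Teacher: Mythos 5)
Your opening reduction is where the argument fails. ``Degenerate on those vertices'' means that $\widetilde{f}_\sigma^\uparrow(\zeta)$ factors through the degeneracy operator collapsing that pair of adjacent vertices --- this is the form of the statement that \autoref{cor:Can_force_Bsigma_degenerate} needs in order to produce the factorization of $\widetilde{f}_\sigma^\uparrow|_{\mathcal{R}_\sigma}$ through $s_\alpha(\gamma)$. Your claim that a simplex of $\mathcal{L}_X$ is degenerate on a pair of adjacent vertices \emph{exactly when} its restriction to the edge they span is degenerate is false: in a general simplicial set a simplex can restrict to a degenerate edge without factoring through the corresponding degeneracy, and $\mathcal{L}_X$ enjoys no special property here, since a simplex of $\mathcal{L}_X$ is a simplex $\hat{\zeta}$ of the arbitrary 2-Cartesian fibration $X$ together with restriction data, and it factors through $s_i$ only if $\hat{\zeta}$ itself does. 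So even a complete verification of your edge computation would not prove the lemma; one must argue simplex by simplex. That is exactly what the paper does: since $\zeta$ hits both $\rho_\sigma(\kappa_\sigma-1)$ and $\rho_\sigma(\kappa_\sigma)$ it cannot factor through $\sigma$, hence it factors through some $\mathcal{Z}_{d_j(\sigma)}$ or some $\mathcal{K}_I$; in the $\mathcal{K}_I$ case one checks that \emph{both} coordinates of the factorization $\Delta^r\to \Delta^I\times\Delta^{[\sigma(0),n]}\to \Delta^n\times X_n\xrightarrow{u}\mathcal{L}_X$ are degenerate at the relevant pair (the second by re-factoring through $\Delta^J\times\Delta^{[\tau(0),n]}$ using the capsule), whence the whole simplex is degenerate there.

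The second gap is your treatment of the cells coming from $\mathcal{Z}_{d_j(\sigma)}$. On those cells $\widetilde{f}_\sigma^\uparrow$ agrees with $f_{d_j(\sigma)}$, which was produced in the inductive construction of \autoref{prop:Compatible_collection} as an \emph{arbitrary} solution of a lifting problem; outside $\mathcal{X}_{d_j(\sigma)}^\uparrow$ (where the corner edge and the simplices $\zeta$ containing it generally live) it is not computed by any ``$\alpha$-transport'' formula, and compatibilities i) and ii) --- the only ones you invoke --- give no control over its values there. The only available source of degeneracy is condition iii) of the compatible $(k-1)$-collection, fed through \autoref{lem:containment_nu_enough} (together with \autoref{lem:tau12} and \autoref{cor:max_good_simp}): one verifies containments such as $d_j(\sigma)\subset \sf{B}(d_j(\tau))$ with $\nu_{d_j(\sigma)}=\nu_{\sf{B}(d_j(\tau))}$ when $j\leq \kappa_\tau-1$ or $j>\kappa_\sigma$, or $\tau\subset d_j(\sigma)\subset \sf{B}(\tau)$ when $\kappa_\tau\leq j\leq \kappa_\sigma$, and then cites iii). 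This inductive input is the actual content of the lemma --- it is precisely why the hypothesis that $\scr{I}_{k-1}$ is a \emph{compatible} collection, rather than an arbitrary family of extensions, appears in the statement --- and it is missing from your proposal.
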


\begin{proof}
	Note that our assumption means that $\zeta$ does \emph{not} factor through $\sigma$. 
	
	First suppose that $\zeta$ factors through $\mathcal{Z}_{d_j(\sigma)}$ for $j\leq \kappa_\tau-1$. Then we note that $d_j(\sigma)\subset \sf{B}(d_j(\tau))$, and $\nu_{d_j(\sigma)}=\nu_{\sf{B}(d_j(\tau))}$, so by \autoref{lem:containment_nu_enough} and the fact that $\scr{I}_{k-1}$ satisfies iii${}^\prime$) we see that $\tilde{f}_\sigma^\uparrow(\zeta)$ is degenerate. An identical argument holds when $j>\kappa_\sigma$.
	
	If $\zeta$ factors through $\mathcal{Z}_{d_j(\sigma)}$ for $\kappa_\tau\leq j\leq \kappa_\sigma$, then $\sigma(j)$ is not in $\tau$. Thus $\tau \subset d_j(\sigma)$, $d_j(\sigma)\subset \sf{B}(\tau)$, and so since $\scr{I}_{k-1}$ satisfies condition iii${}^\prime$), $\zeta$ is degenerate. 
	
	Finally, suppose that $\zeta$ factors through $\scr{K}_I^\sigma$ for some $\theta_I^\sigma\neq \varnothing$. Then $I\cap [s]=J$ has $\theta^\tau_J\neq \varnothing$, and we can factor $\tilde{f}_\sigma^\uparrow(\zeta)$ through $u$ as 
	\[
	\func{\Delta^r\to \Delta^I\times \Delta^{[\sigma(0),n]}\to \Delta^n\times X_n}
	\]
	By construction, the first factor of this simplex is degenerate at the desired vertex. The second factor can be equivalently factored through $\Delta^J\times \Delta^{[\tau(0),n]}$ and thus is degenerate at the desired vertex as well. Thus $\tilde{f}_\sigma^\uparrow(\zeta)$ is degenerate.
\end{proof}

\begin{corollary}\label{cor:Can_force_Bsigma_degenerate}
	Let $\scr{I}_{k-1}$ be a compatible $(k-1)$-collection. Suppose that $\sigma:\Delta^k\to \mathcal{L}_X$ is encapsulated, and let $\tau$ be the capsule for $\sigma$. Then there is a subsimplex $\gamma\subset \sf{B}(\tau)$ and a degeneracy operator $s_\alpha$ such that the diagram 
	\[
	\begin{tikzcd}
		\mathcal{R}_\sigma \arrow[r]\arrow[d] & \Delta^{k+\ell}\arrow[d,"s_\alpha(\gamma)"] \\
		\mathcal{X}_\sigma^\uparrow \arrow[r,"\tilde{f}_\sigma^\uparrow"'] & \mathcal{L}_X 
	\end{tikzcd}
	\] 
	commutes.
\end{corollary}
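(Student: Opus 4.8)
The plan is to read the degeneracy and the simplex $\gamma$ directly off the stutter of $\rho_\sigma$, and then to verify commutativity face-by-face, using the preceding lemma for the faces that meet the collapsed edge and the construction of $\tilde{f}_\sigma^\uparrow$ for the rest. First I would take $s_\alpha$ to be the elementary codegeneracy $s_{\kappa_\sigma-1}\colon\Delta^{k+\ell}\to\Delta^{k+\ell-1}$ that collapses the edge $\rho_\sigma(\kappa_\sigma-1)\to\rho_\sigma(\kappa_\sigma)$, i.e.\ the edge whose endpoints share the second coordinate $\overline{\sigma}(\kappa_\sigma-1)$ and is therefore degenerate over $\Delta^n$. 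The preceding lemma says exactly that $\tilde{f}_\sigma^\uparrow(\zeta)$ is degenerate along this edge for every $\zeta\colon\Delta^r\to\mathcal{R}_\sigma$ hitting both of its endpoints; combined with the simplicial identities $d_{\kappa_\sigma-1}s_{\kappa_\sigma-1}=\id=d_{\kappa_\sigma}s_{\kappa_\sigma-1}$ this forces the composite $\mathcal{R}_\sigma\to\mathcal{X}_\sigma^\uparrow\xrightarrow{\tilde{f}_\sigma^\uparrow}\mathcal{L}_X$ to be constant on the fibres of $s_{\kappa_\sigma-1}$, hence to factor through it. I would then define $\gamma$ as the image under $\tilde{f}_\sigma^\uparrow$ of the face $d_{\kappa_\sigma-1}(\rho_\sigma)$. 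This face does lie in $\mathcal{R}_\sigma$: dropping the vertex $\rho_\sigma(\kappa_\sigma-1)$ removes the value $\kappa_\sigma-1$ from the first coordinate entirely, so by the remark following \autoref{def:Xuparrow} it factors through $\mathcal{Z}_{d_{\kappa_\sigma-1}(\sigma)}$ (note $d_{\kappa_\sigma-1}(\sigma)(k-1)=\overline{\sigma}(k)=n$), and therefore through $\mathcal{X}_\sigma^\uparrow$.

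Second I would identify $\gamma$ with a face of $\sf{B}(\tau)$. Since $\tau$ is the capsule of $\sigma$ we have $\tau\subsetneq\sigma\subseteq\sf{B}(\tau)$ with $\nu_\sigma=\nu_{\sf{B}(\tau)}$, so by \autoref{rmk:terminal_sizes_agree} the simplices $\sigma$, $\tau$, $\sf{B}(\tau)$ all agree on their terminal parts, and (exactly as in the proof of \autoref{lem:tau12}) $\tau$ is obtained from $\sigma$ by deleting the vertices lying strictly between $\kappa_\tau$ and $\kappa_\sigma$; in particular $\tau\subseteq d_{\kappa_\sigma-1}(\sigma)$. By construction $\tilde{f}_\sigma^\uparrow$ agrees with $f_{d_{\kappa_\sigma-1}(\sigma)}$ on $\mathcal{Z}_{d_{\kappa_\sigma-1}(\sigma)}$, so $\gamma=f_{d_{\kappa_\sigma-1}(\sigma)}$ applied to the collapsed chain, and tracing the combinatorics of $\rho_\tau$ against that of $\rho_\sigma$ after the collapse, while transporting the restriction data $\theta_I$ through the maps $\mathcal{K}_I\to(\Delta^n)^{\diamond}\times X_n\xrightarrow{u}\mathcal{L}_X$ of \autoref{rem:ftilde}, exhibits $\gamma$ as the corresponding face of $\sf{B}(\tau)=f_\tau\circ\rho_\tau$. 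With $\gamma$ and $s_\alpha$ in hand, the commutativity of the square is checked on each face $\Delta^S$ of $\mathcal{R}_\sigma$: if $S$ does not contain both $\kappa_\sigma-1$ and $\kappa_\sigma$, then $s_{\kappa_\sigma-1}$ is injective on $S$ and the value $s_{\kappa_\sigma-1}(\gamma)|_{\Delta^S}$ coincides with $\tilde f_\sigma^\uparrow|_{\Delta^S}$ by the defining property of $\gamma$ and conditions i) and ii) of $\scr{I}_{k-1}$; if $S$ contains both, the agreement is supplied by the degeneracy furnished in the preceding lemma.

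The main obstacle is the identification $\gamma\subseteq\sf{B}(\tau)$, not the factorization through $s_{\kappa_\sigma-1}$, which is essentially automatic. Concretely, one must rule out that collapsing the stutter of $\sigma$ yields a genuinely new simplex of $\mathcal{L}_X$ rather than landing inside the already-chosen $\sf{B}(\tau)$; this is precisely where the minimality of the capsule from \autoref{cor:max_good_simp}, the agreement of terminal sizes through \autoref{lem:containment_nu_enough}, and the exact transport of the restriction data $\{\theta_I\}$ via conditions i)–ii) of $\scr{I}_{k-1}$ must be combined. It is this step that genuinely uses that $\tau$ is the capsule of $\sigma$, and not merely some simplex with $\sigma\subseteq\sf{B}(\tau)$ and matching terminal size.
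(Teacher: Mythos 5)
Your overall strategy (collapse the stutter edge via $s_{\kappa_\sigma-1}$ and identify the collapsed simplex with a face of $\sf{B}(\tau)$) is certainly the intended one --- it matches the explicit base case carried out in the proof of \autoref{prop:Compatible_collection} --- but the step you declare ``essentially automatic'' is exactly where the argument has a genuine gap. The preceding lemma only controls simplices of $\mathcal{R}_\sigma$ that hit \emph{both} $\rho_\sigma(\kappa_\sigma-1)$ and $\rho_\sigma(\kappa_\sigma)$, whereas the factorization through $s_{\kappa_\sigma-1}$ requires, for every $\zeta\in\mathcal{R}_\sigma$ containing $\kappa_\sigma-1$ but not $\kappa_\sigma$, the identity $\tilde{f}_\sigma^\uparrow\rho_\sigma(\zeta)=\tilde{f}_\sigma^\uparrow\rho_\sigma(\zeta')$, where $\zeta'$ replaces $\kappa_\sigma-1$ by $\kappa_\sigma$. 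This would follow from the lemma if $\zeta\cup\{\kappa_\sigma\}$ lay in $\mathcal{R}_\sigma$, but in general it does not: take $\zeta$ to be the full graph of $\sigma$ (condition i) of \autoref{def:posetX}), which lies in $\mathcal{R}_\sigma$, contains $\kappa_\sigma-1$, and misses $\kappa_\sigma$ since $\ell\geq 1$. Then $\rho_\sigma(\zeta\cup\{\kappa_\sigma\})$ acquires the off-graph vertex $(\kappa_\sigma,\overline{\sigma}(\kappa_\sigma-1))$ and uses every first coordinate of $[k]$, so it factors through no $\mathcal{Z}_{d_j(\sigma)}$ and no $\mathcal{K}_I$ with $I\neq[k]$, while $\mathcal{K}_{[k]}$ is unavailable because the encapsulated $\sigma$ does not factor through $\mathcal{M}_X$; hence $\zeta\cup\{\kappa_\sigma\}\notin\mathcal{R}_\sigma$ and the lemma says nothing. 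For this $\zeta$ the commutativity of your square reads $\gamma|_{s_{\kappa_\sigma-1}(\zeta)}=\sigma$, which with your choice of $\gamma$ unwinds to $f_{d_{\kappa_\sigma-1}(\sigma)}(\rho_\sigma(\zeta'))=\sigma$, i.e.\ a \emph{specific} face of $\sf{B}(d_{\kappa_\sigma-1}(\sigma))$ must equal $\sigma$. The swapped simplex $\rho_\sigma(\zeta')$ again uses all first coordinates of $d_{\kappa_\sigma-1}(\sigma)$, so (whenever $d_{\kappa_\sigma-1}(\sigma)$ does not factor through $\mathcal{M}_X$) it lies outside $\mathcal{X}^\uparrow_{d_{\kappa_\sigma-1}(\sigma)}$, and therefore conditions i) and ii) of $\scr{I}_{k-1}$ --- which you invoke for precisely this case --- do not determine its image. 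Its value is pinned down only by the finer degeneracy structure of $\sf{B}(d_{\kappa_\sigma-1}(\sigma))$, which is in effect the statement of this corollary one dimension down; that is the compatibility which actually needs proving, and your write-up assumes it.

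A secondary, related problem: your identification $\gamma\subseteq\sf{B}(\tau)$ in the case $\tau\subsetneq d_{\kappa_\sigma-1}(\sigma)$ leans on condition iii), but iii) only asserts that $\sf{B}(d_{\kappa_\sigma-1}(\sigma))$ is degenerate on \emph{some} simplex $\rho\subseteq\sf{B}(\tau)$, fixing neither the degeneracy operator nor the face; a face of a degeneracy of $\rho$ need not be a face of $\rho$ --- it may itself be degenerate --- so your $\gamma$ need not be a subsimplex of $\sf{B}(\tau)$ as claimed, and one must pass to its underlying nondegenerate simplex and allow $s_\alpha$ to be a composite degeneracy operator. In the end both halves of your argument, the factorization and the containment $\gamma\subset\sf{B}(\tau)$, collapse to the same unproven statement about how $\sigma$ sits inside $\sf{B}(\tau)$ and how the previously chosen $f_{d_j(\sigma)}$ behave on the vertical part of $\mathcal{Z}_{d_j(\sigma)}$; when $d_{\kappa_\sigma-1}(\sigma)=\tau$ this can be checked directly by position bookkeeping inside $\sf{B}(\tau)$ (this is the paper's $k=2$ computation), but in general it requires an induction that your proposal does not carry out.
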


With this corollary in hand we can now return to  \autoref{prop:Compatible_collection}. 

\begin{proof}[Proof (of \autoref{prop:Compatible_collection})]
	First let us observe that the choices of $f_\sigma$ for every $\sigma: \Delta^k \to \mathcal{M}_X \subset \mathcal{L}_X$ are already made since $\mathcal{X}_\sigma= \mathcal{Z}_\sigma$. It is also easy to check that the rest of the conditions hold for those choices. Therefore we can restrict our attention to producing the choices for simplices $\sigma:\Delta^k \to \mathcal{L}_X$ that do not factor through $\mathcal{M}_X$.
	
	We will inductively compatible $k$-collections $\scr{I}_k$ for every $k\geq 1$. Before commencing our argument we will make a preliminary definition. Given $\sigma:\Delta^{k} \to \mathcal{L}_X$ we define $\mathcal{Y}^{\uparrow}_\sigma$ to be the subsimplicial subset (with the inherited decorations) of $\mathcal{Z}_\sigma$ whose simplices are those of $\mathcal{X}^{\uparrow}_\sigma$ in addition to the simplex $\sf{B}(\sigma)$. It follows from the argument given in \autoref{lem:keylemX} that the inclusion $\mathcal{Y}^{\uparrow}_\sigma \to \mathcal{Z}_\sigma$ is \bS-anodyne.
	
	For every $e:\Delta^1 \to \mathcal{L}_X$ we fix the choice of $f_e$ which is guaranteed by \autoref{lem:keylemX}. In this ground case, there are no conditions to check. Let us consider a triangle $\sigma:\Delta^2 \to \mathcal{L}_X$. Using the previous choices the can extend the map $\widetilde{f}_\sigma$ to a morphism
	\[
	\func{f^\uparrow_\sigma: \mathcal{X}_\sigma^{\uparrow} \to \mathcal{L}_X}
	\]
	We distinguish now two cases. Suppose that $\sigma$ is not contained in some $\sf{B}(e)$ for $e:\Delta^1 \to \mathcal{L}_X$. Then we define $f_\sigma$ to be an extension of $f^{\uparrow}_\sigma$ to $\mathcal{Z}_\sigma$. If $\sigma \subseteq \sf{B}(e)$ we can assume that $e \subset \sigma$ since otherwise we have $\sigma \in \mathcal{M}_X$. We extend $f^{\uparrow}_\sigma$ to a map $\mathcal{Y}^\uparrow_\sigma \to \mathcal{L}_X$ by sending $\sf{B}(\sigma)$ to the following simplex:  Let $\sigma_e: \Delta^r \to \mathcal{L}_X$ be the simplex obtained by forgetting every vertex $j$ in $\sf{B}(e)$such that $j\leq \kappa_{\overline{\sigma}}-1$ and such that is not in $\sigma$. We can now map $\sf{B}(\sigma)$ to $s_\alpha(\sigma_e)$ where $\alpha=\kappa_{\overline{\sigma}}-1$ and consequently condition $iii)$ is satisfied. This means that we can construct a compatible 1-collection $\scr{I}_1$.
	
	Now suppose we have a compatible $(k-1)$-collection $\scr{I}_{k-1}$. Let $\sigma: \Delta^k\to \mathcal{L}_X$ be a simplex. If $\sigma$ is not encapsulated, then we may define $f_\sigma$ by solving the lifting problem 
	\[
	\begin{tikzcd}
		\mathcal{X}_\sigma^\uparrow \arrow[d] \arrow[r,"\tilde{f}_\sigma^\uparrow"] & \mathcal{L}_X \\
		\mathcal{Z}_{\sigma}
	\end{tikzcd}
	\]
	using \ref{lem:keylemX}. If $\sigma$ is encapsulated with capsule $\tau$, we can use \autoref{cor:Can_force_Bsigma_degenerate} to define a map 
	\[
	\func{
		\mathcal{Y}_\sigma^\uparrow \to \mathcal{L}_X
	}
	\] 
	which sends $\sf{B}(\sigma)$ to the degenerate simplex described in \autoref{cor:Can_force_Bsigma_degenerate}. Solving the corresponding lifting problem yields an $f_\sigma$ satisfying i), ii), and iii). Thus, we can extend $\scr{I}_{k-1}$ to a compatible $k$-collection, as desired. 
\end{proof}

\begin{proposition}
	The cofibration $u:\mathcal{M}_X \to \mathcal{L}_X$ is \textbf{MB}-anodyne.
\end{proposition}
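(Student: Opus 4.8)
The plan is to show that the cofibration $u:\mathcal{M}_X \to \mathcal{L}_X$ is \textbf{MB}-anodyne by filtering $\mathcal{L}_X$ by the simplices that do \emph{not} factor through $\mathcal{M}_X$, attaching them in a controlled order governed by the combinatorial data we have built up. The key observation is that $u$ is a bijection on the restriction to $\Delta^{[0,n-1]}$ and on the fibre over $n$, so every simplex $\sigma:\Delta^k\to\mathcal{L}_X$ not in the image of $u$ has $\overline{\sigma}(k)=n$ without being constant on $n$ and without factoring through $\mathcal{M}_X$. For each such $\sigma$ we have attached, via \autoref{prop:Compatible_collection}, a map $f_\sigma:\mathcal{Z}_\sigma\to \mathcal{L}_X$ restricting to $\widetilde{f}_\sigma$ on $\mathcal{X}_\sigma$, and a distinguished \emph{anchor simplex} $\sf{B}(\sigma)$ of dimension $k+\ell$. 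The heart of the argument is \autoref{lem:keylemX}, which tells us that $\mathcal{X}_\sigma\to \mathcal{X}_\sigma^\uparrow\to \mathcal{Z}_\sigma$ are \textbf{MB}-anodyne; transporting these inclusions along $f_\sigma$ will let us attach $\sf{B}(\sigma)$ (and its faces) to the already-constructed part of $\mathcal{L}_X$ via \textbf{MB}-anodyne pushouts.

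The filtration is organized by two parameters introduced precisely for this purpose: the dimension $k$, and the terminal size $\nu_\sigma = |\overline{\sigma}^{-1}(n)|$. First I would order the non-degenerate simplices $\sigma$ not factoring through $\mathcal{M}_X$ by increasing terminal size $\nu_\sigma$, and within each terminal size, process them so that the relation ``$\sigma\subseteq \sf{B}(\tau)$ with $\tau\subsetneq\sigma$'' is respected. The crucial structural input is \autoref{cor:max_good_simp}: whenever $\sigma$ is \emph{encapsulated} there is a unique minimal capsule $\tau$, and \autoref{cor:Can_force_Bsigma_degenerate} guarantees that $\sf{B}(\sigma)$ can be sent to a degenerate simplex lying inside $\sf{B}(\tau)$. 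This means encapsulated simplices do not require attaching any genuinely new non-degenerate cell — $\sf{B}(\sigma)$ is already present, and attaching $\sigma$ itself amounts to a pushout along $\mathcal{X}_\sigma^\uparrow \to \mathcal{Z}_\sigma$, hence is \textbf{MB}-anodyne by \autoref{lem:keylemX}. For \emph{non}-encapsulated $\sigma$, the simplex $\sf{B}(\sigma)$ is genuinely new, and we attach the whole of $\mathcal{Z}_\sigma$ (carrying $\sigma$ and $\sf{B}(\sigma)$ along $f_\sigma$) by the same lemma.

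Concretely, I would build an increasing sequence of \textbf{MB} simplicial subsets $\mathcal{M}_X = W_0 \hookrightarrow W_1 \hookrightarrow \cdots$, indexed by the pair $(\nu,k)$ lexicographically, where $W$ at stage $(\nu,k)$ is obtained from the previous stage by adjoining, for every $\sigma:\Delta^k\to\mathcal{L}_X$ of terminal size $\nu$ not yet present, the image $f_\sigma(\mathcal{Z}_\sigma)$. At each stage the attaching map fits into a pushout square whose left edge is a coproduct of the inclusions $\mathcal{X}_\sigma^\uparrow \to \mathcal{Z}_\sigma$ (or $\mathcal{Y}_\sigma^\uparrow\to\mathcal{Z}_\sigma$, using the anodyne inclusion noted in the proof of \autoref{prop:Compatible_collection}), intersected over the boundary with what is already built. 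Conditions (i)–(iii) of \autoref{prop:Compatible_collection} are exactly what guarantee these squares are genuine pushouts with no over-counting: faces $d_j(\sigma)$ with $\overline{\sigma}(d^j(k-1))=n$ are already handled compatibly by (ii), and (iii) together with \autoref{lem:containment_nu_enough} ensures degeneracies are consistent across the encapsulation relation.

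The main obstacle is verifying that this attachment process is genuinely well-defined as a sequence of pushouts — that is, that when we add $f_\sigma(\mathcal{Z}_\sigma)$ at its stage, every sub-simplex of $\mathcal{Z}_\sigma$ other than those forming $\mathcal{X}_\sigma^\uparrow$ (and $\sf{B}(\sigma)$ in the encapsulated case) is either already present or is added simultaneously without conflict. This requires carefully checking that the only truly new non-degenerate simplices produced are controlled by the order $\prec$ of \autoref{def:orderessential} and the terminal-size bookkeeping, so that the intersection of $f_\sigma(\mathcal{Z}_\sigma)$ with the previously-built subset is exactly $f_\sigma(\mathcal{X}_\sigma^\uparrow)$. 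The encapsulation machinery of \autoref{cor:max_good_simp} through \autoref{cor:Can_force_Bsigma_degenerate} is precisely the tool that resolves the potential conflicts, so the proof reduces to assembling these lemmata into the inductive pushout, after which each elementary step is \textbf{MB}-anodyne by \autoref{lem:keylemX}. Since a transfinite composite of pushouts of \textbf{MB}-anodyne maps is \textbf{MB}-anodyne, and $\bigcup_\bullet W_\bullet = \mathcal{L}_X$, we conclude that $u$ is \textbf{MB}-anodyne.
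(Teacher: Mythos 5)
Your filtration is the same as the paper's (terminal size first, dimension second, with condition iii) of \autoref{prop:Compatible_collection} disposing of the encapsulated simplices), but the justification of each elementary attaching step has a genuine gap. You reduce the step to a pushout along $\mathcal{X}^{\uparrow}_{\sigma}\to\mathcal{Z}_{\sigma}$, citing \autoref{lem:keylemX}, on the strength of the claim that the intersection of $f_{\sigma}(\mathcal{Z}_{\sigma})$ with the previously built stage is exactly $f_{\sigma}(\mathcal{X}^{\uparrow}_{\sigma})$. That claim is false. The poset $Z_{\sigma}$ is a path followed by a full grid, and its essential simplices are the lattice paths from $(\kappa_{\overline{\sigma}},\overline{\sigma}(\kappa_{\overline{\sigma}}-1))$ to $(k,n)$. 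Every such path other than $\sf{B}(\sigma)$ reaches height $n$ in a column strictly greater than $\kappa_{\overline{\sigma}}$, hence has strictly fewer vertices over $n$ than $\sigma$; since $f_{\sigma}$ lives over $\Delta^{n}$, its image has terminal size at most $\nu_{\sigma}-1$ and therefore already lies in $S^{\nu_{\sigma}-1}$, i.e.\ in the previously built subcomplex. Yet none of these paths lies in $\mathcal{X}^{\uparrow}_{\sigma}$: they have dimension $k+\ell>k$, so they are not faces of $\sigma$; they meet every column of $Z_{\sigma}$, so they factor through no $\mathcal{Z}_{d_{j}(\sigma)}$ and through no $\mathcal{K}_{I}$ with $I\subsetneq[k]$; and $I=[k]$ is excluded because $\sigma$ is wide. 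Consequently the true attaching subobject $A_{\sigma}=f_{\sigma}^{-1}(\text{previous stage})$ properly contains $\mathcal{X}^{\uparrow}_{\sigma}$ whenever $\nu_{\sigma}\geq 2$, and the pushout of the previous stage along $\mathcal{X}^{\uparrow}_{\sigma}\to\mathcal{Z}_{\sigma}$ is not even a subobject of $\mathcal{L}_{X}$ (the other lattice paths would be glued in twice).

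This cannot be patched by \autoref{lem:keylemX}, because anodyne maps are not inherited by larger subobjects: from $\mathcal{X}^{\uparrow}_{\sigma}\subseteq A_{\sigma}\subseteq\mathcal{Z}_{\sigma}$ and the anodyneness of $\mathcal{X}^{\uparrow}_{\sigma}\to\mathcal{Z}_{\sigma}$ nothing follows about $A_{\sigma}\to\mathcal{Z}_{\sigma}$ (compare $\Lambda^{2}_{1}\subset\partial\Delta^{2}\subset\Delta^{2}$). The encapsulation corollaries you invoke do not address this either; they only guarantee that $\sf{B}(\sigma)$ is degenerate on an existing simplex when $\sigma$ is encapsulated, so that such $\sigma$ require no attachment at all (in particular, for encapsulated $\sigma$ nothing is glued --- there is no pushout along $\mathcal{X}^{\uparrow}_{\sigma}\to\mathcal{Z}_{\sigma}$ as you assert). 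What is missing is precisely the paper's argument at this point: one checks that every simplex of $\mathcal{Z}_{\sigma}$ outside $A_{\sigma}$ has terminal size $\nu_{\sigma}$, hence contains the whole top row and is a face of $\sf{B}(\sigma)$, so that $A_{\sigma}\to\mathcal{Z}_{\sigma}$ attaches the single simplex $\sf{B}(\sigma)$ along the generalized horn formed by the faces $d_{j}\sf{B}(\sigma)$ with $j\leq\kappa_{\overline{\sigma}}-1$ or $j\geq\kappa_{\overline{\sigma}}+\ell$; this horn inclusion is then shown to be \textbf{MB}-anodyne by the pivot trick, \autoref{lem:innerpivot}, with pivot point $\kappa_{\overline{\sigma}}$, not by \autoref{lem:keylemX}. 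Supplying that horn-filling argument is where the actual content of the proposition lies.
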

\begin{proof}
	We say that a simplex $\sigma:\Delta^k \to \mathcal{L}_X$ is \emph{wide} if it is not contained in the image of $u$. Let $\sigma: \Delta^k \to \mathcal{L}_X$ and recall the definition $\nu_\sigma=|\set{j \in [k] \given \overline{\sigma}(j)=n}|$. We produce a filtration
	\[
	\mathcal{M}_X \to S^1 \to S^2 \to \cdots \to \mathcal{L}_X
	\]
	where $S^\ell$ consists of those simplices $\sigma$ in $\mathcal{L}_X$ that either factor through $\mathcal{M}_X$ or they satisfy $\nu_\sigma \leq \ell$. We will fix the convention $S^0=\mathcal{M}_X$. We will show that each step in the filtration is \bS-anodyne. Let us fix once and for all a choice of $f_\sigma:\mathcal{Z}_\sigma \to \mathcal{L}_X$ for every $\sigma:\Delta^k \to \mathcal{L}_X$ with the properties listed in \autoref{prop:Compatible_collection}. First, let us observe that given $\sigma:\Delta^k \to S^\ell$ it follows that the morphisms $f_\sigma$ also factor through $S^\ell$. We can now define $S^{(\ell,s)}$ to consist in those simplices contained in $S^{\ell}$ in addition to the simplices $B(\sigma)$ for $\sigma:\Delta^k \to \mathcal{L}_X$ wide and non-degenerate, such that $k \leq s$ and $\nu_\sigma=\ell+1$. This produces a filtration
	\[
	S^{\ell-1} \to S^{(\ell-1,\ell)} \to  S^{(\ell-1,\ell+1)} \to \cdots \to S^\ell
	\]
	We fix the convention $S^\ell=S^{(\ell,\ell)}$. Let us consider a pullback diagram
	\[
	\begin{tikzcd}[ampersand replacement=\&]
		A_{\sigma} \arrow[r] \arrow[d] \& \mathcal{Z}_{\sigma} \arrow[d,"f_\sigma"] \\
		S^{(\ell,s-1)} \arrow[r] \& S^{(\ell,s)}
	\end{tikzcd}
	\]
	where $\sigma:\Delta^s \to \mathcal{L}_X$ does not factor through $S^{(\ell,s-1)}$. Then it follows by construction that $A_{\sigma}$ contains every simplex of $\mathcal{Z}_\sigma$ except the simplex $\sf{B}(\sigma)$. To check that the top horizontal morphism is \bS-anodyne, it suffices to apply \autoref{lem:innerpivot} with pivot point $\kappa_{\overline{\sigma}}$ after observing that the restriction of $\sf{B}(\sigma)$  to $A_{\sigma}$ consists precisely in the union of the following $(s+\ell-1)$-dimensional faces:
	\begin{itemize}
		\item The face that misses the vertex $j$ for $0\leq j \leq \kappa_{\overline{\sigma}}-1$. This is because this simplex either factors through $\mathcal{Z}_{d_j(\sigma)}$ or it is contained in $\mathcal{M}_X$.
		\item The face that misses the vertex $j$ for $\kappa_{\overline{\sigma}}+\ell \leq j \leq s + \ell$. This is because those faces have strictly smaller parameter $\nu_{d_j(\sigma)}$ if $\nu_\sigma >1$ or they are already in $\mathcal{M}_X$ if $\nu_\sigma=1$.
	\end{itemize}

	To finish the proof we observe that given $\sigma:\Delta^s \to \mathcal{L}_X$ such that $\sigma$ factors through $S^{(\ell,s-1)}$ but $\nu_\sigma=\ell+1$ then it follows by condition $iii)$ in \autoref{prop:Compatible_collection} that $\sf{B}(\sigma)$ is already contained in $S^{(\ell,s-1)}$. This together with previous discussion implies that $S^{(\ell,s-1)} \to S^{(\ell,s)}$ is \bS-anodyne.
\end{proof}

We can distill the key upshot of the preceding technical arguments into a single, simple corollary.

\begin{corollary}\label{cor:hmtpy_pushout_mapsimp}
	For any 2-Cartesian fibration $X\to \Delta^n_\flat$, the square 
	\[
	\begin{tikzcd}
		X_n \times (\Delta^{n-1})^\diamond \arrow[r]\arrow[d] & X_n\times (\Delta^n)^\diamond\arrow[d] \\
		X|_{\Delta^{n-1}} \arrow[r]& X
	\end{tikzcd}
	\] 
	is homotopy pushout. 
\end{corollary}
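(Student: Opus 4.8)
The plan is to recognise the square in question as the defining pushout square for $\mathcal{M}_X$ with its terminal vertex replaced, along the comparison morphism $\omega\colon \mathcal{M}_X\to X$, by $X$; the statement then reduces to the assertion that $\omega$ is a weak equivalence.

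First I would recall that $\mathcal{M}_X$ was built as the pushout of the span $X|_{\Delta^{n-1}}\xleftarrow{\alpha'} X_n\times(\Delta^{n-1})^\diamond \to X_n\times(\Delta^n)^\diamond$, and that this pushout square is already a homotopy pushout: its top horizontal leg is a cofibration and, by \autoref{thm:MBModelStructure}, the $2$-Cartesian model structure on $(\mbsSet)_{/\Delta^n_\flat}$ is left proper. Moreover the cocone given by $\alpha\colon X_n\times(\Delta^n)^\diamond\to X$ together with the inclusion $X|_{\Delta^{n-1}}\to X$ is precisely the cocone inducing $\omega$. Hence the square of the statement is obtained from the $\mathcal{M}_X$-pushout square by post-composing its two legs into $\mathcal{M}_X$ with $\omega$.

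The substance of the argument is the claim that $\omega$ is a weak equivalence, and here the preceding technical development does all the work. I would use the factorisation $\omega = v\circ u$, where $u\colon\mathcal{M}_X\to\mathcal{L}_X$ is the cofibration sending a simplex $\theta$ to the datum with underlying simplex $\hat\sigma=\omega(\theta)$ and restriction data $\theta_I=d_I(\theta)$, and $v\colon\mathcal{L}_X\to X$ is the canonical projection; tracing definitions gives $v\circ u=\omega$. The lemma above shows that $v$ is a trivial fibration, hence a weak equivalence, while the proposition immediately preceding shows that $u$ is $\bS$-anodyne, hence a trivial cofibration. Thus $\omega=v\circ u$ is a weak equivalence.

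Finally I would conclude by the general principle that a commuting square is a homotopy pushout exactly when the canonical map from the homotopy pushout of its span to its terminal vertex is a weak equivalence. Since the $\mathcal{M}_X$-square is a homotopy pushout, the comparison map from the homotopy pushout of the common span to $\mathcal{M}_X$ is a weak equivalence; composing with the weak equivalence $\omega$ and invoking two-out-of-three shows the comparison map to $X$ is a weak equivalence as well, so the square of the statement is a homotopy pushout. The only genuine obstacle is the verification that $u$ is $\bS$-anodyne, which is exactly the content of the long pushout-filtration argument already carried out; granting that, the corollary is purely formal.
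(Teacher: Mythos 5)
Your proposal is correct and follows exactly the argument the paper intends: the defining pushout square for $\mathcal{M}_X$ is a homotopy pushout since its top leg is a cofibration and the model structure of \autoref{thm:MBModelStructure} is left proper, and $\omega = v\circ u$ is a weak equivalence because $u$ is \textbf{MB}-anodyne (the preceding proposition) and $v$ is a trivial fibration (the preceding lemma). The paper leaves this assembly implicit in the corollary, but your factorization, the identification $v\circ u = \omega$, and the two-out-of-three conclusion are precisely the intended reasoning.
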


\subsubsection{The equivalence over a simplex}

Having now established the necessary preliminaries, we turn to the proof that the straightening is an equivalence over the minimally-scaled simplex. With few exceptions, the arguments from here on out are standard, and follow the general shape of the analogous arguments given in \cite{HTT} and \cite{LurieGoodwillie}. We begin with a lemma, which allows us to more easily apply the straightening to our homotopy pushout. 

\begin{lemma}\label{lem:diamond_MS_an}
	Consider the inclusion $(\Delta^{n-1})^\diamond \to (\Delta^n)^\diamond$ as a morphism in $(\mbsSet)_{\Delta^n_\flat}$. Then for every $0\leq i<n$, the induced morphism 
	\[
	\func{
		\psi:\ST_{\Delta^n_\flat}((\Delta^{n-1})^\diamond)(i) \to \ST_{\Delta^n_\flat}((\Delta^{n})^\diamond)(i)
	}
	\]
	is an equivalence of marked-scaled simplicial sets. 
\end{lemma}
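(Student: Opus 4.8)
The plan is to compute both sides explicitly using the saturation lemma \autoref{lem:saturation}, which allows us to replace each straightening $\ST_{\Delta^n_\flat}(-)(i)$ with the more combinatorial model $\mathcal{L}^n_T(i)$ up to $\mathbf{MS}$-anodyne morphism. First I would identify the biscalings: the object $(\Delta^{n-1})^\diamond$ carries every triangle as lean and only degenerate triangles as thin, with the marking $E^{n-1}_\diamond$ on edges hitting the top vertex $n-1$; similarly $(\Delta^n)^\diamond$ has every triangle lean, only degenerates thin, and marks edges hitting $n$. Since \autoref{lem:saturation} and its horn-variant lemmata only require control of the thin-scaling and treat the lean-scaling via the poset $L^n(s)$, the relevant target models $\mathcal{L}^n_T(i)$ are determined by taking $T=\sharp$ (all triangles lean). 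The markings then descend to markings on the posets $L^n(i)$ via the recipe of \autoref{def:Lni}, and I would record precisely which edges of $L^n(i)$ become marked, namely those $S_0\subset S_1$ with $\max(S_0)=\max(S_1)=n$ (resp.\ $=n-1$) in the case of $(\Delta^n)^\diamond$ (resp.\ $(\Delta^{n-1})^\diamond$).

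**Reducing to a comparison of poset models.** With both sides replaced by their $\mathcal{L}$-models, the map $\psi$ becomes, up to the $\mathbf{MS}$-anodyne equivalences from \autoref{lem:saturation}, the map on posets induced by the inclusion $[n-1]\hookrightarrow [n]$ and the resulting inclusion of subset-posets. The key observation is that for $i<n$, every subset $S\subseteq [n]$ with $\min(S)=i$ that does \emph{not} contain $n$ already lives in $L^{n-1}(i)$, while every subset containing $n$ is connected to its "truncation'' $S\setminus\{n\}\cup\{n\}$ — more precisely, the marked edge structure coming from the diamond marking makes the inclusion a marked deformation retract. I expect this to mirror closely the homotopy constructed in \autoref{lem:alphan}: one builds a fibrewise homotopy on the poset level sending a chain $S_0\subset\cdots\subset S_k$ to a chain where the maximal element is adjusted, collapsing the freshly-added marked edges to degeneracies.

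**The homotopy and the main obstacle.** Concretely, I would construct an explicit marked homotopy $H\colon (\Delta^1)^\sharp_\sharp \times \mathcal{L}^n_\sharp(i)\to \mathcal{L}^n_\sharp(i)$ exhibiting the image of $\mathcal{L}^{n-1}_\sharp(i)$ (appropriately marked) as a deformation retract, where $H$ moves each subset $S$ toward $S\cup\{n\}$ or its analogue along a \emph{marked} edge, so that the homotopy is through marked morphisms and descends to a weak equivalence of marked-scaled simplicial sets by the characterization of weak equivalences in \autoref{thm:markedscaledmodel}. The main obstacle will be bookkeeping the scaling: one must check that the homotopy $H$ carries thin triangles to thin triangles and does not accidentally require scaling a non-thin triangle. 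Because both models here are \emph{maximally} lean-scaled (we took $T=\sharp$), the constraint reduces to the thin-scaling, which is governed purely by the equality-of-maxima condition of \autoref{def:Lni}; verifying that $H$ respects this condition is the crux, and it is exactly where the restriction $i<n$ is used, since when $i<n$ the vertex $i$ is distinct from the cone-like vertex $n$ and the retraction does not degenerate the initial vertex. Once compatibility of $H$ with markings and thin-scalings is checked, the conclusion that $\psi$ is a weak equivalence follows formally from the two-out-of-three property applied to the commuting square relating $\psi$ to the $\mathbf{MS}$-anodyne maps of \autoref{lem:saturation}.
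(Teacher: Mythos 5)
Your high-level skeleton --- saturate the thin-scalings to the maximal scaling via \autoref{lem:saturation}, reduce to a comparison of subset posets, and finish with a marked homotopy plus two-out-of-three --- is essentially the route the paper takes: the paper identifies the map of underlying marked simplicial sets with the inclusion
\[
\Nerv(\mathbb{P}(\{i+1,\ldots,n-1\}))^\flat\times\{0\}\hookrightarrow \Nerv(\mathbb{P}(\{i+1,\ldots,n-1\}))^\flat\times(\Delta^1)^\sharp,
\]
saturates both scalings by \autoref{lem:saturation}, and concludes by two-out-of-three, which is the packaged form of your retraction. However, your execution has a genuine gap: the markings are identified incorrectly, and since after saturation both thin-scalings are maximal, the markings carry \emph{all} of the content of the statement, so this is not a cosmetic slip.

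Two corrections are needed. First, in this lemma $(\Delta^{n-1})^\diamond$ denotes the restriction of $(\Delta^n)^\diamond$ to $\Delta^{[0,n-1]}$, i.e.\ $(\Delta^{n-1},\flat,\flat\subset\sharp)$; with the intrinsic reading (edges into the vertex $n-1$ marked) the inclusion would not even be a morphism of $\mbsSet$, since those edges are unmarked in $(\Delta^n)^\diamond$. Hence the source straightening $\ST_{\Delta^n_\flat}((\Delta^{n-1})^\diamond)(i)$ carries the \emph{flat} marking, not edges with maxima equal to $n-1$. Second, on the target the marked edges are generated by the scaled cones $j\to n\to\ast$ on the marked edges of $(\Delta^n)^\diamond$, closed under composition with vertices of $\mathfrak{C}^{\sc}[\Delta^n_\flat]$; unwinding this gives precisely the non-degenerate edges $S\to S\cup\{n\}$ with $n\notin S$, \emph{not} the edges $S_0\subset S_1$ with $\max(S_0)=\max(S_1)=n$. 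Your set consists of edges in which both subsets already contain $n$, and no such non-degenerate edge is marked in the straightening; the two sets are disjoint. This matters because with your markings no marked deformation retraction exists in either direction: the connecting edges of any candidate homotopy are of the form $S\setminus\{n\}\subset S$, which are unmarked in your description, so the crucial step fails (and indeed the statement would be false with those decorations). With the correct markings the argument does work, but the retraction must also go the opposite way from what you wrote: moving $S$ toward $S\cup\{n\}$ retracts $\mathcal{L}^n_\sharp(i)$ onto the subposet of subsets \emph{containing} $n$, which is not the image of $\psi$. The correct retraction is $r(S)=S\setminus\{n\}$, which fixes the image of the source pointwise and whose homotopy to the identity has components $S\setminus\{n\}\subseteq S$, i.e.\ precisely the marked edges $T\to T\cup\{n\}$; markings are preserved by $r$ because each marked edge collapses to a degenerate one.
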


\begin{proof}
	To begin, we examine the morphism on underlying marked simplicial sets. Consider the pushouts
	\[
	\begin{tikzcd}
		(\Delta^{n-1})^\diamond \arrow[r]\arrow[d] & ((\Delta^{n-1})^\diamond)^{\triangleright}\arrow[d] \\
		\Delta^n \arrow[r] & X
	\end{tikzcd}
	\]
	and 
	\[
	\begin{tikzcd}
		(\Delta^{n})^\diamond \arrow[r]\arrow[d] & ((\Delta^{n})^\diamond)^{\triangleright}\arrow[d] \\
		\Delta^n \arrow[r] & Y
	\end{tikzcd}
	\]
	and the induced map 
	\[
	\func{\phi:\mathfrak{C}^{\sc}[X](i,\ast) \to \mathfrak{C}^{\sc}[Y](i,\ast)}
	\]
	We first note that, since $i<n$, we have that $\mathfrak{C}^{\sc}[X](i,\ast)=\mathfrak{C}[((\Delta^{n-1})^\diamond)^{\triangleright}](i,\ast)$. 
	
	From the definition, we then have that
	\[
	\mathfrak{C}^{\sc}[X](i,\ast)\cong  \Nerv(\mathbb{P}(\{i+1,\ldots,n-1\}))^\flat
	\]
	and 
	\[
	\mathfrak{C}^{\sc}[Y](i,\ast)\cong \Nerv(\mathbb{P}(\{i+1,\ldots,n\}))^\dagger 
	\]
	where $\dagger$ indicates the marking in which precisely the non-degenerate morphisms $S\to S\cup \{n\}$ are marked. 
	
	We note that, on underlying marked simplicial sets, this means that $\phi$ can be identified with the morphism 
	\[
	\func{
		\mathfrak{C}^{\sc}[X](i,\ast)\times \{0\} \to \mathfrak{C}^{\sc}[X](i,\ast)\times (\Delta^1)^\sharp. 
	}
	\]
	We will show that this yields an equivalence of marked-scaled simplicial sets by showing that both scalings are equivalent to the maximal scaling. 
	
	We claim that the morphisms 
	\[
	\func{f_n^i:\ST_{\Delta^n_\flat}((\Delta^n)^\diamond)(i) \to (  \ST_{\Delta^n_\flat}((\Delta^n)^\diamond)(i))_\sharp}
	\]
	and 
	\[
	\func{
		g_n^i :\ST_{\Delta^n_\flat}((\Delta^{n-1})^\diamond)(i) \to ( \ST_{\Delta^n_\flat}((\Delta^{n-1})^\diamond)(i))_\sharp
	}
	\]
	are \textbf{MS}-anodyne. To show that $f_n^i$ is \textbf{MS}-anodyne it suffices to apply \autoref{lem:saturation}. The argument for $g_n^i$ is similar and left as an exercise.  We thus obtain, for any $i<n$ a commutative diagram 
	\[
	\begin{tikzcd}
		\ST_{\Delta^n_\flat}((\Delta^{n-1})^\diamond)(i) \arrow[r,"\phi"]\arrow[d,"g"',"\sim"] & \ST_{\Delta^n_\flat}((\Delta^{n})^\diamond)(i)\arrow[d,"f","\sim"']\\
		\ST_{\Delta^n_\flat}((\Delta^{n-1})^\diamond)(i)_\sharp \arrow[r,"\phi_\sharp","\sim"'] & \ST_{\Delta^n_\flat}((\Delta^{n})^\diamond)(i)_\sharp
	\end{tikzcd}
	\]
	Showing that $\phi$ is an equivalence of marked-scaled simplicial sets by 2-out-of-3. 
\end{proof}

\begin{lemma}\label{lem:FibresOfStraightening}
	Let $X\to \Delta^n_\flat$ be a 2-Cartesian fibration, and denote by $X_i$ the fibre over $i$. Let $\ST_*$ denote the straightening over $\Delta^0$. Then the map 
	\[
	\func{
		\psi_i^X:\ST_*(X_i)\to \ST_{\Delta^n_\flat}(X)(i)
	}
	\] 
	is an equivalence of marked-scaled simplicial sets. 
\end{lemma}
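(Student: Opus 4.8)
The plan is to argue by induction on $n$, using the homotopy pushout decomposition of \autoref{cor:hmtpy_pushout_mapsimp} as the engine of the induction. The base case $n=0$ is immediate, since there $X=X_0$, $\Delta^0_\flat=\Delta^0$, and $\psi_0^X$ is the identity. For the inductive step I would apply the left Quillen functor $\ST_{\Delta^n_\flat}$ (\autoref{thm:STLQ}) to the homotopy pushout square of \autoref{cor:hmtpy_pushout_mapsimp}; since left Quillen functors preserve homotopy pushouts and homotopy pushouts in the projective model structure on $(\mssSet)^{\mathfrak{C}^{\sc}[\Delta^n]^\op}$ are computed pointwise, evaluation at a vertex $i$ yields a homotopy pushout square in $\mssSet$
\[
\begin{tikzcd}[ampersand replacement=\&]
\ST_{\Delta^n_\flat}(X_n\times(\Delta^{n-1})^\diamond)(i)\arrow[r]\arrow[d] \& \ST_{\Delta^n_\flat}(X_n\times(\Delta^n)^\diamond)(i)\arrow[d] \\
\ST_{\Delta^n_\flat}(X|_{\Delta^{n-1}})(i)\arrow[r] \& \ST_{\Delta^n_\flat}(X)(i).
\end{tikzcd}
\]
By \autoref{thm:product} the two upper corners are identified with $\ST_*(X_n)\times\ST_{\Delta^n_\flat}((\Delta^{n-1})^\diamond)(i)$ and $\ST_*(X_n)\times\ST_{\Delta^n_\flat}((\Delta^n)^\diamond)(i)$ respectively, the top map becoming $\id_{\ST_*(X_n)}\times\psi$ with $\psi$ the comparison map of \autoref{lem:diamond_MS_an}. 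The remainder of the argument splits according to whether $i<n$ or $i=n$.

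For $i<n$, I would first note that $\psi$ is an equivalence by \autoref{lem:diamond_MS_an}, and since products with a fixed object preserve weak equivalences in the Cartesian-closed model category $\mssSet$, the top horizontal map is an equivalence; as the square is a homotopy pushout, the bottom map $\ST_{\Delta^n_\flat}(X|_{\Delta^{n-1}})(i)\to\ST_{\Delta^n_\flat}(X)(i)$ is therefore also an equivalence. Next I would identify the bottom-left corner: writing $\iota:\Delta^{n-1}\hookrightarrow\Delta^n$ for the face missing $n$, combining the base-change statements of \autoref{prop:formalproperties} (and the Remark following it) gives $\ST_{\Delta^n_\flat}(X|_{\Delta^{n-1}})\cong\mathfrak{C}^{\sc}[\iota]_!\,\ST_{\Delta^{n-1}_\flat}(X|_{\Delta^{n-1}})$; since $\mathfrak{C}^{\sc}[\iota]$ is fully faithful and $i$ lies in its image, the left Kan extension recovers the value, so $\ST_{\Delta^n_\flat}(X|_{\Delta^{n-1}})(i)\cong\ST_{\Delta^{n-1}_\flat}(X|_{\Delta^{n-1}})(i)$. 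The inductive hypothesis applied to $X|_{\Delta^{n-1}}\to\Delta^{n-1}_\flat$, together with the identification $(X|_{\Delta^{n-1}})_i=X_i$, then shows $\ST_*(X_i)\to\ST_{\Delta^{n-1}_\flat}(X|_{\Delta^{n-1}})(i)$ is an equivalence. Composing these three equivalences proves the claim for $i<n$.

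The case $i=n$ is where \autoref{lem:diamond_MS_an} no longer applies, and I would instead exploit that the left-hand corners vanish. Since $X|_{\Delta^{n-1}}$ is supported on the face missing $n$, there is no morphism from the object $n$ to the cone point in the enriched category computing the straightening, whence $\ST_{\Delta^n_\flat}(X|_{\Delta^{n-1}})(n)=\emptyset$; the same reasoning (or \autoref{thm:product} together with the vanishing of $\ST_{\Delta^n_\flat}((\Delta^{n-1})^\diamond)(n)$) shows the top-left corner is empty. Hence the homotopy pushout degenerates and $\ST_{\Delta^n_\flat}(X)(n)\simeq\ST_{\Delta^n_\flat}(X_n\times(\Delta^n)^\diamond)(n)$. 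By \autoref{thm:product} the latter is $\ST_*(X_n)\times\ST_{\Delta^n_\flat}((\Delta^n)^\diamond)(n)$, and a direct computation of the mapping space from $n$ to the cone point (the cube $\mathfrak{C}^{\sc}[\Delta^{n+1}](n,n+1)$ is a point) identifies $\ST_{\Delta^n_\flat}((\Delta^n)^\diamond)(n)$ with the terminal object; thus the right-hand corner is equivalent to $\ST_*(X_n)$, exhibiting $\psi_n^X$ as an equivalence.

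The emptiness and contractibility computations at the vertex $n$, and the full-faithfulness reduction of the Kan extension, are routine. The step I expect to cost the most effort is the bookkeeping of \emph{naturality}: one must verify that the composite of equivalences constructed in each case is genuinely the canonical comparison map $\psi_i^X$, and not merely some equivalence between the same two objects. This reduces to checking that the identifications furnished by \autoref{thm:product}, \autoref{lem:diamond_MS_an}, and the base-change isomorphisms of \autoref{prop:formalproperties} are compatible with the maps induced by the cone inclusion and the structural maps $\omega:\mathcal{M}_X\to X$, which I would confirm by unwinding the definitions of the natural transformations involved.
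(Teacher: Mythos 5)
Your proposal is correct and takes essentially the same route as the paper's proof: induction on $n$ driven by applying the left Quillen functor $\ST_{\Delta^n_\flat}$ to the homotopy pushout of \autoref{cor:hmtpy_pushout_mapsimp}, identifying the upper corners via \autoref{thm:product}, invoking \autoref{lem:diamond_MS_an} for $i<n$, and using base change plus the inductive hypothesis to handle the restriction to $\Delta^{n-1}$. The only real deviation is at $i=n$, where the paper simply notes that $\psi_n^X$ is an isomorphism by construction, which short-circuits your emptiness-and-degenerate-pushout argument and also dissolves most of the naturality bookkeeping you flag; the paper likewise sidesteps the remaining naturality concern for $i<n$ by setting up the canonical commutative triangle $\ST_*(X_i)\to \ST_{\Delta^n_\flat}(X|_{\Delta^{n-1}})(i)\to \ST_{\Delta^n_\flat}(X)(i)$ before proving either leg is an equivalence.
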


\begin{proof}
	Following \cite[3.2.3.3]{HTT}, we proceed by induction on $n$. We have already shown the case $n=0$ in \autoref{prop:SToverPt}
	
	By construction, $\psi_n$ is an isomorphism. For $i< n$, we get a canonical commutative diagram
	\[
	\begin{tikzcd}
		& \ST_{\Delta^{n}_\flat}(X|_{\Delta^{n-1}})(i)\arrow[dr,"\gamma_i"]  & \\
		\ST_*(X_i)\arrow[ur]\arrow[rr,"\psi_i^X"'] & & \ST_{\Delta^n_\flat}(X)(i)
	\end{tikzcd}
	\]
	We can identify the upper-left map with $\psi_i^{X|_{\Delta^{n-1}}}$, and so by the inductive hypothesis, it is an equivalence. It thus suffices for us to show that $\gamma_i$ is an equivalence. 
	
	By \autoref{cor:hmtpy_pushout_mapsimp}, we get a homotopy pushout diagram
	\[
	\begin{tikzcd}
		X_n\times (\Delta^{n-1})^\diamond \arrow[r]\arrow[d] & X_n\times (\Delta^n)^\diamond \arrow[d]\\ 
		X|_{\Delta^{n-1}} \arrow[r] & X
	\end{tikzcd}
	\]
	in $(\mbsSet)_{/\Delta^n_\flat}$. Applying the left Quillen functor $\ST_{\Delta^n_\flat}$ yields a homotopy pushout diagram 
	\[
	\begin{tikzcd}
		\ST_{\Delta^n_\flat}(X_n\times (\Delta^{n-1})^\diamond) \arrow[r]\arrow[d] &\ST_{\Delta^n_\flat}( X_n\times (\Delta^n)^\diamond) \arrow[d]\\ 
		\ST_{\Delta^n_\flat}(X|_{\Delta^{n-1}}) \arrow[r,"\gamma"'] & \ST_{\Delta^n_\flat}(X)
	\end{tikzcd}
	\]
	We have a commutative diagram 
	\[
	\begin{tikzcd}
		\ST_{\Delta^n_\flat}(X_n\times (\Delta^{n-1})^\diamond) \arrow[r]\arrow[d] &\ST_{\Delta^n_\flat}( X_n\times (\Delta^n)^\diamond) \arrow[d]\\
		\ST_{\ast}(X_n)\boxtimes \ST_{\Delta^n_\flat}( (\Delta^{n-1})^\diamond)\arrow[r]  & \ST_{\ast}(X_n)\boxtimes \ST_{\Delta^n_\flat}((\Delta^n)^\diamond)
	\end{tikzcd}
	\]
	where the vertical maps are equivalences of marked-scaled simplicial sets by \autoref{thm:product}. It thus suffices to note that, by \autoref{lem:diamond_MS_an}, the induced morphism 
	\[
	\func{
		\psi_i:\ST_{\Delta^n_\flat}((\Delta^{n-1})^\diamond)(i) \to \ST_{\Delta^n_\flat}((\Delta^{n})^\diamond)(i)
	}
	\]
	is an equivalence for any $i<n$. 
\end{proof}

Before continuing, we fix some notation to ease the coming discussion. We will in the following theorem denote the straightening-unstraightening equivalence over the point by 
\[
\begin{tikzcd}
	S: &[-3em] (\mbsSet)_{/\Delta^0_\flat} \arrow[r,shift left] & (\Set_\Delta^{\mathbf{ms}})^{\mathfrak{C}^{\sc}[\Delta^0_\flat]^\op} \arrow[l,shift left] &[-3em] : U 
\end{tikzcd}
\]

\begin{proposition}
	The Quillen adjunction 
	\[
	\begin{tikzcd}
		\ST_{\Delta^n_\flat}: &[-3em] (\mbsSet)_{/\Delta^n_\flat} \arrow[r,shift left] & (\Set_\Delta^{\mathbf{ms}})^{\mathfrak{C}^{\sc}[\Delta^n_\flat]^\op} \arrow[l,shift left] &[-3em] : \UN_{\Delta^n_\flat} 
	\end{tikzcd}
	\]
	is a Quillen equivalence.
\end{proposition}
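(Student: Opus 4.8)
The plan is to apply the standard criterion for a Quillen equivalence. Since cofibrations in $(\mbsSet)_{/\Delta^n_\flat}$ are the monomorphisms, \emph{every} object is cofibrant, and by \autoref{thm:STLQ} the functor $\ST_{\Delta^n_\flat}$ is left Quillen with right adjoint $\UN_{\Delta^n_\flat}$ preserving fibrant objects. It therefore suffices to verify that (i) $\ST_{\Delta^n_\flat}$ reflects weak equivalences between cofibrant objects, and (ii) for every fibrant $\mathcal{F}\in (\mssSet)^{\mathfrak{C}^{\sc}[\Delta^n_\flat]^\op}$ the counit $\epsilon_\mathcal{F}\colon \ST_{\Delta^n_\flat}\UN_{\Delta^n_\flat}\mathcal{F}\to \mathcal{F}$ is a weak equivalence; as $\UN_{\Delta^n_\flat}\mathcal{F}$ is automatically cofibrant, this ordinary counit computes the derived counit, so (i) and (ii) together yield the claim. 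Both checks will be bootstrapped from the equivalence over the point $\ST_* \dashv \UN_*$ of \autoref{prop:SToverPt} by means of two detection principles: weak equivalences between $2$-Cartesian fibrations over $\Delta^n_\flat$ are detected fibrewise (as recorded in \cite{AGS_CartI}, in the spirit of the fibrewise detection used in \cite[\S3.2]{HTT}), and weak equivalences in the projective model structure on the functor category are detected objectwise.

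First I would prove (i). Given $f\colon X\to Y$ with $\ST_{\Delta^n_\flat}(f)$ a weak equivalence, I choose \textbf{MB}-anodyne fibrant replacements $X\to \widetilde{X}$ and $Y\to \widetilde{Y}$; since $\ST_{\Delta^n_\flat}$ sends \textbf{MB}-anodyne maps to trivial cofibrations, two-out-of-three reduces the statement to the case in which $X$ and $Y$ are fibrant, i.e.\ $2$-Cartesian fibrations. Evaluating $\ST_{\Delta^n_\flat}(f)$ at each object $i$ and using the naturality in $X$ of the comparison map $\psi_i$ of \autoref{lem:FibresOfStraightening}, the induced map on fibres $\ST_*(f_i)\colon \ST_*(X_i)\to \ST_*(Y_i)$ is a weak equivalence for each $i$. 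As $\ST_*$ is a left Quillen equivalence it reflects weak equivalences between cofibrant objects, and the fibres $X_i,Y_i$ are cofibrant, so each $f_i$ is a weak equivalence; fibrewise detection then forces $f$ to be a weak equivalence.

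Next I would prove (ii). For $\mathcal{F}$ fibrant, $\UN_{\Delta^n_\flat}\mathcal{F}$ is a $2$-Cartesian fibration, so \autoref{lem:FibresOfStraightening} applies to it, and objectwise detection reduces the claim to showing that $\epsilon_\mathcal{F}(i)$ is a weak equivalence for each $i$. Precomposing with the equivalence $\psi_i$ identifies $\epsilon_\mathcal{F}(i)$ with a map $\ST_*\bigl((\UN_{\Delta^n_\flat}\mathcal{F})_i\bigr)\to \mathcal{F}(i)$. The remaining ingredient is a base-change identification $(\UN_{\Delta^n_\flat}\mathcal{F})_i \simeq \UN_*(\mathcal{F}(i))$, compatible with the counit maps; this is the unstraightening dual of the base-change isomorphisms of \autoref{prop:formalproperties}, obtained by passing to right adjoints along the inclusion $\{i\}\hookrightarrow \Delta^n_\flat$. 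Under this identification $\epsilon_\mathcal{F}(i)$ becomes the counit of $\ST_*\dashv \UN_*$ at the fibrant object $\mathcal{F}(i)$, which is a weak equivalence by \autoref{prop:SToverPt}. This establishes (ii) and completes the argument.

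The hard part will be the verification in (ii) that the fibre of the unstraightening $(\UN_{\Delta^n_\flat}\mathcal{F})_i$ agrees with $\UN_*(\mathcal{F}(i))$ \emph{compatibly with the counits}: whereas \autoref{prop:formalproperties} only records the base-change behaviour of the straightening, the corresponding statement for $\UN_{\Delta^n_\flat}$ must be extracted by adjunction and then matched against both $\psi_i$ and $\epsilon_\mathcal{F}$, so that the square relating $\epsilon_\mathcal{F}(i)$ to the point-counit genuinely commutes up to coherent homotopy. The only other nontrivial external input is the fibrewise detection of equivalences among $2$-Cartesian fibrations over $\Delta^n_\flat$, which we import from \cite{AGS_CartI}; everything else is a formal consequence of the equivalence over the point and the pushout analysis culminating in \autoref{cor:hmtpy_pushout_mapsimp} and \autoref{lem:FibresOfStraightening}.
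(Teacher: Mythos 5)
Your proposal is correct and follows essentially the same route as the paper: both hinge on the fibre comparison $\psi_i^X$ of \autoref{lem:FibresOfStraightening}, the equivalence over the point (\autoref{prop:SToverPt}), objectwise detection of equivalences in the projective model structure, fibrewise detection of equivalences between 2-Cartesian fibrations, and the identification of the fibre of $\UN_{\Delta^n_\flat}$ over $i$ with $\UN_*$ of the value at $i$. The only difference is formal dualization — you check the counit at fibrant functors together with $\ST_{\Delta^n_\flat}$ reflecting weak equivalences between cofibrant objects, whereas the paper checks the derived unit at fibrant fibrations together with $\UN_{\Delta^n_\flat}$ reflecting weak equivalences — and the adjunction-compatibility you flag as the hard part is likewise present (and likewise asserted rather than spelled out) in the paper's factorization of the adjoint map $S(X_i)\to \scr{F}(i)$ through $\psi_i^X$.
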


\begin{proof}
	As in \cite[Lem. 3.2.3.2]{HTT}, we see that $\UN_{\Delta^n_\flat}$ reflects weak equivalences between the images of fibrant objects. It is thus sufficient to show that the derived adjunction unit 
	\[
	\on{Id}\Rightarrow \mathsf{R}(\UN_{\Delta^n_\flat})\circ \ST_{\Delta^{n}_\flat} 
	\]
	is an equivalence. Since $\mathsf{R}(\UN_{\Delta^n_\flat})$ preserves weak equivalences and $\ST_{\Delta^{n}_\flat}$ preserves trivial cofibrations, it is sufficient to check this for fibrant objects. 
	
	Let $X\to \Delta^n_\flat$ be a 2-Cartesian fibration, and let 
	\[
	\func{\ST_{\Delta^{n}_\flat}(X)\to[\sim] \scr{F}} 
	\]
	be a fibrant replacement in $(\Set_\Delta^{\mathbf{ms}})^{\mathfrak{C}^{\sc}[\Delta^n_\flat]^\op}$. We are thus left to show that the induced map 
	\[
	\func{ X\to \UN_{\Delta^n_\flat}(\scr{F})
	}
	\]
	is an equivalence in $(\mbsSet)_{/\Delta^n_\flat}$. Since both objects are fibrant, it suffices to show that this map is a fibrewise equivalence. 
	
	We can identify $\UN_{\Delta^n_\flat}(\scr{F})$ with $U(\scr{F}(i))$. Using the equivalence of \autoref{prop:SToverPt}, we see that the map 
	\[
	\func{X_i\to U(\scr{F}(i))}
	\]
	is an equivalence if and only if the adjoint map 
	\[
	\func{
		S(X_i) \to \scr{F}(i) 
	}
	\]
	is an equivalence. However, we can factor this map as 
	\[
	\begin{tikzcd}
		& \ST_{\Delta^{n}_\flat}(X)(i)\arrow[dr] & \\
		S(X_i)\arrow[ur,"\psi_i^X"]\arrow[rr] & & \scr{F}(i)
	\end{tikzcd}
	\]
	The upper-right map is an equivalence since $\scr{F}$ was a fibrant replacement, and $\psi_i^X$ is an equivalence by \autoref{lem:FibresOfStraightening}. The proposition is thus proven. 
\end{proof}

\begin{corollary}
	Consider the scaled simplicial set $(\Delta^2)_\sharp:=\Nsc([2])$. Then Quillen adjunction 
	\[
	\begin{tikzcd}
		\ST_{\Delta^n_\sharp}: &[-3em] (\mbsSet)_{/\Delta^n_\sharp} \arrow[r,shift left] & (\Set_\Delta^{\mathbf{ms}})^{\mathfrak{C}^{\on{sc}}[\Delta^n_\sharp]^\op} \arrow[l,shift left] &[-3em] : \UN_{\Delta^n_\sharp} 
	\end{tikzcd}
	\]
	is a Quillen equivalence.
\end{corollary}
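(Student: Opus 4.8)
The plan is to run the argument of the preceding Proposition essentially verbatim, replacing $\Delta^n_\flat$ by $\Delta^n_\sharp$ throughout, and to isolate the one lemma that genuinely feels the change of base scaling. Since $\ST_{\Delta^n_\sharp}$ is left Quillen by \autoref{thm:STLQ}, and since $\UN_{\Delta^n_\sharp}$ reflects weak equivalences between fibrant objects by the argument of \cite[Lem. 3.2.3.2]{HTT} (which is insensitive to the scaling on the base), it suffices to check that the derived unit $\mathrm{Id}\Rightarrow \mathsf{R}(\UN_{\Delta^n_\sharp})\circ \ST_{\Delta^n_\sharp}$ is an equivalence on a fibrant object $X\to \Delta^n_\sharp$. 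As equivalences in $(\mbsSet)_{/\Delta^n_\sharp}$ between fibrant objects are detected fibrewise, this reduces — exactly as in the cited proof — to the analogue of \autoref{lem:FibresOfStraightening} over $\Delta^n_\sharp$: that for each $i$ the canonical map $\ST_*(X_i)\to \ST_{\Delta^n_\sharp}(X)(i)$ is an equivalence of marked-scaled simplicial sets.

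I would then prove this Fibres Lemma by the same induction on $n$, using two base-independent inputs. First, the homotopy-pushout decomposition \autoref{cor:hmtpy_pushout_mapsimp} remains valid over $\Delta^n_\sharp$: its square is a pushout of \bS\ simplicial sets along a cofibration, computed in $\mbsSet$ independently of the base, and the comparison $\omega\colon\mathcal{M}_X\to X$ is exhibited as \bS-anodyne by an absolute argument; since $(\mbsSet)_{/\Delta^n_\sharp}$ is left proper with cofibrations the monomorphisms, the square is a homotopy pushout there as well. Second, \autoref{thm:product} is stated for arbitrary scaled $A,B$ and so applies with $B=\Delta^n_\sharp$, giving the box-product comparison $\ST_{\Delta^n_\sharp}(X_n\times(\Delta^n)^\diamond)\simeq \ST_*(X_n)\boxtimes\ST_{\Delta^n_\sharp}((\Delta^n)^\diamond)$. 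With these, the inductive step goes through once we supply the analogue of \autoref{lem:diamond_MS_an}, namely that $\ST_{\Delta^n_\sharp}((\Delta^{n-1})^\diamond)(i)\to \ST_{\Delta^n_\sharp}((\Delta^n)^\diamond)(i)$ is an equivalence for $i<n$.

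That diamond comparison, and hence the underlying saturation computation \autoref{lem:saturation}, over the maximally scaled base is the main obstacle. Passing from $\Delta^n_\flat$ to $\Delta^n_\sharp$ alters the straightening only through the enriching category $\mathfrak{C}^{\sc}[\Delta^n_\sharp]$, whose mapping spaces (entering via $\scr{C}_\sharp$) carry the extra marked edges produced by the thin base triangles; consequently each value $\ST_{\Delta^n_\sharp}(-)(i)$ agrees with its $\Delta^n_\flat$-counterpart on underlying simplicial set and scaling but acquires additional marked edges. I would therefore reprove the $\Delta^n_\sharp$-analogues of \autoref{lem:saturation} and \autoref{lem:diamond_MS_an} by the same filtration arguments, verifying that these extra markings are inessential — i.e. that the straightened diamond is still $\mathbf{MS}$-anodyne over its maximal scaling — so that the comparison is an equivalence by two-out-of-three. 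I expect only bookkeeping rather than new phenomena here: the $\mathbf{MS}$-anodyne fillings used in \autoref{lem:saturation} live in the contractible directions of the cubes $\mathfrak{C}^{\sc}[\Delta^n](i,*)$ and are unaffected by marking more edges.

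Once the Fibres Lemma is in place, the final diagram chase of the preceding Proposition factors $\ST_*(X_i)\to\ST_{\Delta^n_\sharp}(X)(i)$ through the fibrant replacement and identifies it with an equivalence, yielding the derived unit equivalence and hence the Quillen equivalence. The case $n=2$, that is $\Nsc([2])$, which is precisely what is needed to attach scaling cells in the proof of the general theorem, is the special instance $\Delta^2_\sharp$ of this statement.
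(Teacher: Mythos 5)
Your strategy---rerun the $\Delta^n_\flat$ argument verbatim over $\Delta^n_\sharp$---is a genuinely different route from the paper's. The paper never touches the machinery of Section \ref{subsec:STequivSimp} again: it observes that pulling back along the scaling cofibration $(\Delta^2)^\sharp_{\flat\subset\sharp}\to(\Delta^2)^\sharp_{\sharp\subset\sharp}$ and restricting along $\mathfrak{C}^{\sc}[\Delta^2_\flat]\to\mathfrak{C}^{\sc}[\Delta^2_\sharp]$ identify the fibrant objects on both sides of the sharp adjunction with full simplicial subcategories of the fibrant objects on both sides of the flat adjunction, compatibly with $\UN$, and then concludes by the argument of \cite[Prop. 3.8.7]{LurieGoodwillie}, using the already-proven flat case as a black box. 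Your route forgoes this economy, which is legitimate in principle, but it then owes a complete re-verification of the hard input, and that is exactly where your proposal has a gap.

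The gap is your claim that \autoref{cor:hmtpy_pushout_mapsimp} holds over $\Delta^n_\sharp$ ``by an absolute argument.'' A fibrant object $X\to\Delta^2_\sharp$ has the property that \emph{every} lean triangle is thin: lifting against the generator \ref{mb:coCartoverThin} imposes this, because compatibility of the bottom map with the base scaling is automatic when the base is maximally scaled. In particular $X$, and hence $\mathcal{L}_X$, carries thin triangles lying over the nondegenerate triangle of $\Delta^2$, and the proof that $u:\mathcal{M}_X\to\mathcal{L}_X$ is \bS-anodyne is calibrated to exclude exactly this configuration: the thin scaling of $\mathcal{Z}_\sigma$ in \autoref{def:posetZ} consists of lean triangles with \emph{degenerate} image in $\Delta^n$, \autoref{lem:previousRightpivot} hypothesizes a simplex ``whose thin triangles are degenerate,'' and condition (1) of \autoref{lem:innerpivot} requires thin triangles away from the pivot to factor through $(\mathcal{S}^{\mathcal{A}})^\dagger$. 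Run verbatim in the sharp setting, the paper's filtration therefore terminates at a marked-biscaled simplicial set whose thin scaling is strictly smaller than that of $\mathcal{L}_X$, so it does not exhibit $u$ as anodyne. The repair is real content rather than bookkeeping: one must run the flat-decorated filtration first and then adjoin the missing thin triangles by pushouts of type \ref{mb:coCartoverThin}, which are legal over $\Delta^2_\sharp$ precisely because every triangle of the base is thin; an analogous extra step (showing that marking edges which have become equivalences is a trivial cofibration) is needed in your sharp versions of \autoref{lem:saturation} and \autoref{lem:diamond_MS_an} to absorb the extra marked edges produced by $\mathfrak{C}^{\sc}[\Delta^2_\sharp](0,2)=(\Delta^1)^\sharp$. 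You flag the latter points but supply no argument, and you explicitly wave off the mapping-simplex step, which is the one that fails as cited; alternatively, all of this can be bypassed by the paper's transfer argument.
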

\begin{proof}
	The key point to note is that base change along the cofibration
	\[
	(\Delta^2)^\sharp_{\flat\subset \sharp}\to (\Delta^2)^\sharp_{\sharp\subset \sharp}
	\]
	induces a fully faithful inclusion 
	\[
	(\mbsSet)_{/\Delta^2_\flat}^\circ \to (\mbsSet)_{/\Delta^2_\sharp}^\circ
	\]
	and similarly, composition with the induced map $\mathfrak{C}^{\sc}[\Delta^2_\flat]\to \mathfrak{C}^\sc[\Delta^2_\sharp]$ induces a fully faithful inclusion 
	\[
	\left((\Set_\Delta^{\mathbf{ms}})^{\mathfrak{C}^\sc[\Delta^2_\sharp]^\op}\right)^\circ \to \left((\Set_\Delta^{\mathbf{ms}})^{\mathfrak{C}^\sc[\Delta^2_\flat]^\op}\right)^\circ 
	\]
	and so we obtain a commutative diagram 
	\[
	\begin{tikzcd}
		\left((\Set_\Delta^{\mathbf{ms}})^{\mathfrak{C}^\sc[\Delta^2_\sharp]^\op}\right)^\circ \arrow[d,hookrightarrow]\arrow[r,"\UN"] & (\mbsSet)_{/\Delta^2_\sharp}^\circ\arrow[d]\\
		\left((\Set_\Delta^{\mathbf{ms}})^{\mathfrak{C}^\sc[\Delta^2_\flat]^\op}\right)^\circ\arrow[r,"\UN"'] & (\mbsSet)_{/\Delta^2_\flat}^\circ
	\end{tikzcd}
	\]
	of simplicial categories. 
	
	The remainder of the proof is, \emph{mutatis mutandis}, that of \cite[Prop. 3.8.7]{LurieGoodwillie}.
\end{proof}

\subsection{Straightening in general}\label{subsec:STequivGen}

We now prove the main theorem of this paper.

\begin{theorem}\label{thm:QE_general}
	Let $S\in \scsSet$ be a scaled simplicial set, and let $\phi:\mathfrak{C}^{\sc}[S]\to \mathcal{C}$ be an equivalence of $\msSet$-enriched categories. The Quillen adjunction 
	\[
	\begin{tikzcd}
		\ST_\phi: &[-3em] (\mbsSet)_{/S} \arrow[r,shift left] & (\mssSet)^{\mathcal{C}^\op}\arrow[l,shift left] &[-3em] :\UN_\phi 
	\end{tikzcd}
	\]
	is a Quillen equivalence.
\end{theorem}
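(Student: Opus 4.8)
The plan is to reduce the general statement to the equivalences already established over simplices and then glue along a cellular filtration of $S$, following the template of \cite[Prop. 3.8.4]{LurieGoodwillie}.

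First I would remove the dependence on $\phi$. Since $\phi\colon \mathfrak{C}^{\sc}[S]\to \mathcal{C}$ is an equivalence of $\msSet$-enriched categories, left Kan extension $\phi_!$ along $\phi$ is a left Quillen equivalence between the projective model structures $(\mssSet)^{\mathfrak{C}^{\sc}[S]^\op}$ and $(\mssSet)^{\mathcal{C}^\op}$, with right adjoint the restriction $\phi^*$; this is the standard fact that a Dwyer--Kan equivalence induces a Quillen equivalence of projective diagram categories, the (co)units being computed pointwise. By part (3) of \autoref{prop:formalproperties} there is a natural isomorphism $\ST_\phi\cong \phi_!\circ \ST_S$. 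Consequently $\ST_\phi$ is a Quillen equivalence if and only if $\ST_S$ is, and it suffices to prove the theorem for $\phi=\id$, i.e.\ that $\ST_S$ is a Quillen equivalence for every $S\in\scsSet$.

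Call $S$ \emph{good} if $\ST_S\dashv\UN_S$ is a Quillen equivalence. By \autoref{prop:SToverPt} the point is good, and in \autoref{subsec:STequivSimp} we showed that $\Delta^n_\flat$ and the fully scaled simplices are good; interpolating by pushouts along the scaling cofibration $\Delta^2_\flat\to\Delta^2_\sharp$ will show that every scaled simplex is good. The generating cofibrations of $\scsSet$ are the boundary inclusions $(\partial\Delta^n)_\flat\to\Delta^n_\flat$ and the scaling map $\Delta^2_\flat\to\Delta^2_\sharp$, so it remains to prove that goodness is closed under (i) transfinite composition and filtered colimits and (ii) pushouts $S=S'\cup_A B$ along a generating cofibration $A\to B$ with $A$, $B$, $S'$ good. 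For the gluing step I would use base change: part (2) of \autoref{prop:formalproperties} and the remark following it identify the restrictions of $\ST_S$ along the maps from $S'$, $A$, $B$ into $S$ with the functors $\ST_{S'}$, $\ST_A$, $\ST_B$ transported by $\mathfrak{C}^{\sc}[-]_!$. Since $\ST_S$ is left Quillen (\autoref{thm:STLQ}) it preserves homotopy colimits; since weak equivalences in $(\mbsSet)_{/S}$ are detected fibrewise and weak equivalences in $(\mssSet)^{\mathfrak{C}^{\sc}[S]^\op}$ pointwise (and $\UN_S$ reflects weak equivalences between fibrant objects, exactly as over a simplex), checking that the derived unit is an equivalence over $S$ reduces, along the homotopy pushout presenting $S$, to the corresponding statements over $S'$, $A$, and $B$, which hold by hypothesis. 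A skeletal induction over the cells of an arbitrary $S$ then finishes the proof.

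The main obstacle will be the bookkeeping in the gluing step rather than any new idea: one must verify that the base-change functors $\mathfrak{C}^{\sc}[f]_!$ on the projective functor categories are compatible with the homotopy-colimit decompositions, so that a homotopy pushout on the fibration side is carried to a homotopy pushout on the functor side and the derived units assemble correctly. This is precisely where the formal apparatus developed above---colimit preservation, base change in both variances from \autoref{prop:formalproperties}, and the fibrewise/pointwise characterizations of weak equivalences---must all be invoked at once, mirroring the structure of \cite[Prop. 3.8.4]{LurieGoodwillie}.
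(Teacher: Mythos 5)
Your proposal is correct and takes essentially the same route as the paper: the paper's proof also reduces to the cases already established over simplices and then invokes the inductive argument of \cite[Prop.~3.8.4]{LurieGoodwillie}, whose key claim is that the functor $S\mapsto \bigl((\mssSet)^{\mathfrak{C}^{\sc}[S]^\op}_f\bigr)[\scr{W}_S^{-1}]$ sends pushouts along cofibrations to homotopy pullbacks and transfinite composites of cofibrations to homotopy limits. Your formulation via closure of ``good'' scaled simplicial sets under cell attachments and transfinite composition, together with the reduction from $\phi$ to the identity using $\ST_\phi\cong\phi_!\circ\ST_S$, is just a spelled-out version of that same argument.
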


Coupled with the fact, discussed immediately hereafter, that $\UN_\phi$ is a $\msSet$-enriched functor, this will immediately imply a stronger result --- the functor of $\msSet$-enriched categories of fibrant-cofibrant objects induces an equivalence of $\infty$-bicategories.

The argument from here on out is standard, and follows the same path as \cite[Section 3.8]{LurieGoodwillie}. Our first aim will be to show that, for any scaled simplicial set $S$, the functor 
\[
\func{
	\UN_{\phi}: (\Set_\Delta^{\mathbf{ms}})^{\mathfrak{C}^{\sc}[S]^\op} \to (\mbsSet)_{/S}
}
\]
is, in fact, an $\Set_\Delta^+$-enriched functor. 

The $\Set_\Delta^+$-enrichment on $\UN_{\phi}$ is given as follows. Let $\scr{F},\scr{G}: \mathcal{C}^\op\to \Set_\Delta^{\mathbf{ms}}$ be $\Set_\Delta^+$-enriched functors, and $K\in \msSet$. A map 
\[
\func{
	K\to \Map^+(\scr{F},\scr{G})
}
\]
is equivalently a map $\scr{F}\otimes K\to \scr{G}$, where $(\scr{F}\otimes K)(s):=\scr{F}(s)\times K_\sharp$. We then have a natural map 
\[
\UN_\phi(\scr{F})\times K_{\sharp\subset \sharp}\to \UN_\phi(\scr{F})\times \UN_*(K_\sharp) 
\]
Where the second component is induced by the natural transformation $\alpha:\ST_*\Rightarrow L$. We can then write down a natural composite map 
\[
\UN_\phi(\scr{F})\times K_{\sharp\subset \sharp}\to \UN_S(\scr{F})\times \UN_*(K_\sharp) \to \UN_\phi(\scr{F}\otimes K)\to \UN_\phi(\scr{G})
\]
Which is equivalently a map $K\to \Map^{\on{th}}(\UN_S(\scr{F}),\UN_S(\scr{G}))$. The naturality guarantees that this defines a map of simplicial sets 
\[
\Map^+(\scr{F},\scr{G})\to \Map^{\on{th}}(\UN_{\phi}(\scr{F}),\UN_\phi(\scr{G})).
\]
Similarly, since the composition maps in both cases are defined via the diagonal $\Delta^n\to \Delta^n\times \Delta^n$, naturality ensures that this defines an enriched functor. A wholly analogous argument shows that $\UN_S$ can also be viewed as a simplicially-enriched functor. 

\begin{proof}[Proof (of \autoref{thm:QE_general})]
	The proof is now nearly identical to that of \cite[Prop. 3.8.4]{LurieGoodwillie}. The argument hangs on the claim that the functor
	\[
	\func*{
		F: (\scsSet)^{\op}\to \Cat_\Delta;
		S \mapsto ((\Set_\Delta^{\mathbf{ms}})_f^{\mathfrak{C}^{\sc}[S]^\op})[\scr{W}_S^{-1}] 
	}
	\]
	sends pushouts along cofibrations to homotopy pullbacks, and sends transfinite composites of cofibrations to homotopy limits, which follows from the argument given in loc. cit.  
\end{proof}

\begin{corollary}
	Let $S\in \scsSet$ be an $\infty$-bicategory. The $\Set_\Delta^+$-enriched functor $\UN_S$ induces an equivalence of $\infty$-bicategories 
	\[
	\func{\Nsc\left(\left((\Set_\Delta^{\mathbf{ms}})^{\mathfrak{C}^{\sc}[S]^\op}\right)^\circ\right) \to \Nsc\left(\left((\Set_\Delta^{\mathbf{mb}})_{/S}\right)^\circ\right). }
	\]
\end{corollary}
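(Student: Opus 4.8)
The plan is to deduce this corollary from the Quillen equivalence of \autoref{thm:QE_general} (taken with $\phi=\id$) by the standard mechanism that converts an \emph{enriched} Quillen equivalence into an equivalence of the underlying enriched higher categories. Three inputs are already in place: first, $\UN_S$ is a $\Set_\Delta^+$-enriched functor, as constructed in the paragraphs preceding the proof of \autoref{thm:QE_general}; second, both $(\mbsSet)_{/S}$ and $(\mssSet)^{\mathfrak{C}^{\sc}[S]^\op}$ are $\Set_\Delta^+$-enriched model categories (the latter because $\mssSet$ is one by \autoref{prop:markedscaledenriched}, and projective functor categories inherit the enrichment); and third, $\ST_S\dashv\UN_S$ is a Quillen equivalence. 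The strategy is then to show that $\UN_S$ restricts to a weak equivalence $\UN_S^\circ$ of $\Set_\Delta^+$-enriched categories between the full subcategories of fibrant-cofibrant objects, and finally to transport this equivalence across the scaled nerve.

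First I would verify that $\UN_S^\circ$ is a weak equivalence in $\Cat_\Delta^+$, i.e. that it is essentially surjective on homotopy categories and induces equivalences on all mapping objects. Essential surjectivity is immediate: since $\ST_S\dashv\UN_S$ is a Quillen equivalence, the derived functors induce an equivalence of homotopy categories, and $\UN_S^\circ$ realizes the derived right adjoint on fibrant-cofibrant objects. For the mapping objects, I must show that for fibrant-cofibrant $\scr{F},\scr{G}\in(\mssSet)^{\mathfrak{C}^{\sc}[S]^\op}$ the enriched structure map
\[
\on{Map}^+(\scr{F},\scr{G})\to \Map^{\on{th}}_S(\UN_S\scr{F},\UN_S\scr{G})
\]
is a weak equivalence of marked simplicial sets. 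Here both sides compute derived mapping objects (both source and target are fibrant-cofibrant in $\Set_\Delta^+$-enriched model categories), so this reduces to the assertion that the derived counit and unit of the enriched adjunction are equivalences, which is exactly the content of $\ST_S\dashv\UN_S$ being a Quillen equivalence.

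Finally I would apply the scaled nerve. The objects $\bigl((\mssSet)^{\mathfrak{C}^{\sc}[S]^\op}\bigr)^\circ$ and $\bigl((\mbsSet)_{/S}\bigr)^\circ$ are fibrant in $\Cat_\Delta^+$, since their mapping objects, being derived mapping objects between fibrant-cofibrant objects, are fibrant marked simplicial sets. Because $\Nsc$ is the right adjoint of the Quillen equivalence $\mathfrak{C}^{\sc}\dashv\Nsc$, it preserves weak equivalences between fibrant objects by Ken Brown's lemma, and it sends such locally fibrant enriched categories to $\infty$-bicategories. Hence $\Nsc(\UN_S^\circ)$ is a bicategorical equivalence, as claimed. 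The main obstacle is the fully faithful (mapping-object) step: because $\ST_S$ only \emph{weakly} preserves the $\msSet$-tensoring, the adjunction $\ST_S\dashv\UN_S$ is not strictly $\Set_\Delta^+$-enriched, so the comparison above is not a formal consequence of enriched adjointness. I expect to handle this by using the explicit $\Set_\Delta^+$-enrichment of $\UN_S$ built from the natural equivalence $\alpha:\ST_*\Rightarrow L$ of \autoref{prop:STptToL} together with \autoref{thm:product}, which together show that the relevant structure maps on mapping objects agree up to coherent weak equivalence with the genuine derived comparison.
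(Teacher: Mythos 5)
Your proposal is correct and takes essentially the same route as the paper: the paper's entire proof is the three citations you assemble, namely \autoref{thm:product} together with \autoref{prop:STptToL} (which underlie the $\Set_\Delta^+$-enrichment of $\UN_S$), the Quillen equivalence \autoref{thm:QE_general}, and \cite[A.3.1.10]{HTT} --- the latter being exactly the abstract mechanism you verify by hand, i.e. that an enriched right Quillen equivalence whose left adjoint is only \emph{weakly} compatible with the tensoring induces a weak equivalence of $\Set_\Delta^+$-enriched categories of fibrant-cofibrant objects, which the scaled nerve then converts into an equivalence of $\infty$-bicategories. The only difference is that you unpack that citation rather than invoke it.
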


\begin{proof}
	This follows immediately from \autoref{thm:product}, \autoref{thm:QE_general}, and \cite[A.3.1.10]{HTT}.
\end{proof}

One final step is left: to interpret this result internally to marked-scaled simplicial sets. 

\begin{definition}
	The $\Set_\Delta^+$-enrichment on $\mssSet$ equips the full subcategory $(\mssSet)^\circ$ of fibrant-cofibrant objects with the structure of a fibrant $\msSet$-enriched category. We denote by $\bcat{B}\!\on{icat}_\infty:=\Nsc((\mssSet)^\circ)$ the homotopy-coherent scaled nerve of this $\msSet$-category (considered as a scaled simplicial set). We refer to $\bcat{B}\!\on{icat}_\infty$ \glsadd{BicatInfty} as the \emph{$\infty$-bicategory of $\infty$-bicategories.} 
	
	Similarly, for $S\in \scsSet$, we denote by $2\bcat{C}\!\on{art}(S):=\Nsc\left(\left((\Set_\Delta^{\mathbf{mb}})_{/S}\right)^\circ\right)$ the \emph{$\infty$-bicategory of 2-Cartesian fibrations over $S$}. \glsadd{CartINfty} 
\end{definition} 

\begin{remark}
	Formally, considering $\mssSet$ as the category of all $\scr{U}$-small marked-scaled simplicial sets for some Grothendieck universe $\scr{U}$, the marked-scaled simplicial set $\bcat{B}\!\on{icat}_\infty$ is no longer small. We thus resort to fixing a new Grothendieck universe $\scr{V}$ in which $\scr{U}$, and thus $\bcat{B}\!\on{icat}_\infty$, becomes $\scr{V}$-small. 
\end{remark}  

\begin{proposition}
	Let $\scr{C}$ be a small $\msSet$-enriched category, $S$ a small scaled simplicial set,  $\phi:\mathfrak{C}^{\sc}[S]\to \scr{C}$ an equivalence of $\msSet$-enriched categories, and $\mathbf{A}$ a combinatorial, $\msSet$-enriched model category. Endow $\mathbf{A}^{\scr{C}}$ with the projective model structure. Then the functor 
	\[
	\Nsc((\mathbf{A}^{\scr{C}})^\circ )\to \Fun(S,\Nsc(\mathbf{A}^\circ))
	\]
	is a bicategorical equivalence of scaled simplicial sets. 
\end{proposition}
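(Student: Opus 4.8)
The plan is to follow the same template as the proof of \autoref{thm:QE_general}, which is itself modelled on \cite[Prop.~3.8.4]{LurieGoodwillie} and on the $(\infty,1)$-categorical comparison \cite[Prop.~4.2.4.4]{HTT}. First I would dispose of the generality in $\scr{C}$. Since $\phi$ is an equivalence of $\msSet$-enriched categories, restriction along $\phi$ is a Quillen equivalence $\mathbf{A}^{\scr{C}}\to \mathbf{A}^{\mathfrak{C}^{\sc}[S]}$ of projective model structures (the $\msSet$-enriched analogue of the invariance results of \cite[\S A.3.3]{HTT}), and $\phi$ induces a bicategorical equivalence $S\simeq \Nsc(\scr{C})$ compatibly with $\Fun(-,\Nsc(\mathbf{A}^\circ))$. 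I may therefore assume $\scr{C}=\mathfrak{C}^{\sc}[S]$ and $\phi=\id$, reducing to the claim that for every scaled simplicial set $S$ the comparison
\[
\Theta_S\colon \Nsc\bigl((\mathbf{A}^{\mathfrak{C}^{\sc}[S]})^\circ\bigr)\longrightarrow \Fun\bigl(S,\Nsc(\mathbf{A}^\circ)\bigr)
\]
is a bicategorical equivalence.

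Next I would view both sides as functors $\scsSet^{\op}\to \Cat_\Delta^+$ (valued in $\infty$-bicategories up to equivalence) connected by the natural transformation $\Theta$, and record two descent properties. For the target $\Phi(S):=\Fun(S,\Nsc(\mathbf{A}^\circ))$: because the scaled model structure is Cartesian-closed and $\Nsc(\mathbf{A}^\circ)$ is fibrant, $\Fun(-,\Nsc(\mathbf{A}^\circ))$ is homotopical in the first variable and carries homotopy pushouts along cofibrations to homotopy pullbacks and transfinite composites to homotopy limits. For the source $\Psi(S):=\Nsc((\mathbf{A}^{\mathfrak{C}^{\sc}[S]})^\circ)$: since $\mathfrak{C}^{\sc}$ is left Quillen, it sends these homotopy colimits of scaled simplicial sets to homotopy colimits in $\Cat_\Delta^+$, and the assignment $\scr{D}\mapsto \Nsc((\mathbf{A}^{\scr{D}})^\circ)$ converts homotopy colimits of $\msSet$-enriched categories into homotopy limits of $\infty$-bicategories. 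This last point is precisely the descent statement already invoked in \autoref{thm:QE_general}, and I would import it verbatim from the argument of \cite[Prop.~3.8.4]{LurieGoodwillie}.

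Granting these two properties, the gluing lemma and two-out-of-three for homotopy limits show that the class of $S$ for which $\Theta_S$ is an equivalence is closed under homotopy pushouts along cofibrations, transfinite composition, and retracts. Since the cofibrations of $\scsSet$ are generated by the boundary inclusions $\partial\Delta^n\to\Delta^n$ together with the scaling map $\Delta^2_\flat\to\Delta^2_\sharp$, every scaled simplicial set is built from the cells $\Delta^n_\flat$ and $\Delta^2_\sharp$ by these operations. It therefore suffices to verify $\Theta_S$ on the generators $S\in\{\Delta^n_\flat,\Delta^2_\sharp\}$.

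The base cases are the genuinely bicategorical content, and I expect them to be the main obstacle. For $S=\Delta^0$ the map $\Theta_{\Delta^0}$ is an isomorphism. For $n\le 1$ the mapping objects of $\mathfrak{C}^{\sc}[\Delta^n]$ are points, carrying no scaling data, so $\Theta_{\Delta^n_\flat}$ is an instance of the $(\infty,1)$-categorical comparison \cite[Prop.~4.2.4.4]{HTT} applied mapping-object-wise. The essential new input is the scaled $2$-simplex $\Delta^2_\sharp$: here I would check directly that scaling the triangle of $\mathfrak{C}^{\sc}[\Delta^2_\sharp]$ corresponds, under $\Theta$, to the thinness condition cutting out $\Fun(\Delta^2_\sharp,\Nsc(\mathbf{A}^\circ))$; the remaining flat simplices $\Delta^n_\flat$ then follow by a secondary induction on $n$ using the skeletal decomposition of $\mathfrak{C}^{\sc}[\Delta^n]$ into these $2$-simplex cells. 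The delicate part throughout is to keep the descent bookkeeping of the second paragraph honest in the bicategorical (rather than $(\infty,1)$-categorical) setting, ensuring that the homotopy pullbacks computed via $\Phi$ and via $\Psi$ really are identified by the natural $\Theta$; once the scaled $2$-simplex is settled, the cell induction of the third paragraph closes the argument.
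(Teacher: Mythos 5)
Your overall skeleton --- reduce to $\scr{C}=\mathfrak{C}^{\sc}[S]$ by enriched invariance, set up descent for both sides, run a cell induction over the generating cofibrations of $\scsSet$, and verify base cases --- is the right shape; the paper's own proof is in fact a one-line citation of \cite[Prop.~4.2.4.4]{HTT} together with the remark that exchanging $\Set_\Delta$ for $\mssSet$ costs nothing since both are excellent model categories. The genuine gap is in your treatment of the base cases, where you have the difficulty exactly inverted. Your list of irreducible cases $\{\Delta^n_\flat,\ \Delta^2_\sharp\}$ is correct, but the step ``the remaining flat simplices $\Delta^n_\flat$ then follow by a secondary induction on $n$ using the skeletal decomposition of $\mathfrak{C}^{\sc}[\Delta^n]$ into these $2$-simplex cells'' does not work: no such decomposition exists. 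As a scaled simplicial set, $\Delta^n_\flat$ is itself a generating cell, so it is not built from $2$-dimensional cells by pushouts along cofibrations (the skeletal filtration of $\Delta^n$ ends by attaching the top $n$-cell, which is precisely the case in question, so this reading is circular); and, unlike the $(\infty,1)$-categorical setting, $\Delta^n_\flat$ is not bicategorically equivalent to any complex of lower-dimensional cells either. In the unscaled world the simplex case is disposed of because $\mathfrak{C}[\Delta^n]\to[n]$ is an equivalence of simplicial categories --- the mapping spaces $(\Delta^1)^{j-i-1}$ are contractible Kan complexes, equivalently the spine inclusion is inner anodyne. In the scaled world $\mathfrak{C}^{\sc}[\Delta^n_\flat]=\OO^n$ has mapping objects the flat-marked cubes, which present the poset categories $[1]^{j-i-1}$ rather than points (already $\mathfrak{C}^{\sc}[\Delta^2_\flat](0,2)=(\Delta^1)^\flat\not\simeq \Delta^0$), so the spine inclusion is not a bicategorical equivalence and no reduction of $\Delta^n_\flat$ to low-dimensional data is available. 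Conversely $\Delta^2_\sharp$, which you single out as ``the essential new input,'' is the one case that \emph{does} reduce: $(\Lambda^2_1,\{\Delta^{\{0,1,2\}}\})\to(\Delta^2,\{\Delta^{\{0,1,2\}}\})$ is scaled anodyne, so $\Delta^2_\sharp$ follows from $\Delta^1$ and $\Delta^0$ by your own closure properties.

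So the real content of the proposition sits exactly where your proposal waves it away: the flat simplices $\Delta^n_\flat$, $n\geq 2$, must each be handled directly (essential surjectivity via pointwise fibrant--cofibrant projective resolutions, plus an identification of the mapping $\infty$-categories of $\Fun(\Delta^n_\flat,\Nsc(\mathbf{A}^\circ))$ with the enriched natural-transformation objects on the other side). This mirrors the structure of the paper's proof of \autoref{thm:QE_general}, where the case over $\Delta^n_\flat$ (Section \ref{subsec:STequivSimp}) is by far the most technical step. A smaller, secondary issue: for $n\leq 1$ the claim is not literally ``an instance of'' \cite[Prop.~4.2.4.4]{HTT} applied mapping-object-wise, since the enrichment here is over $\msSet$ and the nerve is $\Nsc$; what is true is that the proof of that base case transfers verbatim to the $\msSet$-enriched setting, which is precisely the role that excellence of the enriching model category plays in the paper's citation.
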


\begin{proof}
	The proof is that of \cite[Prop. 4.2.4.4]{HTT}. The only thing that changes is the exchange of $\Set_\Delta$ for $\mssSet$, and as both of these are excellent model categories, no further emendation is necessary.  
\end{proof}

\begin{corollary}\label{cor:intrinsincbicat}
	Let $S\in \scsSet$. There is an equivalence of $\infty$-bicategories
	\[
	2\bcat{C}\!\on{art}(S)\simeq \Fun(S^\op,\bcat{B}\!\on{icat}_\infty).
	\]
\end{corollary}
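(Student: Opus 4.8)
The plan is to obtain this corollary as a formal consequence of the two results established immediately above, by stringing together the unstraightening equivalence with the comparison between enriched functor categories and $\infty$-bicategories of functors. The target chain of equivalences I would produce is
\[
2\bcat{C}\!\on{art}(S) \;\simeq\; \Nsc\!\left(\left((\mssSet)^{\mathfrak{C}^{\sc}[S]^\op}\right)^\circ\right) \;\simeq\; \Fun(S^\op,\bcat{B}\!\on{icat}_\infty),
\]
where the first equivalence is the preceding corollary and the second is an instance of the preceding proposition.

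First I would invoke the preceding corollary, which identifies $2\bcat{C}\!\on{art}(S)=\Nsc\!\left(\left((\mbsSet)_{/S}\right)^\circ\right)$ with the scaled nerve of the fibrant–cofibrant objects of the projective model structure on $(\mssSet)^{\mathfrak{C}^{\sc}[S]^\op}$. This is the step carrying the genuine content of the paper, since it rests on \autoref{thm:QE_general} (that $\ST_S\dashv\UN_S$ is a Quillen equivalence) and \autoref{thm:product} (compatibility of the straightening with the $\msSet$-tensoring), fed through \cite[A.3.1.10]{HTT}.

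Next I would apply the preceding proposition with $\mathbf{A}=\mssSet$ --- a combinatorial $\msSet$-enriched model category by \autoref{thm:markedscaledmodel} and \autoref{prop:markedscaledenriched} --- and with $\scr{C}=\mathfrak{C}^{\sc}[S]^\op$. To match the hypotheses of that proposition I must exhibit an equivalence of $\msSet$-enriched categories $\phi:\mathfrak{C}^{\sc}[S^\op]\to\mathfrak{C}^{\sc}[S]^\op$; this is provided by the compatibility of scaled rigidification with passage to opposites. The proposition then furnishes a bicategorical equivalence $\Nsc\!\left(\left((\mssSet)^{\mathfrak{C}^{\sc}[S]^\op}\right)^\circ\right)\to\Fun(S^\op,\Nsc((\mssSet)^\circ))$, and since $\Nsc((\mssSet)^\circ)=\bcat{B}\!\on{icat}_\infty$ by definition, composing with the previous step yields the claim.

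The only real obstacle I anticipate is bookkeeping the variance: I need to be sure the opposite lands on the correct factor so that the indexing category $\mathfrak{C}^{\sc}[S]^\op$ of the functor category is of the form $\mathfrak{C}^{\sc}[-]$ up to equivalence, which is exactly what the equivalence $\phi$ secures and what makes the preceding proposition applicable. Everything else is immediate: smallness holds after passing to the enlarged universe $\scr{V}$ of the preceding remark, and the identification of $\bcat{B}\!\on{icat}_\infty$ with $\Nsc((\mssSet)^\circ)$ is definitional.
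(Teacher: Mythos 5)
Your proposal is correct and is exactly the argument the paper intends: \autoref{cor:intrinsincbicat} is stated without proof precisely because it follows by chaining the preceding corollary (the nerve-level unstraightening equivalence, resting on \autoref{thm:QE_general}, \autoref{thm:product}, and \cite[A.3.1.10]{HTT}) with the preceding proposition applied to $\mathbf{A}=\mssSet$ and $\scr{C}=\mathfrak{C}^{\sc}[S]^\op$, using the standard identification $\mathfrak{C}^{\sc}[S^\op]\cong\mathfrak{C}^{\sc}[S]^\op$ to handle the variance. Your handling of the universe issue and the variance bookkeeping matches the paper's setup, so nothing further is needed.
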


Many of the corollaries of the $(\infty,1)$-categorical straightening-unstraightening equivalence generalize directly. In particular, the following results generalize Corollary 3.3.1.1, Corollary 3.3.1.2, and Proposition 3.3.1.7 from \cite{HTT}.

\begin{corollary}
	Let $f:T\to S$ be a bicategorical equivalence between scaled simplicial sets. Then the pullback functor
	\[
	\func{
		f^\ast: (\mbsSet)_{/S} \to (\mbsSet)_{/T}	
		}
	\]
	is a right Quillen equivalence. 
\end{corollary} 
\begin{proof}
	In this case, three of the four functors in the commutative square of \autoref{rmk:basechange} induce equivalences on underlying $\infty$-categories. As a result, the right derived functor of $f^\ast$ is an equivalence. 
\end{proof}

\begin{corollary}
	Let $p:X\to S$ be a 2-Cartesian fibration between scaled simplicial sets, and $f:S\to T$ a bicategorical equivalence of scaled simplicial sets. Then there is a 2-Cartesian fibration $q: Y\to T$ and an equivalence of 2-Cartesian fibrations over $S$ $X\simeq S\times_T Y$. 
\end{corollary}

\begin{corollary}
	Let $S$ be a scaled simplicial set, and let $p:X\to S$ be a fibrant object in $(\mbsSet)_{/S}$. Then the underlying map of scaled simplicial sets is a bicategorical fibration. 
\end{corollary}

\begin{proof}
	The proof is nearly verbatim that of \cite[Prop. 3.3.1.7]{HTT}. 
\end{proof}

\section{The Relative 2-Nerve}\label{app:Relnerve}

There is a special case of most $\infty$-categorical Grothendieck constructions in which the computation of the right adjoints can be greatly simplified. When the base is suitably strict, it is possible to define a \emph{relative nerve}, which computes the Grothendieck construction of a functor. The aim of this appendix is to provide a relative nerve construction which takes as input a $\on{Set}_\Delta^+$-enriched functor 
\[
\func{
	F:\CC^{\op}\to \Set_\Delta^{\mathbf{ms}}
}
\]
and yields as output a 2-Cartesian fibration $\cchi_{\CC}(F)\to \Nsc(\CC)$. In form, this relative nerve will actually seem slightly \emph{more} complicated than the associated straightening functor. However, it will enable us to more easily make the comparison with the strict 2-categorical relative nerve construction of \cite{Buckley}. The particular virtue of our relative 2-nerve construction in this regard is that, given a strict 2-functor 
\[
\func{F:\CC^\op\to 2\!\Cat,}
\]
we can compute the relative 2-nerve in terms of strict 2-functors into $\CC$ and $F(x)$, without first passing to simplicial sets. 

In our previous papers \cite{AGSRelNerve} and \cite{AGSQuillen}, we defined two variants of the relative 2-nerve, which provided $\infty$-bicategories fibred in $(\infty,1)$-categories. In this section, we will upgrade the later of these constructions to provide the desired $\cchi_\CC$. 

\begin{remark}
	Our choice of notation $\cchi_\CC$ for the relative 2-nerve of a functor $F:\CC^{\op} \to \on{Set}_\Delta^{\mathbf{ms}}$ does in fact collide with the choice of notation in \cite{AGSRelNerve} and \cite{AGSQuillen}. An ideal choice of notation would involved a superscript $\cchi_\CC^{\epsilon}$ where $\epsilon$ denotes one of the four variances for bicategorical fibrations. We will use this rather abusive notation to improve readibility since we will only consider the \emph{outer Cartesian} variance.
\end{remark}

\begin{definition}
	Given a totally ordered set $I$, the 2-category $\OO^{I}_{i \upslash}$ has 
	\begin{itemize}
		\item Objects given by subsets $S \subseteq I$ such that $\min(S)=i$.
		\item Each mapping category $\OO^{I}_{i \upslash}(S,T)$ is a poset whose objects $\func{\mathcal{U}:S \to T}$ are given by subsets $\mathcal{U} \subseteq I$ such that
		\[
		\min(\mathcal{U})=\max(S), \enspace \max(\mathcal{U})=\max(T), \enspace   S \cup \mathcal{U}\subseteq T,
		\] 
		ordered by inclusion.
		\item Composition is given by union.
	\end{itemize}
	These lax slice categories piece together into a 2-functor 
	\[
	\func*{
		(\OO^I)^\op\to 2\!\Cat;
		i\mapsto \OO^I_{i\upslash}
	}
	\]
	so that, in particular for any $J\subset I$ with $i=\min(I)$ and $j=\min(J)$, we have 2-functors 
	\[
	\func{
		\omega_{I,J}: \OO^I(i,j)\times \OO^J_{j\upslash} \to \OO^I_{i\upslash}
	}
	\]
	given on objects by the union of sets.  It is an easy check that these functors are injective on objects, 1-morphisms, and 2-morphisms.
\end{definition}

The 2-categories $\OO^I_{i\upslash}$ play a central role in our relative nerve construction. 

\begin{construction}
	Let 
	\[
	\func{
		F: \CC^\op\to \Set_\Delta^{\mathbf{ms}}
	}
	\]
	be a $\Set_\Delta^+$-enriched functor. We define a marked-biscaled simplicial set $\cchi_\CC(F)$ as follows. An $n$-simplex $\Delta^n\to \cchi_\CC(F)$ consists of 
	\begin{itemize}
		\item A simplex $\sigma:\Delta^n_\flat\to \Nsc(\CC)$. 
		\item For every $\varnothing\neq I\subset [n]$ with $\min(I)=i$, a map of marked-scaled simplicial sets 
		\[
		\func{
			\theta_I: \Nms(\OO^I_{i\upslash})^\flat \to F(\sigma(i)) 
		}
		\] 
		such that, for every $\varnothing \neq J\subset I\subset [n]$ with $\min(J)=j$ and $\min(i)=i$, the diagram 
		\[
		\begin{tikzcd}
			\Nms(\mathbb{O}^I(i,j)) \times \Nms(\OO^J_{j\upslash}) \arrow[r]\arrow[d,"\Nerv(\sigma)\times\theta_J"'] & \Nms(\OO^I_{i\upslash}) \arrow[d,"\theta_I"] \\
			\Nms(\CC(\sigma(i),\sigma(j))) \times F(\sigma(j))\arrow[r,"F(-)"']  & F(\sigma(i))  
		\end{tikzcd}
		\]
		commutes.
	\end{itemize}
	We then define markings and scalings on $\rho_\CC(F)$.
	\begin{itemize}
		\item A 1-simplex $\Delta^1\to \cchi_\CC(F)$ is marked if the corresponding map $\theta_{[1]}:\Nms(\OO^1_{0\upslash})\to F(\sigma(i))$ descends to a map 
		\[
		\func{
			\Nms(\OO^1_{0\upslash})^\sharp\to F(\sigma(0)).
		}
		\]  
		\item A 2-simplex $\Delta^2\to \cchi_\CC(F)$ is lean if the corresponding map 
		\[
		\func{\Nms(\OO^2_{0\upslash})\to F(\sigma(0))}
		\]
		descends to a map 
		\[
		\func{\Nms(\OO^2_{0\upslash})_\sharp\to F(\sigma(0))}.
		\]
		\item A 2-simplex  $\Delta^2\to \cchi_\CC(F)$ is thin if and only if it is lean \emph{and} the corresponding 2-simplex $\sigma:\Delta^2 \to \Nsc(\CC)$ is thin. 
	\end{itemize}
	Note that there is a canonical forgetful functor
	\[
	\func*{
		\cchi_{\CC}(F)\to \Nsc(\CC);
		(\sigma,\{\theta_I\})\mapsto \sigma 
	}
	\]
	which sends thin triangles to thin triangles. 
\end{construction}

\begin{definition}
	The \emph{relative bicategorical nerve} \glsadd{RhoC} over a 2-category $\CC$ is the functor 
	\[
	\func*{
		\cchi_\CC: \left(\Set_\Delta^{\mathbf{ms}}\right)^{\CC^\op} \to (\Set_\Delta^{\mathbf{mb}})_{/\Nsc(\CC)};
		F \mapsto \cchi_\CC(F) 
	}
	\]
	By the adjoint functor theorem, $\cchi_\CC$ admits a left adjoint, which we will denote by 
	\[
	\func{
		\PPhi_\CC: (\Set_\Delta^{\mathbf{mb}})_{/\Nsc(\CC)} \to \left(\Set_\Delta^{\mathbf{ms}}\right)^{\CC^\op}.
	}
	\]
\end{definition}

%The remainder of this section will be devoted to proving the following theorem.
%
%\begin{theorem}
% Let $\CC$ be a 2-category. Then there is a natural equivalence $\cchi_\CC\simeq \UN_{\Nsc(\CC)}$.
%\end{theorem}
%
%We will, in fact, use this theorem to show that $\cchi_\CC$ is a right Quillen equivalence. Our first step in the proof of this fact is the following 

\begin{lemma}
	The functor $\cchi_\CC$ preserves trivial fibrations. 
\end{lemma}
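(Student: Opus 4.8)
The plan is to reduce the claim that $\cchi_\CC$ preserves trivial fibrations to a lifting problem against the generating cofibrations of $\mbsSet$, then to transfer that lifting problem across the adjunction $\PPhi_\CC \dashv \cchi_\CC$ so that it becomes a lifting problem in the projective model structure on $\left(\mssSet\right)^{\CC^\op}$. Concretely, a trivial fibration in $\mbsSet$ is characterized by the right lifting property against the four generating cofibrations \ref{cof:bndry}--\ref{cof:thin} listed in the proof of \autoref{prop:St_pres_cof}. So I would take a trivial fibration $p:\scr{F}\to \scr{G}$ in $\left(\mssSet\right)^{\CC^\op}$, a generating cofibration $\iota: A\to B$ in $(\mbsSet)_{/\Nsc(\CC)}$, and a commutative square

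\begin{equation*}
\begin{tikzcd}
A \arrow[r]\arrow[d,"\iota"'] & \cchi_\CC(\scr{F})\arrow[d,"\cchi_\CC(p)"] \\
B \arrow[r] & \cchi_\CC(\scr{G})
\end{tikzcd}
\end{equation*}

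and seek a diagonal filler.

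\textbf{First I would} record the key computation that makes the argument go through: for each generating cofibration $\iota$, I would identify $\PPhi_\CC(\iota)$ explicitly as a (levelwise) cofibration of $\mssSet$-valued functors, using the definition of $\cchi_\CC$ via the 2-categories $\OO^I_{i\upslash}$. Because the data of an $n$-simplex of $\cchi_\CC(F)$ is precisely a simplex $\sigma$ of $\Nsc(\CC)$ together with a compatible family of maps $\theta_I:\Nms(\OO^I_{i\upslash})^\flat\to F(\sigma(i))$, the adjunction unit lets me read off $\PPhi_\CC$ on representable-type inputs: $\PPhi_\CC$ applied to a nondegenerate simplex over $\Nsc(\CC)$ is built out of the marked-scaled simplicial sets $\Nms(\OO^I_{i\upslash})^\flat$ (with their $\sharp$-variants encoding the markings and leanness). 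The upshot I want is that $\PPhi_\CC$ sends each of \ref{cof:bndry}--\ref{cof:thin} to a cofibration in each object of the functor category, hence a projective cofibration. This is the content that must be checked case by case, but each case reduces to observing that passing from $\partial\Delta^n$ to $\Delta^n$ (resp. adding a marked edge, a lean triangle, or a thin triangle) enlarges the indexing family $\{\theta_I\}$ by a monomorphism at the appropriate object of $\CC$.

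\textbf{Then} the transposed square

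\begin{equation*}
\begin{tikzcd}
\PPhi_\CC(A) \arrow[r]\arrow[d,"\PPhi_\CC(\iota)"'] & \scr{F}\arrow[d,"p"] \\
\PPhi_\CC(B) \arrow[r] & \scr{G}
\end{tikzcd}
\end{equation*}

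has left vertical map a projective cofibration and right vertical map a projective trivial fibration, so a lift exists; transposing the lift back across the adjunction solves the original problem. This shows $\cchi_\CC(p)$ has the right lifting property against all generating cofibrations, hence against all cofibrations, hence is a trivial fibration in $\mbsSet$ (and it lies over $\Nsc(\CC)$ by the canonical forgetful functor).

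\textbf{The main obstacle} I expect is the bookkeeping in computing $\PPhi_\CC$ on generators and verifying that the scaling/marking generators \ref{cof:coCart} and \ref{cof:thin} transpose to honest cofibrations rather than to maps that also alter the underlying simplicial set; here the fact that $\OO^I_{i\upslash}$ has contractible nonempty mapping categories (\autoref{rem:initialmapping}) and that the markings and leanness are defined by whether $\theta_{[1]}$, resp. $\theta_{\OO^2_{0\upslash}}$, descends to the $\sharp$-decorated source should guarantee that adding a decoration in $B$ forces adding exactly the corresponding decoration on the relevant $\theta_I$, with no change to the underlying data. Once that compatibility is pinned down, the projective model structure does the rest formally, since the generating trivial fibration lifting property is preserved under the adjoint transpose.
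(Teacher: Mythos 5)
Your reduction to the generating cofibrations \ref{cof:bndry}--\ref{cof:thin} matches the paper's first step, and transposing the lifting problem across $\PPhi_\CC\dashv\cchi_\CC$ is a legitimate reformulation. But the step that carries all the weight fails: you claim that because $\PPhi_\CC(\iota)$ is an objectwise (levelwise) cofibration it is \emph{therefore} a projective cofibration. This implication goes in the wrong direction. In the projective model structure on $\left(\mssSet\right)^{\CC^\op}$ the cofibrations are characterized by the left lifting property against the objectwise trivial fibrations; every projective cofibration is an objectwise cofibration, but an objectwise cofibration is in general far from projective (its source and target need not even be projectively cofibrant objects). Worse, by adjunction the assertion that $\PPhi_\CC(\iota)$ is a projective cofibration for each generating cofibration $\iota$ is \emph{precisely equivalent} to the lemma you are proving --- it is the content of the corollary that the paper deduces \emph{from} this lemma. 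So your argument does not reduce the problem to something simpler; it restates it and then closes the loop with a false inference.

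The repair is to not transpose all the way into the functor category. Since projective trivial fibrations are exactly the objectwise trivial fibrations, one solves the lifting problem against $\cchi_\CC(\mu)$ directly, using the explicit description of the simplices of $\cchi_\CC(F)$: for a generating cofibration $A\to B$, supplying the missing simplex of $\cchi_\CC(F)$ amounts to a single extension problem in $\mssSet$, namely extending the datum $\theta_{[n]}$ (a map out of a decorated subobject of $\Nms(\OO^n_{0\upslash})$ determined by the data already given over $A$ and over $\cchi_\CC(G)$) along the trivial fibration $F(\sigma(i))\to G(\sigma(i))$. This localizes the problem at one value of the functors at a time, so only monomorphy of the relevant inclusions of decorated nerves in $\mssSet$ is needed --- which is exactly the observation in your final paragraph, and is where the genuine case-by-case work for the decorations \ref{cof:marked1}, \ref{cof:coCart}, \ref{cof:thin} lives. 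This is the route the paper takes: reduce to generators and then run the extension argument, citing \cite{AGSQuillen} (Prop.~3.0.11) for the details.
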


\begin{proof}
	We need only check that the lifting problems 
	\[
	\begin{tikzcd}
		A\arrow[d,"f",hookrightarrow]\arrow[r] & \cchi_\CC(F)\arrow[d,"\cchi_\CC(\mu)"] \\
		B\arrow[r] & \cchi_\CC(G)
	\end{tikzcd}
	\]
	have solutions when $\mu:F\Rightarrow G$ is a projective (pointwise) trivial fibration and $f:A\to B$ is a generating cofibration of marked-biscaled simplicial sets. The proof is virtually identical to the proof of \cite[Prop. 3.0.11]{AGSQuillen}.
\end{proof}

\begin{corollary}
	The functor $\PPhi_{\CC}$ preserves cofibrations.
\end{corollary}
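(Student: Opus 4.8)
The plan is to deduce this formally from the preceding lemma via the standard adjunction argument: a left adjoint preserves cofibrations precisely when its right adjoint preserves trivial fibrations. Concretely, I would first recall that in any model category the cofibrations are exactly those maps possessing the left lifting property against every trivial fibration. Thus, to show that $\PPhi_\CC(f)$ is a cofibration for a given cofibration $f:A\to B$ in $(\Set_\Delta^{\mathbf{mb}})_{/\Nsc(\CC)}$, it suffices to solve every lifting problem of $\PPhi_\CC(f)$ against an arbitrary projective trivial fibration $g:X\to Y$ in $\left(\Set_\Delta^{\mathbf{ms}}\right)^{\CC^\op}$.

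The key step is transposition across the adjunction $\PPhi_\CC\dashv \cchi_\CC$. A lifting problem
\[
\begin{tikzcd}
\PPhi_\CC(A) \arrow[r]\arrow[d,"\PPhi_\CC(f)"'] & X\arrow[d,"g"] \\
\PPhi_\CC(B) \arrow[r] & Y
\end{tikzcd}
\]
corresponds under the adjunction to a lifting problem
\[
\begin{tikzcd}
A \arrow[r]\arrow[d,"f"'] & \cchi_\CC(X)\arrow[d,"\cchi_\CC(g)"] \\
B \arrow[r] & \cchi_\CC(Y)
\end{tikzcd}
\]
By the preceding lemma, $\cchi_\CC$ carries the trivial fibration $g$ to a trivial fibration $\cchi_\CC(g)$, and since $f$ is a cofibration this second square admits a diagonal filler. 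Transposing this filler back across the adjunction produces a solution to the original lifting problem. Hence $\PPhi_\CC(f)$ has the left lifting property against all trivial fibrations and is therefore a cofibration.

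I do not anticipate any genuine obstacle here: all of the substantive work has already been carried out in establishing that $\cchi_\CC$ preserves trivial fibrations. The only point requiring (minor) care is to confirm that the class of trivial fibrations appearing in the lemma is precisely the class against which cofibrations are tested by lifting in the relevant model structures --- namely the projective trivial fibrations in $\left(\Set_\Delta^{\mathbf{ms}}\right)^{\CC^\op}$ and the trivial fibrations of the $\bS$ model structure over $\Nsc(\CC)$ --- but since in both cases the cofibrations (monomorphisms) and trivial fibrations form the two classes of a weak factorization system, this compatibility is automatic.
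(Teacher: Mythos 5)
Your proof is correct and is exactly the standard adjunction argument the paper has in mind: the corollary is stated without proof precisely because it follows formally from the lemma that $\cchi_\CC$ preserves trivial fibrations, by transposing lifting problems across the adjunction $\PPhi_\CC\dashv\cchi_\CC$ and using that cofibrations are characterized by the left lifting property against trivial fibrations in any model category.
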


\subsection{Identifying $\PPhi_{\OO^n}$}

Let $\CC$ be a 2-category. Then we can define a 2-functor
\[
\func{
	\CC^\op \to 2\!\Cat;
	c \mapsto \CC_{c\upslash}
}
\]
that maps a 1-morphism $f:c \to d$ to the functor $f^*: \CC_{d\upslash} \to \CC_{c\upslash}$ given by precomposition with $f$. It easy to verify that given a 2-morphism $\alpha:f \Rightarrow g$ we can construct a natural transformation $f^* \Rightarrow g^*$ whose component at an object $u:d \to x$ is given by $\alpha*u$. Passing to $\msSet$-enriched categories we thus obtain, for any strict 2-category $\CC$, a $\msSet$-enriched functor 
\[
\func{
	\CC_{-\upslash}:\CC^\op \to \Set_\Delta^{\mathbf{ms}};
	c \mapsto \Nms(\CC_{c\upslash}) 
}
\]
\begin{definition}
	In the particular case where $\CC=\OO^n$, we will denote the functor constructed above by 
	\[
	\func{
		\mathfrak{O}^n:(\OO^n)^\op \to \Set_\Delta^{\mathbf{ms}} 
	}
	\]
\end{definition}

\begin{notation}
	The canonical normal lax functor $\xi_n:[n]\to \OO^n$ gives rise to an inclusion of scaled simplicial sets which we denote by
	\[
	\func{p_n:\Delta^n_\flat \to \Nsc(\OO^n).}
	\]
	We will equip $\Delta^n$ with the minimal marking and lean scaling, and conventionally view $p_n$ as an object in $\left(\Set_\Delta^{\mathbf{mb}}\right)_{/\Nsc(\OO^n)}$. 
\end{notation}

\begin{lemma}
	Let $F:(\OO^n)^\op\to \Set_\Delta^{\mathbf{ms}}$ be a $\Set_\Delta^+$-enriched functor. There is a natural bijection 
	\[
	\on{Nat}_{\CC^\op}(\mathfrak{O}^n,F)\cong \Hom_{\left(\Set_\Delta^{\mathbf{mb}}\right)_{/\Nsc(\OO^n)}}(p_n, \cchi_\OO(F)).
	\]
	Consequently, we have an equivalence of $\Set_\Delta^+$-enriched functors $\PPhi_{\OO^n}(p_n)\cong \mathfrak{O}^n$. 
\end{lemma}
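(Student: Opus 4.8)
The plan is to establish the asserted natural bijection directly, and then deduce the identification of $\PPhi_{\OO^n}(p_n)$ formally. Since $\PPhi_{\OO^n}$ is by definition left adjoint to $\cchi_\OO$, there is a natural isomorphism $\Hom_{(\mbsSet)_{/\Nsc(\OO^n)}}(p_n,\cchi_\OO(F))\cong \on{Nat}_{\CC^\op}(\PPhi_{\OO^n}(p_n),F)$. Granting the bijection in the statement, this yields $\on{Nat}_{\CC^\op}(\PPhi_{\OO^n}(p_n),F)\cong \on{Nat}_{\CC^\op}(\mathfrak{O}^n,F)$ naturally in $F$, and the enriched Yoneda lemma then forces $\PPhi_{\OO^n}(p_n)\cong \mathfrak{O}^n$. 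Thus the whole content is the construction of the bijection, which I would carry out by unwinding both sides explicitly.

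On the right, a map $p_n\to\cchi_\OO(F)$ over $\Nsc(\OO^n)$ is exactly an $n$-simplex of $\cchi_\OO(F)$ whose underlying simplex of $\Nsc(\OO^n)$ is $p_n=\xi_n$; that is, a family $\{\theta_I\}$ of maps $\theta_I\colon\Nms(\OO^I_{i\upslash})^\flat\to F(i)$ (with $i=\min I$) subject to the compatibility squares of the construction. The key structural observations are that $[i,n]=\{i,\dots,n\}$ is the largest subset of $[n]$ with minimum $i$, so that $\OO^n_{i\upslash}=\OO^{[i,n]}_{i\upslash}$, and that applying the compatibility square to a pair $J\subseteq I$ with \emph{equal} minima (where $\OO^I(i,i)=\ast$) shows $\theta_J$ to be simply the restriction of $\theta_I$ along $\OO^J_{i\upslash}\hookrightarrow\OO^I_{i\upslash}$. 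Hence the whole family is determined by the top pieces $\eta_i:=\theta_{[i,n]}\colon\Nms(\OO^n_{i\upslash})^\flat\to F(i)$, one per vertex $i$. It then remains to match the cross-minimum compatibility squares (for $J=[j,n]\subseteq I=[i,n]$ with $i<j$) with the naturality squares of an enriched natural transformation $\mathfrak{O}^n\Rightarrow F$. Here one uses the identity $\OO^{[i,n]}(i,j)=\OO^n(i,j)$: the constraint $\max=j$ forces any $S$ computing such a hom to lie in $[i,j]$, so the two mapping posets literally coincide and the compatibility square becomes precisely the square expressing naturality of $\{\eta_i\}$ with respect to the composition maps $\Nms(\OO^n(i,j))\times\Nms(\OO^n_{j\upslash})\to\Nms(\OO^n_{i\upslash})$ defining $\mathfrak{O}^n$. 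The remaining (general) compatibility squares follow by restriction. Thus $f\mapsto\{\eta_i\}$ and $\{\eta_i\}\mapsto\{\theta_I:=\eta_{\min I}|_{\OO^{I}_{\min I\upslash}}\}$ are mutually inverse.

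The subtle point, which I expect to be the main obstacle, is that the bijection must respect decorations: the marked-biscaled lifting constraints imposed by $p_n$ on the right have to correspond exactly to the requirement that each $\eta_i$ be a map of marked-scaled simplicial sets. The inputs are combinatorial properties of the $\OO^I_{i\upslash}$ read off from the example following \autoref{defn:upslice}. First, $\OO^I_{i\upslash}$ has no non-identity equivalences (an isomorphism $S\to T$ forces $\max S=\max T$ and hence $S=T$), so the natural marking on $\Nms(\OO^n_{i\upslash})$ is trivial and the minimal marking of $p_n$ imposes nothing beyond the automatic; this makes the marking sides agree. Second, one must check that the thin/lean scaling of $p_n$ translates, under the restriction correspondence above, exactly into preservation of thin triangles by $\eta_i$. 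This is genuinely delicate rather than formal, because — unlike the markings — the slices $\OO^I_{i\upslash}$ \emph{do} contain non-thin $2$-simplices (e.g. in $\OO^{\{0,1,2\}}_{0\upslash}$ the composite $\{0\}\to\{0,2\}\to\{0,1,2\}$ admits a long edge $\{0,1,2\}$ strictly containing the composite $\{0,2\}$), so one has to confirm that the scaling placed on $p_n$ is calibrated so as not to impose spurious extra conditions.

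Once the decoration-compatible bijection is in place, its naturality in $F$ is immediate, since every map involved is defined by restriction and is manifestly functorial in $F$. The adjunction-plus-Yoneda argument of the first paragraph then yields $\PPhi_{\OO^n}(p_n)\cong\mathfrak{O}^n$, completing the proof.
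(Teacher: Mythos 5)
Your proposal is correct and is essentially the paper's own argument: the paper's proof is literally ``follows immediately from unwinding the definitions,'' and your unwinding --- reduction of the family $\{\theta_I\}$ to the top pieces $\theta_{[i,n]}$ via the equal-minima compatibility squares, identification of the cross-minimum squares (using $\OO^{[i,n]}(i,j)=\OO^n(i,j)$) with the enriched naturality squares for $\mathfrak{O}^n\Rightarrow F$, triviality of the markings, and the adjunction-plus-Yoneda step for the ``consequently'' clause --- is exactly that unwinding. The one point you flag as genuinely delicate is in fact immediate: $p_n$ carries the \emph{minimal} marking and biscaling, so decoration-preservation for a map $p_n\to\cchi_{\OO}(F)$ over $\Nsc(\OO^n)$ imposes no condition at all, while the requirement that each $\eta_i$ preserve thin triangles is not imposed by $p_n$ but is built into the definition of the simplices of $\cchi_{\OO}(F)$ (each $\theta_I$ is by definition a map of marked-scaled simplicial sets out of $\Nms(\OO^I_{i\upslash})^\flat$), which matches verbatim the requirement that the components of an enriched natural transformation be morphisms in $\mssSet$.
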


\begin{proof}
	Follows immediately from unwinding the definitions.
\end{proof}

\begin{corollary}
	Denote by 
	\[
	\func{
		p_1^\sharp: (\Delta^1)^\sharp \to \Nsc(\OO^1),
	}
	\]
	\[
	\func{(p_2)_{\flat\subset \sharp}:(\Delta^2)^\flat_{\flat\subset \sharp} \to \Nsc(\OO^2)}, \text{ and} 
	\]
	\[
	\func{(p_2)_{\sharp}:(\Delta^2)^\flat_{\sharp} \to \Nsc(\OO^2)_\sharp}
	\]
	the obvious decorated versions of the $p_n$. Then 
	\[
	\func{
		\PPhi_{\OO^{1}}(p_1^\sharp): (\OO^1)^\op \to (\Set_\Delta^{\mathbf{ms}});
		i \mapsto \Nms(\OO^1_{i\upslash})^\sharp
	}
	\]
	\[
	\func{
		\PPhi_{\OO^{2}}((p_2)_{\flat\subset\sharp}): (\OO^2)^\op \to (\Set_\Delta^{\mathbf{ms}});
		i \mapsto \Nms(\OO^2_{i\upslash})_\sharp 
	}, \text{ and}
	\]
	\[
	\func{
		\PPhi_{\OO^{2}}((p_2)_{\flat\subset\sharp}): (\OO^2)^\op_\sharp \to (\Set_\Delta^{\mathbf{ms}});
		i \mapsto \Nms(\OO^2_{i\upslash})^\dagger_\sharp 
	}
	\]
	where $\dagger$ denotes the marking in which the unique morphism $02\to 012$ is marked. 
\end{corollary}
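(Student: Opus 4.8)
The plan is to deduce all three identifications from the natural bijection established in the preceding Lemma, by tracking the effect of the added decorations on the corresponding natural transformations. Since $\PPhi_{\OO^n}$ is a left adjoint it preserves colimits, and by the preceding Corollary it preserves cofibrations; each of $p_1^\sharp$, $(p_2)_{\flat\subset\sharp}$, $(p_2)_\sharp$ is obtained from the relevant $p_n$ by a single added decoration over the base (marking the edge, lean-scaling the triangle, thin-scaling the triangle while passing to the maximally-scaled base $\Nsc(\OO^2)_\sharp$). So it suffices to identify $\PPhi_{\OO^n}$ applied to each decorated object, which I would do through the adjunction rather than by computing the left adjoint directly.

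The mechanism is as follows. For an arbitrary $\Set_\Delta^{\mathbf{ms}}$-valued functor $F$, a morphism from a decorated $p$ into $\cchi_{\OO^n}(F)$ over $\Nsc(\OO^n)$ is, by the Lemma, a natural transformation $\mathfrak{O}^n\Rightarrow F$; requiring this morphism to respect the added decoration translates — directly from the definition of the marked, lean, and thin collections on $\cchi_{\OO^n}(F)$ — into the requirement that the value $\theta_{[n]}\colon \Nms(\OO^n_{0\upslash})\to F(0)$ descend to the correspondingly decorated mapping object. By Yoneda this identifies $\PPhi_{\OO^n}(p)$ with the functor obtained from $\mathfrak{O}^n$ by imposing that decoration on the value at $0$. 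The essential book-keeping point is that the lower slices are degenerate: $\OO^1_{1\upslash}$ and $\OO^2_{2\upslash}$ are terminal and $\OO^2_{1\upslash}$ is the arrow category, so none of them carries a nondegenerate $2$-simplex nor the edge $02\to 012$. Hence the imposed decoration is automatically vacuous at every index $i\neq 0$, and the decorated functor agrees everywhere with the claimed $i\mapsto \Nms(\OO^n_{i\upslash})^\sharp$ (respectively $(-)_\sharp$, respectively $(-)^\dagger_\sharp$).

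For the first two cases this is immediate. Marking the edge $0\to 1$ in $p_1^\sharp$ forces $\theta_{[1]}$ to descend to $\Nms(\OO^1_{0\upslash})^\sharp$ by the defining clause for marked edges of $\cchi$, yielding the maximally-marked functor; lean-scaling the $2$-simplex in $(p_2)_{\flat\subset\sharp}$ forces $\theta_{[2]}$ to descend to $\Nms(\OO^2_{0\upslash})_\sharp$ by the defining clause for lean triangles, yielding the maximally-scaled functor.

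The genuine obstacle is the third case. For $(p_2)_\sharp$ the base $2$-simplex is thin, and one must analyse the clause ``thin $=$ lean \emph{and} base thin'' in the construction of $\cchi_{\OO^2}(F)$. The difficulty is to explain why thin-scaling over a thin base triangle produces, beyond the maximal scaling already present in the lean case, exactly the marked edge $02\to 012$ of $\OO^2_{0\upslash}$ and no other marking. I expect to settle this with the same anodyne book-keeping used to prove that $\PPhi_\CC$ preserves cofibrations: thin-scaling is governed by the generators of type \ref{cof:thin}/\ref{mb:coCartoverThin}, and over a thin base triangle the Cartesian lift corresponding to the morphism ``$2$'' $\colon 02\to 012$ in $\OO^2_{0\upslash}$ is forced to be marked. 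Concretely, I would compute $\cchi_{\OO^2}(F)$ for $F=\PPhi_{\OO^2}((p_2)_\sharp)$ and check that morphisms out of $(p_2)_\sharp$ correspond precisely to natural transformations out of $i\mapsto \Nms(\OO^2_{i\upslash})^\dagger_\sharp$; isolating that the distinguished marked edge is exactly $02\to 012$ is the only step that is not purely formal, and is where I would spend the bulk of the argument.
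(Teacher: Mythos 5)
Your handling of the first two cases is correct and agrees with the paper's argument: marking the edge of $p_1$ and lean-scaling the triangle of $p_2$ translate, through the adjunction $\PPhi_{\OO^n}\dashv \cchi_{\OO^n}$ and the defining clauses for marked edges and lean triangles of $\cchi$, into the maximal marking (resp.\ maximal scaling) on the value at $0$, and the lower slices $\OO^1_{1\upslash}$, $\OO^2_{1\upslash}$, $\OO^2_{2\upslash}$ carry no relevant cells. The gap is in the third case, where you have misidentified the mechanism that produces the $\dagger$ marking. It cannot come from the clause ``thin $=$ lean and base thin,'' nor from anodyne book-keeping with the generators relating lean and thin scalings: in the third case the base is $\Nsc(\OO^2)_\sharp$, so \emph{every} base triangle is thin and that clause is vacuous --- thinness of a triangle of $\cchi(F)$ is there equivalent to leanness. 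Tracking only the decorations of the source simplex would therefore give $i\mapsto \Nms(\OO^2_{i\upslash})_\sharp$ with \emph{no} extra marking, i.e.\ the same answer as the second case, which is wrong. (Note also that you write $\cchi_{\OO^2}$ for this case: $(p_2)_\sharp$ is not even an object of $(\mbsSet)_{/\Nsc(\OO^2)}$, since its thin triangle has no thin triangle of that base to land on; the base must be $\Nsc(\OO^2)_\sharp$, and this is precisely the point being missed.)

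The actual source of the marking --- and the entire content of the paper's one-line proof of this case --- is the change of enrichment base forced by the maximal scaling: $\mathfrak{C}^{\sc}[\Delta^2_\sharp]$ is $\OO^2$ with its mapping spaces maximally marked, so $\PPhi_{\OO^2_\sharp}((p_2)_\sharp)$ is required to be an $\msSet$-enriched functor with source $(\OO^2)^\op_\sharp$, in which the edge $02\to 012$ of $\OO^2_\sharp(0,2)$ is marked. Given any such enriched functor $F$ and any datum $\{\theta_I\}$ as in the preceding Lemma, the compatibility square for $\{2\}\subset [2]$ identifies $\theta_{[2]}(02\to 012)$ with the image of that marked edge (paired with the point $\theta_{\{2\}}\in F(2)$) under the enriched action map $\Nms(\OO^2_\sharp(0,2))\times F(2)\to F(0)$, which preserves markings; hence $\theta_{[2]}(02\to 012)$ is \emph{automatically} marked in $F(0)$. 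By Yoneda the representing object therefore carries exactly this one additional marked edge, yielding $i\mapsto \Nms(\OO^2_{i\upslash})^\dagger_\sharp$, with no anodyne or lifting arguments needed. Your first two cases can stand as written, but the third case should be replaced by this enrichment argument.
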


\begin{proof}
	All identifications except the last are immediate from the definitions. The additional marking in the final case follows from the necessity that the functor have source $\OO^2_\sharp$.  
\end{proof}

\begin{notation}
	We will denote the three functors above by $(\mathfrak{O}^1)^\sharp$, $(\mathfrak{O}^2)_{\flat\subset \sharp}$, and $(\mathfrak{O}_2)_\sharp$, respectively. 
\end{notation}

\subsection{Identifying $\ST_{\OO^n}$}

Our comparison will be with a very specific version of the straightening functor:

\begin{notation}
	For a 2-category $\CC$, we view $\CC$ as a $\Set_\Delta^+$-enriched category. The counit $\epsilon_\CC:\mathfrak{C}^{\sc}(\Nsc(\CC))\to \CC$ is an equivalence of $\Set_\Delta^+$-enriched categories. We will denote by 
	\[
	\func{
		\ST_\CC:(\mbsSet)_{/\Nsc(\CC)}\to (\Set_\Delta^{\mathbf{ms}})^{\CC^\op} 
	}    
	\]
	the relative straightening functor $\ST_{\epsilon_\CC}$. 
\end{notation}

We now unravel the definitions to characterize $\ST_{\OO^n}(p_n)$. By construction, the underlying functor to $\Set_\Delta^+$ is given by the Yoneda embedding on the $\Set_\Delta^+$-enriched category 
\[
\OO^n \coprod_{\mathfrak{C}^{\sc}(\Nsc(\OO^n))} \mathfrak{C}^{\sc}(\Nsc(\OO^n)) \coprod_{\mathfrak{C}^{\sc}(\Delta^n_\flat)} \mathfrak{C}^{\sc}((\Delta^n_\flat)^\triangleright)
\]
We note that $\OO^n=\mathfrak{C}^{\sc}(\Delta^n_\flat)$, and by the triangle identities for the adjunction $\mathfrak{C}^{\sc}\dashv \Nsc$, we see that the induced map 
\[
\mathfrak{C}^{\sc}(\Delta^n_\flat)\to \OO^n
\]
is simply the identity. The pushout above thus collapses to simply $\mathfrak{C}^{\on{sc}}((\Delta^n_\flat)^\triangleright)$.  We can then describe the marked-scaled simplicial set $\ST_{\OO^n}(\Delta^n_\flat)(i)$ as the poset $\mathcal{L}^n_\flat(i)$ described in \autoref{def:Lni}. To ease the notation let us denote $\mathcal{L}^n_\flat(i)$ simply by $\mathcal{L}^n_i$.

\begin{construction}
	We construct a morphism of marked-scaled simpliial sets $\eta^n_i:\mathcal{L}^n_i\to \Nms(\OO^n_{i\upslash})$ whose underlying map of simplicial sets is given by (the nerve of) a normal lax functor defined as follows:
	\begin{itemize}
		\item On objects, $S\mapsto S$. 
		\item On morphisms $S\subset T$ is sent to the mophism $\{\max(S),\max(T)\}:S\to T$. 
	\end{itemize}
	The fact that, for $S\subset T\subset V$, we have $\{\max(S),\max(V)\}\subset \{\max(S),\max(T),\max(V)\}$ gives us our compositors. The fact that if $S=T$, we have $\{\max(S),\max(T)\}=\{\max(S)\}$ gives strict unitality. Since both marked-scaled simplicial sets carry the minimal marking we only need to check that $\eta^n_i$ preserves the scaling. Let $S_0\subset S_1\subset S_2$ be a 2-simplex in the source. If there are $i,j$ such that $\max(S_i)=\max(S_j)$, then it follows immediately that $\{\max(S_0),\max(S_1),\max(S_2)\}=\{\max(S_0),\max(S_2)\}$. 
\end{construction} 

The following lemma follows immediately from our definitions.

\begin{lemma}
	The maps $\eta^n_i$ define natural transformations of $\Set_\Delta^+$-enriched functors $\eta^n:\ST_{\OO^n}(\Delta^n_\flat)\to \mathfrak{O}^n$. 
\end{lemma}

\begin{proposition}\label{prop:CompareEquivonFlat}
	The morphisms $\eta_i^n: \ST_{\OO^n}(\Delta^n_\flat)(i)\to \mathfrak{O}^n(i)$ are equivalences of marked-scaled simplicial sets. 
\end{proposition}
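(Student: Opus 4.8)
The plan is to verify directly that the map $\eta^n_i \colon \mathcal{L}^n_i \to \Nms(\OO^n_{i\upslash})$ is a weak equivalence of marked-scaled simplicial sets, for each $0 \le i \le n$. First I would recall the explicit descriptions of both sides: on the source, $\mathcal{L}^n_i$ is the poset of subsets $S \subseteq [n]$ with $\min(S) = i$, scaled so that $S_0 \subseteq S_1 \subseteq S_2$ is thin precisely when two of the $\max(S_j)$ coincide, and carrying the minimal marking. On the target, $\OO^n_{i\upslash}$ is the strict lax slice computed in the example following \autoref{defn:upslice}: its objects are the same subsets $S$, a $1$-morphism $S \to T$ is a subset $\mathcal{U}$ with $\min(\mathcal{U}) = \max(S)$, $\max(\mathcal{U}) = \max(T)$, $S \cup \mathcal{U} \subseteq T$, and a $2$-morphism is an inclusion $\mathcal{U} \subseteq V$.

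The key structural observation, already recorded in \autoref{rem:initialmapping}, is that every non-empty mapping category of $\OO^n_{i\upslash}$ has an initial object and is therefore contractible. This means $\Nms(\OO^n_{i\upslash})$ is, up to equivalence, the nerve of the underlying $1$-category obtained by collapsing each mapping category to a point, i.e. the poset on subsets $S$ (with $\min S = i$) in which $S \le T$ iff there exists an admissible $\mathcal{U} \colon S \to T$ — and one checks that such a $\mathcal{U}$ exists iff $S \subseteq T$, recovering exactly the inclusion poset underlying $\mathcal{L}^n_i$. So $\eta^n_i$ is a bijection on objects and, on underlying simplicial sets, exhibits $\Nms(\OO^n_{i\upslash})$ as having contractible "extra" $2$-cells in each hom. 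The plan is therefore to build an explicit deformation retraction. On the target side, the canonical choice $S \mapsto \{\max(S), \max(T)\}$ selects the initial morphism in each hom; I would assemble these choices into a marked homotopy $(\Delta^1)^\sharp \times \Nms(\OO^n_{i\upslash}) \to \Nms(\OO^n_{i\upslash})$ contracting each hom onto its initial object, landing in the image of $\eta^n_i$. Concretely, $\eta^n_i$ admits a section $s$ (the levelwise identity on objects, sending the initial $1$-morphism back to the inclusion), and I would exhibit marked homotopies $s \circ \eta^n_i \simeq \id$ and $\eta^n_i \circ s \simeq \id$ using the inclusion-ordering on the $\mathcal{U}$'s to drive the homotopy, exactly as in the retraction arguments of \autoref{lem:boxiso} and \autoref{lem:boxcover}.

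For the scalings and markings I would verify compatibility separately: both sides carry the minimal marking, so there is nothing to check on edges; the construction of $\eta^n_i$ already shows it preserves thin triangles (a thin $S_0 \subseteq S_1 \subseteq S_2$ has two equal maxima, forcing the image triangle to degenerate), and I would note that the homotopies constructed collapse $2$-cells along morphisms that are sent to degenerate (hence thin) edges, so the homotopies are genuinely marked-scaled. Putting these together, $\eta^n_i$ is a marked homotopy equivalence and thus a weak equivalence in the model structure on $\mssSet$, which is the desired conclusion.

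The main obstacle I anticipate is bookkeeping the scaling data through the homotopy: one must confirm that the contraction onto initial objects in $\Nms(\OO^n_{i\upslash})$ respects the thin-simplex structure — i.e. that every $2$-simplex swept out by the homotopy genuinely lands among the scaled triangles — rather than merely being a homotopy equivalence of underlying simplicial sets. This amounts to checking that whenever the homotopy replaces a morphism $\mathcal{U} \colon S \to T$ by the initial $\{\max S, \max T\}$, the intervening triangles have the required pair of coinciding maxima. This is the same phenomenon handled in \autoref{lem:boxiso}, and I expect it to go through by the identical reasoning; the only subtlety is that here the $2$-categorical hom-data of $\OO^n_{i\upslash}$ (the inclusions $\mathcal{U} \subseteq V$) must be absorbed, which is precisely where \autoref{rem:initialmapping} does the essential work.
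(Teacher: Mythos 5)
Your proposal has a genuine gap, and it is not the routine ``bookkeeping'' you flag at the end --- it is fatal to the strategy. The inference ``each mapping category of $\OO^n_{i\upslash}$ has an initial object, hence is contractible, hence $\Nms(\OO^n_{i\upslash})$ is equivalent to the nerve of the poset obtained by collapsing every hom to a point'' is false for $(\infty,2)$-categories: hom-categories may only be collapsed along \emph{equivalences of categories}, and a category with an initial object (the walking arrow, say) is weakly contractible but not equivalent to a point. Concretely, for $n=2$, $i=0$ the hom $\OO^2_{0\upslash}(\{0\},\{0,1,2\})$ is the arrow $\{0,2\}\subset\{0,1,2\}$, so $\Nms(\OO^2_{0\upslash})$ is the \emph{lax} square; your collapse would identify it with the commutative square, which is not equivalent to it. The non-thin triangles of $\mathcal{L}^n_i$ (those with three distinct maxima) exist precisely to record these non-invertible 2-cells, and this is where your concrete construction breaks: the proposed retraction $s$ is not even a map of scaled simplicial sets, since the \emph{thin} triangle of $\Nms(\OO^2_{0\upslash})$ with edges $\{0,1\}$, $\{1,2\}$, $\{0,1,2\}$ (identity compositor) would have to map to the \emph{non-thin} triangle $\{0\}\subset\{0,1\}\subset\{0,1,2\}$ of $\mathcal{L}^2_0$. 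Likewise, the homotopy $\eta^n_i\circ s\simeq\id$ must use, over each edge $\mathcal{U}\colon S\to T$, the 2-cell $\{\max(S),\max(T)\}\subseteq\mathcal{U}$, which is non-invertible, hence non-thin; so what you obtain is only a map out of the Gray product of $\Nms(\OO^n_{i\upslash})$ with $\Delta^1$, not out of $(\Delta^1)^\sharp\times\Nms(\OO^n_{i\upslash})$, and such lax homotopies do not witness weak equivalences in $\mssSet$. This is exactly the disanalogy with \autoref{lem:boxiso} and \autoref{lem:boxcover}: there the edges along which one contracts are \emph{marked}, so those homotopies are genuinely marked homotopies; here the cells you need to contract are exactly the non-invertible ones.

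This is why the paper never argues at the level of the scaled simplicial sets themselves. It first reduces, via \autoref{thm:QE_mb_ms_sc} and the Quillen equivalence $\mathfrak{C}^{\sc}\dashv\Nsc$, to showing that the rigidified map $\xi^n_i\colon\mathfrak{C}^{\sc}[\ST_{\OO^n}(\Delta^n_\flat)(i)]\to\OO^n_{i\upslash}$ is an equivalence of $\Set_\Delta^+$-enriched categories. That map is bijective on objects, so everything becomes a hom-wise comparison of \emph{marked simplicial sets}: the poset of chains from $S$ to $T$ in $\mathcal{L}^n_i$ versus $\OO^T(\max(S),\max(T))$. In that setting your retraction technique is legitimate, because an edge of the chain poset is marked exactly when its $\xi^n_i$-image is degenerate; the paper interpolates through the subposet $\mathbb{B}^n_i(S,T)$ of chains of the form $S\subset[T,s_1]\subset\cdots\subset T$, shows that both $\mathbb{B}^n_i(S,T)\to\OO^T(\max(S),\max(T))$ and the inclusion $\mathbb{B}^n_i(S,T)\hookrightarrow\mathbb{L}^n_i(S,T)$ are marked homotopy equivalences, and concludes by 2-out-of-3. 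Some rigidification step of this kind --- trading the non-invertible 2-cells encoded by the scaling for honest mapping objects in an enriched category --- is unavoidable, and your plan has no analogue of it.
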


We will prove this proposition in a series of lemmata. Since both simplicial sets are equipped with the minimal marking, it suffices to show that the map is an equivalence on underlying scaled simplicial sets by \autoref{thm:QE_mb_ms_sc}. Since $\mathfrak{O}^n(i)=\Nsc(\OO^n_{i\upslash})$, it suffices to show that the induced map
\[
\func{
	\xi^n_i:\mathfrak{C}^{\sc}[(\ST_{\OO^n}(\Delta^n_\flat))(i)]\to \OO^n_{i\upslash} 
}
\]
is an equivalence of $\Set_\Delta^+$-enriched categories. Since this map is clearly bijective on objects, it suffices to check that the induced morphisms on mapping spaces are equivalences.

In both cases, the mapping spaces are nerves of posets. 
\begin{itemize}
	\item For $S,T\in \OO^n_{i\upslash}$, the mapping space $\mathfrak{O}^n_i(S,T)$  is the poset of chains $U\subset [n]$ such that $\min(U)=\max(S)$, $\max(U)=\max(T)$, and $S\cup U\subset T$. Equivalently, this is the poset $\OO^T(\max(S),\max(T))$, equipped with the minimal marking.
	\item For $S,T\in Q^n_i$, the mapping space $\mathfrak{C}^{\sc}[(\ST_{\OO^n}(\Delta^n_\flat))(i)](S,T)$ is the poset of chains 
	\[
	S\subset S_1\subset \cdots \subset S_k\subset T
	\]
	in $\mathcal{L}^n_i$. An inclusion $\vec{S}\subset \vec{U}$ is marked if and only if, for every $S_i\subset S_{i+1}$ in $\vec{S}$, every $U_\ell\in \vec{U}$ between $S_i$ and $S_{i+1}$, either $\max(U_\ell)=\max(S_i)$ or $\max(U_\ell)=\max(S_{i+1})$. Notice that $T\setminus S$ could have elements lower than $\max(S)$.  
\end{itemize}

The map 
\[
\xi^n_i: \mathfrak{C}^{\sc}[(\ST_{\OO^n}(\Delta^n_\flat))(i)](S,T) \to \mathfrak{O}^n_i(S,T)
\]
sends a chain $S\subset S_1\subset \cdots \subset S_k\subset T$ to the chain 
\[
\xi^n_i(\vec{S})=\bigcup_{V\in \vec{S}} \{\max(V)\}.
\]

\begin{definition}
	For ease of notation, we define
	\[
	\mathbb{L}^n_i:=\mathfrak{C}^{\sc}[(\ST_{\OO^n}(\Delta^n_\flat))(i)] 
	\]
	For any $S,T\in \mathbb{L}^n_i$, we define a full subposet $\mathbb{B}^{n}_i(S,T)\subset \mathbb{L}^n_i(S,T)$ consisting of chains 
	\[
	S\subset [T,s_1]\subset \cdots \subset [T,s_k]\subset T 
	\]
	where we define for every $s \in T$ the subset $[T,s]=\set{r \in T \given r \leq s}$.
\end{definition}

\begin{lemma}
	An inclusion $\vec{S}\subset \vec{U}$ represents a marked morphism in $\mathbb{L}^n_i(S,T)$ if and only if its image under $\xi^n_i$ is degenerate. 
\end{lemma}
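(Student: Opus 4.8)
The plan is to unwind both sides of the claimed equivalence very explicitly and verify the marking condition by a direct comparison of definitions. Recall that a morphism $\vec{S}\subset \vec{U}$ in $\mathbb{L}^n_i(S,T)$ is marked precisely when, for every consecutive pair $S_i\subset S_{i+1}$ in $\vec{S}$, every intermediate element $U_\ell$ of $\vec{U}$ lying strictly between them satisfies $\max(U_\ell)=\max(S_i)$ or $\max(U_\ell)=\max(S_{i+1})$. On the other hand, $\xi^n_i(\vec{S})$ is the chain $\bigcup_{V\in\vec{S}}\{\max(V)\}$ in $\mathfrak{O}^n_i(S,T)=\OO^T(\max(S),\max(T))$, which carries the minimal marking. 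Thus $\xi^n_i(\vec{S})\subset \xi^n_i(\vec{U})$ is marked if and only if it is \emph{degenerate}, i.e. the inclusion of chains of maximum-values collapses to an identity.

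First I would translate "$\xi^n_i(\vec{S})\subset\xi^n_i(\vec{U})$ is degenerate" into the concrete statement that the two chains $\bigcup_{V\in\vec{S}}\{\max(V)\}$ and $\bigcup_{W\in\vec{U}}\{\max(W)\}$ coincide as subsets of $T$ (since a morphism of posets which is an inclusion of chains is degenerate exactly when source and target are equal). So the lemma reduces to the purely combinatorial equivalence: the inclusion $\vec{S}\subset\vec{U}$ is marked $\iff$ $\{\max(W):W\in\vec{U}\}=\{\max(V):V\in\vec{S}\}$.

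For the forward direction, I would assume $\vec{S}\subset\vec{U}$ is marked and take any $W\in\vec{U}$; it lies between some consecutive $S_i\subset S_{i+1}$ of $\vec{S}$ (or equals an element of $\vec{S}$), and the marking condition forces $\max(W)\in\{\max(S_i),\max(S_{i+1})\}$, so no new maximum-values are introduced and the two max-value sets agree. For the converse, I would assume the max-value sets agree and show the marking condition holds: if some intermediate $U_\ell$ between $S_i$ and $S_{i+1}$ had $\max(U_\ell)\notin\{\max(S_i),\max(S_{i+1})\}$, then since $\max(S_i)\leq\max(U_\ell)\leq\max(S_{i+1})$ we would obtain a maximum-value strictly between $\max(S_i)$ and $\max(S_{i+1})$ that does not occur among the $\max(V)$ for $V\in\vec{S}$, contradicting equality of the max-value sets. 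This is a clean bookkeeping argument once the definitions are laid side by side.

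The only genuinely subtle point — and the step I expect to require the most care — is handling the fact, emphasized in the definition of $\mathbb{L}^n_i(S,T)$, that $T\setminus S$ may contain elements \emph{below} $\max(S)$, so the chains $\vec S$ and $\vec U$ are not monotone in their maxima in a naive way, and one must confirm that the marking condition is genuinely phrased in terms of the consecutive pairs of $\vec S$ rather than of the ambient order on $[n]$. I would therefore be careful to check the edge cases where an intermediate $U_\ell$ has maximum equal to $\max(S_i)$ or $\max(S_{i+1})$ (the permitted cases) and confirm these contribute no new element to the max-value set, so that the two directions match exactly. Once this is verified the lemma follows immediately, and it is precisely what is needed to conclude that $\xi^n_i$ preserves and reflects marked morphisms, feeding into the proof of \autoref{prop:CompareEquivonFlat}.
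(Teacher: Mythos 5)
Your proof is correct and is exactly the verification the paper leaves implicit: the paper's proof of this lemma is literally ``Immediate from the definition,'' and your unwinding (marked $\iff$ no intermediate $U_\ell$ introduces a new maximum $\iff$ the max-value sets of $\vec{S}$ and $\vec{U}$ coincide $\iff$ the image edge has equal source and target, hence is degenerate) is precisely that verification spelled out, with the converse resting on the consecutiveness of $S_i\subset S_{i+1}$ in $\vec{S}$. One small remark: the subtlety you flag at the end is not actually a problem, since $\max$ is weakly monotone under inclusion (even though $T\setminus S$ may contain elements below $\max(S)$), so maxima are non-decreasing along every chain and your inequality $\max(S_i)\leq\max(U_\ell)\leq\max(S_{i+1})$ holds without further care.
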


\begin{proof}
	Immediate from the definition.
\end{proof}

\begin{lemma}\label{lem:BBprojEquiv}
	The restriction of the map $\xi^{n}_i$ 
	\[
	\func{\xi^n_i:\mathbb{B}^n_i(S,T)\to \mathbb{O}^T(\max(S),\max(T))}
	\]
	is an equivalence of marked simplicial sets.
\end{lemma}

\begin{proof}
	We define a map 
	\[
	\func*{\gamma:\OO^T(\max(S),\max(T))\to \BB^n_i(S,T)
	}
	\]
	which sends $\max(S)<s_1<\cdots <s_k<\max(T)$ to the chain
	\[
	S\subset [T,s_1]\subset \cdots \subset [T,s_k]\subset T.
	\]
	We then note that $\xi^n_i\circ \gamma=\id$. We claim that $\gamma\circ \xi^n_i\leq \id$, which yields a marked homotopy $\gamma\circ \xi^n_i$ to $\id$. To prove the claim we note that $\gamma \circ \xi^n_i(\vec{S})$ is given by $\vec{S}$ if $S_1 \neq [T,s_0]$ with $s_0=\max(S)$ or by $\vec{S}\setminus [T,s_0]$ in which case the existence of the marked morphism $\gamma \circ \xi^n_i(\vec{S}) \to \vec{S}$ follows immediately.
\end{proof}

\begin{lemma}\label{lem:BBequivasubposet}
	The inclusion $\iota:\mathbb{B}^n_i(S,T)\to \mathbb{L}^n_i(S,T)$ is an equivalence of marked simplicial sets. 
\end{lemma}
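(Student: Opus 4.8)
The plan is to show that the inclusion $\iota:\mathbb{B}^n_i(S,T)\to \mathbb{L}^n_i(S,T)$ admits a retraction that is connected to the identity by a zig-zag of marked homotopies. The natural candidate for the retraction is the map $\xi^n_i$ followed by the section $\gamma$ from the proof of \autoref{lem:BBprojEquiv}; that is, one considers $r:=\gamma\circ \xi^n_i:\mathbb{L}^n_i(S,T)\to \mathbb{B}^n_i(S,T)$. Concretely, $r$ sends a chain $S\subset S_1\subset\cdots\subset S_k\subset T$ to $S\subset [T,\max(S_1)]\subset\cdots\subset [T,\max(S_k)]\subset T$, discarding repetitions. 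One checks immediately that $r\circ \iota=\id_{\mathbb{B}^n_i(S,T)}$, since a chain already lying in $\mathbb{B}^n_i(S,T)$ has each $S_\ell=[T,\max(S_\ell)]$ and is therefore fixed. The remaining, and essential, task is to produce a marked homotopy relating $\iota\circ r$ to $\id_{\mathbb{L}^n_i(S,T)}$.

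\textbf{Constructing the homotopy via a poset inequality.} Since both mapping spaces are nerves of posets with the minimal marking, a homotopy can be produced from a suitable order relation, exactly as in the proof of \autoref{lem:BBprojEquiv}. First I would verify that $r$ actually lands in $\mathbb{L}^n_i(S,T)$ and preserves the marked morphisms: this follows because $\xi^n_i$ sends marked morphisms to degeneracies and $\gamma$ produces chains of the special form $[T,s]$, so the lemma preceding this one guarantees compatibility with the marking. Next, for a chain $\vec{S}=(S\subset S_1\subset\cdots\subset S_k\subset T)$, I would compare $\vec{S}$ with $r(\vec{S})$ termwise. The key observation is that each $S_\ell\subseteq [T,\max(S_\ell)]$, since every element of $S_\ell$ is $\leq \max(S_\ell)$ and lies in $T$. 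This yields a levelwise inclusion producing a comparison morphism, and I expect the resulting edge $\vec{S}\to r(\vec{S})$ (or its reverse) to be marked precisely because at each step the maxima agree: $\max(S_\ell)=\max([T,\max(S_\ell)])$, so the condition defining marked inclusions in $\mathbb{L}^n_i(S,T)$ is satisfied.

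\textbf{The main obstacle.} The delicate point — and the one I would spend the most care on — is that $r$ is \emph{not} order-preserving in the naive sense that would immediately give $\iota\circ r\leq \id$ or $\id\leq \iota\circ r$ as a single inequality of functors, because collapsing $S_\ell$ to $[T,\max(S_\ell)]$ can both enlarge the individual sets and shrink the chain by removing repeated maxima. So, rather than a one-step natural transformation, I anticipate needing to exhibit the homotopy through an intermediate functor, or to argue that on each poset $\mathbb{L}^n_i(S,T)$ the assignment $\vec{S}\mapsto r(\vec{S})$ together with the inclusions $S_\ell\subseteq[T,\max(S_\ell)]$ defines a natural transformation $\id\Rightarrow \iota\circ r$ after verifying functoriality against refinements of chains. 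Once naturality is confirmed, this transformation is a homotopy $\Delta^1\times \mathbb{L}^n_i(S,T)\to \mathbb{L}^n_i(S,T)$, and the crucial verification that it is \emph{marked} reduces, via the characterization in the lemma immediately above, to checking that each component edge maps to a degeneracy under $\xi^n_i$ — which holds because $\xi^n_i(\vec{S})=\xi^n_i(r(\vec{S}))$ by construction of $\gamma$ and $r$. Combining $r\circ\iota=\id$ with this marked homotopy $\iota\circ r\simeq \id$ exhibits $\iota$ as a marked homotopy equivalence, hence an equivalence of marked simplicial sets, completing the proof.
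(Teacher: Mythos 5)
Your overall strategy (a retraction onto $\mathbb{B}^n_i(S,T)$ plus a marked homotopy to the identity) has the right shape, but two of its load-bearing claims are false, and the second failure is exactly the difficulty the paper's filtration argument is designed to circumvent. The minor one first: $r\circ\iota\neq\id$. If $S\subsetneq[T,s_0]$ with $s_0=\max(S)$, the chain $S\subset[T,s_0]\subset T$ lies in $\mathbb{B}^n_i(S,T)$, but $\xi^n_i$ cannot see the repeated maximum $s_0$, so $\gamma\circ\xi^n_i$ returns the strictly smaller chain $S\subset T$. As in the proof of \autoref{lem:BBprojEquiv}, one only has $r\circ\iota\leq\id$ with marked components; this is harmless and repairable.

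The genuine gap is the homotopy $\iota\circ r\simeq\id$ on $\mathbb{L}^n_i(S,T)$. The order on $\mathbb{L}^n_i(S,T)$ is \emph{refinement of chains} --- $\vec{S}\leq\vec{U}$ means every member of $\vec{S}$ occurs in $\vec{U}$ --- not termwise inclusion of subsets. The inclusions $S_\ell\subseteq[T,\max(S_\ell)]$ therefore produce no morphism $\vec{S}\to r(\vec{S})$ whatsoever: generically $\vec{S}$ and $r(\vec{S})$ are incomparable. Concretely, for $S=\{0\}$, $T=\{0,1,2,3\}$ and the chain $\vec{S}\colon\{0\}\subset\{0,2\}\subset\{0,2,3\}\subset T$, one gets $r(\vec{S})\colon\{0\}\subset\{0,1,2\}\subset T$, and neither chain refines the other. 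So this is not a naturality issue to be "confirmed", as you put it; the components of the proposed transformation do not exist. Moreover, your fallback of interpolating through an intermediate functor cannot be the obvious one, $\vec{S}\mapsto\vec{S}\cup r(\vec{S})$, because that union need not be a chain: in the example, $\{0,2,3\}$ and $\{0,1,2\}$ are not nested. This is precisely why the paper abandons the one-shot retraction and instead filters $\mathbb{B}=\mathbb{L}^{s_0}\subset\cdots\subset\mathbb{L}^{s_j}\subset\mathbb{L}^{s_{j+1}}\subset\cdots\subset\mathbb{L}$, straightening one value of the maximum at a time from the top down. Within a single step only elements with maximum exactly $s_j$ are replaced, every element with larger maximum is already of the form $[T,s]$ and hence comparable with $[T,s_j]$, so appending $[T,s_j]$ \emph{does} yield a chain, giving the zig-zag of marked natural transformations $\id\leq\theta_j\geq\zeta_j$ at each stage. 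Without this (or some other) interpolation scheme, your argument stops at the point you yourself flagged as delicate.
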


\begin{proof}
	Let $s_j \in T$. We define $\mathbb{L}^s \subset \mathbb{L}$ as the full subposet consisting of those chains 
	\[
	\vec{S}=S \subset S_1 \subset \cdots \subset S_k \subset T,
	\] 
	such that $S_i=[T,s_i]$ whenever $s_i \geq s_j$. Note that if $s_j\leq s_0=\max(S)$ then it follows that $\mathbb{L}^s=\mathbb{B}$. Let $T_S=\set{s_j \in T \given s_j \geq s_0}$ and consider a filtration
	\[
	\mathbb{B}=\mathbb{L}^{s_0} \subset \mathbb{L}^{s_1}\subset \cdots \subset \mathbb{L}^{s_m} \subset \mathbb{L}^{s_m+1}=\mathbb{L}, \enspace \text{ with }s_m=\max(T)
	\]
	Our goal is to show that each step in the filtration is a weak equivalence of marked simplicial sets. We denote by $\iota_j: \mathbb{L}^{j} \to \mathbb{L}^{j+1}$ for $j=0,\dots,s_m$. Let $\vec{S}=S \subset S_1 \subset \cdots \subset S_k \subset T$ be an object of $\mathbb{L}^{j+1}$ we construct a new chain $\pi_j(\vec{S})$ by replacing each $S_\ell$ with $s_{\ell} \geq s_{j}$ with its corresponding $[T,s_\ell]$. This definition yields a functor
	\[
	\func{\pi_j: \mathbb{L}^{j+1} \to \mathbb{L}^j; \vec{S} \mapsto \pi_j(\vec{S})}
	\]
	such that $\pi_j \circ \iota_j=\on{id}$. Let $\zeta_j=\iota_j \circ \pi_j$. We construct a functor
	\[
	\func{\theta_j:\mathbb{L}^{j+1} \to \mathbb{L}^{j+1}}
	\]
	that appends to each chain $\vec{S} \in \mathbb{L}^{j+1}$ the object $[T,s_j]$ if there exists some $S_\ell \in \vec{S}$ such that $\max(S_{\ell})=s_j$ or leaves the chain untouched otherwise. Note that if $s_j=s_m$ then this functor is the identity. We also observe that we have a natural transformation $\on{id} \leq \theta_j$ and $\zeta_j \leq \theta_j$ whose components are marked. It follows that each $\iota_j$ is a weak equivalence and consequently so is $\iota$.
	
\end{proof}

\begin{proof}[Proof (of \autoref{prop:CompareEquivonFlat})]
	We simply apply \autoref{lem:BBequivasubposet}, \autoref{lem:BBprojEquiv}, and 2-out-of-3. 
\end{proof}

Turning now to the cases $(p_1)^\sharp$, $(p_2)_{\flat\subset \sharp}$, and $(p_2)_\sharp$, we see that the corresponding straightenings are obtained from $\ST_{\OO^1}(p_1)$ and $\ST_{\OO^2}(p_2)$ by maximally marking or maximally scaling the values of the functors, respectively. We then have the following 

\begin{corollary}\label{cor:compareequivnotflat}
	The transformations $\xi^n$, $n=1,2$ induce equivalences of enriched functors 
	\[
	\func{(\xi^1)^\sharp:\ST_{\OO^1}(p_1^\sharp)\to (\mathfrak{O}^1)^\sharp},
	\]
	\[
	\func{(\xi^2)_{\flat\subset \sharp}:\ST_{\OO^2}((p_2)_{\flat\subset \sharp})\to (\mathfrak{O}^2)_{\flat\subset\sharp}}, \text{ and}
	\]
	\[
	\func{(\xi^2)_{ \sharp}:\ST_{\OO^2_\sharp}((p_2)_{ \sharp})\to (\mathfrak{O}^2)_{\sharp}}
	\]
\end{corollary}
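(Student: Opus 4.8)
The plan is to deduce \autoref{cor:compareequivnotflat} from \autoref{prop:CompareEquivonFlat} by a formal base-change and saturation argument, treating the three decorated cases uniformly as modifications of the minimally-decorated straightening. First I would observe that, by \autoref{prop:formalproperties}(1), the straightening functors preserve colimits, and that each of the decorated inputs $p_1^\sharp$, $(p_2)_{\flat\subset\sharp}$, and $(p_2)_\sharp$ is obtained from $p_n$ (viewed as $(\Delta^n,\flat,\flat\subset\flat)\to\Nsc(\OO^n)$, resp. the sharp-scaled base in the last case) by pushing out along one of the generating cofibrations \ref{cof:marked1}, \ref{cof:coCart}, or \ref{cof:thin}. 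Concretely, $(p_1)^\sharp$ is the pushout marking the unique edge $0\to 1$; $(p_2)_{\flat\subset\sharp}$ is the pushout making the $2$-simplex lean; and $(p_2)_\sharp$ is the further pushout (and base change to $\Nsc(\OO^2)_\sharp$) making it thin. Since $\ST_{\OO^n}$ preserves these pushouts and sends the generating cofibrations to the corresponding saturation/marking operations on the values $\mathcal{L}^n_i$, the diagrams computing $\ST_{\OO^n}(p_n^{\text{dec}})(i)$ are obtained from $\ST_{\OO^n}(p_n)(i)=\mathbb{L}^n_i$ by maximally marking (resp. maximally scaling) exactly the mapping spaces forced by the decoration.

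Next I would match this against the target side. The functors $(\mathfrak{O}^1)^\sharp$, $(\mathfrak{O}^2)_{\flat\subset\sharp}$, and $(\mathfrak{O}^2)_\sharp$ are by definition obtained from $\mathfrak{O}^n$ by maximally marking or scaling its values $\Nms(\OO^n_{i\upslash})$ (with the extra $02\to 012$ marking appearing in the final case precisely because the source $\OO^2_\sharp$ forces it, as already noted in the corollary identifying $\PPhi_{\OO^2}((p_2)_\sharp)$). The key point is that the natural transformation $\eta^n_i$ (equivalently $\xi^n_i$ on rigidifications) is compatible with these decoration operations: applying $(-)^\sharp$ or $(-)_\sharp$ to both source and target yields the decorated transformations $(\xi^1)^\sharp$, $(\xi^2)_{\flat\subset\sharp}$, $(\xi^2)_\sharp$. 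I would assemble this into a commutative square in each case, with the undecorated equivalence $\eta^n_i$ of \autoref{prop:CompareEquivonFlat} along the top and the decorated map along the bottom, the vertical maps being the saturation morphisms.

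The conclusion then follows from a two-out-of-three argument once I verify that each vertical saturation morphism is itself a weak equivalence of marked-scaled simplicial sets. For the marking operations this is immediate, since maximally marking a fibrant marked-scaled simplicial set whose marked edges already contain all equivalences is a weak equivalence; for the scaling operations I would invoke \autoref{lem:saturation} (and its horn-variants used in \autoref{subsec:STMBann}), which show precisely that the canonical maps from $\mathcal{L}^n_T(i)$-type objects to their maximally-scaled versions are $\mathbf{MS}$-anodyne. The one subtlety, which I expect to be the main obstacle, is the final case $(p_2)_\sharp$: here the base itself changes from $\Nsc(\OO^2)$ to $\Nsc(\OO^2)_\sharp$, so I must check that the relevant extra marked morphism $02\to 012$ is correctly produced on the straightening side and that the equivalence $\xi^2_i$ remains compatible after this base change. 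I would handle this by invoking the base-change compatibility of \autoref{prop:formalproperties}(2) together with the explicit identification of $\PPhi_{\OO^2}((p_2)_\sharp)$ already recorded in the excerpt, so that the extra $\dagger$-marking matches on both sides; the remaining check that the saturation map remains $\mathbf{MS}$-anodyne in the presence of this marked edge is again a direct application of the pivot-trick lemmata. With all three vertical maps shown to be equivalences, two-out-of-three completes the proof.
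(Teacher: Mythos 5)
There is a genuine gap, and it is fatal to the proposed argument: the vertical ``saturation'' morphisms in your squares are \emph{not} weak equivalences, so two-out-of-three gives nothing. Your claim that ``maximally marking a fibrant marked-scaled simplicial set whose marked edges already contain all equivalences is a weak equivalence'' is false: marking an edge forces it to become invertible, i.e.\ it is a localization, not an equivalence. The map $(\Delta^1)^\flat\to(\Delta^1)^\sharp$ is the generating \emph{cofibration} \ref{cof:marked1}, not a trivial cofibration, and it is precisely the left vertical map of your square in the case $n=1$, since $\ST_{\OO^1}(p_1)(0)\cong(\Delta^1)^\flat$ while $\ST_{\OO^1}(p_1^\sharp)(0)\cong(\Delta^1)^\sharp$. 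The same problem occurs for the scalings: \autoref{lem:saturation} does not say that adding scaling is an equivalence --- it compares two models of the \emph{same} decorated straightening, namely $\ST_{\Delta^n_\flat}(\Delta^n_T)(s)$ and $\mathcal{L}^n_T(s)$, both carrying the scaling induced by the same $T$. By contrast, the map $\mathcal{L}^2_\flat(0)\to\mathcal{L}^2_\sharp(0)$ that you would need is not an equivalence: in the rigidification, the mapping space from $\{0\}$ to $\{0,1,2\}$ passes from (a marked poset whose localization is) the walking arrow to a contractible object. So neither vertical map in any of your squares is an equivalence, and the strategy collapses even though the conclusion is true.

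The correct routes are different. For $(\xi^1)^\sharp$ no argument is needed: $\xi^1_i$ is already an isomorphism, hence so is its maximally marked variant. For $(\xi^2)_{\flat\subset\sharp}$ one reruns the proof of \autoref{prop:CompareEquivonFlat} with the new decorations: the comparison of mapping spaces via $\mathbb{B}^n_i(S,T)$ was established by explicit marked homotopies (\autoref{lem:BBprojEquiv}, \autoref{lem:BBequivasubposet}), and these homotopies survive maximal scaling, where the mapping-space comparison reduces to a homotopy equivalence of maximally marked posets. For $(\xi^2)_\sharp$ one observes that its $i$-component is the pushout of the $i$-component of the equivalence $(\xi^2)_{\flat\subset\sharp}$ along the cofibration $(\Delta^1)^\flat\to(\Delta^1)^\sharp$ (marking the morphism $02\to012$ on both sides), and then invokes left properness of $\mssSet$ (\autoref{thm:markedscaledmodel}) via the gluing lemma. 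Note the crucial difference from your argument: here one pushes out the entire \emph{horizontal} equivalence along matching cofibrations, rather than asserting that the cofibrations themselves (your vertical maps) are equivalences.
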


\begin{proof}
	The morphism $(\xi^1)^\sharp$ is an isomorphism, and it is a quick check to extend the previous arguments to cover the case $(\xi^2)_{\flat\subset \sharp}$. One then notes that, for each $i\in \OO^2$, the $i$-component of $(\xi^2)_\sharp$ is a pushout of the $i$-component of $(\xi^2)_{\flat\subset \sharp}$ along the inclusion $(\Delta^1)^\flat\to (\Delta^1)^\sharp$, and thus is an equivalence.
\end{proof}

\begin{remark}
	As in \cite[Prop. 4.1.1]{AGSRelNerve} any 2-functor $f:\CC\to \DD$ yields diagrams 
	\[
	\begin{tikzcd}
		(\Set_\Delta^{\mathbf{ms}})^{\DD^\op}\arrow[r,"f^\ast"]\arrow[d,"\cchi_\DD"'] & (\Set_\Delta^{\mathbf{ms}})^{\CC^\op}\arrow[d,"\cchi_\CC"] \\
		(\mbsSet)_{/\Nsc(\DD)}\arrow[r,"\Nsc(f)^\ast"'] & (\mbsSet)_{/\Nsc(\CC)}
	\end{tikzcd}
	\]
	and 
	\[
	\begin{tikzcd}
		(\Set_\Delta^{\mathbf{ms}})^{\DD^\op}\arrow[r,leftarrow,"f_!"]\arrow[d,leftarrow,"\PPhi_\DD"'] & (\Set_\Delta^{\mathbf{ms}})^{\CC^\op}\arrow[d,leftarrow,"\PPhi_\CC"] \\
		(\mbsSet)_{/\Nsc(\DD)}\arrow[r,leftarrow,"\Nsc(f)_!"'] & (\mbsSet)_{/\Nsc(\CC)}
	\end{tikzcd}
	\]
	which commute up to natural isomorphism.
\end{remark}

\begin{theorem}\label{thm:nat_equiv_St_rho}
	There exists a unique family of natural weak equivalences 
	\[
	\func{
		\xi^\CC(X): \ST_\CC\nat \PPhi_\CC 
	}
	\]
	indexed by pairs $(\CC,X)$ consisting of a 2-category $\CC$ and $X\in (\mbsSet)_{/\Nsc(\CC)}$ with the following properties. 
	\begin{enumerate}
		\item On the maps $p_n$ for $n\geq 0$, $p_1^\sharp$, $(p_2)_{\flat\subset \sharp}$, and $(p_2)_\sharp$, the transformations $\xi^\CC(X)$ coincide with the transformations $\xi^n$ from \autoref{prop:CompareEquivonFlat} and \autoref{cor:compareequivnotflat}. 
		\item For every map $g:X\to Y$ in $(\mbsSet)_{\Nsc(\CC)}$, the diagram 
		\[
		\begin{tikzcd}
			\ST_\CC(X)\arrow[d,"\ST_\CC(g)"']\arrow[r,"\xi^\CC(X)"] & \PPhi_\CC(X)\arrow[d,"\PPhi_\CC(g)"]\\
			\ST_\CC(Y)\arrow[r,"\xi^\CC(Y)"'] & \PPhi_\CC(Y)
		\end{tikzcd}
		\]
		commutes 
		\item For every 2-functor $f:\CC\to \DD$, the diagram 
		\[
		\begin{tikzcd}
			f_!\ST_\CC(X) \arrow[r,"f_! \xi^\CC(X)"]\arrow[d,"\cong"] & f_! \PPhi_\CC(X)\arrow[d,"\cong"]\\
			\ST_\DD(f_!X) \arrow[r,"\eta_\CC(f_!X)"'] & \PPhi_\CC(f_!X)
		\end{tikzcd}
		\]
		commutes.
	\end{enumerate}
\end{theorem}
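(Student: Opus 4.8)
The plan is to construct the family of transformations $\xi^\CC(X)$ by a Kan-extension / density argument, reducing everything to the cases already handled in \autoref{prop:CompareEquivonFlat} and \autoref{cor:compareequivnotflat}. The key observation is that both $\ST_\CC$ and $\PPhi_\CC$ are \emph{colimit-preserving} functors: $\ST_\CC$ preserves colimits by \autoref{prop:formalproperties}(1), and $\PPhi_\CC$ is a left adjoint (the left adjoint to $\cchi_\CC$), hence also preserves colimits. Since every object $X\in (\mbsSet)_{/\Nsc(\CC)}$ is canonically a colimit of representables $p_n\to\Nsc(\CC)$ together with their decorated variants (via the canonical colimit decomposition of a simplicial set over $\Nsc(\CC)$ by its simplices, refined by the generating cofibrations \ref{cof:marked1}, \ref{cof:coCart}, \ref{cof:thin} that produce markings and scalings), it suffices to define $\xi^\CC$ on these generators and then extend by colimits. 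Concretely, for the base case $\CC=\OO^n$ the generators are exactly $p_n$, $p_1^\sharp$, $(p_2)_{\flat\subset\sharp}$, and $(p_2)_\sharp$, on which property (1) forces $\xi^{\OO^n}$ to equal the $\xi^n$ we have already constructed and shown to be equivalences.

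First I would make precise the reduction to $\OO^n$. Using property (3) as a \emph{constraint}, I observe that any simplex $\sigma:\Delta^n_\flat\to\Nsc(\CC)$ factors as $\mathfrak{C}^{\sc}[\sigma]$ through $\OO^n=\mathfrak{C}^{\sc}[\Delta^n_\flat]$, i.e. is of the form $f_!(p_n)$ for the associated 2-functor $f:\OO^n\to\CC$. Base change along $f$ (the two commuting squares in the final \emph{Remark} before the theorem, together with the base-change statements in \autoref{prop:formalproperties}) identifies $\ST_\CC(f_!p_n)\cong f_!\ST_{\OO^n}(p_n)$ and $\PPhi_\CC(f_! p_n)\cong f_!\PPhi_{\OO^n}(p_n)$. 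I would then \emph{define} $\xi^\CC(f_! p_n):= f_!\,\xi^n$, and symmetrically for the decorated generators. Property (3) holds by construction, and since $f_!$ is a left Quillen functor between the projective model structures, it preserves the pointwise weak equivalences $\xi^n$ (these are equivalences of marked-scaled simplicial sets between cofibrant objects), so each $\xi^\CC$ on a generator is again a pointwise equivalence.

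Next I would verify that these choices glue to a genuine natural transformation $\ST_\CC\Rightarrow\PPhi_\CC$ of functors on all of $(\mbsSet)_{/\Nsc(\CC)}$. The mechanism is the standard left-Kan-extension-along-the-Yoneda-embedding argument: both functors are left Kan extensions of their restrictions to the (decorated) simplex category, because both preserve colimits and every decorated simplicial set over $\Nsc(\CC)$ is a canonical colimit of its decorated simplices. A natural transformation between two such colimit-preserving functors is determined by, and can be built from, its values on the generating diagram, \emph{provided} the values are compatible with the face and degeneracy maps and with the generating cofibrations that introduce markings and scalings. This compatibility is precisely the content of property (2) restricted to morphisms among generators, which I would check by hand using the explicit descriptions of $\xi^n$ (this is the routine bookkeeping of verifying that $\xi^n$ commutes with the coface/codegeneracy 2-functors $[n-1]\to[n]$ and with the inclusions $(\Delta^1)^\flat\to(\Delta^1)^\sharp$, etc.). Uniqueness is then immediate: any transformation satisfying (1) agrees with ours on generators, and by colimit-preservation (a transformation between colimit-preserving functors is the left Kan extension of its restriction) must agree everywhere.

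The main obstacle I anticipate is not the formal extension but the \emph{coherence} of the generating data against the decorations — that is, confirming that the scalings and markings introduced by the generating cofibrations \ref{cof:marked1}, \ref{cof:coCart}, and \ref{cof:thin} are sent compatibly by $\xi^n$ on both sides, so that the naturality square (2) commutes before passing to colimits. This requires checking that the auxiliary scaling $P^s_{X^\triangleright}$ used to define $\ST_\CC$ matches, under $\xi^n$, the scaling prescribed in the construction of $\cchi_\CC$ (hence of $\PPhi_\CC$) on the relevant low-dimensional simplices; the decorated cases $(p_2)_{\flat\subset\sharp}$ and $(p_2)_\sharp$ in \autoref{cor:compareequivnotflat} are exactly where the marking of $02\to 012$ must be tracked. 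Once these finitely many compatibilities are established, the colimit extension and the uniqueness statement follow formally, and property (3) is built in from the outset.
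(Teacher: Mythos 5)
Your proposal is essentially the paper's own argument: the paper's proof simply defers to \cite[Props.~4.3.1 and 4.3.2]{AGSRelNerve}, whose method is exactly your colimit-decomposition-plus-base-change reduction to the universal generators over $\OO^n$, with properties (1)--(3) forcing the transformation on generators and colimit preservation forcing uniqueness. The one step you compress too much is the claim that the colimit extension remains a pointwise weak equivalence --- this is not purely formal, but follows by the standard skeletal induction using that both $\ST_\CC$ and $\PPhi_\CC$ preserve cofibrations, together with the gluing lemma in the left proper projective model structure on $(\mssSet)^{\CC^\op}$; this is precisely the content of the second cited proposition.
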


\begin{proof}
	This is identical to the proofs of \cite[Prop 4.3.1 and 4.3.1]{AGSRelNerve}.
\end{proof}

\begin{corollary}\label{cor:relative2}
	The adjunction 
	\[
	\begin{tikzcd}
		\PPhi_\CC: &[-3em] (\mbsSet)_{/\Nsc(\CC)} \arrow[r,shift left] & (\Set_\Delta^{\mathbf{ms}})^{\CC^\op}\arrow[l,shift left] &[-3em]: \cchi_\CC 
	\end{tikzcd}
	\]
	is a Quillen equivalence. 
\end{corollary}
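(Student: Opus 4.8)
The plan is to deduce this final corollary directly from \autoref{thm:QE_general} together with the natural equivalence $\xi^\CC:\ST_\CC\nat\PPhi_\CC$ established in \autoref{thm:nat_equiv_St_rho}. Recall that $\ST_\CC$ denotes the straightening functor $\ST_{\epsilon_\CC}$ associated to the counit $\epsilon_\CC:\mathfrak{C}^{\sc}(\Nsc(\CC))\to\CC$, which is an equivalence of $\msSet$-enriched categories. Since $\epsilon_\CC$ is such an equivalence, \autoref{thm:QE_general} applies with $S=\Nsc(\CC)$ and $\phi=\epsilon_\CC$, giving that
\[
\ST_\CC:(\mbsSet)_{/\Nsc(\CC)}\llra (\mssSet)^{\CC^\op}:\UN_\CC
\]
is a Quillen equivalence. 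In particular $\ST_\CC$ is a left Quillen functor that induces an equivalence on homotopy categories.

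The key point is then that $\PPhi_\CC$ is naturally weakly equivalent to the left Quillen equivalence $\ST_\CC$. First I would record that $\PPhi_\CC$ preserves cofibrations: this is exactly the corollary following the lemma that $\cchi_\CC$ preserves trivial fibrations, so $\PPhi_\CC$ is at least the left adjoint of a candidate Quillen pair. Next, by \autoref{thm:nat_equiv_St_rho}, for every 2-category $\CC$ and every object $X\in(\mbsSet)_{/\Nsc(\CC)}$ there is a natural transformation $\xi^\CC(X):\ST_\CC(X)\to\PPhi_\CC(X)$ which is a pointwise (hence projective) weak equivalence, and which is natural in $X$ by property (2). It is a standard fact that a functor naturally weakly equivalent to a left Quillen functor between model categories is itself a left Quillen functor, and that it induces the same functor on homotopy categories; thus $\PPhi_\CC$ preserves trivial cofibrations and is left Quillen. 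Since $\xi^\CC$ is a pointwise weak equivalence, the induced maps $\mathsf{L}\ST_\CC\xRightarrow{\simeq}\mathsf{L}\PPhi_\CC$ on the level of left-derived functors are natural isomorphisms in the homotopy category.

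From here the conclusion is formal. A left Quillen functor whose left-derived functor is naturally isomorphic to that of a left Quillen \emph{equivalence} is itself a left Quillen equivalence: the derived adjunction unit and counit are detected in the homotopy category, and since $\mathsf{L}\PPhi_\CC\cong\mathsf{L}\ST_\CC$ and $\ST_\CC$ is a Quillen equivalence by the previous paragraph, the derived unit and counit for $\PPhi_\CC\dashv\cchi_\CC$ are natural isomorphisms as well. This yields precisely the assertion that
\[
\PPhi_\CC:(\mbsSet)_{/\Nsc(\CC)}\llra (\mssSet)^{\CC^\op}:\cchi_\CC
\]
is a Quillen equivalence.

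I do not expect any genuine obstacle here, since the real content lives in \autoref{thm:QE_general} and in the construction of the natural equivalence $\xi^\CC$ of \autoref{thm:nat_equiv_St_rho}; the present corollary is essentially a transport-of-structure argument. The only point requiring mild care is the verification that $\ST_\CC=\ST_{\epsilon_\CC}$ genuinely falls under the hypotheses of \autoref{thm:QE_general}, i.e. that $\epsilon_\CC$ is an equivalence of $\msSet$-enriched categories rather than merely of underlying categories; this is exactly the content of the adjunction $\mathfrak{C}^{\sc}\dashv\Nsc$ being a Quillen equivalence, so it is immediate. With that in hand, the remaining steps are purely formal manipulations of Quillen equivalences under natural weak equivalence of left adjoints.
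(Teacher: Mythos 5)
Your proposal is correct and follows essentially the same route as the paper: note that $\PPhi_\CC$ preserves cofibrations, use the natural pointwise weak equivalence $\xi^\CC:\ST_\CC\Rightarrow\PPhi_\CC$ from \autoref{thm:nat_equiv_St_rho} together with 2-out-of-3 to conclude $\PPhi_\CC$ preserves trivial cofibrations and is left Quillen, and then observe that the left-derived functors of $\ST_\CC$ and $\PPhi_\CC$ agree, the former being an equivalence by \autoref{thm:QE_general}. The paper's proof is just a terser rendition of exactly this argument.
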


\begin{proof}
	Since $\PPhi_\CC$ preserves cofibrations, and is naturally weakly equivalent to $\ST_\CC$, it preserves trivial cofibrations, and thus is left Quillen. Moreover, the left-derived functors of $\ST_\CC$ and $\PPhi_\CC$ agree, and the former is an equivalence. 
\end{proof}

\subsection{Comparison to the strict case}

We now establish a comparison result with the strict 2-categorical case, as worked out by Buckley in \cite{Buckley}. We will heavily leverage two facts to ease the proof of this comparison results
\begin{itemize}
	\item For a strict 2-functor $F:\CC^\op \to 2\!\Cat$, we can describe $\cchi_\CC(F)$ \emph{entirely} in terms of 2-functors into $\CC$ and $F(x)$, for $x\in \CC$. 
	\item The Duskin 2-nerve $\Nerv_2(\CC)$ of any strict 2-category $\CC$ is 3-coskeletal. 
\end{itemize}
Making use of these two facts allows us to construct a comparison map by checking a finite number of cases by hand. Once the comparison is established, we can work with strict 2-categories to prove that it is a fibre-wise equivalence. 

Let us now introduce the 2-categorical Grothendieck construction we wish to compare with. Appropriately dualizing Buckley's construction\footnote{Buckley defines a construction that takes as input a functor $F:\CC^{(\op,\op)}\to 2\!\Cat$ where both 1- and 2-morphisms have been reversed.}, the \emph{strict 2-categorical Grothendieck construction} of a 2-functor 
\[
\func{F:\CC^\op\to 2\!\Cat}
\]
is the 2-category $\on{El}(F)$ which has 
\begin{itemize}
	\item Objects: pairs $(x,x_-)$ with $x\in \CC$ and $x_-\in F(x)$. 
	\item Morphisms:
	\[
	\func{(f,f_-):(x,x_-)\to (y,y_-)}
	\]
	where $f:x\to y$ in $\CC$, and $f_-:x_-\to F(f)(y_-)$. 
	\item 2-Morphisms: $(\alpha,\alpha_-):(f,f_-)\Rightarrow (g,g_-)$, where $\alpha:f\Rightarrow g$ is a 2-morphism in $\CC$, and $\alpha_-$ fits in the diagram 
	\[
	\begin{tikzcd}
		& & |[alias= T]| F(f)(y_-)\arrow[dd,"F(\alpha)_{y_-}"] \\
		x_-\arrow[urr,"f_-"]\arrow[drr,"g_-"',""{name=U}] & & \\ 
		& & F(g)(y_-)\arrow[from=T, to=U,Rightarrow, "\alpha_-",shorten <= 1em, shorten >=1em]
	\end{tikzcd}
	\]
\end{itemize}
The resulting functor $\on{El}(F)\to \CC$ is a 2-Cartesian fibration, where 
\begin{itemize}
	\item $(f,f_-)$ is Cartesian if $f_-$ is an equivalence, and 
	\item $(\alpha,\alpha_-)$ is coCartesian if $\alpha_-$ is an isomorphism. 
\end{itemize} 

Our aim is to prove the following 

\begin{theorem}\label{thm:BuckleyComp}
	Let 
	\[
	\func{F:\CC^{(\op,-)}\to 2\!\Cat}
	\]
	be a 2-functor, and let $\tilde{F}$ denote the composite 
	\[
	\func{\CC^{(\op,-)}\to 2\!\Cat\to \Set_\Delta^{\mathbf{ms}}}
	\]
	Then there is an equivalence 
	\[
	\begin{tikzcd}
		(\Nerv_2(\on{El}(F)),M,T\subset C)\arrow[dr]\arrow[rr, leftarrow, "\simeq"] & & \cchi_\CC(\tilde{F})\arrow[dl]\\
		& \Nsc(\CC)& 
	\end{tikzcd}
	\]
	of 2-Cartesian fibrations over $\Nsc(\CC)$.  
\end{theorem}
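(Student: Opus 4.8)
The plan is to produce an explicit comparison morphism $\Psi\colon \cchi_\CC(\tilde{F})\to \Nerv_2(\on{El}(F))$ of marked-biscaled simplicial sets over $\Nsc(\CC)$, and then to show it is a fibrewise equivalence between two fibrant objects. The first step is to make both sides completely explicit. On the target, an $n$-simplex of $\Nerv_2(\on{El}(F))$ is a strict $2$-functor $\OO^n\to \on{El}(F)$, and since the Duskin nerve of a strict $2$-category is $3$-coskeletal, a map into $\Nerv_2(\on{El}(F))$ is the same datum as a compatible assignment on simplices of dimension $\leq 3$. On the source, I would use the fact that for a genuine $2$-functor $F\colon \CC^{(\op,-)}\to 2\!\Cat$ the simplices of $\cchi_\CC(\tilde{F})$ admit a strict description: an $n$-simplex consists of a $2$-functor $\sigma\colon \OO^n\to \CC$ together with a family of $2$-functors $\theta_I\colon \OO^I_{i\upslash}\to F(\sigma(i))$, for $\varnothing\neq I\subseteq[n]$ and $i=\min I$, subject to the compatibility squares defining $\cchi_\CC$ (built from the gluing functors $\omega_{I,J}$).

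Next I would write down $\Psi$ on this data. Inspecting the objects of $\OO^I_{i\upslash}$ of small cardinality recovers exactly the data of a simplex of $\Nerv_2(\on{El}(F))$: $\theta_{\{i\}}$ selects an object $x_-^i\in F(\sigma(i))$, so a vertex $i$ maps to $(\sigma(i),x_-^i)$; for an edge, the arrow category $\OO^{\{0,1\}}_{0\upslash}$ forces $\theta_{[1]}$ to supply precisely a morphism $f_-\colon x_-^0\to F(f)(x_-^1)$ with $f=\sigma(0\to 1)$, i.e.\ a morphism $(f,f_-)$ of $\on{El}(F)$; and $\OO^2_{0\upslash}$ encodes the $2$-cell $\alpha_-$ of $\on{El}(F)$. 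Assembling these gives, for each $n\leq 3$, a strict $2$-functor $\OO^n\to \on{El}(F)$; the main work is to check that these assignments are compatible with faces and degeneracies and so define a simplicial map, which by $3$-coskeletality of the target then extends uniquely to all of $\cchi_\CC(\tilde{F})$. This bookkeeping --- deducing the $2$-functoriality and simplicial coherences from the compatibility squares of the $\theta_I$ --- is where the real content lies, and I expect it to be the main obstacle.

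Once $\Psi$ is constructed, the comparison of decorations is designed to be immediate. A marked edge of $\cchi_\CC(\tilde{F})$ is one where $\theta_{[1]}$ inverts the generating morphism, which corresponds to $f_-$ being an equivalence, i.e.\ the Cartesian morphisms of $\on{El}(F)$; a lean $2$-simplex corresponds to $\alpha_-$ being invertible, i.e.\ the coCartesian $2$-morphisms; and a thin $2$-simplex to a lean simplex lying over a thin simplex of $\Nsc(\CC)$, i.e.\ an invertible $2$-morphism. Defining the collections $(M,T\subseteq C)$ on $\Nerv_2(\on{El}(F))$ by exactly these conditions makes $\Psi$ preserve and reflect all decorations, and since both projections record only $\sigma$, $\Psi$ is a morphism over $\Nsc(\CC)$.

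Finally I would argue that $\Psi$ is a weak equivalence. Both objects are fibrant: $\cchi_\CC(\tilde{F})$ is a $2$-Cartesian fibration because $\cchi_\CC$ is right Quillen (\autoref{cor:relative2}) and $\tilde{F}$ is projectively fibrant (each $\tilde{F}(x)=\Nms(F(x))$ is an $\infty$-bicategory), while $\Nerv_2(\on{El}(F))$ is fibrant because $\on{El}(F)\to\CC$ is a strict $2$-Cartesian fibration, so its scaled nerve is a $2$-Cartesian fibration by \cite[Thm 4.29]{AGS_CartI}. A morphism between fibrant objects of $(\mbsSet)_{/\Nsc(\CC)}$ is a weak equivalence as soon as it is a fibrewise equivalence, so it suffices to examine the fibre over each $d\in\CC$. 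There the fibre of $\cchi_\CC(\tilde{F})$ is $\tilde{F}(d)=\Nms(F(d))$, the fibre of $\Nerv_2(\on{El}(F))$ is $\Nerv_2(F(d))$ with the induced decoration, and $\Psi$ restricts to the canonical comparison between the marked-scaled and Duskin nerves of the strict $2$-category $F(d)$ --- which share the same underlying simplicial set and matching decorations, hence is an isomorphism. This exhibits $\Psi$ as a fibrewise, and therefore global, equivalence of $2$-Cartesian fibrations over $\Nsc(\CC)$.
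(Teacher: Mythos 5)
Your overall strategy coincides with the paper's: construct the comparison map explicitly in dimensions $\leq 3$ and extend using $3$-coskeletality of the Duskin nerve, observe that the markings and biscalings match by construction, and then reduce, via \cite[Prop. 3.35]{AGS_CartI}, to showing the map is an equivalence on fibres over each $d\in\CC$. Up to that final step your proposal and the paper's proof agree essentially line by line, including the identification of low-dimensional simplex data with the objects, morphisms $(f,f_-)$ and $2$-cells $(\alpha,\alpha_-)$ of $\on{El}(F)$.

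The gap is in the fibre identification. You assert that the fibre of $\cchi_\CC(\tilde{F})$ over $d$ \emph{is} $\tilde{F}(d)=\Nms(F(d))$, so that $\Psi$ restricts to a comparison of two nerves of $F(d)$ with the same underlying simplicial set, hence an isomorphism. That identification is not definitional, and it is precisely where the remaining content of the theorem lives. By construction, an $n$-simplex of that fibre is a compatible family of maps $\theta_I:\Nms(\OO^I_{i\upslash})^\flat\to \tilde{F}(d)$ for $\varnothing\neq I\subseteq[n]$ --- maps out of the lax slices $\OO^I_{i\upslash}$, which have many more cells than $\OO^n$ --- and not a single simplex of $\Nms(F(d))$. (This is in contrast with Lurie's $(\infty,1)$-relative nerve, where the fibre over a vertex really is $F(d)$ on the nose; that analogy is likely what misled you.) What is true is that over a constant $\sigma$ the compatibility squares force $\theta_{[n]}$ to send every morphism $\mathcal{U}:S\to T$ of $\OO^n_{0\upslash}$ with $\max(S)=\max(T)$, and every $2$-morphism between such, to identities; one must then prove that restriction along the projection $\pi^n:\OO^n_{0\upslash}\to\OO^n$ gives a bijection between $2$-functors $\OO^n\to F(d)$ and such \emph{peripatetically constant} functors, compatibly with faces and degeneracies. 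This is exactly \autoref{lem:corep_rho_over_pt} and \autoref{cor:fibre_of_rho} in the paper (the key device being the section $s^n:j\mapsto[0,j]$ of $\pi^n$), yielding the isomorphism of the fibre with $\Nsc(F(d))$. Once that lemma is supplied, the rest of your argument goes through: the fibre of $\Nerv_2(\on{El}(F))$ is $\Nerv_2(F(d))$ with the induced decorations, and a check in dimensions $\leq 3$ plus coskeletality shows $\Psi$ restricts to this isomorphism. So the gap is fillable, but as written your proof omits its key fibre-identification lemma rather than proving it.
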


We begin by showing there is a map in one direction. For ease of notation, given a morphism $\phi:x\to y$ in $\CC$, we will write $\phi^\ast:=F(\phi)$. We will employ the same convention for 2-morphisms.  

Since the 2-nerve of a 2-category is 3-coskeletal, it suffices to define a map 
\[
\func{\on{sk}_3(\cchi_\CC(F))\to N_2(\on{El}(F))}
\]
which is compatible with markings and scalings. 

On 0- and 1-simplices, the data specified by the simplices in both constructions is identical. A 2-simplex in $\cchi_\CC(F)$ consists of the following data:
\begin{itemize}
	\item A 2-simplex 
	\[
	\begin{tikzcd}
		& |[alias=T]|x^1\arrow[dr,"\phi_{1,2}"]& \\
		x^0\arrow[ur,"\phi_{01}"]\arrow[rr,"\phi_{02}"',""{name=B}]& & x^2\arrow[from=B,to=T,Rightarrow,"\alpha"]
	\end{tikzcd}
	\]
	\item Three 1-simplices 
	\[
	\func{f_{12}:x^1_-\to \phi_{12}^\ast(x^2_-)}
	\]
	\[
	\func{
		f_{01}:x^0_-\to \phi_{01}^\ast(x^1_-)
	}
	\]
	and 
	\[
	\func{
		f_{02}:x^0_-\to \phi_{02}^\ast(x^2_-)
	}
	\]
	\item A diagram
	\[
	\begin{tikzcd}[row sep=4em,column sep=3em]
		|[alias=X]|\phi^\ast_{01}(x_-^1)\arrow[rr,"\phi_{01}^\ast f_{12}"] & & \phi^\ast_{01}\phi^\ast_{12}(x^2_-)\\
		& & \\
		x^0_-\arrow[rr,"f_{02}"']\arrow[uu,"f_{01}"]\arrow[uurr,bend left,""{name=U}]\arrow[uurr,bend right,""{name=L}] & & |[alias=Y]| x^2_-\arrow[uu,"(\alpha^\ast)_{x^2_-}"']\arrow[from=L,to=U,Rightarrow,"\mu", shorten <=1em, shorten >=1em]\arrow[from=U,to=X,phantom,"\circlearrowleft"{description}]\arrow[from=L,to=Y,phantom,"\circlearrowleft"{description}]
	\end{tikzcd}
	\]
	in $\CC$.
\end{itemize}
These data are identical to the data of a 2-simplex in $\on{El}(F)$. It is immediate that markings and scalings coincide under these correspondences.

Finally, we note that 3-simplices in $\on{El}(F)$ are simply compatibility conditions on 2-morphisms. It is a long but easy check to see that, given a 3-simplex in $\cchi_\CC(F)$, the corresponding 2-simplices in $\on{El}(F)$ are compatible.  We have thus shown

\begin{proposition}
	There is a morphism of 2-Cartesian fibrations 
	\[
	\func{\tau:\cchi_\CC(F)\to \on{El}(F).}
	\]
\end{proposition}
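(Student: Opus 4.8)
The plan is to verify that the assignment described in the paragraphs preceding the statement genuinely defines a map of simplicial sets respecting all the decorations and commuting over $\Nsc(\CC)$. The crucial structural input is that $\Nerv_2(\on{El}(F))$ is $3$-coskeletal, since $\on{El}(F)$ is a strict $2$-category and the Duskin nerve of any strict $2$-category is $3$-coskeletal. This reduces the entire construction to producing a map $\on{sk}_3(\cchi_\CC(F)) \to \Nerv_2(\on{El}(F))$ and checking that it is compatible with the coskeletal structure, i.e. that the assignment respects face and degeneracy maps up through dimension $3$. First I would spell out explicitly, in each dimension $k \leq 3$, what the data of a $k$-simplex of $\cchi_\CC(F)$ is, using the identification of $\cchi_\CC(F)$ for a strict $2$-functor $F$ entirely in terms of strict $2$-functors into $\CC$ and into the fibres $F(x)$, as promised at the start of the subsection. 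This is exactly the content already displayed in the excerpt for $k = 0, 1, 2$, and I would record the analogous (longer) unwinding for $k = 3$.

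Next I would observe that in each dimension the unwound data matches, term for term, the data of a $k$-simplex of $\Nerv_2(\on{El}(F))$: a $0$-simplex is a pair $(x, x_-)$; a $1$-simplex is a morphism $(\phi, f)$; a $2$-simplex is precisely a $2$-morphism datum $(\alpha, \mu)$ of $\on{El}(F)$ together with its three boundary edges; and a $3$-simplex is a compatibility condition among four such $2$-morphism data. Because these correspondences are dimension-wise bijections (or at least well-defined maps) that manifestly commute with the simplicial face and degeneracy operators, they assemble into a map on $3$-truncations, and $3$-coskeletality of $\Nerv_2(\on{El}(F))$ then promotes this uniquely to a map of simplicial sets $\tau \colon \cchi_\CC(F) \to \Nerv_2(\on{El}(F))$. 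I would then check that $\tau$ commutes with the two forgetful projections to $\Nsc(\CC)$, which is immediate since on the underlying simplex $\sigma \colon \Delta^n \to \Nsc(\CC)$ both constructions forget to the same $\sigma$.

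It then remains to verify that $\tau$ is compatible with the markings and the bi-scaling, so that it is a morphism of $\bS$ simplicial sets, hence of $2$-Cartesian fibrations over $\Nsc(\CC)$. Here I would compare the decoration recipes directly: an edge of $\cchi_\CC(F)$ is marked exactly when the associated $\theta_{[1]}$ descends to $\Nms(\OO^1_{0\upslash})^\sharp$, which under the unwinding corresponds to the component $f$ being an equivalence in the fibre, matching Buckley's condition that a Cartesian morphism of $\on{El}(F)$ is one whose fibre component $f_-$ is an equivalence. Likewise a $2$-simplex is lean exactly when $\theta_{\OO^2_{0\upslash}}$ descends to the sharp-scaled mapping space, corresponding to the $2$-morphism component $\mu$ being invertible, which is Buckley's coCartesian condition; and a $2$-simplex is thin precisely when it is lean and its image in $\Nsc(\CC)$ is thin, which is exactly the thinness condition transported across the identification.

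The main obstacle I anticipate is the bookkeeping in dimension $3$: one must check that the single $3$-simplex compatibility datum of $\cchi_\CC(F)$ — a commutativity condition among the various $2$-morphisms $\mu$ living in the fibres and the images under $F$ of the compositors $\alpha$ in $\CC$ — is genuinely equivalent to the $3$-simplex condition in $\Nerv_2(\on{El}(F))$, which is a cocycle/associativity-style identity among the four face $2$-morphisms $(\alpha_{ij}, \alpha_{-,ij})$. This is the ``long but easy check'' flagged in the excerpt; the work is to confirm that the whiskering and horizontal/vertical composition of the $2$-morphism components, together with the interchange law and the functoriality of $F$ (in particular $F(\beta \ast \alpha) = F(\beta) \ast F(\alpha)$ and the compatibility of $F$ with composition of $1$-morphisms), reproduce exactly the $\on{El}(F)$ identity. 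Since $F$ is a \emph{strict} $2$-functor, no coherence isomorphisms intervene and these identities hold on the nose, so the check is mechanical rather than conceptual; I would carry it out by fixing the combinatorial labels $0 < 1 < 2 < 3$ and matching the two sides face by face.
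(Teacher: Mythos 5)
Your proposal is correct and takes essentially the same approach as the paper: reduce via $3$-coskeletality of the Duskin nerve $\Nerv_2(\on{El}(F))$ to a comparison of $3$-truncations, identify the simplex data term by term in dimensions $\leq 2$, match the marking and biscaling recipes with Buckley's Cartesian/coCartesian conditions, and verify the dimension-$3$ compatibility, which is exactly the ``long but easy check'' the paper defers to, made mechanical by the strictness of $F$. Your additional remarks on face/degeneracy compatibility and on commutation of the projections to $\Nsc(\CC)$ only make explicit what the paper leaves implicit.
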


The final ingredient in our proof will involve a comparsion of 2-functors. 

\begin{definition}
	We denote by $\pi^n:\OO^n_{0\upslash}\to \OO^n$ the canonical projection. This sends $S\mapsto \max(S)$, and sends a morphism $\scr{U}:S\to T$ to the set $\scr{U}$. 
	
	Given a 2-category $\scr{C}$, we call a 2-functor
	\[
	f: \OO^n_{0\upslash}\to \CC 
	\]
	\emph{peripatetically constant} if it sends every morphism $\scr{U}:S\to T$ where $\max(S)=\max(T)$ to an identity, and every 2-morphism between such morphisms to an identity as well. We denote the \emph{set} of peripatetically constant functors $\OO^n_{0\upslash} \to \CC$ by 
	\[
	\Hom^{\on{PC}}(\OO^n_{0\upslash},\CC).
	\]
\end{definition}

\begin{lemma}\label{lem:corep_rho_over_pt}
	For any $n\geq 0$, the 2-functor $\pi^n:\OO^n_{0\upslash}\to \OO^n$ induces a bijection 
	\[
	\begin{tikzcd}
		(\pi^n)^\ast: &[-3em] \Hom(\OO^n,\CC) \arrow[r,"\cong"]& \Hom^{\on{PC}}(\OO^n_{0\upslash},\CC)
	\end{tikzcd}
	\]
\end{lemma}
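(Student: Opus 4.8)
The plan is to establish the bijection by explicitly constructing the inverse map and checking that the two assignments are mutually inverse. The forward map $(\pi^n)^\ast$ is clear: precomposition with $\pi^n$ sends a $2$-functor $g:\OO^n\to \CC$ to $g\circ \pi^n$, and I would first verify that this composite is indeed peripatetically constant. This is immediate from the description of $\pi^n$ on morphisms and $2$-morphisms: any morphism $\scr{U}:S\to T$ with $\max(S)=\max(T)$ has $\pi^n(\scr{U})=\scr{U}$, but as a morphism in $\OO^n$ this lies in $\OO^n(\max(S),\max(S))$, and since the only morphism in $\OO^n(i,i)$ is the identity $\{i\}$, it is sent to an identity; likewise $2$-morphisms between such morphisms are carried to identities. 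So the forward map lands in $\Hom^{\on{PC}}(\OO^n_{0\upslash},\CC)$.

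Next I would construct the candidate inverse. Given a peripatetically constant functor $f:\OO^n_{0\upslash}\to \CC$, I want to produce $\hat{f}:\OO^n\to \CC$. On objects, I set $\hat{f}(i)=f(\{0,\dots,i\})$ or, more naturally, note that objects of $\OO^n$ are elements $i\in[n]$ and use the section $i\mapsto \{0,i\}$ (or the terminal subset $[0,i]$) to define values; the cleaner route is to exploit that $\pi^n$ admits a canonical section on objects. On a morphism $S\in \OO^n(i,j)$ (a subset with $\min S=i$, $\max S=j$), I would set $\hat{f}(S)$ to be $f$ applied to a corresponding morphism in $\OO^n_{0\upslash}$; the key point is that peripatetic constancy forces $f$ to be insensitive precisely to the ``stationary'' data that $\pi^n$ collapses, so there is a well-defined way to read off the value. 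I would then check that $\hat{f}$ is a genuine $2$-functor --- compatibility with composition (union of subsets) and units --- using that composition in both $2$-categories is given by union, and that the functoriality of $f$ together with its peripatetic constancy guarantees the compositors and unitors transport correctly.

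The heart of the argument is verifying the two round-trip identities. For $\hat{f}\circ \pi^n$ versus $f$, I would trace through objects, $1$-morphisms, and $2$-morphisms of $\OO^n_{0\upslash}$: on any morphism $\scr{U}:S\to T$ one decomposes $\scr{U}$ (under union) into a ``stationary'' part, on which $f$ is an identity by hypothesis, and a ``moving'' part detected by $\pi^n$, recovering $f(\scr{U})$. For $(\pi^n)^\ast\hat{f}=g$ starting from $g:\OO^n\to \CC$, the identity is formal once the definitions are unwound, since $\hat{f}$ was defined to undo precomposition. Both checks reduce to the combinatorics of how subsets of $[n]$ factor as unions relative to their maxima, exactly as in the composition law of \autoref{def:OI}.

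The main obstacle I anticipate is making the inverse construction genuinely well-defined and $2$-functorial, rather than merely defined on a skeleton of generating morphisms. Concretely, one must confirm that the assignment on a morphism $S$ does not depend on the choice of representative factorization in $\OO^n_{0\upslash}$, and that the compositors of $f$ assemble into the compositors required of a strict $2$-functor $\hat{f}$ --- this is where peripatetic constancy is used in an essential, and slightly delicate, way (it is precisely the condition ensuring that the collapsed ``horizontal'' data in $\OO^n_{0\upslash}$ carries no nontrivial information). Since all the categories involved are nerves of posets or strict $2$-categories with union as composition, I expect this to be a bookkeeping verification with no conceptual surprises, and I would streamline it by phrasing the correspondence directly in terms of the injectivity of the structure maps $\omega_{I,J}$ noted after \autoref{def:OI}.
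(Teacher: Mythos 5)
Your proposal is correct and follows essentially the same route as the paper: the inverse is restriction along the section $j\mapsto[0,j]$ (the paper's $s^n$, which acts as the identity on 1- and 2-morphisms), and injectivity/surjectivity amount to exactly the round-trip checks you describe, with peripatetic constancy handling $f\circ s^n\circ\pi^n=f$. One small caution: of your two candidate sections, only $[0,i]$ works --- the assignment $i\mapsto\{0,i\}$ cannot be extended to 1-morphisms, since $\OO^n_{0\upslash}(\{0,i\},\{0,j\})$ is empty unless $i\in\{0,j\}$.
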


\begin{proof}
	We can define a strict 2-functor 
	\[
	\func*{
		s^n:\OO^n \to \OO^n_{0\upslash};
		j \mapsto {[0,j]}
	}
	\]
	which acts as the identity on 1- and 2-morphisms. Since $\pi^n\circ s^n=\id$, we have $(s^n)^\ast\circ (\pi^n)^\ast=\id$, and thus, $\pi^n_\ast$ is injective. It is immediate from unraveling the definitions that the image of $(\pi^n)^\ast$ is precisely the peripatetically constant functors.  
\end{proof}

\begin{corollary}\label{cor:fibre_of_rho}
	Let $\DD$ be a 2-category, and denote by $\ast$ the terminal 2-category. There is an isomorphism  
	\[
	\cchi_\ast(\DD)\cong \Nsc(\DD).
	\]
\end{corollary}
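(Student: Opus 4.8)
The plan is to compute $\cchi_\ast(\DD)$ simplex by simplex and then transport the bijection supplied by \autoref{lem:corep_rho_over_pt}. Here the input $\DD \in (\Set_\Delta^{\mathbf{ms}})^{\ast^\op} = \Set_\Delta^{\mathbf{ms}}$ is the marked-scaled Duskin nerve $\Nms(\DD)$, viewed as the (necessarily constant) functor on the terminal $2$-category $\ast$. Since $\Nsc(\ast) = \Delta^0$, the underlying simplex $\sigma\colon \Delta^n_\flat \to \Nsc(\ast)$ of any $n$-simplex of $\cchi_\ast(\DD)$ is unique, every hom-category $\ast(\ast,\ast)$ is terminal, and the action map $F(-)$ is the identity. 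First I would therefore unwind the construction of $\cchi_\CC(F)$ in this case: the data of an $n$-simplex reduces to a family of maps $\theta_I\colon \Nms(\OO^I_{\min I\upslash}) \to \Nms(\DD)$ indexed by $\varnothing \neq I \subseteq [n]$, subject to the single compatibility requirement that $\theta_I(U\cup S) = \theta_J(S)$ whenever $J\subseteq I$, $U \in \OO^I(\min I,\min J)$, and $S\in \OO^J_{\min J\upslash}$.

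The next step is to observe that this entire family is determined by the top map $\theta := \theta_{[n]}$. Taking $I = [n]$ in the compatibility relation exhibits every $\theta_J$ as a restriction of $\theta$, so the only remaining content is that $\theta(U\cup S)$ be independent of the chosen $U$. Unwinding, inclusions $U \subseteq U'$ in $\OO^{[n]}(0,\min J)$ produce exactly those morphisms of $\OO^n_{0\upslash}$ whose source and target share the same maximum, and the required independence is precisely the condition that $\theta$ send such morphisms, and the $2$-morphisms between them, to identities --- that is, that $\theta$ be \emph{peripatetically constant}. Since the Duskin nerve is fully faithful on $2$-categories (and the source is minimally marked with thin triangles the invertible $2$-morphisms, so the decorations impose no extra condition), a map $\Nms(\OO^n_{0\upslash})^\flat \to \Nms(\DD)$ is the same as a $2$-functor $\OO^n_{0\upslash}\to \DD$. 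This yields a natural bijection $\cchi_\ast(\DD)_n \cong \Hom^{\on{PC}}(\OO^n_{0\upslash},\DD)$, and \autoref{lem:corep_rho_over_pt} (with $\CC = \DD$) then identifies the latter with $\Hom(\OO^n,\DD) = \Nerv_2(\DD)_n = \Nsc(\DD)_n$ via $(\pi^n)^\ast$. Because $\pi^n$ and its section $s^n$ are natural in $[n]$, while the simplicial operators on $\cchi_\ast(\DD)$ act by relabelling the index sets $I$, these bijections assemble into an isomorphism of simplicial sets $\cchi_\ast(\DD)\cong \Nsc(\DD)$.

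Finally I would match the decorations, which is where the only fiddly bookkeeping lies. A $1$-simplex is marked exactly when $\theta_{[1]}$ descends along $\Nms(\OO^1_{0\upslash})^\flat \to \Nms(\OO^1_{0\upslash})^\sharp$, i.e. when the corresponding edge of $\Nms(\DD)$ is marked, recovering the marking on $\Nsc(\DD)$ induced from $\DD$. For triangles the key simplification is that every $2$-simplex of $\Nsc(\ast) = \Delta^0$ is degenerate, hence thin, so the conditions ``lean'' and ``lean and $\sigma$ thin'' coincide; both reduce to the descent of $\theta_{\OO^2_{0\upslash}}$ to the maximally scaled source, which corresponds to invertibility of the associated $2$-morphism of $\DD$. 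Thus the thin and lean scalings agree and are exactly the invertible $2$-morphisms, matching the marked-biscaled structure on $\Nsc(\DD)$. The main obstacle is not any single hard estimate but the careful verification in the first two steps that the compatibility diagrams defining $\cchi_\ast$ are equivalent to peripatetic constancy and that the resulting identification is simplicially natural; once this is established, \autoref{lem:corep_rho_over_pt} does the substantive work.
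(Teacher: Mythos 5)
Your proposal is correct and follows essentially the same route as the paper's proof: unwind an $n$-simplex of $\cchi_\ast(\DD)$ as a compatible family $\{\theta_I\}$, show it is determined by a peripatetically constant $\theta_{[n]}$, invoke \autoref{lem:corep_rho_over_pt} to pull back along $\pi^n$, and check simplicial naturality. Your explicit verification that the markings and the (coinciding) thin/lean scalings match is a welcome extra step that the paper's proof leaves implicit, but it does not constitute a different argument.
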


\begin{proof}
	An $n$-simplex in $\cchi_{\ast}(\DD)$ consists of 2-functors  
	\[
	\func{\theta_I:\OO^I_{i\upslash}\to \DD}
	\]
	for every non-empty $I\subset [n]$, such that for every $J\subset I \subset [n]$, the diagram
	\[
	\begin{tikzcd}
		\OO^I(i,j)\times \OO^J_{j\upslash} \arrow[r]\arrow[d] & \OO^I_{i\upslash}\arrow[d] \\
		\ast \times \DD\arrow[r] & \DD 
	\end{tikzcd}
	\]
	commute. Such functors are uniquely determined by the map $\theta_{[n]}:\OO^n_{0\upslash}\to \DD$, and the commutativity of the diagrams above is equivalent to requiring that $\theta_{[n]}$ be peripatetically constant. 
	
	Consequently, we obtain a bijection on sets of $n$-simplices $\Nsc(\DD)_n\cong (\cchi_\ast(\DD))_n$ by pulling back along $\pi^n$. The corollary follows from checking directly that these bijections respect face and degeneracy maps.   
\end{proof}

\begin{proof}[Proof (of \autoref{thm:BuckleyComp})]
	By \cite[Proposition 3.35]{AGS_CartI}, it will suffice for us to show that this morphism is an equivalence on fibres. By construction, the fibre of $\on{El}(F)$ over $x\in \CC$ is precisely the 2-category $F(x)$, and the fibre of $\cchi_\CC(F)$ over $x$ is \emph{also} precisely $\cchi_{x}(F(x))\cong \Nsc(F(x))$. 
	
	It is a quick explicit check that, on 0-,1-,2-, and 3-simplices, the map $\tau:\cchi_x(F(c))\to \on{El}_x(F(x))\cong \Nsc(F(x))$ agrees with the isomorphism of \autoref{cor:fibre_of_rho}. The theorem then follows from 3-coskeletalness. 
\end{proof}

%\newpage
%\section*{Index of Notation}
%\addcontentsline{toc}{section}{Index of Notation}
%
%For ease of reference, we provide here a non-comprehensive list of notations used in this paper. 
%
%\printglossaries

\newpage


\begin{thebibliography}{7}
  \bibitem{AG_cof} F. Abell\'an Garc\'ia. \emph{Marked colimits and higher cofinality.} J. Homotopy Relat. Struct. (2021) DOI: \href{https://doi.org/10.1007/s40062-021-00296-2}{s40062-021-00296-2}
  \bibitem{AGScofinality} F. Abell\'an  \& W.H. Stern. \emph{On cofinal functors of $\infty$-bicategories.}
  \bibitem{AGSQuillen} F. Abell\'an Garc\'ia \& W.H. Stern. \emph{Theorem A for marked 2-categories.} J. Pure \& Applied Algebra. Vol 226, Issue 9, 2022. \textsc{doi}:
\href{https://doi.org/10.1016/j.jpaa.2022.107040}{j.jpaa.2022.107040} 
  \bibitem{AGSRelNerve} F. Abell\'an Garc\'ia T. Dyckerhoff, \& W.H. Stern. \emph{A relative 2-nerve}. Algebr. Geom. Topol. 20-6 (2020), 3147-3182
  \bibitem{AGS_CartI} F. Abell\'an Garc\'ia \& W.H. Stern. \emph{2-Cartesian fibrations I: A model for $\infty$-bicategories fibred in $\infty$-bicategories.} Appl Categor Struct, vol 30, pp. 1341–1392 (2022) \textsc{doi}:
  \href{https://doi.org/10.1007/s10485-022-09693-x}{s10485-022-09693-x}
  \bibitem{AGS_Twisted} F. Abell\'an Garc\'ia \& W.H. Stern. \emph{Enhanced twisted arrow categories}. Theory and Applications of Categories, vol. 39, pp. 98-149 (2023).
  \bibitem{Berman} J. D. Berman. \emph{On lax limits in infinity categories}. arXiv: \href{https://arxiv.org/abs/2006.10851}{2006.10851}
  \bibitem{Buckley} M. Buckley. \emph{Fibred 2-categories and bicategories.} J. Pure \& Applied Algebra. Vol 218, Issue 6. pp. 1034--1074 (2014)
  \bibitem{Dubuc_limits}  M. E. Descotte,E. J. Dubuc, M. and Szyld.  
\enquote{Sigma limits in 2-categories and flat pseudofunctors} {\em Adv. Math.}
7.18 (jul. 2018), pp. 266--313. \textsc{doi}:
\href{https://doi.org/10.1016/j.aim.2018.05.021}{10.1016/j.aim.2018.05.021} 
  \bibitem{GHL_Gray}  A. Gagna, Y. Harpaz, E. Lanari. \emph{Gray tensor products and lax functors of $(\infty,2)$-categories.} Advances in Mathematics, vol. 391 (2021)  \textsc{doi}:
\href{https://doi.org/10.1016/j.aim.2021.107986}{j.aim.2021.107986} 
  \bibitem{GHL_Equivalence} A. Gagna, Y. Harpaz, E. Lanari. \emph{On the equivalence of all models for $(\infty,2)$-categories}. Math. Soc, Lond (2022) \textsc{doi}:
  \href{https://doi.org/10.1112/jlms.12614x}{jlms.12614}
  \bibitem{GHL_LaxLim} A. Gagna, Y. Harpaz, E. Lanari. \emph{Fibrations and lax limits of $(\infty,2)$-categories.} arXiv: \href{https://arxiv.org/abs/2012.04537}{2012.04537}
  \bibitem{GHL_Cart} A. Gagna, Y. Harpaz, E. Lanari.  \emph{Cartesian fibrations of $(\infty,2)$-categories.} arXiv: \href{https://arxiv.org/pdf/2107.12356.pdf}{2107.12356}
  \bibitem{GaitsgoryRozenblyum} D. Gaitsgory and N. Rozenblyum. ``A Study in Derived Algebraic Geometry, Volumes I and II.'' Mathematical Surveys and Monographs
  Vol. 221. 2017. 969 pp. AMS
  \bibitem{GHN} D. Gepner, R. Haugseng and T. Nikolaus,  \enquote{Lax colimits and free fibrations in $\infty$-categories}. {\em Documenta Mathematica } 1.15 (jan. 2015), vol 22. pp. 1225-1266
  \bibitem{GoerssJardine} P Goerss and R. Jardine. \enquote{Simplicial Homotopy Theory} Birkh\"auser, 2012. 
   \bibitem{GrothendieckDescent} A. Grothendieck. \emph{Technique de descente et théorèmes d'existence en géométrie algébrique. I. Généralités. Descente par morphismes fidèlement plats.}  Séminaire N. Bourbaki, 1960, exp. n o 190, p. 299-327
  \bibitem{SGA1} A. Grothendieck. \emph{    Revêtements étales et groupe fondamental (SGA 1).} 1971. Springer Lecture Notes in Mathematics 224. Springer-Verlag.
  \bibitem{JoyalNotes} A. Joyal. \emph{ Notes of quasi-categories} Available online \href{http://www.math.uchicago.edu/~may/IMA/Joyal.pdf}{Notes} 
  \bibitem{Kelly} G. M. Kelly  \enquote{Basic concepts of enriched category theory} {\em Reprints in Theory and Applications of Categories}, No. 10, 2005.
  \bibitem{DAGX} J. Lurie. \enquote{Derived Algebraic Geometry {X}: Formal Moduli Problems} 2011 available at the author's webpage: \href{people.math.harvard.edu/~lurie/papers/DAG-X.pdf}{DAG X}
  \bibitem{HTT} J. Lurie. \enquote{Higher Topos Theory}. Princeton University Press, 2009. Available on \href{http://people.math.harvard.edu/~lurie/papers/highertopoi.pdf}{the author's webpage}
  \bibitem{HA} J. Lurie. \enquote{Higher Algebra}. 2017 {\em Available on} \href{http://people.math.harvard.edu/~lurie/papers/HA.pdf}{\em the author's webpage.} 
  \bibitem{LurieGoodwillie} J. Lurie. \emph{$(\infty,2)$-categories and the Goodwillie calculus.} arXiv: \href{https://arxiv.org/abs/0905.0462}{0905.0462}
  \bibitem{Macpherson} A. W. Macpherson. \emph{A bivariant Yoneda lemma and $(\infty,2)$-categories of correspondences}. arXiv:\href{https://arxiv.org/abs/2005.10496v2}{2005.10496v2}
  \bibitem{MM_sheaves} I. Moerdijk and S MacLane. \emph{Sheaves in Geometry and Logic.} Springer, 2000.
 \bibitem{QuillenK} Quillen, Daniel. 
 \enquote{Higher algebraic K-theory: I} {\em Higher K-Theories} 
Lecture Notes in Mathematics, vol 341. Springer, Berlin, Heidelberg. \textsc{doi}:
\href{https://doi.org/10.1007/BFb0067053}{10.1007/BFb0067053}
  \bibitem{RasekhDYoneda} N. Rasekh. \emph{Yoneda Lemma for $\mathcal{D}$-Simplicial Spaces}. arXiv: \href{https://arxiv.org/abs/2108.06168}{2108.06168}
  \bibitem{RiehlVerityComp} E. Riehl and D.R.B. Verity. \emph{The comprehension construction}. Higher Structures, Vol. 2 No. 1. \href{https://journals.mq.edu.au/index.php/higher_structures/article/view/39}{online}
  \bibitem{Verity} D.R.B. Verity \enquote{Weak complicial sets I. Basic homotopy theory} Advances in Mathematics vol. 219 (2008) pp. 1081–-1149
  \bibitem{Wald} F. Waldhausen \enquote{Algebraic K-Theory of spaces} Algebraic and geometric topology: proceedings of a conference held at Rutgers Univ., New Brunswick, USA, July 6 - 13, 1983. 
  
\end{thebibliography}
\end{document}